\newcommand{\nocontentsline}[3]{}
\newcommand{\toclesslab}[3]{\bgroup\let\addcontentsline=\nocontentsline#1{#2\label{#3}}\egroup}
\newcommand{\beq}{\begin{equation}}
\newcommand{\eeq}{\end{equation}}
\newcommand{\pp}{\partial}
\newcommand{\vol}{\,\mbox{vol}}
\newcommand{\dvol}{\mathrm{d}{\rm{vol}}}
\newcommand{\inj}{{\rm{inj}}}
\newcommand{\diam}{{\rm{diam}}}
\newcommand{\ep}{\epsilon}
\newcommand{\vep}{\varepsilon}
\newcommand{\uvep}{u_{\vep}}
\newcommand{\veep}{v_{\ep}}
\newcommand{\tveep}{\tilde{v}_{\ep}}
\newcommand{\evep}{e_{\vep}}
\newcommand{\calE}{{\mathcal{E}}}
\newcommand{\calEvep}{\calE_{\vep}}
\newcommand{\Vep}{V_{\vep}}
\newcommand{\Kap}{K_{ap}}
\newcommand{\Kapg}{K_{ap,g}}
\newcommand{\Evep}{\widetilde{E}_{\vep}}
\newcommand{\Eepgr}{\widetilde{E}_{\ep,\gR}}
\newcommand{\muvep}{\mu_{\vep}}
\newcommand{\mus}{\mu_{*}}
\newcommand{\vphi}{\varphi}
\newcommand{\mrmd}{\mathrm{d}}
\newcommand{\gR}{g_{R_{1}}}
\newcommand{\ceq}{{\coloneqq}}
\newcommand{\N}{{\mathbb{N}}}
\newcommand{\Z}{{\mathbb{Z}}}
\newcommand{\R}{{\mathbb{R}}}
\newcommand{\C}{{\mathbb{C}}}
\newcommand{\bbS}{{\mathbb{S}}}
\newcommand{\calK}{{\mathcal{K}}}
\newcommand{\calM}{{\mathcal{M}}}
\newcommand{\calH}{{\mathcal{H}}}
\newcommand{\calL}{{\mathcal{L}}}
\def\rest{\hskip 1pt{\hbox to 10.8pt{\hfill
\vrule height 7pt width 0.4pt depth 0pt\hbox{\vrule height 0.4pt width 7.6pt depth 0pt}\hfill}}}
\definecolor{darkgreen}{rgb}{0,0.55,0} 
\numberwithin{equation}{section}
\theoremstyle{plain}\newtheorem{theorem}{Theorem}[section]
\theoremstyle{plain}\newtheorem{proposition}[theorem]{Proposition}
\theoremstyle{plain}\newtheorem{lemma}[theorem]{Lemma}
\theoremstyle{plain}\newtheorem{corollary}[theorem]{Corollary}
\theoremstyle{definition}\newtheorem{remark}[theorem]{Remark}
\theoremstyle{definition}\newtheorem{definition}[theorem]{Definition}
\title{Structural descriptions of limits of the parabolic Ginzburg-Landau equation on closed manifolds}
\begin{document}

\author{Andrew Colinet}
\affil{Department of Mathematics, University of Toronto, Toronto, ON M5S 2E4 Canada}

\maketitle

\abstract{
In the setting of a compact Riemannian manifold of dimension $N\ge3$ we provide a
structural description of the limiting behaviour of the energy measures of
solutions to the parabolic Ginzburg-Landau equation.
In particular, we provide a decomposition of the limiting energy measure into a
diffuse part, which is absolutely continuous with respect to the volume
measure, and a concentrated part supported on a codimension $2$
rectifiable subset.
We also demonstrate that the time evolution of the diffuse part is
determined by the heat equation while the concentrated part evolves according to
a Brakke flow.
This paper extends the work of Bethuel, Orlandi, and Smets from \cite{BOS2}.
}

\section{Introduction}
\hspace{15pt}In this paper we extend the work of Bethuel, Orlandi, and Smets on
the parabolic Ginzburg-Landau equation from \cite{BOS2} to
the setting of a compact Riemannian manifold $(M,g)$ of dimension
$N\ge3$.
More specifically, we are interested in providing a
detailed description of the limiting behaviour
as $\vep\to0^{+}$ of solutions of the PDE initial value problem
\beq\label{PGLOriginal}
\begin{cases}\tag*{$\rm{(PGL)}_{\vep}$}
\pp_{t}\uvep=\Delta\uvep+\frac{1}{\vep^{2}}\uvep\bigl(1-|\uvep|^{2}\bigr)&\forall{}x\in{}M\text{ and }\forall{}t>0\\
\uvep(x,0)=\uvep^{0}(x)&\forall{}x\in{}M
\end{cases}
\eeq
for a given $\uvep^{0}$ which, throughout this paper, we assume satisfies
\beq\label{H0}
\calEvep(\uvep^{0})\le{}M_{0}\mathopen{}\left|\log(\vep)\right|\mathclose{}\hspace{20pt}\text{where }M_{0}\text{ is a fixed positive constant}
\tag{${\rm{H}}_{0}$}
\eeq
and where
\beq\label{GL:Energy}
\calEvep(u)\ceq\int_{M}\!{}\evep(u)\dvol_{g},
\qquad
\evep(\uvep)\ceq\frac{1}{2}|\nabla{}u|^{2}+\Vep(u)
\eeq
with
\[
\Vep(x)\ceq\frac{1}{4\vep^{2}}\bigl(1-|x|^{2}\bigr)^{2}.
\]

The asymptotics of solutions to the equation \ref{PGLOriginal}
has been extensively studied in the setting of Euclidean space.
For $N\ge3$ it was shown in  \cite{JS2, Lin2},
in a variety of settings, including
$\R^{N}$ and bounded open subsets of $\R^{N}$, that for well-prepared initial data,
the energy of solutions to \ref{PGLOriginal} concentrates around a codimension $2$ mean curvature flow, as long as that flow remains smooth.
It was then shown in \cite{AS} that {\em if} the limiting energy measure satisfies a lower density 
bound, then this result may be extended past the formation of singularities, thereby giving a conditional proof of
convergence of rescaled energy measures, globally in time, 
to a codimension $2$ Brakke flow -- a measure-theoretic weak solution of the mean curvature flow.\\

Following this work results were obtained in \cite{LiRi2},
for $N=3$ and on a bounded domain, relating a local energy
condition with the local absence of vortex behaviour and using
this to demonstrate energy concentration on a rectifiable
$1$-varifold.
The relationship between the local energy condition and the
absence of vortex behaviour was shown to hold for $\R^{4}$ in
\cite{Wang} where the energy is weighted by a Gaussian function.\\

Finally, this line of research concluded with \cite{BOS2} which,
among other improvements, removed the lower density bound imposed
in \cite{AS},  giving an unconditional proof that the concentrated part of a limiting energy measure evolves via a Brakke flow in $\R^N$, globally in $t$, for every $N\ge 3$, and without requiring well-prepared initial data.\\

The description of the dynamics of the limiting energy measure
over $\R^{N}$ in \cite{BOS2} raised the question of possible
extensions to other settings.
One such extension is found in \cite{Liu} who demonstrated the
conclusions of \cite{BOS2} for the parabolic
Ginzburg-Landau equation with magnetic potential in $\R^{3}$.
Related work in the Riemannian setting includes
\cite{PP1} and \cite{PP2} for the Allen-Cahn equation over a
compact Riemannian manifold without boundary as well as
\cite{PCC} which extends the Monotonicity formula to a suitably
restricted class of compact Riemannian manifolds, possibly with
boundary.
Despite these efforts, an extension of the results of
\cite{BOS2} to the case of a compact Riemannian manifold without
boundary has not been shown.\\

The main result of this paper is, in the setting of a compact
smooth Riemannian manifold $(M,g)$ without boundary,
a careful study of the family of energy measures
\[
\mus^{t}(x)\ceq\frac{\evep(\uvep(x,t))}
{\mathopen{}\left|\log(\vep)\right|\mathclose{}}
\dvol_{g}(x)
\]
for $t>0$ as $\vep\to0^{+}$.
Of particular note is that we impose no topological or curvature
restrictions on $M$ beyond what is guaranteed by compactness.
As a result, our analysis applies to compact manifolds with
possibly non-trivial topology.
The result of this analysis, stated in Theorem
\ref{BOSTheorem}, is that the limiting energy decomposes into
a diffuse energy and a concentrated vortex energy which
do not interact.
The evolution of the diffuse energy will be governed by
the heat equation while the vortex energy evolves according to
a Brakke flow, a measure theoretic formulation of
mean curvature flow.
More specifically, we have:
\begin{theorem}\label{BOSTheorem}
Let $M$ be of dimension $N\ge3$ and suppose that $\{\uvep\}_{\vep\in(0,1)}$ are a family of solutions to \ref{PGLOriginal} for
corresponding $\vep$ and with respective initial data $\{\uvep^{0}\}_{\vep\in(0,1)}$.
Let $\muvep^{t}$ be, for each $t>0$,
the measure on $M$ defined by
\[
\muvep^{t}\ceq\frac{\evep(\uvep(\cdot,t))}{\mathopen{}\left|\log(\vep)\right|\mathclose{}}\dvol_{g}.
\]
Then, after perhaps passing to a subsequence $\{u_{\vep_{n}}\}_{n\in\N}$, there exists a
family of
limiting measures $\{\mus^{t}\}_{t>0}$ and
subsets $\{\Sigma_{\mus}^{t}\}_{t>0}$ in $M$, as well as a
function
$\Phi_{*}\colon{}M\times(0,\infty)\to\R\slash2\pi\Z$ such that the
following properties hold:
\begin{enumerate}
\item\label{BOSTheorem:Item1}
$\mu_{\vep_{n}}^{t}\rightharpoonup\mus^{t}$ in $\calM(M)$ for each $t>0$.
\item\label{BOSTheorem:Item2}
$\Phi_{*}$ satisfies the heat equation on
$M\times(0,\infty)$.
\item\label{BOSTheorem:Item3}
For each $t>0$, the measure $\mus^{t}$ can be exactly decomposed as
\beq\label{Decomposition}
\mus^{t}=\frac{|\nabla\Phi_{*}|^{2}}{2}\calH^{N}+\nu_{*}^{t}
\eeq
where
\beq\label{nustar.form}
\nu_{*}^{t}=\Theta_{*}(x,t)\calH^{N-2}\rest\Sigma_{\mus}^{t}
\eeq
and where $\Theta_{*}(\cdot,t)$ is a bounded measurable function.
\item\label{BOSTheorem:Item4}
There exists a positive function $\eta$ defined on $(0,\infty)$ such that, for $\calL^{1}$-almost every $t>0$, the set $\Sigma_{\mus}^{t}$ is
$(N-2)$-rectifiable and
\beq\label{LowerBound}
\Theta_{*}(x,t)=\Theta_{N-2}(\mus^{t},x)=\lim_{r\to0^{+}}\frac{\mus^{t}(B_{r}(x))}{\omega_{N-2}r^{N-2}}\ge\eta(t),
\eeq
for $\calH^{N-2}$-almost every $x\in\Sigma_{\mus}^{t}$.
\item\label{BOSTheorem:Item5}
The family of measures $t\mapsto\Theta_{*}(x,t)\calH^{N-2}\rest\Sigma_{\mus}^{t}$ forms a Brakke flow.
\end{enumerate}
\end{theorem}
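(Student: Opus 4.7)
The plan is to follow the strategy of Bethuel--Orlandi--Smets in \cite{BOS2}, localizing every Euclidean argument to geodesic balls of radius below the injectivity radius of $(M,g)$. First, for item \ref{BOSTheorem:Item1}, hypothesis \ref{H0} gives $\muvep^{t}(M)\le{}M_{0}$ uniformly in $\vep$ and $t$, so Banach--Alaoglu yields weak-$*$ convergence for each fixed $t$ along a subsequence. To obtain a single subsequence good for all $t>0$, I would combine a diagonal extraction along a countable dense set $\{t_{k}\}\subset(0,\infty)$ with the $L^{2}$ energy identity
\[
\frac{\mrmd}{\mrmd{}t}\calEvep(\uvep(\cdot,t))=-\int_{M}|\pp_{t}\uvep|^{2}\dvol_{g}\le{}0,
\]
which makes $t\mapsto\muvep^{t}(M)$ nonincreasing and thereby controls oscillation of the mass between sample times.

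Next, for items \ref{BOSTheorem:Item2} and \ref{BOSTheorem:Item3}, I would work with the momentum $1$-form $j(\uvep)\ceq(i\uvep,\mrmd\uvep)$. On any geodesic ball where the vorticity $\mrmd{}j(\uvep)$ is controlled, $|\uvep|\to{}1$ and $j(\uvep)$ compares to the differential of a local phase; the parabolic equation \ref{PGLOriginal} shows that $\pp_{t}j(\uvep)-\Delta{}j(\uvep)$ is quadratic in $(1-|\uvep|^{2})$, which vanishes in the limit and produces a distributional heat equation for a $\R/2\pi\Z$-valued limit $\Phi_{*}$, with the target chosen to accommodate possibly nontrivial $\pi_{1}(M)$. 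Parabolic regularity then upgrades $\Phi_{*}$ to a smooth solution. For the splitting \ref{Decomposition}, I would use the pointwise decomposition
\[
\evep(\uvep)=\frac{1}{2}|\nabla|\uvep||^{2}+\Vep(\uvep)+\frac{|j(\uvep)|^{2}}{2|\uvep|^{2}}
\]
valid off the vortex set, identifying $\tfrac{1}{2}|\nabla\Phi_{*}|^{2}\calH^{N}$ as the weak limit of the last term over $|\logeps|$ (using the weak convergence of $j(\uvep)/|\logeps|^{1/2}$ and Hodge decomposition), and defining $\nu_{*}^{t}$ as the weak limit of the remaining two terms. Boundedness of the density $\Theta_{*}(\cdot,t)$ is inherited from the total mass bound.

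For items \ref{BOSTheorem:Item4} and \ref{BOSTheorem:Item5}, the essential tool is a Riemannian parabolic monotonicity formula. Following and extending \cite{PCC}, I would replace the Euclidean backward heat kernel used in \cite{BOS2} by either the intrinsic heat kernel of $(M,g)$ or a frozen Euclidean kernel read in normal coordinates, picking up a curvature remainder that decays in scale. The clearing-out lemma of \cite{BOS2} then transfers to $M$ at scales below the injectivity radius, producing the density lower bound $\Theta_{*}(x,t)\ge\eta(t)$ of \ref{LowerBound}, and the combination of density monotonicity with Preiss's theorem gives the $(N-2)$-rectifiability of $\Sigma_{\mus}^{t}$ for $\calL^{1}$-a.e.\ $t$. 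The Brakke flow inequality follows by testing the local energy identity $\pp_{t}\evep(\uvep)+|\pp_{t}\uvep|^{2}=\operatorname{div}(\pp_{t}\uvep\cdot\nabla\uvep)$ against nonnegative $C^{2}$ functions, verifying that the smooth part $\tfrac{1}{2}|\nabla\Phi_{*}|^{2}\calH^{N}$ contributes only a diffusive term which, thanks to item \ref{BOSTheorem:Item2}, exactly balances its own heat evolution and so cancels, leaving the required variational inequality for $\nu_{*}^{t}$.

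The main obstacle is this Riemannian monotonicity formula and, intertwined with it, the decoupling of diffuse and concentrated energy in the limit. On small geodesic balls, the BOS2 monotonicity formula is only a perturbation of its Euclidean form, but the curvature corrections must be controlled uniformly in $\vep$ throughout the iterated multi-scale analysis on which the clearing-out lemma and rectifiability argument depend, and this control must hold without topological or curvature restrictions on $M$ beyond compactness. Establishing such uniform estimates, together with their consequence that the cross-term between $\tfrac{1}{2}|\nabla\Phi_{*}|^{2}\calH^{N}$ and $\nu_{*}^{t}$ vanishes in the $\vep\to0^{+}$ limit, is what genuinely distinguishes the Riemannian problem from its Euclidean predecessor and is where the bulk of the technical work will lie.
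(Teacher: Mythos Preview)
Your overall strategy is correct and matches the paper's: both follow \cite{BOS2}, establish a Riemannian monotonicity formula with curvature error terms, prove a clearing-out theorem, decompose the limiting energy into diffuse and concentrated parts, and verify Brakke's inequality after cancelling the heat-equation contribution of the diffuse part. Your closing paragraph correctly identifies where the real work lies.

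There is, however, a genuine gap in your construction of $\Phi_*$. The assertion that $\partial_t j(\uvep)-\Delta j(\uvep)$ is quadratic in $(1-|\uvep|^2)$ is not correct: the cross product of \ref{PGLOriginal} with $\uvep$ gives only the first-order constraint $\uvep\times\partial_t\uvep+\mrmd^*(\uvep\times\mrmd\uvep)=0$, and the $1$-form $j(\uvep)$ does not itself satisfy a heat equation with small right-hand side. What the paper does instead (Proposition~\ref{ControlledHodgeDeRham}, Lemma~\ref{EvolutionOfThePhase}) is a Hodge--de Rham decomposition of the space-time $1$-form $\uvep\times\delta\uvep=\delta\Phi+\delta^*\Psi+\gamma+\zeta$ on $M\times(t_1,t_2)$: the exact part $\Phi$ then satisfies an approximate heat equation whose error is governed by $\delta^*\Psi+\zeta$, and the $W^{1,p}$ bound on $\Psi$ for $p<\frac{N+1}{N}$ comes from the Jerrard--Soner Jacobian estimate applied to $-\Delta\Psi=J\uvep$. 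Crucially, when $H^1(M;\R)\ne 0$ the decomposition produces a genuinely nonzero harmonic piece $\gamma$ that cannot be absorbed into $\delta\Phi$; the paper handles this by constructing a time-independent $\bbS^1$-valued map $u_{h,\vep}$ with $ju_{h,\vep}=\lfloor\gamma_\vep^0\rfloor$ (an integer-lattice projection of the harmonic part of the initial data) and writing $\uvep=w_\vep e^{i\phi_\vep}u_{h,\vep}$. Your choice of an $\R/2\pi\Z$ target is the right instinct but does not by itself deliver this structure, and without it the globalization of $\Phi_*$ in Lemma~\ref{GlobalPhi} does not go through.

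A few smaller discrepancies are worth flagging. The paper's monotonicity weight is neither the intrinsic heat kernel nor a frozen Euclidean one but an explicit approximate kernel $K_{ap}$ built from a smoothly truncated distance $d_+$; this choice is made precisely so that weighted energies at distinct base points can be compared (Lemma~\ref{WeightedEnergyComparison}), and the resulting error terms are controlled via the Hessian Comparison Theorem rather than heat-kernel asymptotics. Inside the clearing-out proof, the Hodge decomposition of $2$-forms on $M$ also picks up a harmonic projection $H(\mrmd[\veep\times\mrmd\veep]\chi)$ whenever $H^2(M;\R)\ne 0$, requiring separate estimates absent from \cite{BOS2}. Finally, the $L^\infty$ bound on $\Theta_*(\cdot,t)$ does not follow from the total mass bound (which only gives $L^1$) but from the monotonicity formula via the upper-density estimate of Lemma~\ref{UpperBoundDensityBound}; and for rectifiability the paper first shows that the $(N-2)$-density actually exists (Proposition~\ref{Proposition8}, via a function-space argument on radial test profiles) before any Preiss-type conclusion can be drawn.
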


These conclusions were first demonstrated in \cite{BOS2}
for the, non-compact, smooth manifold $\R^{N}$ paired with the
standard metric.\\

In general we follow the strategy developed in
\cite{BOS2}.
However, a number of details need to be adapted in order for the
strategy to extend to the more general setting.
\begin{itemize}
\item
When defining the weighted energy, which is used to
establish a monotonicity formula, we use an approximation to
the heat kernel as a weight.
The form of the alteration that we employ differs from the
earlier works \cite{PP1}, \cite{PP2} and is designed to facilitate
a comparison of the weighted energy at distinct points in
space-time, see Lemma \ref{WeightedEnergyComparison}.
\item
A consequence of modifying the weighted energy is that new
error terms $\Phi$ and $\Psi$ arise, see \eqref{Phi:Def} and
\eqref{Psi:Def} for definitions.
The error term $\Phi$, as seen in Theorem $1.1$ of \cite{Ham2},
corresponds to the fact that we are not working over Euclidean
space while $\Psi$, as seen in \cite{PCC}, reflects the
fact that we have replaced the heat kernel on $M$ with an
approximation.
These error terms are handled by appealing to the Hessian
Comparison Theorem which is discussed in
\eqref{HessianComparisonInequality}.
\item
When following the Hodge de Rham decomposition strategy from
Subsection $3.6$ of \cite{BOS2} we need to solve a Poisson
problem over $M$.
Since we do not impose any topological restrictions on $M$
some care is needed to ensure that a solution exists.
Specifically, we needed to modify the argument from
\cite{BOS2} to account for the harmonic part of the data as well
as provide additional estimates for the resultant error terms.
\item
When decomposing the solution to \ref{PGLOriginal}, as in Theorem
$3$ of \cite{BOS2}, we now have to account for the fact that no
topological restrictions were placed on $M$.
This, in particular, has the effect of adding an additional
term, $u_{h,\vep}\colon{}M\times(0,\infty)\to\bbS^{1}$, which
corresponds to the harmonic part of the Hodge de Rham
decomposition of $\uvep\times\mrmd\uvep$ at time $t=0$.
The presence of this additional term also has consequences
on how we are able to express the limiting energy density in
Theorem \ref{BOSTheorem}.
\end{itemize}

The use of the Hessian Comparison Theorem gives rise to curvature-dependent constants in many of our estimates. 
In our arguments, it is often convenient to rescale the metric $g$ to a dilated metric $g/a$ with $a\in (0,1)$. 
All estimates that we need continue to hold with the same, often better, constants after such rescaling. Indeed, such a rescaling decreases bounds on the curvature and hence improves all curvature-dependent constants.

Inevitably, there are numerous arguments in the proof of Theorem \ref{BOSTheorem},
that are very
similar to corresponding points in \cite{BOS2}.
We omit  discussions that would essentially duplicate prior arguments.
However, we have taken a couple of steps to explain these points and
and to document their correctness.
First, we attempt to sketch these proofs well enough to 
make it clear that no significant new
subtleties arise in the Riemannian case. As a result, in places our exposition
resembles a sort of reader's guide to parts of \cite{BOS2}.
This seems to us necessary for a reasonably complete account of the proof of Theorem \ref{BOSTheorem}.
Second, the author's Ph.D. thesis \cite{Col} contains an expanded version of this paper, and it includes an appendix in which
we discuss in detail a number of the points omitted here.
These are arguments that involve few novel ingredients, but for which some documentation may be useful.
We refer to this appendix often.

While Theorem \ref{BOSTheorem} is interesting in its own right
it is worth noting that this result is a key ingredient
in demonstrating the existence of solutions to the elliptic
Ginzburg-Landau equation over $(M,g)$, when $N=3$, for which the energy and
a quantity associated to vorticity concentrate about a non-length minimizing
geodesic as $\vep\to0^{+}$.
This is shown in \cite{CJS} which
improves on earlier work such as \cite{JSt} and \cite{Mes}.\\

We conclude this introduction by describing some issues in 
the proof of Theorem \ref{BOSTheorem}.
First, as in \cite{BOS2}, an important intermediate result is the
following ``clearing out" theorem.
It involves a weighted energy, $\Evep$, whose definition
is provided in \eqref{WeightedEnergyIntegral}.

\begin{theorem}\label{Theorem1}
For any $\sigma\in(0,1)$ and $T>0$ there exists positive numbers $\vep_{0}$,
$R(\sigma)$, and $\eta(\sigma)$ such that if $\uvep$ is a solution to
\ref{PGLOriginal} on $M\times(0,T)$ satisfying \eqref{H0} for $0<\vep<\vep_{0}$, $R$
satisfies $\sqrt{2\vep}<R<\min\bigl\{R(\sigma),\sqrt{T}\bigr\}$,
and $x_{T}$ is a point such that
\[
\Evep(\uvep,(x_{T},T),R)
\le\eta(\sigma)\mathopen{}\left|\log(\vep)\right|\mathclose{},
\]
then
\[
|\uvep(x_{T},T)|\ge1-\sigma.
\]
\end{theorem}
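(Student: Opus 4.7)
The plan is to follow the strategy of Theorem~II.1 in \cite{BOS2}, adapted to the manifold setting. At a high level, the proof proceeds by contradiction: assuming the weighted energy is small at $(x_T,T)$ at scale $R$ while $|\uvep(x_T,T)|<1-\sigma$, I combine the monotonicity formula for $\Evep$ with a parabolic gradient bound for $\uvep$ to force a lower bound on the potential contribution to the energy that is incompatible with the hypothesis.

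First, I would deploy the monotonicity formula for the weighted energy $\Evep$. Because the weight is only an approximation to the heat kernel, monotonicity holds only up to the error terms $\Phi$ and $\Psi$ mentioned in the introduction, which in turn are controlled (through the Hessian Comparison Theorem) by the geometry of $M$ on scales $\lesssim R(\sigma)$. Shrinking $R(\sigma)$ if necessary so that the errors over $[T-R^2,T]$ are absorbable, the monotonicity inequality allows me to propagate the smallness of $\Evep$ from scale $R$ down to all scales $\sqrt{2\vep}\le r\le R$, centred at $(x_T,T)$. This is exactly the step that requires rescaling to $g/a$ for $a$ small: rescaling turns the monotonicity formula into one with arbitrarily small curvature-dependent remainder, so the analogue of the Euclidean argument in Section~III of \cite{BOS2} goes through essentially unchanged.

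Next, I would translate the weighted-energy bound at small scales into an unweighted local energy bound: since the weight is bounded below by $c\,r^{-(N-2)}$ on the ball $B_{r}(x_T)$ at the corresponding time, one obtains
\[
\int_{B_{r}(x_T)\times\{t\}} \evep(\uvep)\,\dvol_g
\;\le\; C\,\eta(\sigma)\,r^{N-2}\,\logeps
\]
for a range of times $t\in[T-r^2,T]$ and radii $r\in[\sqrt{2\vep},R]$. Integrating in time and using parabolic regularity for \ref{PGLOriginal} (Bochner/maximum-principle bounds giving $\|\nabla\uvep\|_{L^\infty}\le C/\vep$ and pointwise smoothing) then produces, on the scale $r\sim\vep$, a pointwise gradient estimate valid at the point $(x_T,T)$ itself.

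Finally, I would run the standard elliptic-type clearing-out: if, for contradiction, $|\uvep(x_T,T)|<1-\sigma$, the gradient bound forces $|\uvep|\le 1-\sigma/2$ on a parabolic cylinder of size comparable to $\vep$, and hence the potential $\Vep(\uvep)=(1-|\uvep|^2)^2/(4\vep^2)$ contributes at least $c(\sigma)\,\vep^{N-2}$ to the local energy. Inserted back into the weighted-energy functional at scale $r=\sqrt{2\vep}$, this produces a lower bound of order $c(\sigma)\,\logeps$ (the logarithm coming from the weight $r^{-(N-2)}$ evaluated at $r=\sqrt{2\vep}$, combined with the lifetime of the parabolic cylinder), contradicting the assumption once $\eta(\sigma)$ is chosen sufficiently small relative to $c(\sigma)$.

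The main obstacle is the first step: unlike in \cite{BOS2}, the monotonicity of $\Evep$ is not exact, so the error terms $\Phi$ and $\Psi$ must be shown to be of strictly lower order than the main energy contribution over the entire time window $[T-R^2,T]$. This is where the Hessian Comparison Theorem, together with the freedom to rescale $g\mapsto g/a$, does the decisive work; all remaining steps are essentially those of \cite{BOS2} and should go through without new subtleties.
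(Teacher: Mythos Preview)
Your contradiction step does not close. If $|\uvep(x_T,T)|<1-\sigma$, the parabolic bounds $|\nabla\uvep|\le K/\vep$, $|\partial_t\uvep|\le K/\vep^2$ only guarantee $|\uvep|\le 1-\sigma/2$ on a region of spatial radius $\sim\sigma\vep$ and duration $\sim\sigma\vep^2$. The best lower bound this yields for the weighted energy, via the time-integrated monotonicity \eqref{TimeIntegratedMonotonicity}, is
\[
\int_0^{c\vep^2}\!\!\int_{B_{c\vep}(x_T)}\Vep(\uvep)\,K_{ap}(\cdot,s;x_T)\,\dvol_g\,\mrmd s
\;\sim\;\frac{c(\sigma)}{\vep^2}\cdot\vep^2
\;=\;c(\sigma),
\]
an $O(1)$ quantity, not $c(\sigma)\logeps$. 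No logarithm arises: the weight $r^{2-N}$ at scale $\sqrt{2\vep}$ is a power of $\vep$, and the lifetime $\vep^2$ is far too short to generate a logarithmic time integral. Since the hypothesis is $\Evep\le\eta(\sigma)\logeps$, an $O(1)$ lower bound gives no contradiction once $\vep$ is small. This is precisely the codimension-two obstacle separating Ginzburg--Landau from Allen--Cahn: diffuse phase energy of order $\logeps$ is compatible with $|\uvep|\equiv1$, and an isolated dip in the modulus costs only $O(1)$ energy.

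The paper's proof, following \cite{BOS2}, is not by contradiction. It establishes an \emph{energy improvement} (Proposition~\ref{Theorem1Reduction}): after rescaling and a pigeonhole choice of scale $R_1$, one shows
\[
\Eepgr(\veep,(x_T,1),\delta_0)\le\tfrac14\bigl(\Eepgr(\veep,(x_T,1),1)+C_7E_0R_1\bigr)+\mathcal{R}(\eta,R),
\]
and combining this with the averaging inequality \eqref{RescaledAveragingInequality} forces the rescaled weighted energy itself to be $O(\mathcal{R}(\eta,R))$ rather than $O(\logeps)$; a pointwise estimate $1-|\uvep|\le C\bigl[\vep^{-N}\int_{B_\vep}(1-|\uvep|^2)^2\bigr]^{1/(N+2)}$ then concludes directly. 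Producing the factor $\tfrac14$ is the hard part and requires the full Hodge--de~Rham decomposition $\veep\times\mrmd\veep=\mrmd\vphi_t+\mrmd^*\psi_t+\xi_t$ together with separate estimates for each piece, including the Jacobian estimates of \cite{JS3} and an auxiliary parabolic problem for the high-frequency part of $\psi_t$. The machinery of Sections~\ref{Sec:Outline}--\ref{Sec::SteppingStones} exists precisely because the shortcut you propose is unavailable in codimension two.
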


The overall strategy of the proof follows that of Theorem $1$
in \cite{BOS2}, on which Theorem \ref{Theorem1} is modelled.
We start by presenting an overview in Section \ref{Sec:Outline}, drawing on the work of \cite{BOS2}.
In this overview we highlight elements of the proof in which substantial new considerations arise. All such points are treated in detail in Section \ref{Sec::SteppingStones}. The overview of Section \ref{Sec:Outline} also identifies many aspects of the proof
that carry over to the Riemannian setting with only superficial changes. 
Detailed verification of these points can be found in Appendix A of
\cite{Col}.
In addition, for such points we attempt in Section \ref{Sec:Outline} to describe the underlying ideas 
in sufficient detail to explain why the arguments
of \cite{BOS2} do not involve any substantial changes 
in the Riemannian context.

The next result is an adaptation of Theorem $3$ from \cite{BOS2}.
New issues arise from the possibly non-trivial topology of $M$.
This is reflected in the presence of the $\bbS^{1}$-valued
map $u_{h,\vep}$.
We refer the reader to \eqref{jnotation} for the definition of $ju$, where $u\colon{}M\to\C$, which is used in the statement of the next theorem.
\begin{theorem}\label{Theorem3}
Suppose $\uvep$ satisfies \emph{\ref{PGLOriginal}} and
\eqref{H0}.
Then there exists an $\bbS^{1}$-valued
function $u_{h,\vep}$, depending only on the initial data of $\uvep$ such that,
for any compact set $\calK\subset{}M\times(0,\infty)$ and $\vep$
sufficiently small,
there is a real-valued function $\phi_{\vep}$ and a complex-valued function $w_{\vep}$ defined on a neighbourhood of $\calK$, such that
\begin{enumerate}
\item\label{Theorem3Item1}
$\uvep=w_{\vep}e^{i\phi_{\vep}}u_{h,\vep}
\hspace{5pt}\text{on }\calK$,
\item\label{Theorem3Item2}
$\phi_{\vep}\text{ verifies the heat equation on }\calK$,
\item\label{Theorem3Item3}
$|\nabla\phi_{\vep}(x,t)|\le{}C(\calK)\sqrt{(M_{0}+1)
\mathopen{}\left|\log(\vep)\right|\mathclose{}}
\text{ for all }(x,t)\in\calK$,
\item\label{Theorem3Item4}
$\left\|\nabla{}w_{\vep}\right\|_{L^{p}(\calK)}\le{}C(p,\calK)\text{, for any }1\le{}p<\frac{N+1}{N}$,
\item\label{Theorem3Item5}
$u_{h,\vep}$ does not depend on $t$, $ju_{h,\vep}$ is a harmonic $1$-form on $M$,
and
\[
|\nabla{}u_{h,\vep}(x,t)|\le{}K_{M}\sqrt{M_{0}\mathopen{}\left|\log(\vep)\right|\mathclose{}}\text{ for all }(x,t)\in\calK.
\]
\end{enumerate}
Here, $C(\calK)$ and $C(p,\calK)$ are constants depending only on $\calK$ and $\calK$,p (and $M_{0}$) respectively and $K_{M}$ is a constant depending only on $M$.
\end{theorem}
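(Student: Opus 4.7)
The plan is to adapt the Hodge-de Rham strategy of Subsection 3.6 of \cite{BOS2}, treating the harmonic component as a genuinely new object dictated by the topology of $M$. At the initial time the $1$-form $j\uvep^{0}$ satisfies the a priori bound $\|j\uvep^{0}\|_{L^{2}(M)}^{2}\le 2\calEvep(\uvep^{0})\le 2M_{0}|\log\vep|$, and on the compact manifold $M$ it admits a Hodge decomposition
\[
j\uvep^{0}=\mrmd\alpha_{\vep}+\mrmd^{*}\beta_{\vep}+h_{\vep},
\]
with $h_{\vep}$ harmonic. The first new step is to replace $h_{\vep}$ by a nearby harmonic form $\tilde h_{\vep}$ whose periods all lie in $2\pi\Z$ (a full-rank lattice inside the finite-dimensional space $\calH^{1}(M)$), and to let $u_{h,\vep}$ be the unique (up to a global constant in $\bbS^{1}$) smooth $\bbS^{1}$-valued map, depending only on the initial data and independent of $t$, with $ju_{h,\vep}=\tilde h_{\vep}$. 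Because $h_{\vep}-\tilde h_{\vep}$ lies in a bounded fundamental domain of the period lattice, its $L^{2}$ norm is bounded independently of $\vep$ and can be absorbed into $\mrmd\alpha_{\vep}$ after solving an auxiliary Poisson problem on $M$. Elliptic regularity for harmonic forms combined with the energy bound then yields the pointwise estimate in (\ref{Theorem3Item5}) with a constant $K_{M}$ depending only on $M$.

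With $u_{h,\vep}$ fixed, I would work with the complex-valued factor $v_{\vep}\ceq\uvep/u_{h,\vep}$, which is well-defined because $|u_{h,\vep}|=1$. Applying the Hodge decomposition to $jv_{\vep}(\cdot,0)$, only exact and co-exact parts survive. Following the construction in \cite{BOS2}, I would define $\phi_{\vep}$ as the solution of the heat equation on $M\times(0,\infty)$ whose initial data is the real potential of the exact part, recovered by inverting the Laplacian on mean-zero functions; this produces conclusion (\ref{Theorem3Item2}) by construction. The gradient bound (\ref{Theorem3Item3}) follows from combining the $L^{2}$ bound on $\mrmd\phi_{\vep}(\cdot,0)$ with parabolic smoothing on the compact set $\calK$, exactly as in \cite{BOS2}, with curvature-dependent constants arising from the heat kernel on $(M,g)$ handled by the rescaling argument mentioned in the introduction.

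The factor $w_{\vep}\ceq v_{\vep}e^{-i\phi_{\vep}}$ then satisfies (\ref{Theorem3Item1}) by construction, and $w_{\vep}$ solves a linear parabolic equation whose forcing is controlled, up to curvature corrections, by $jv_{\vep}-\mrmd\phi_{\vep}$, which is co-exact and therefore governed by $\mrmd^{*}\beta_{\vep}$. The $L^{p}$ estimate in (\ref{Theorem3Item4}) is obtained by the duality argument of Proposition 3.4 of \cite{BOS2}: test against smooth vector fields with $W^{1,q}$ norm controlled for $q=p'>N+1$, and invoke the parabolic regularity on $M$, which transfers from $\R^{N}$ with only curvature-dependent losses.

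The main obstacle is verifying that the lattice-projection step producing $u_{h,\vep}$ is compatible with the a priori energy bound and with the PDE for $v_{\vep}$. Concretely, one must check that the discrepancy $h_{\vep}-\tilde h_{\vep}$, although uniformly bounded and hence much smaller than the $\sqrt{|\log\vep|}$ scale of the other pieces, does not introduce uncontrolled source terms in the equation for $w_{\vep}$, and that the global harmonic representative exists for every closed form arising from $\uvep^{0}$; the Hessian comparison bound \eqref{HessianComparisonInequality} and the curvature-dependent elliptic estimates from Section \ref{Sec::SteppingStones} are essential here. Once this decomposition of the initial data is established, the remainder of the proof is a direct transcription of the BOS argument with $\R^{N}$ replaced by $(M,g)$.
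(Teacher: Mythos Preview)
Your treatment of the harmonic piece is exactly right and matches the paper: one takes the Hodge decomposition of $j\uvep^{0}$ on $M$, projects the harmonic component onto the period lattice $\sum_{k}2\pi\Z\,c^{k}$ to obtain $\tilde h_{\vep}$, and lifts to $u_{h,\vep}\colon M\to\bbS^{1}$. The paper writes this as $\lfloor\gamma_{\vep}^{0}\rfloor$ and, like you, uses that the discrepancy lies in a bounded fundamental domain.

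However, the rest of your outline departs from both the paper and \cite{BOS2} in a way that leaves a genuine gap. You propose to take $\phi_{\vep}$ as the heat evolution of a potential extracted from the Hodge decomposition of $j\uvep^{0}$ at $t=0$, and then assert that $w_{\vep}=\uvep e^{-i\phi_{\vep}}\overline{u_{h,\vep}}$ ``solves a linear parabolic equation whose forcing is controlled by $jv_{\vep}-\mrmd\phi_{\vep}$, which is co-exact.'' Both claims are problematic. First, $w_{\vep}$ is not governed by a linear parabolic equation; it still carries the full Ginzburg--Landau nonlinearity. Second, and more seriously, a spatial Hodge decomposition at $t=0$ says nothing about the structure of $j\uvep(\cdot,t)$ for $t>0$: the co-exact part at $t=0$ does not simply propagate, and there is no mechanism in your sketch that produces the $\vep$-independent $L^{p}$ bound on $\nabla w_{\vep}$.

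What the paper (following \cite{BOS2}) actually does is a \emph{space-time} Hodge decomposition of $\uvep\times\delta\uvep$ on a cylinder $\Omega=M\times(t_{1},t_{2})$ with $t_{1}>0$: one writes $\uvep\times\delta\uvep=\delta\Phi+\delta^{*}\Psi+\gamma+\zeta$ where $\delta$ is the $(N{+}1)$-dimensional exterior derivative. The point is that $\Psi$ then solves an elliptic problem on $\Omega$ whose right-hand side is the space-time Jacobian $J\uvep$, and the Jerrard--Soner estimate $\|J\uvep\|_{(C^{0,\alpha})^{*}}\le C(M_{0}+1)$ yields $\|\Psi\|_{W^{1,p}(\Omega)}\le C(p,\Omega)(M_{0}+1)$ for $p<\frac{N+1}{N}$ by duality. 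The same Jerrard--Soner bound is also what shows that the harmonic parts $\gamma_{\vep}^{i}$ on the two boundary slices differ from $\gamma_{\vep}^{0}$ by $O(1)$ rather than $O(\sqrt{|\log\vep|})$. The function $\phi_{\vep}$ is then the solution of the heat equation on a slightly smaller cylinder with parabolic boundary data $\Phi$, and one shows via Lemma~\ref{EvolutionOfThePhase} that $\Phi-\phi_{\vep}$ has $\nabla$ bounded in $L^{p}$. The $L^{p}$ bound on $\nabla w_{\vep}$ is then assembled from the identity $|w_{\vep}|^{2}|\nabla w_{\vep}|^{2}=|w_{\vep}|^{2}\bigl|\nabla|w_{\vep}|\bigr|^{2}+|w_{\vep}\times\nabla w_{\vep}|^{2}$, with the phase part controlled by $\delta^{*}\Psi$ and the modulus part by direct PDE estimates. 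None of this machinery is visible in your proposal, and the reference to ``parabolic regularity on $M$'' does not substitute for it.
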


This is proved in Section \ref{Sec::Decompositions}.
Finally, the proof of Theorem \ref{BOSTheorem} is completed in Section \ref{Sec::Limiting}.\\

\emph{Acknowledgements.}
I would like to thank Prof. Robert Jerrard for his assistance, support, and
guidance in the preparation of this paper.
I would like to thank Prof. Peter Sternberg for helpful comments made to improve
the paper.
I would also like to thank Prof. Giandomenico Orlandi for the very helpful
references on Green's functions.

\section{Preliminaries}\label{Prelim}

In this section we record some of the specialized notation and definitions used throughout this paper.
\newline

At each $x\in{}M$ we use $\mathopen{}\left<\cdot,\cdot\right>\mathclose{}_{g}$ and
$|\cdot|_{g}$ to
denote, respectively, the inner product and norm on $T_{x}M$ given by $g$.
For $x,y\in{}M$ we use $d_{g}(x,y)$ to denote the distance between $x$ and $y$
induced by the metric $g$.
For $p\in{}M$ and $r>0$ we use the notation $B_{r,g}(p)$ to denote the geodesic ball
about $p$ of radius $r$ in the metric $g$ which is defined
by
\[
B_{r,g}(p)\ceq\{x\in{}M:d_{g}(x,p)<r\}.
\]
We will write $\vol_{g}$ to denote the unique Radon measure on $M$ with the
property that 
$\vol_{g}(A)$ is the Riemannian volume of $A$ for all sufficiently
regular $A$,
and for non-negative $f\in L^1(M;\vol_{g})$, we write $f\vol_{g}$ to denote the
measure defined by
\[
f\vol_{g}(A)\ceq \int_{A}\!{}f\dvol_{g}.
\]
We define the \emph{injectivity radius of $M$ according to the metric $g$}, denoted $\inj_{g}(M)$, by
\beq\label{InjectivityRadius}
\inj_{g}(M)\ceq\sup\biggl\{r>0\biggm\vert
\begin{aligned}
&\exp_{x}\colon{}T_{x}M\to{}M
\text{ is a diffeomorphism}\\
&\hspace{25pt}\text{onto }B_{r,g}(x)\text{ for all }x\in{}M
\end{aligned}
\biggr\}.
\eeq

We define the \emph{diameter of $M$ according to the metric $g$}, denoted $\diam_{g}(M)$, by
\beq\label{Diameter}
\diam_{g}(M)\ceq\sup\left\{d_{g}(x,y):\forall{}x,y\in{}M\right\}.
\eeq
In the above notation we may, for convenience, remove the subscript $g$.
We note for $p\in{}M$ and $0<s<\inj_{g}(M)$ that the function
\beq\label{dsquared}
r(x)\ceq\frac{1}{2}(d_{g}(x,p))^{2}
\eeq
satisfies
\begin{equation}\label{DistanceSquaredGradient}
\nabla{}r(x)=-\text{exp}_{x}^{-1}(p)
\end{equation}
on $B_{s}(p)$, see Theorem $6.6.1$ of \cite{Jos}.
Also, if the sectional curvature, $K$, of $M$ satisfies
\begin{equation*}
\lambda\le{}K\le\mu,\hspace{10pt}\text{with }\lambda\le0\le\mu
\end{equation*}
then, for
$0<\rho<\min\Bigl\{\frac{\pi}{2\sqrt{}\mu},\inj_{g}(M)\Bigr\}$ if $\mu>0$ and $0<\rho<\inj_{g}(M)$ otherwise, we have
\begin{equation}\label{HessianComparisonInequality}
\sqrt{\mu}d(x,p)\cot\bigl(\sqrt{\mu}d(x,p)\bigr)|v|^{2}
\le\text{Hess}(r)(v,v)
\le\sqrt{|\lambda|}d(x,p)\coth\bigl(\sqrt{|\lambda|}d(x,p)\bigr)|v|^{2}
\end{equation}
for $x\in{}B_{\rho}(p)$ and $v\in{}T_{x}M$, see
Theorem $6.6.1$ of \cite{Jos}.
This is referred to as the Hessian Comparison Theorem.\\

We use the notation $\Lambda_{\alpha}(x_{0},T,R,\Delta{}T)$ for $0<\alpha\le1$, $x_{0}\in{}M$, $T\ge0$, $\Delta{}T>0$, and
$R>0$ to refer to
\begin{equation}\label{ParabolicCylinder}
\Lambda_{\alpha}(x_{0},T,R,\Delta{}T)
\ceq{}B_{\alpha{}R}(x_{0})\times[T+(1-\alpha^{2})\Delta{}T,T+\Delta{}T].\end{equation}
We also use the abbreviations $\Lambda_{\alpha}$ for
\eqref{ParabolicCylinder} and $\Lambda\ceq\Lambda_{1}(x_{0},T,R,\Delta{}T)$ when the
other parameters are understood.\\

For $y\in{}M$ we define the \emph{approximate heat
kernel about $y$ evaluated at $(x,t)\in{}M\times(0,\infty)$},
denoted $\Kap(x,t;y)$, by
\beq\label{AHK}
\Kap(x,t;y)\ceq\frac{1}{(4\pi{}t)^{\frac{N}{2}}}
\exp\Biggl[\frac{-(d_{+,g}(x,y))^{2}}{4t}\Biggr]
\eeq
where $d_{+,g}\colon{}M\times{}M\to[0,\infty)$ is a smooth function defined so that
\beq\label{dplus}
d_{+,g}(x,y)\ceq\inj_{g}(M)f\left(\frac{d_{g}(x,y)}{\inj_{g}(M)}\right)
\eeq
where $f\colon[0,\infty)\to[0,\infty)$ is a smooth function chosen so that
\begin{enumerate}
\item\label{f1}
$f(s)=s \hspace{10pt}\text{ for }s\in{}\bigl[0,\frac{1}{2}\bigr]$,
\item\label{f2}
$f(s)=1 \hspace{10pt}\text{ for }s\ge1$,
\item\label{f3}
$f(s)\ge{}s\hspace{10pt}\text{ for }0\le{}s\le1$,
\item\label{f4}
$f \text{ is non-decreasing}$,
\item\label{f5}
$\left\|f'\right\|_{L^{\infty}(\R)}<\sqrt{2}$.
\end{enumerate}
We note that $d_{+,g}$ satisfies
\beq\label{dInequality}
c_{*}d_{g}(x,y)\le{}d_{+,g}(x,y)\le{}2d_{g}(x,y)
\eeq
where
\[
c_{*}\ceq\frac{\inj_{g}(M)}{\diam_{g}(M)}.
\]
We will use the notation $\Kapg(x,t;x_{*})$ when we wish to explicitly indicate the dependence of $\Kap$ on the metric
$g$.
Also, for a fixed point $p\in{}M$ we use the notation
$r_{+}$ to denote
\beq\label{rplus}
r_{+}(x)\ceq\frac{1}{2}(d_{+}(x,p))^{2}.
\eeq

Next we introduce notation for energy weighted by the approximate heat kernel on
$M$.
For $z_{*}=(x_{*},t_{*})\in{}M\times(0,\infty)$ and $0<R\le{}\sqrt{t_{*}}$ we use the notation
\beq\label{WeightedEnergyIntegral}
\Evep(z_{*},R)\ceq{}R^{2}\int_{M}\!{}\evep(u(x,t_{*}-R^{2}))\Kap(x,R^{2};x_{*})\dvol_{g}(x).
\eeq
We may also use variations of this notation which include $g$ in the subscript to emphasize particular dependence on the metric.

For a given $u\colon{}M\to\C$ we introduce the notation $ju$ for the $1$-form
\beq\label{jnotation}
ju\ceq{}u\times\mrmd{}u
\eeq
which in coordinates can be expressed as
\[
u\times\mrmd{}u\ceq\sum_{i=1}^{N}u\times\frac{\pp{}u}{\pp{}x_{i}}\mrmd{}x^{i}.
\]

Now we provide a series of definitions related to Brakke
flows.
\begin{definition}\label{RectifiableMeasure}
A Radon measure $\nu$ on $M$ is said to be \emph{$k$-rectifiable} if there exists a $k$-rectifiable set $\Sigma$, and a density function
$\Theta\in{}L_{loc}^{1}(\calH^{k}\rest\Sigma)$ such that
\begin{equation}\label{Def:RectMeas}
\nu=\Theta(\cdot)\calH^{k}\rest\Sigma.
\end{equation}
\end{definition}
Next, we define the distributional first variation of a rectifiable Radon measure.
To do this, we remark that if $\Sigma$ is $k$-rectifiable then at $\calH^{k}$-almost every point $x\in\Sigma$ there is a unique tangent space
$T_{x}\Sigma$ belonging to the Grassmannian $G_{N,k,x}$.
Similar to \cite{BOS2} we associate $G_{N,k,x}$ to projection operators onto $k$-dimensional subspaces of $T_{x}M$.
\begin{definition}\label{DistributionalFirstVariation}
Let $\nu$ be a $k$-rectifiable Radon measure.
Then we define the \emph{distributional first variation of} $\nu$ to be the distribution, $\delta{}v$, defined by
\begin{equation}\label{DistributionalVariationMeasure}
\delta\nu(X)\ceq\int_{\Sigma}\!{}\text{div}_{T_{x}\Sigma}(X)\mrmd\nu\hspace{5pt}\text{for all }X\in{}\chi(M)
\end{equation}
where $\chi(M)$ denotes the space of smooth vector fields over $M$ and, following Section $2$ of \cite{PP2}, we
define
\beq\label{Def:div}
\text{div}_{T_{x}\Sigma}(X)\ceq
\sum_{k=1}^{N-2}\mathopen{}
\left<D_{e_{i}}X(x),e_{i}\right>\mathclose{}
\eeq
where $\{e_{1},e_{2},\ldots,e_{N-2}\}$ denote any
orthonormal basis of $T_{x}\Sigma$ and $D_{e_{i}}X$ denote
the associated covariant derivatives.
When $|\delta\nu|$ is absolutely continuous with respect to $\nu$, we say that $\nu$ has a \emph{first variation} and we may write
\begin{equation*}
\delta\nu=H\nu
\end{equation*}
where $H$ is the Radon-Nikodym derivative of $\delta\nu$ with respect to $\nu$.
In this case, \eqref{DistributionalVariationMeasure} becomes
\begin{equation}\label{FirstVariationAbsolutelyContinuous}
\int_{\Sigma}\!{}\text{div}_{T_{x}\Sigma}(X)\mrmd\nu=\int_{\Sigma}\!{}\mathopen{}\left<H,X\right>\mathclose{}\mrmd\nu.
\end{equation}
\end{definition}
Next, we let $\{\nu^{t}\}_{t\ge0}$ be a family of Radon measures on $M$.
For $\chi\in{}C^{2}(M;(0,\infty))$, we define
\begin{equation*}
\overline{D}_{t}\nu_{0}^{t}(\chi)\ceq\limsup_{t\to{}t_{0}}\frac{\nu^{t}(\chi)-\nu^{t_{0}}(\chi)}{t-t_{0}}.
\end{equation*}
If $\nu^{t}\rest\{\chi>0\}$ is a $k$-rectifiable measure which has a first variation verifying
$\chi|H|^{2}\in{}L^{1}(\nu^{t})$, then we set
\begin{equation*}
\mathcal{B}\bigl(\nu^{t},\chi\bigr)\ceq-\int{}\chi|H|^{2}\mrmd\nu^{t}+\int\!{}\mathopen{}\left<\nabla\chi,P(H)\right>\mathclose{}\mrmd\nu^{t},
\end{equation*}
where $P$, as in Section $2$ of \cite{PP2} and consistent
with our identification of the Grassmannian with
projections, denotes $\calH^{k}$-almost everywhere the orthogonal projection onto the tangent space to $\nu^{t}$, otherwise, we set
\begin{equation*}
\mathcal{B}(\nu^{t},\chi)=-\infty.
\end{equation*}
We are now in a position to give the definition of a Brakke flow.
\begin{definition}\label{BrakkeFlow}
Let $\{\nu^{t}\}_{t\ge0}$ be a family of Radon measures on $M$.
We say that $\{\nu^{t}\}_{t\ge0}$ is a $k$\emph{-dimensional Brakke flow} if and only if
\begin{equation}\label{BrakkeFlowIdentity}
\overline{D}_{t}\nu^{t}(\chi)\le\mathcal{B}(\nu^{t},\chi),
\end{equation}
for every $\chi\in{}C^{\infty}(M;(0,\infty))$ and for all $t\ge0$.
\end{definition}

\section{Toolbox}\label{Toolbox}
We record a few helpful results that will be needed for the proof of Theorem \ref{BOSTheorem}.
These are generalizations of corresponding results found in
\cite{BOS2}.

\begin{lemma}\label{DerivativeOfEnergy}
Let $\chi$ be a Lipschitz function on $M$.
Then, for any $T\ge0$, at $t=T$,
\beq\label{OriginalEnergyTestFunction}
\frac{\mrmd}{\mrmd{}t}\int_{M\times\{t\}}\!{}
\evep(\uvep)\chi(x)
=-\int_{M\times\{T\}}\!{}|\pp_{t}\uvep|^{2}\chi(x)
-\int_{M\times\{T\}}\!{}
\pp_{t}\uvep\cdot\mathopen{}\left<\nabla{}\uvep,\nabla\chi\right>\mathclose{}
\eeq
and
\beq\label{EnergyTestFunction}
\frac{1}{2}\int_{M\times\{t\}}\!{}|\pp_{t}\uvep|^{2}\chi^{2}
+\frac{\mrmd}{\mrmd{}t}\int_{M\times\{t\}}\!{}\evep(\uvep)\chi^{2}
\le4\left\|\nabla\chi\right\|_{L^{\infty}}^{2}\int_{\emph{supp}(\chi)}\!{}\evep(\uvep).
\eeq
In particular, for any $0\le{}T_{1}\le{}T_{2}$,
\begin{align}\label{OriginalEnergyTestFunctionIntegrated}
\int_{M\times\{T_{2}\}}\!{}&\evep(\uvep)\chi(x)
-\int_{M\times\{T_{1}\}}\!{}\evep(\uvep)\chi(x)\\
&=-\int_{M\times[T_{1},T_{2}]}\!{}|\pp_{t}\uvep|^{2}\chi(x)
-\int_{M\times[T_{1},T_{2}]}\!{}\pp_{t}\uvep\cdot
\mathopen{}\left<\nabla\uvep,\nabla\chi\right>\mathclose{}.
\nonumber
\end{align}
\end{lemma}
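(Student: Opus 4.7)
The plan is a direct computation based on differentiating under the integral sign, integrating by parts on the closed manifold $M$, and substituting the PDE \ref{PGLOriginal}. For a Lipschitz test function $\chi$, all operations are justified: the solutions $\uvep$ are smooth in space-time by standard parabolic regularity, and since $M$ has no boundary, integration by parts produces no boundary contributions. The Lipschitz regularity of $\chi$ is enough because $\nabla\chi$ exists $\vol_g$-a.e.\ and is bounded; if desired, one may first prove the identities for $\chi\in C^{\infty}(M)$ and then pass to the Lipschitz case by approximating $\chi$ in $W^{1,\infty}$ by a sequence of smooth functions $\chi_k$ with uniformly bounded gradients and $\nabla\chi_k\to\nabla\chi$ a.e.

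To establish \eqref{OriginalEnergyTestFunction}, I would differentiate
\[
\int_{M\times\{t\}}\!\evep(\uvep)\chi
=\int_{M}\!\left(\tfrac12|\nabla\uvep|^2+\Vep(\uvep)\right)\chi\,\dvol_g
\]
in $t$, obtaining
\[
\int_{M}\!\left(\langle\nabla\uvep,\nabla\pp_t\uvep\rangle_g+\Vep'(\uvep)\cdot\pp_t\uvep\right)\chi\,\dvol_g,
\]
where $\Vep'(u)=-\tfrac{1}{\vep^2}u(1-|u|^2)$. Integrating the first term by parts on $M$ gives
\[
\int_{M}\!\langle\nabla\uvep,\nabla\pp_t\uvep\rangle_g\,\chi
=-\int_{M}\!\Delta\uvep\cdot\pp_t\uvep\,\chi
-\int_{M}\!\pp_t\uvep\cdot\langle\nabla\uvep,\nabla\chi\rangle_g.
\]
Combining and substituting $\Delta\uvep+\tfrac{1}{\vep^2}\uvep(1-|\uvep|^2)=\pp_t\uvep$ from \ref{PGLOriginal} collapses the bulk terms into $-|\pp_t\uvep|^2$, yielding \eqref{OriginalEnergyTestFunction}.

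For \eqref{EnergyTestFunction}, I would apply \eqref{OriginalEnergyTestFunction} with the Lipschitz function $\chi^2$ in place of $\chi$, so that $\nabla\chi^2=2\chi\nabla\chi$. The cross term becomes
\[
-2\int_{M}\!\chi\,\pp_t\uvep\cdot\langle\nabla\uvep,\nabla\chi\rangle_g,
\]
and Young's inequality $2|ab|\le\tfrac12 a^2+2b^2$ with $a=\chi|\pp_t\uvep|$, $b=|\nabla\uvep||\nabla\chi|$ gives
\[
\Bigl|2\int\chi\,\pp_t\uvep\cdot\langle\nabla\uvep,\nabla\chi\rangle_g\Bigr|
\le\tfrac12\int\chi^2|\pp_t\uvep|^2+2\int|\nabla\uvep|^2|\nabla\chi|^2.
\]
Using $|\nabla\uvep|^2\le 2\evep(\uvep)$ and $|\nabla\chi|\le\|\nabla\chi\|_{L^\infty}$ on $\spt(\chi)$, the last integral is bounded by $4\|\nabla\chi\|_{L^\infty}^2\int_{\spt(\chi)}\evep(\uvep)$, and rearranging produces \eqref{EnergyTestFunction}.

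Finally, \eqref{OriginalEnergyTestFunctionIntegrated} follows immediately by integrating \eqref{OriginalEnergyTestFunction} over $[T_1,T_2]$ (Fubini is applicable since $\uvep$ is smooth and $\chi$ is bounded Lipschitz with bounded gradient). There is no real obstacle here; the only point requiring a small comment is the Lipschitz (rather than smooth) regularity of $\chi$, which is handled by the mollification argument indicated above, and the use of the closedness of $M$ to discard boundary terms in the integration by parts.
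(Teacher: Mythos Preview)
Your proposal is correct and follows exactly the approach sketched in the paper: differentiation under the integral, integration by parts on the closed manifold, substitution of the equation, and then replacing $\chi$ by $\chi^{2}$ and applying Young's inequality for \eqref{EnergyTestFunction}. You have simply supplied the details the paper omits.
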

\begin{proof}
The proof of \eqref{OriginalEnergyTestFunction} follows from differentiation under the integral while
\eqref{OriginalEnergyTestFunctionIntegrated} follows by integrating \eqref{OriginalEnergyTestFunction} in $t$.
To see \eqref{EnergyTestFunction} we replace $\chi$ with $\chi^{2}$ in Lemma \ref{DerivativeOfEnergy} and use standard estimates.
\end{proof}

The next result, the basis for a monotonicity formula, will play a fundamental role in the proof of Theorem \ref{Theorem1}.

\begin{lemma}\label{MonotonicityFormulaApproximateHeatKernel}
Suppose $(M,g)$ is an $N$-dimensional compact Riemannian manifold without boundary
and suppose that $\uvep$ solves \ref{PGLOriginal} on $M$.
Let $\Kap$ be the approximate heat kernel as in \eqref{AHK}.
Then for $0<R<\sqrt{T}$ and $y\in{}M$:
\begin{align}
Z'(R)
&=2R\int_{M}\!{}\Bigl[\Vep(\uvep(x,T-R^{2}))
+\Xi(\uvep,(y,T))(x,T-R^{2})\Bigr]
\label{Diff:AHK}\\
&+2R\int_{M}\!{}\Psi(\uvep,(y,T))(x,T-R^{2})    \nonumber\\
&+2R\int_{M}\!{}\Phi(\uvep,(y,T))(x,T-R^{2})    \nonumber
\end{align}
where
\beq\label{ZR}
Z(R)\ceq{}R^{2}\int_{M}\!{}\evep(\uvep(x,T-R^{2}))
\Kap\bigl(x,R^{2};y\bigr)\dvol_{g}(x)
\eeq
and where, for $0<t<T$, we have set
\begin{align}
\Xi(\uvep,(y,T))(x,t)
&\ceq(T-t)\biggl|\pp_{t}\uvep(x,t)
+\frac{\mathopen{}\left<\nabla\uvep(x,t),\nabla\Kap(x,T-t;y)\right>\mathclose{}}{\Kap(x,T-t;y)}\biggr|^{2},
\label{Xi:Def}\\
\Phi(\uvep,(y,T))(x,t)&\ceq(T-t)\mathopen{}\biggl[\emph{Hess}(\Kap(x,T-t;y))(\nabla\uvep(x,t),\nabla\uvep(x,t)),
\label{Phi:Def}\\
&-\frac{|\mathopen{}\left<\nabla\uvep(x,t),\nabla\Kap(x,T-t;y)\right>\mathclose{}|^{2}}{\Kap(x,T-t;y)}\mathclose{}
+\mathopen{}\frac{|\nabla\uvep|^{2}\Kap(x,T-t;y)}{2(T-t)}\biggr]\mathclose{},
\nonumber\\
\Psi(\uvep,(y,T))(x,t)&\ceq(T-t)
\evep(\uvep(x,t))[(\pp_{t}\Kap)(x,T-t;y)-(\Delta\Kap)(x,T-t;y)].    \label{Psi:Def}
\end{align}
We also have, for any $z_{T}=(x_{T},T)\in{}M\times(0,\infty)$ and $R_{*}=\sqrt{T}$, that
\begin{align}
\Evep(z_{T},R_{*})&=
\int_{M\times[0,T]}\!{}(\Vep(\uvep)
+\Xi(\uvep,z_{T}))\Kap(x,T-t;x_{T})
\dvol_{g}(x)\mrmd{}t  \label{TimeIntegratedMonotonicity}  \\
&+\int_{M\times[0,T]}\!{}\Psi(\uvep,z_{T})\dvol_{g}(x)\mrmd{}t
+\int_{M\times[0,T]}\!{}\Phi(\uvep,z_{T})\dvol_{g}(x)\mrmd{}t. \nonumber
\end{align}
\end{lemma}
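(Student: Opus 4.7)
The plan is to compute $Z'(R)$ by differentiation under the integral, then use the PDE and integration by parts to reorganize the resulting terms into the three pieces $\Xi K_{ap}$, $\Phi$, $\Psi$. It is most convenient to set $\tau=R^{2}$ and let $F(\tau)\ceq\int_{M}e_{\vep}(\uvep(x,T-\tau))K_{ap}(x,\tau;y)\dvol_{g}(x)$, so that $Z(\tau)=\tau F(\tau)$ and $Z'(R)=2R[F(\tau)+\tau F'(\tau)]$. Differentiating $F$, I find
\[
F'(\tau)=-\int_{M}\pp_{t}e_{\vep}(\uvep)K_{ap}\dvol_{g}+\int_{M}e_{\vep}(\uvep)(\pp_{\tau}K_{ap})\dvol_{g}.
\]
Using $\pp_{t}e_{\vep}(\uvep)=\langle\nabla\uvep,\nabla\pp_{t}\uvep\rangle+\nabla V_{\vep}(\uvep)\cdot\pp_{t}\uvep$, integrating by parts on the first term, and then invoking \ref{PGLOriginal} to replace $\Delta\uvep-\nabla V_{\vep}(\uvep)$ by $\pp_{t}\uvep$, the $\nabla V_{\vep}\cdot\pp_{t}\uvep$ terms cancel and I obtain $-\int_{M}\pp_{t}e_{\vep}(\uvep)K_{ap}=\int_{M}K_{ap}|\pp_{t}\uvep|^{2}+\int_{M}\langle\nabla K_{ap},\nabla\uvep\rangle\cdot\pp_{t}\uvep$. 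This is the standard Struwe--Ilmanen computation and is metric-independent apart from the use of $\dvol_{g}$.

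The remaining task is to recognize the right-hand side as $\int_{M}(V_{\vep}+\Xi)K_{ap}+\int_{M}\Phi+\int_{M}\Psi$. The complete-square structure of $\Xi$ in \eqref{Xi:Def}, together with its explicit expansion, shows that after subtracting $\Xi K_{ap}$ from the sum $\frac{1}{2}|\nabla\uvep|^{2}K_{ap}+\tau K_{ap}|\pp_{t}\uvep|^{2}+\tau\langle\nabla K_{ap},\nabla\uvep\rangle\cdot\pp_{t}\uvep$, what is left to match is exactly the combination $\int_{M}\Phi+\int_{M}\Psi$ plus $\tau\int_{M}e_{\vep}(\uvep)\pp_{\tau}K_{ap}$. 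To verify this I apply the divergence identity
\[
\text{div}(\langle\nabla\uvep,\nabla K_{ap}\rangle\nabla\uvep)=\tfrac{1}{2}\langle\nabla|\nabla\uvep|^{2},\nabla K_{ap}\rangle+\text{Hess}(K_{ap})(\nabla\uvep,\nabla\uvep)+\langle\nabla\uvep,\nabla K_{ap}\rangle\cdot\Delta\uvep,
\]
integrate it over $M$, and use \ref{PGLOriginal} once more, together with $\int_{M}\langle\nabla V_{\vep}(\uvep),\nabla K_{ap}\rangle=-\int_{M}V_{\vep}(\uvep)\Delta K_{ap}$, to obtain
\[
\int_{M}\text{Hess}(K_{ap})(\nabla\uvep,\nabla\uvep)=\tfrac{1}{2}\int_{M}|\nabla\uvep|^{2}\Delta K_{ap}+\int_{M}V_{\vep}(\uvep)\Delta K_{ap}-\int_{M}\pp_{t}\uvep\cdot\langle\nabla\uvep,\nabla K_{ap}\rangle.
\]
Splitting $\int_{M}e_{\vep}\pp_{\tau}K_{ap}=\int_{M}e_{\vep}\Delta K_{ap}+\int_{M}e_{\vep}(\pp_{\tau}K_{ap}-\Delta K_{ap})$ (the latter giving precisely $\Psi/\tau$) and substituting the Hessian identity confirms the claimed balance.

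The main obstacle is bookkeeping: in the Euclidean setting with the true heat kernel both $\Phi$ and $\Psi$ vanish identically, the former because $\text{Hess}(K)(v,v)-|\langle\nabla K,v\rangle|^{2}/K+|v|^{2}K/(2t)=0$ for the Gaussian, the latter because $\pp_{t}K=\Delta K$. In the Riemannian case the approximate kernel $K_{ap}$ satisfies neither identity, so these two error terms have to be explicitly isolated rather than cancelled, and it is precisely the definitions in \eqref{Phi:Def}--\eqref{Psi:Def} that package them in the right way for the Hessian Comparison Theorem \eqref{HessianComparisonInequality} to be applied later.

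Finally, the identity \eqref{TimeIntegratedMonotonicity} follows by integrating $Z'(R)$ from $0$ to $R_{*}=\sqrt{T}$. Observing that $Z(R_{*})=\Evep(z_{T},R_{*})$ by \eqref{WeightedEnergyIntegral} and that the prefactor $R^{2}$ forces $\lim_{R\to0^{+}}Z(R)=0$ (since $\int_{M}e_{\vep}(\uvep)K_{ap}\dvol_{g}$ remains bounded at fixed $\vep$), the substitution $t=T-R^{2}$, $2R\,\mrmd R=-\mrmd t$, converts the single $R$-integral of \eqref{Diff:AHK} into the space-time integral on the right-hand side of \eqref{TimeIntegratedMonotonicity}.
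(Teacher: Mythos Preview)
Your proposal is correct and follows essentially the same approach as the paper: the paper itself gives no details, merely noting that the computation is ``quite standard'' (of Struwe--Ilmanen type, as in \cite{PCC}) and that \eqref{TimeIntegratedMonotonicity} follows by integrating \eqref{Diff:AHK} and changing variables. You have carried out exactly that standard computation---differentiating under the integral, using \ref{PGLOriginal} and the Pohozaev/Hessian divergence identity, and isolating the curvature and approximate-kernel defects as $\Phi$ and $\Psi$---which is precisely what the paper defers to its references and to A.3.1.1 of \cite{Col}.
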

\begin{proof}
Computations like \eqref{Diff:AHK} are quite standard,
and very similar ones can be found for example in the proof of
Theorem $2.1$ of \cite{PCC}.
Then \eqref{TimeIntegratedMonotonicity} follows by integrating \eqref{Diff:AHK} from
$R=0$ to $R=\sqrt{T}$ and changing variables.
For a detailed exposition see A.3.1.1 of \cite{Col}.
\end{proof}

As remarked in the introduction, the terms $\Phi$ and $\Psi$  reflect the non-Euclidean character of the metric and the use of the approximate, rather than exact, heat kernel. They are estimated using arguments that ultimately rely on the Hessian Comparison Theorem. We illustrate this first for $\Psi$.

\begin{lemma}\label{HeatKernelTermEstimate}
Let $(M,g)$ be an $N$-dimensional compact Riemannian manifold and suppose $y\in{}M$.
Let $\Kap$ be the approximate heat kernel from \eqref{AHK} and
$\Psi$ be as in \eqref{Psi:Def}.
Then there is $c_{0}>0$ such that for all $0<t<T$ we have
\beq\label{HeatErrorTermLowerBound}
\int_{M}\!{}\Psi(u,(y,T))(x,T-t)
\ge\frac{-N\mu{}t^{\frac{1}{2}}}{4}
\int_{M}\!{}\evep(u)\Kap
-c_{0}t\int_{M}\!{}\evep(u)
\eeq
where the constants remain bounded when dividing the metric by $0<a\le1$ and
we have used the abbreviations $\Kap$ for
$\Kap(x,t;y)$ and $u$ for $u(x,T-t)$.
Similarly, there is $c_{1}>0$ such that for all $0<t<T$ we have
\beq\label{HeatErrorTermUpperBound}
\int_{M}\!{}\Psi(u,(y,T))(x,T-t)
\le{}\frac{N|\lambda|t^{\frac{1}{2}}}{6}
\int_{M}\!{}\evep(u)\Kap
+c_{1}t\int_{M}\!{}\evep(u).
\eeq
where the constants remain bounded when dividing the metric by $0<a\le1$.
It is worth noting that we also have
\beq\label{HeatErrorTermDistanceBound}
\int_{M}\!{}\Psi(u,(y,T))(x,T-t)
\le\frac{N|\lambda|}{6}\int_{M}\!{}(d_{+}(x,y))^{2}\evep(u)\Kap
+C_{M}\int_{M}\!{}\evep(u)\Kap
+C_{0}E_{0}t
\eeq
where $C_{M},C_{0}$ remain bounded when dividing the metric by $0<a\le1$.
\end{lemma}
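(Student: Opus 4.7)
The plan is to compute $\pp_{t}\Kap - \Delta\Kap$ explicitly and then apply the Hessian Comparison Theorem \eqref{HessianComparisonInequality}. Using $r_{+} = \tfrac{1}{2}d_{+}^{2}$ and $\nabla_{x}\Kap = -(\Kap/(2t))\nabla r_{+}$, a direct calculation yields
\[
\pp_{t}\Kap - \Delta\Kap = \frac{\Kap}{2t}\bigl(\Delta r_{+} - N\bigr) + \frac{\Kap}{4t^{2}}\bigl(2r_{+} - |\nabla r_{+}|^{2}\bigr).
\]
Substituting into \eqref{Psi:Def} and noting that the prefactor $T - (T-t) = t$ cancels one factor of $1/t$ gives
\[
\Psi(u,(y,T))(x,T-t) = \tfrac{1}{2}\evep(u)\Kap\bigl(\Delta r_{+} - N\bigr) + \tfrac{1}{4t}\evep(u)\Kap\bigl(2r_{+} - |\nabla r_{+}|^{2}\bigr).
\]
By property \ref{f1} of $f$, one has $d_{+} = d$ on $B_{\inj_{g}(M)/2}(y)$, so there $|\nabla r_{+}|^{2} = d^{2} = 2r_{+}$ and the second term vanishes identically.

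Next I split $M = B_{\rho}(y) \cup (M \setminus B_{\rho}(y))$ with $\rho = \min\{t^{1/4},\tfrac{1}{2}\inj_{g}(M)\}$. On $B_{\rho}(y)$, applying \eqref{HessianComparisonInequality} to a local orthonormal frame and summing yields
\[
N\sqrt{\mu}\,d\cot(\sqrt{\mu}\,d) \le \Delta r \le N\sqrt{|\lambda|}\,d\coth(\sqrt{|\lambda|}\,d).
\]
Combining with the elementary inequalities $y\cot y \ge 1 - y^{2}/2$ on $(0,\pi/2)$ and $y\coth y \le 1 + y^{2}/3$ on $[0,\infty)$ produces the pointwise bounds
\[
-\tfrac{N\mu}{2}d^{2} \le \Delta r_{+} - N \le \tfrac{N|\lambda|}{3}d^{2} \quad\text{on }B_{\rho}(y).
\]
Using $d^{2} \le \rho^{2} \le t^{1/2}$ on $B_{\rho}(y)$ then produces exactly the $t^{1/2}$ factor with the stated coefficients $N\mu/4$ and $N|\lambda|/6$ in \eqref{HeatErrorTermLowerBound}--\eqref{HeatErrorTermUpperBound}. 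Estimate \eqref{HeatErrorTermDistanceBound} is obtained by the same procedure but retaining $d^{2} \le d_{+}^{2}/c_{*}^{2}$ rather than substituting $d^{2} \le t^{1/2}$.

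It remains to absorb the contribution from $M \setminus B_{\rho}(y)$. There $\Kap(x,t;y) \le (4\pi t)^{-N/2}\exp\bigl(-c_{*}^{2}\rho^{2}/(4t)\bigr)$, which decays faster than any power of $t$ as $t \to 0^{+}$. Since $|\nabla r_{+}|^{2}$, $|\Delta r_{+}|$, and $|2r_{+} - |\nabla r_{+}|^{2}|$ are uniformly bounded on $M$ by constants depending only on $\inj_{g}(M)$, $\diam_{g}(M)$, and $\left\|f'\right\|_{L^{\infty}}$, the far-field contributions of both terms in $\Psi$ are dominated by $Ct\int_{M}\evep$, producing the $c_{0}t$, $c_{1}t$, and $C_{0}E_{0}t$ terms. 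The main subtlety will be the transition annulus $\tfrac{1}{2}\inj_{g}(M) \le d(x,y) \le \inj_{g}(M)$, where $d_{+} \ne d$ yet $\Kap$ is not exponentially tiny; there properties \ref{f2}--\ref{f4} of $f$ control the discrepancy while the factor $\exp(-d_{+}^{2}/(4t))$ still provides enough decay because $d_{+} \ge \tfrac{1}{2}\inj_{g}(M)$. Finally, the invariance claim under $g \mapsto g/a$, $a \in (0,1]$, follows because this rescaling sends $\mu \mapsto a\mu$ and $|\lambda| \mapsto a|\lambda|$ while leaving $c_{*}$ unchanged, so every curvature-dependent constant can only improve.
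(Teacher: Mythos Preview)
Your approach is essentially the same as the paper's: compute $\partial_t \Kap - \Delta \Kap$ explicitly, apply the Hessian Comparison Theorem on an inner ball where $d_+ = d$, use the Taylor bounds on $y\cot y$ and $y\coth y$ to extract the $d^2$ factor, and absorb the outer region via Gaussian decay. The paper organizes the splitting slightly differently---it fixes $s = \min\{\pi/(4\sqrt{\mu}),\,\inj_g(M)/2\}$ and then, inside $B_s(y)$, splits again at radius $t^{1/4}$---whereas you fold both scales into a single $t$-dependent radius $\rho = \min\{t^{1/4},\,\tfrac{1}{2}\inj_g(M)\}$; the net effect is the same. One small point to tighten: the Hessian Comparison inequality \eqref{HessianComparisonInequality} requires the radius to be below $\pi/(2\sqrt{\mu})$, and your $\rho$ does not enforce this when $t$ is large; the paper's choice of $s$ includes the factor $\pi/(4\sqrt{\mu})$ precisely for this reason, so you should add it to your $\rho$ as well.
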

\begin{proof}
By computing $\pp_{t}\Kap-\Delta\Kap$ we obtain, using the notation from \eqref{rplus}, that
\[
\pp_{t}\Kap-\Delta\Kap=\frac{[\Delta{}r_{+}(x)-N]}{2t}\Kap
+\frac{r_{+}(x)-\frac{1}{2}|\nabla{}r_{+}(x)|^{2}}{2t^{2}}K_{ap}.
\]
First observe that if $s\ceq\min\Bigl\{\frac{\pi}{4\sqrt{\mu}},\frac{\inj(M)}{2}\Bigr\}$ and
$x\in{}B_{s}(y)$ then the rightmost term is zero and by using
the notation \eqref{dsquared} as well as \eqref{HessianComparisonInequality} 
we obtain
\beq\label{DistanceLaplacianEstimate}
\frac{[\Delta{}r_{+}(x)-N]}{2t}\Kap
=\frac{[\Delta{}r(x)-N]}{2t}\Kap\ge\frac{-N\mu(d(x,y))^{2}}{4t}\Kap.
\eeq
Next, observe that for $x\in{}M\setminus{}B_{s}(y)$ we have
\beq\label{ResidualLaplacianEstimate}
\pp_{t}\Kap-\Delta\Kap\ge-\frac{C_{M}\max\{t,1\}}{t^{2}}\Kap.
\eeq
Using \eqref{DistanceLaplacianEstimate} and \eqref{ResidualLaplacianEstimate} leads to
\[
\int_{M}\!{}\evep(u)[\pp_{t}\Kap-\Delta\Kap]
\ge-\frac{N\mu}{4t}\int_{B_{s}(y)}\!{}(d(x,y))^{2}\evep(u)\Kap
-\frac{C_{M}\max\{t,1\}}{t^{2}}
\int_{M\setminus{}B_{s}(y)}\!{}\evep(u)\Kap.
\]
Note that, since $d_{+}(\cdot,y)$ is a function of distance from $y$, we have
\begin{align*}
\frac{C_{M}\max\{t,1\}}{t^{2}}
\int_{M\setminus{}B_{s}(y)}\!{}\evep(u)\Kap
&\le\frac{C_{M}\max\{t,1\}e^{-\frac{s^{2}}{4t}}}{t^{2}(4\pi{}t)^{\frac{N}{2}}}
\int_{M\setminus{}B_{s}(y)}\!{}\evep(u)\\
&\le{}C_{M}'e^{-\frac{s^{2}}{8t}}\int_{M}\!{}\evep(u)\\
&\le{}C_{M}'\int_{M}\!{}\evep(u).
\end{align*}
Note that if we rescale the metric by dividing by $0<a\le1$ then the constant $C_{M}'$
only becomes smaller.
Observe that we either have $t^{\frac{1}{4}}\ge{}s$ or $0<t^{\frac{1}{4}}<s$.
If $t^{\frac{1}{4}}\ge{}s$ then
\[
-\frac{N\mu}{4t}\int_{B_{s}(y)}\!{}(d(x,y))^{2}\evep(u)\Kap
\ge-\frac{N\mu}{4t^{\frac{1}{2}}}
\int_{B_{s}(y)}\!{}\evep(u)\Kap.
\]
If $0<t^{\frac{1}{4}}<s$ then we have, using the notation
$A_{t^{\frac{1}{4}},s}(y)\ceq{}
B_{s}(y)\setminus{}B_{t^{\frac{1}{4}}}(y)$ for $y\in{}M$, that
\begin{align*}
&-\frac{N\mu}{4t}\int_{B_{s}(y)}\!{}(d(x,y))^{2}\evep(u)\Kap\\
&=-\frac{N\mu}{4t}\int_{B_{t^{\frac{1}{4}}}(y)}\!{}(d(x,y))^{2}\evep(u)\Kap
-\frac{N\mu}{4t}\int_{A_{t^{\frac{1}{4}},s}(y)}
\!{}(d(x,y))^{2}\evep(u)\Kap\\
&\ge-\frac{N\mu}{4t^{\frac{1}{2}}}\int_{B_{t^{\frac{1}{4}}}(y)}\!{}\evep(u)\Kap
-\frac{N\mu\left(\inj(M)\right)^{2}}{16t}
\int_{A_{t^{\frac{1}{4}},s}(y)}\!{}
\evep(u)\cdot\frac{e^{\frac{-(d(x,y))^{2}}{4t}}}{(4\pi{}t)^{\frac{N}{2}}}\\
&\ge-\frac{N\mu}{4t^{\frac{1}{2}}}\int_{B_{t^{\frac{1}{4}}}(y)}\!{}\evep(u)\Kap
-\frac{N\mu\left(\inj(M)\right)^{2}}{16}
\sup_{t>0}\left\{\frac{e^{\frac{-1}{8t^{\frac{1}{2}}}}}{t(4\pi{}t)^{\frac{N}{2}}}\right\}
\cdot{}e^{\frac{-1}{8t^{\frac{1}{2}}}}\int_{A_{t^{\frac{1}{4}},s}(y)}\!{}
\evep(u)\\
&\ge-\frac{N\mu}{4t^{\frac{1}{2}}}\int_{M}\!{}\evep(u)\Kap
-C_{M}''e^{\frac{-1}{8t^{\frac{1}{2}}}}\int_{M}\!{}\evep(u)\\
&\ge-\frac{N\mu}{4t^{\frac{1}{2}}}\int_{M}\!{}\evep(u)\Kap
-C_{M}''\int_{M}\!{}\evep(u).
\end{align*}
Notice that $C_{M}''$ is invariant under rescaling in the metric and $\mu$ only becomes smaller if we divide the metric by
$a$ for $0<a<1$.
Putting this altogether gives
\begin{align*}
\int_{M}\!{}\evep(u)[\pp_{t}\Kap-\Delta\Kap]
&\ge\frac{-N\mu}{4t^{\frac{1}{2}}}\int_{M}\!{}\evep(u)\Kap\\
&-2\max\{C_{M}',C_{M}''\}
\int_{M}\!{}\evep(u).
\end{align*}
Setting
\[
c_{0}\ceq2\max\{C_{M}',C_{M}''\}
\]
and multiplying by $t$ gives the desired result.
Observe that a similar proof holds for \eqref{HeatErrorTermUpperBound} and that
\eqref{HeatErrorTermDistanceBound} is demonstrated through the proof of the upper bound.
\end{proof}

We next record estimates of a similar character for $\Phi$.

\begin{lemma}\label{ApproximateHeatKernelMatrixHarnackPrinciple}
Suppose $(M,g)$ is an $N$-dimensional compact Riemannian manifold without boundary.
Let $\Kap$ be the approximate heat kernel from \eqref{AHK}.
Then there is $c_{2}>0$ such that for all $0<t<T$ that
\beq\label{WeightedEnergyDerivative}
\int_{M}\!{}\Phi(u,(y,T))(x,T-t)
\ge\frac{-|\lambda|t^{\frac{1}{2}}}{3}
\int_{M}\!{}\evep(u)\Kap
-c_{2}t\int_{M}\!{}\evep(u)
\eeq
where the constants remain bounded when dividing the metric by $0<a\le1$ and
where we have used the abbreviations $\Kap$ for $\Kap(x,t;y)$ and $u$ for
$u(x,T-t)$.
Similarly, there is $c_{3}>0$ such that for all $0<t<T$ that
\beq\label{HessianTermUpperBound}
\int_{M}\!{}\Phi(u,(y,T))(x,T-t)
\le\frac{\mu{}t^{\frac{1}{2}}}{2}
\int_{M}\!{}\evep(u)\Kap+c_{3}t\int_{M}\!{}\evep(u)
\eeq
where the constants remain bounded when dividing the metric by $0<a\le1$.
It is worth noting that we have
\beq\label{HessianTermDistanceBound}
\int_{M}\!{}\Phi(u,T)(x,T-t)
\le\mu\int_{M}\!{}\frac{(d_{+}(x,y))^{2}}{4}\evep(u)\Kap
+D_{M}\int_{M}\!{}\evep(u)\Kap
\eeq
where $D_{M}$ remains bounded when dividing the metric by $0<a\le1$.
\end{lemma}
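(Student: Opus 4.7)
My plan is to mirror the proof of Lemma \ref{HeatKernelTermEstimate}: first collapse $\Phi$ to a clean pointwise expression, then bound that expression via the Hessian Comparison Theorem on a small ball around $y$, and finally split the domain to convert the resulting $d(x,y)^{2}$ factor into $t^{1/2}$.

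The first step is purely algebraic. Writing $\Kap=(4\pi t)^{-N/2}\exp(-r_{+}/(2t))$, one computes $\nabla\Kap=-\frac{\Kap}{2t}\nabla r_{+}$ and $\text{Hess}(\Kap)=\frac{\Kap}{4t^{2}}\nabla r_{+}\otimes\nabla r_{+}-\frac{\Kap}{2t}\text{Hess}(r_{+})$. Substituting into the definition \eqref{Phi:Def} of $\Phi$, the $\nabla r_{+}\otimes\nabla r_{+}$ piece of $\text{Hess}(\Kap)(\nabla u,\nabla u)$ exactly matches the subtracted term $|\langle\nabla u,\nabla\Kap\rangle|^{2}/\Kap$ and the two cancel; after the evaluation $t\mapsto T-t$ performed in the statement of the lemma, the outer $(T-t)$ becomes $t$ and one arrives at the identity
\[
\Phi(u,(y,T))(x,T-t)=\frac{\Kap(x,t;y)}{2}\bigl[|\nabla u|^{2}-\text{Hess}(r_{+})(\nabla u,\nabla u)\bigr].
\]
The task thereby reduces to estimating the bracket pointwise.

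Next I would fix $s=\min\{\pi/(4\sqrt{\mu}),\inj(M)/2\}$, so that on $B_{s}(y)$ the smooth extension $d_{+}$ agrees with $d$ and hence $r_{+}=r=\frac{1}{2}d(\cdot,y)^{2}$. Applying the Hessian Comparison Theorem \eqref{HessianComparisonInequality} together with the elementary Taylor bounds $s\cot s\ge 1-s^{2}/3$ and $s\coth s\le 1+s^{2}/3$ (both valid in the range determined by the choice of $s$), one obtains the pointwise estimates
\[
-\frac{|\lambda|d(x,y)^{2}}{3}|\nabla u|^{2}\le|\nabla u|^{2}-\text{Hess}(r_{+})(\nabla u,\nabla u)\le\frac{\mu d(x,y)^{2}}{3}|\nabla u|^{2}
\]
on $B_{s}(y)$. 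On $M\setminus B_{s}(y)$, the function $r_{+}$ is a smooth global function of distance whose Hessian is uniformly bounded in terms of $M$ alone, and $\Kap$ is exponentially suppressed by the factor $e^{-s^{2}/(4t)}$.

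The final step is to adapt the tri-partition employed in the proof of Lemma \ref{HeatKernelTermEstimate}: decompose $M$ into $B_{t^{1/4}}(y)$, the annulus $A_{t^{1/4},s}(y)$, and $M\setminus B_{s}(y)$. On $B_{t^{1/4}}(y)$, the bound $d(x,y)^{2}\le t^{1/2}$ supplies the principal $\pm t^{1/2}$ factor in front of $\int\evep(u)\Kap$ after absorbing $|\nabla u|^{2}\le 2\evep(u)$. On the annulus, the factor $e^{-d^{2}/(4t)}\le e^{-1/(8t^{1/2})}$ together with $d^{2}\le(\inj M)^{2}$ forces the contribution to be at most $Ct\int\evep(u)$, and on $M\setminus B_{s}(y)$ the uniform bound on $\text{Hess}(r_{+})$ combined with the exponential smallness of $\Kap$ yields a similar $Ct\int\evep(u)$ estimate. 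Assembling these gives \eqref{WeightedEnergyDerivative} and \eqref{HessianTermUpperBound}, while \eqref{HessianTermDistanceBound} is obtained by halting one step earlier and retaining the $d_{+}^{2}$ factor inside the integral. The only genuine obstacle is the claim that all constants stay bounded under the rescaling $g\mapsto g/a$ for $a\in(0,1)$; this follows from the scale invariance of the combinations $\sqrt{\mu}\,d$ and $|\lambda|d^{2}$ (curvature bounds scale as $a$ while distances scale as $1/\sqrt{a}$) and from the fact that $\inj(M)$ only grows under such rescaling, so each constant either remains fixed or improves.
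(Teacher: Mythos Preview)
Your approach is correct and is exactly what the paper intends: it states only that the proof is ``similar to that of Lemma \ref{HeatKernelTermEstimate},'' and you have carried out precisely that parallel argument---collapse $\Phi$ to $\tfrac{\Kap}{2}\bigl[|\nabla u|^{2}-\text{Hess}(r_{+})(\nabla u,\nabla u)\bigr]$, apply Hessian comparison on $B_{s}(y)$, then split into $B_{t^{1/4}}(y)$, the annulus, and the complement exactly as in the $\Psi$ estimate. One small correction: the inequality $s\cot s\ge 1-s^{2}/3$ actually fails near $s=\pi/4$ (e.g.\ $\tfrac{\pi}{4}\cot\tfrac{\pi}{4}\approx 0.785<0.794\approx 1-\tfrac{1}{3}(\tfrac{\pi}{4})^{2}$); replacing it by $s\cot s\ge 1-s^{2}/2$ on that range is valid and yields exactly the constant $\mu/2$ stated in \eqref{HessianTermUpperBound}.
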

\begin{proof}
The proof is similar to that of Lemma \ref{HeatKernelTermEstimate}.
More discussion is provided in A.3.1.2 of \cite{Col}.
\end{proof}

We now prove a monotonicity formula for solutions to
\ref{PGLOriginal}.
As noted before, this result will be instrumental to demonstrating
many of the estimates needed in the proof of Theorem
\ref{Theorem1}.

\begin{proposition}\label{MonotonicityFormula}
Let $\Kap$ be the approximate heat kernel and suppose that $y\in{}M$ and $T>0$.
Then there exists positive constants $C_{1}\ge1$ and $C_{2}$ such that if
$0\le{}R_{1}\le{}R_{2}\le\min\bigl\{\sqrt{T},1\bigr\}$ then
\beq\label{AlmostMonotonicity}
C_{1}E_{0}R_{1}+\exp[C_{2}R_{1}]Z(R_{1})
\le{}C_{1}E_{0}R_{2}+\exp[C_{2}R_{2}]Z(R_{2})
\eeq
where
\beq\label{E0}
E_{0}\ceq{}\int_{M}\!{}\evep(\uvep^{0}(x))\dvol_{g}(x).
\eeq
That is, the function $r\mapsto{}C_{1}E_{0}r+\exp[C_{2}r]Z(r)$
is non-decreasing on $\bigl[0,\min\bigl\{\sqrt{T},1\bigr\}\bigr]$.
\end{proposition}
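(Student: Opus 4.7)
The plan is to combine the differential identity for $Z'(R)$ from Lemma~\ref{MonotonicityFormulaApproximateHeatKernel} with the lower bounds for the error integrals $\int_M \Phi$ and $\int_M \Psi$ provided by Lemma~\ref{ApproximateHeatKernelMatrixHarnackPrinciple} and Lemma~\ref{HeatKernelTermEstimate}, and then absorb the bad terms into an exponential integrating factor plus a linear correction in $R$.

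First I would drop the manifestly non-negative terms. From \eqref{Diff:AHK} we have
\[
Z'(R)=2R\int_M(V_\vep(\uvep)+\Xi)K_{ap}+2R\int_M\Psi+2R\int_M\Phi,
\]
and since $V_\vep\ge 0$ and $\Xi\ge 0$ pointwise, the first term on the right is non-negative and may be discarded. Next I would apply \eqref{HeatErrorTermLowerBound} and \eqref{WeightedEnergyDerivative} with $t=R^2$, so that $t^{1/2}=R$. Writing $\int_M e_\vep(u_\vep(\cdot,T-R^2))K_{ap}=Z(R)/R^2$ and collecting, this yields
\[
Z'(R)\ge -\Bigl(\tfrac{N\mu}{2}+\tfrac{2|\lambda|}{3}\Bigr)Z(R)-2(c_0+c_2)R^3\int_M e_\vep(\uvep(\cdot,T-R^2)).
\]
For the residual bulk-energy term I would invoke the energy dissipation identity \eqref{OriginalEnergyTestFunctionIntegrated} applied with $\chi\equiv 1$, which shows that $t\mapsto\int_M e_\vep(\uvep(\cdot,t))$ is non-increasing, and hence is bounded above by $E_0$ for all $t\ge 0$. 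Setting $C_2\ceq \tfrac{N\mu}{2}+\tfrac{2|\lambda|}{3}$ and $c_*\ceq 2(c_0+c_2)$, this gives the differential inequality
\[
Z'(R)+C_2\,Z(R)\ge -c_*\,E_0\,R^3 \qquad\text{for }R\in\bigl[0,\min\{\sqrt{T},1\}\bigr].
\]

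Now I would introduce the integrating factor $e^{C_2 R}$ to rewrite the above as
\[
\frac{d}{dR}\bigl[e^{C_2 R}Z(R)\bigr]\ge -c_*\,E_0\,R^3\,e^{C_2 R}.
\]
Since $R\in[0,1]$, the right-hand side is bounded below by $-c_*e^{C_2}E_0$. Choosing $C_1\ge\max\{1,\,c_*e^{C_2}\}$ then gives
\[
\frac{d}{dR}\bigl[C_1 E_0 R+e^{C_2 R}Z(R)\bigr]\ge C_1 E_0-c_* e^{C_2}E_0\ge 0,
\]
so $r\mapsto C_1 E_0 r+e^{C_2 r}Z(r)$ is non-decreasing on $[0,\min\{\sqrt{T},1\}]$, which is exactly \eqref{AlmostMonotonicity}.

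There is no genuine obstacle here beyond bookkeeping: the heart of the matter lies in the two lower-bound lemmas, which have already absorbed the Hessian Comparison Theorem into constants $C_2,c_0,c_2$ that do not blow up under rescaling the metric by $a\in(0,1]$. The minor point to be careful about is that the residual term $c_* R^3 E_0$ carries an $R^3$ rather than $R$, which is why restricting to $R\le 1$ is necessary to obtain a uniform bound when dividing by $e^{C_2 R}$; without this restriction one would need a larger correction term than $C_1 E_0 R$. The requirement $C_1\ge 1$ is harmless and satisfied by taking $C_1$ sufficiently large after the constants $c_0,c_2,\mu,|\lambda|$ have been fixed.
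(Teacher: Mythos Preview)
Your proposal is correct and follows essentially the same route as the paper: combine the differential identity \eqref{Diff:AHK} with the lower bounds \eqref{HeatErrorTermLowerBound} and \eqref{WeightedEnergyDerivative}, discard the non-negative $(V_\vep+\Xi)$ term, bound the residual bulk energy by $E_0$ via energy dissipation, and absorb the resulting inequality $Z'(R)\ge -\tilde C Z(R)-\tilde D E_0$ into an exponential integrating factor plus a linear correction, using $R\le 1$. Your write-up is in fact more explicit than the paper's, which only records the intermediate inequality and the choices $C_2=\tilde C$, $C_1=\tilde D e^{\tilde C}$.
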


\begin{proof}
Combining \eqref{HeatErrorTermLowerBound} and
\eqref{WeightedEnergyDerivative} for $u=\uvep$
with the expression for $Z'(R)$ from
Lemma \ref{MonotonicityFormulaApproximateHeatKernel}
gives an inequality of the form
\[
Z'(R)\ge{}-\tilde{C}Z(R)-\tilde{D}E_{0}
\]
where $\tilde{C}$ and $\tilde{D}$ are positive constants that
remain bounded when dividing the metric by $0<a\le1$.
Setting $C_{2}\ceq\tilde{C}$ and
$C_{1}\ceq\tilde{D}e^{\tilde{C}}$ as well as using that $R\le1$
leads to \eqref{AlmostMonotonicity}.
More details are provided in A.3.1.3 of \cite{Col}.
\end{proof}

\begin{remark}
As one might guess from the appeal to the Hessian Comparison
Theorem, the constants $C_{1}$ and $C_{2}$ from the above
proposition can all be estimated in terms of
upper and lower bounds on the sectional curvature.
As a result, all such constants are preserved by dividing the
metric $g$ by factors smaller than one.
As noted in the introduction, this is generally the case for all curvature-dependent constants appearing in this paper.
\end{remark}

The next result facilitates comparison of the weighted energy
centred about two different points in space-time.

\begin{lemma}\label{WeightedEnergyComparison}
Let $0<t_{*}<T$, and $z_{*}=(x_{*},t_{*})\in{}M\times(0,\infty)$.
Then,
\[
\Evep(z_{*},\sqrt{t_{*}})
\le\biggl(\frac{T}{t_{*}}\biggr)^{\frac{N}{2}-1}
\exp\biggl[\frac{C_{f}(d_{+}(x_{T},x_{*}))^{2}}{T-t_{*}}\biggr]\Evep\bigl(\uvep,(x_{T},T),\sqrt{T}\bigr)
\]
for all $x_{T}\in{}M$ where $C_{f}\ceq\max\bigl\{1,\left\|f'\right\|_{L^{\infty}([0,\infty))}^{2}\bigr\}$ and $f$ is as defined below \eqref{dplus}.
In particular,
\[
\Evep(z_{*},\sqrt{t_{*}})
\le\biggl(\frac{T}{t_{*}}\biggr)^{\frac{N}{2}-1}
\exp\biggl[\frac{4C_{f}(d(x_{T},x_{*}))^{2}}{T-t_{*}}\biggr]\Evep\bigl(\uvep,(x_{T},T),\sqrt{T}\bigr)
\]
for all $x\in{}M$ where $C_{f}$ is as above.
\end{lemma}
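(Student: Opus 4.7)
The plan is to reduce the claimed inequality to a pointwise comparison of the two approximate heat kernels, and then to an elementary inequality on the profile function $f$ from \eqref{dplus}. Unwinding \eqref{WeightedEnergyIntegral} with $R=\sqrt{t_*}$ gives $\Evep(z_*,\sqrt{t_*}) = t_*\int_M \evep(\uvep(x,0))\,\Kap(x,t_*;x_*)\,\dvol_g(x)$, and analogously with $T$ and $x_T$. Hence it suffices to show, for every $x\in M$,
\[
t_*\Kap(x,t_*;x_*) \;\le\; \Bigl(\tfrac{T}{t_*}\Bigr)^{\frac{N}{2}-1}\exp\!\Bigl[\tfrac{C_f(d_+(x_T,x_*))^2}{T-t_*}\Bigr]\,T\Kap(x,T;x_T).
\]
Substituting the definition \eqref{AHK}, the polynomial prefactors in $t_*,T$ cancel exactly, and this reduces further to
\[
\frac{(d_+(x,x_T))^2}{4T}\;-\;\frac{(d_+(x,x_*))^2}{4t_*} \;\le\; \frac{C_f(d_+(x_T,x_*))^2}{T-t_*}.
\]

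Next, writing $d_+=\inj_{g}(M)\,f(d_g/\inj_{g}(M))$ from \eqref{dplus}, setting $a\ceq d_g(x,x_*)/\inj_{g}(M)$, $b\ceq d_g(x,x_T)/\inj_{g}(M)$, $c\ceq d_g(x_*,x_T)/\inj_{g}(M)$, and using the triangle inequality $|a-b|\le c$ for $d_g$, the common factor $\inj_{g}(M)^2$ cancels on both sides and the target becomes the purely one-variable statement
\[
\frac{f(b)^2}{4T}\;-\;\frac{f(a)^2}{4t_*} \;\le\; \frac{C_f f(c)^2}{T-t_*} \qquad \text{whenever } |a-b|\le c.
\]
The engine for this is Young's inequality $(\alpha+\beta)^2\le(1+\lambda)\alpha^2+(1+\tfrac{1}{\lambda})\beta^2$ applied with $\lambda\ceq(T-t_*)/t_*$, so that $1+\lambda=T/t_*$ and $1+\tfrac{1}{\lambda}=T/(T-t_*)$, to an estimate of the form $f(b)\le f(a)+\beta$ with two different choices of $\beta$.

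If $c\le 1$, property \ref{f5} gives $|f(b)-f(a)|\le Lc$ with $L\ceq\|f'\|_{L^\infty}<\sqrt{2}$, so take $\beta=Lc$; Young's inequality then yields $\tfrac{f(b)^2}{4T}-\tfrac{f(a)^2}{4t_*}\le\tfrac{L^2c^2}{4(T-t_*)}$, and property \ref{f3} gives $c\le f(c)$ on $[0,1]$, whence $L^2c^2\le L^2 f(c)^2\le 4C_f f(c)^2$ since $L^2<2$ and $C_f\ge 1$. If instead $c\ge 1$, property \ref{f2} gives $f(c)=1$, and the boundedness $f\le 1$ gives $|f(b)-f(a)|\le 1$, so take $\beta=1$; the same Young step yields $\tfrac{f(b)^2}{4T}-\tfrac{f(a)^2}{4t_*}\le\tfrac{1}{4(T-t_*)}=\tfrac{f(c)^2}{4(T-t_*)}\le\tfrac{C_f f(c)^2}{T-t_*}$ since $C_f\ge 1$. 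The ``in particular'' statement then follows from $d_+\le 2d_g$, which is part of \eqref{dInequality}.

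The one subtle point is the second case $c\ge 1$, i.e., when $x_*$ and $x_T$ are more than $\inj_{g}(M)$ apart: the Lipschitz bound on $f$ alone would leave an $L^2c^2$ on the right that cannot be absorbed into $f(c)^2=1$. Using instead the boundedness of $f$ is exactly what allows the constant $C_f=\max\{1,\|f'\|_{L^\infty}^2\}$ in the lemma to depend only on $f$, and not on $\diam_g(M)$ or $\inj_{g}(M)$.
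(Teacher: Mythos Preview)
Your proof is correct and follows essentially the same approach as the paper: both reduce the inequality to a pointwise bound on the supremum of $x\mapsto\exp\bigl(\tfrac{(d_+(x,x_T))^2}{4T}-\tfrac{(d_+(x,x_*))^2}{4t_*}\bigr)$ and then handle the profile function $f$ by cases. The paper hints at a case split on the sizes of $d_g(x,x_T)$ and $d_g(x,x_*)$ relative to $\inj_g(M)$, whereas you split cleanly on whether $c=d_g(x_*,x_T)/\inj_g(M)$ exceeds $1$; this is a minor variation of the same idea, and your Young-inequality packaging with $\lambda=(T-t_*)/t_*$ is exactly the mechanism that makes the Euclidean argument of \cite{BOS2} go through once the $f$-estimate is in place.
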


\begin{proof}
The proof proceeds in the same way as the proof of Lemma $2.3$ of \cite{BOS2} except a careful estimate of the supremum of the function
\[
x\mapsto\exp\biggl(\frac{(d_{+}(x,x_{T}))^{2}}{4T}-\frac{(d_{+}(x,x_{*}))^{2}}{4t_{*}}\biggr)
\]
is required.
The corresponding estimate in \cite{BOS2} is done completely explicitly.
Here it is carried out by considering several cases, depending on the relative size of $d_{g}(x,x_{T})$, $d_{g}(x,x_{*})$, and $\inj_{g}(M)$.
Details can be found in A.3.2.1 of \cite{Col}.
\end{proof}

The next proposition is an important localization method that
converts information about the energy density on a small ball
to information about the weighted energy.
This will be helpful when analyzing the structure of
the energy density measure in the proof of Theorem
\ref{BOSTheorem}.

\begin{proposition}\label{ManifoldEnergyLocalizedToBall}
Suppose $T>0$ and $\sqrt{2\vep}<R<1$.
Then for any $\lambda>0$ and $x_{T}\in{}M$ the following inequality holds
\begin{align*}
\int_{M}\!{}&\evep(\uvep(\cdot,T))e^{-\frac{(d_{+}(\cdot,x_{T}))^{2}}{4R^{2}}}
\le{}\int_{B_{\lambda{}R}(x_{T})}\!{}\evep(\uvep(\cdot,T))\\
&+M_{0}e^{-\frac{c_{*}^{2}\lambda^{2}}{8}}
\biggl[e^{C_{2}}\biggl(\frac{2R^{2}}{T+2R^{2}}\biggr)^{\frac{N-2}{2}}
+C_{1}(4\pi)^{\frac{N}{2}}(\sqrt{2}R)^{N-2}\sqrt{T}\biggr]\mathopen{}\left|\log(\vep)\right|\mathclose{}.
\end{align*}
\end{proposition}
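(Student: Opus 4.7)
The plan is to split the left-hand integral at the geodesic ball $B_{\lambda R}(x_T)$. On the ball the exponential weight is at most $1$, which yields $\int_{B_{\lambda R}(x_T)}e_\vep(\uvep(\cdot,T))$ directly. On the exterior $M\setminus B_{\lambda R}(x_T)$ I use $d_+(x,x_T)\ge c_*\,d_g(x,x_T)\ge c_*\lambda R$ (from \eqref{dInequality}) to factor
\[
e^{-(d_+(x,x_T))^2/(4R^2)} \le e^{-c_*^2\lambda^2/8}\,e^{-(d_+(x,x_T))^2/(8R^2)},
\]
and extend the remaining integral back to all of $M$.

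Rewriting $e^{-(d_+)^2/(8R^2)} = (8\pi R^2)^{N/2}K_{ap}(\cdot,2R^2;x_T)$ recognizes the extended integral as $(4\pi)^{N/2}(\sqrt{2}R)^{N-2}Z(\sqrt{2}R)$, where $Z$ is the monotonic quantity from \eqref{ZR} attached to the base point $y=x_T$ and the shifted reference time $T'\ceq T+2R^2$ (so that $T'-(\sqrt{2}R)^2=T$). Applying Proposition \ref{MonotonicityFormula} with $R_1=\sqrt{2}R$ and $R_2=\min\{\sqrt{T+2R^2},\,1\}$ gives
\[
Z(\sqrt{2}R) \le C_1 E_0(R_2-\sqrt{2}R) + e^{C_2 R_2}Z(R_2).
\]

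In the principal case $R_2=\sqrt{T+2R^2}$, the identity $(\sqrt{T+2R^2}-\sqrt{2}R)(\sqrt{T+2R^2}+\sqrt{2}R)=T$ gives $R_2-\sqrt{2}R\le\sqrt{T}$, and combining the pointwise bound $K_{ap}(\cdot,R_2^2;x_T)\le (4\pi R_2^2)^{-N/2}$ with the energy-dissipation identity (obtained from \eqref{OriginalEnergyTestFunctionIntegrated} with $\chi\equiv 1$) produces $Z(R_2)\le M_0\logeps/[(4\pi)^{N/2}R_2^{N-2}]$. Using $R_2\le 1$ to bound $e^{C_2 R_2}\le e^{C_2}$, noting the exact identity $(\sqrt{2}R/R_2)^{N-2}=(2R^2/(T+2R^2))^{(N-2)/2}$, and multiplying through by the prefactor $(4\pi)^{N/2}(\sqrt{2}R)^{N-2}$ together with the exterior factor $e^{-c_*^2\lambda^2/8}$ produces the two terms on the right-hand side with the asserted constants.

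The main obstacle is the degenerate regime $T+2R^2>1$ (and the further subcase $\sqrt{2}R\ge 1$), in which the $R_2\le 1$ restriction of Proposition \ref{MonotonicityFormula} prevents the clean choice $R_2=\sqrt{T+2R^2}$. Here I take $R_2=1$ when $\sqrt{2}R\le 1$, or fall back on the direct heat-kernel estimate $K_{ap}(\cdot,2R^2;x_T)\le(8\pi R^2)^{-N/2}$ combined with $\int_M e_\vep(\uvep(\cdot,T))\dvol_g\le M_0\logeps$ when $\sqrt{2}R>1$. A short case analysis, using the standing hypothesis $R<1$ and $C_1\ge 1$, then shows that the resulting crude bound is still dominated by the two-term right-hand side of the proposition.
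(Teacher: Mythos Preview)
Your proof is correct and follows essentially the same approach as the paper: split off the ball $B_{\lambda R}(x_T)$, use \eqref{dInequality} on the exterior to extract the factor $e^{-c_*^2\lambda^2/8}$, recognize the remaining integral as a rescaled weighted energy at the shifted time $T+2R^2$, and apply Proposition~\ref{MonotonicityFormula} together with \eqref{H0}. Your treatment of the degenerate regime $T+2R^2>1$ is somewhat compressed, but the paper likewise omits these details and defers them to the appendix of \cite{Col}; the case analysis you outline does close.
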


\begin{proof}
The proof is essentially the same as that of Proposition $2.3$ of \cite{BOS2}.
The point is to estimate
\[
\int_{M\setminus{}B_{\lambda{}R}(x_{T})}\!{}\evep(\uvep)e^{-\frac{(d_{+}(x,x_{T}))^{2}}{4R^{2}}}.
\]
This is achieved by applying the monotonicity formula, see Proposition \ref{MonotonicityFormula}, in addition to the properties of $d_{+}$, as in \eqref{dInequality}.
See A.3.5.1 of \cite{Col} for details.
\end{proof}

In the next proposition we exploit the monotonicity formula to 
obtain good estimates of the solution of a nonhomogeneous heat equation
when the right-hand side is dominated by $\Vep(\uvep)$.

\begin{proposition}\label{HeatEstimate}
If $0<T<1$, $x_{T}\in{}M$, and
$\omega\colon{}M\times(0,\infty)\to\wedge^{2}M$ solves
\[
\begin{cases}
\pp_{t}\omega-\Delta{}\omega=h&\text{on }M\times(0,\infty)\\
\omega(x,0)=0&x\in{}M
\end{cases}
\]
where $h\in{}L^{\infty}(M\times[0,T];\wedge^{2}M)$ satisfies
\beq\label{PotentialUpperBound}
|h(x,t)|\le{}\Vep(\uvep(x,t))\hspace{10pt}\text{for }(x,t)\in{}M\times[0,T]
\eeq
then for any $z=(x,t)\in{}M\times[0,T]$, the following estimate holds:
\begin{align}
|\omega(z)|&\le{}C_{3}(T+1)
\biggl(\frac{T}{t}\biggr)^{\frac{N}{2}-1}e^{\frac{C_{f}(d_{+}(x_{T},x_{*}))^{2}}{T-t}}
\Bigl(\Evep\bigl(\uvep,(x_{T},T),\sqrt{T}\bigr)+C_{1}E_{0}T\Bigr)\\
&\le{}C_{3}(T+1)
\biggl(\frac{T}{t}\biggr)^{\frac{N}{2}-1}e^{\frac{4C_{f}(d(x_{T},x_{*}))^{2}}{T-t}}
\Bigl(\Evep\bigl(\uvep,(x_{T},T),\sqrt{T}\bigr)+C_{1}E_{0}T\Bigr)  \nonumber
\end{align}
where $C_{f}$ is as in Lemma \ref{WeightedEnergyComparison} and $C_{3}$ depends on $M$.
\end{proposition}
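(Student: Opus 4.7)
Since $\omega(\cdot, 0) = 0$ and $\pp_t \omega - \Delta \omega = h$, Duhamel's principle gives $\omega(x, t) = \int_0^t \int_M G(x, y, t-s)\, h(y, s)\, \dvol_g(y)\, \mrmd s$, where $G$ is the heat kernel of the Hodge Laplacian on $2$-forms on $(M, g)$. Using the pointwise hypothesis \eqref{PotentialUpperBound},
\[
|\omega(x, t)| \le \int_0^t \int_M |G(x, y, t-s)|\, \Vep(\uvep(y, s))\, \dvol_g(y)\, \mrmd s.
\]
Standard Gaussian upper bounds for $G$ on the compact manifold $M$, together with the two-sided comparison \eqref{dInequality} between $d_+$ and $d_g$, yield $|G(x, y, \tau)| \le C_M \Kap(y, \tau; x)$ for $0 < \tau \le 1$, possibly after absorbing a curvature-dependent mismatch in the Gaussian exponent into $C_M$ or, if needed, into a mild time rescaling whose effect is harmless thanks to the metric-rescaling invariance mentioned in the introduction. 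Consequently,
\[
|\omega(x, t)| \le C_M \int_0^t \int_M \Kap(y, t-s; x)\, \Vep(\uvep(y, s))\, \dvol_g(y)\, \mrmd s.
\]

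The right-hand side is controlled via the time-integrated monotonicity identity \eqref{TimeIntegratedMonotonicity} applied at the reference point $(x, t)$ with $R_* = \sqrt{t}$. Since the $\Xi$ term is nonnegative, this gives
\[
\int_0^t \int_M \Vep\, \Kap(y, t-s; x)\, \dvol_g\, \mrmd s \le \Evep((x, t), \sqrt{t}) - \int_0^t \int_M (\Psi + \Phi)\, \dvol_g\, \mrmd s.
\]
The lower bounds \eqref{HeatErrorTermLowerBound} and \eqref{WeightedEnergyDerivative} on $\Psi$ and $\Phi$, combined with the almost-monotonicity from Proposition \ref{MonotonicityFormula} and the elementary energy-dissipation bound $\int_M \evep(\uvep(\cdot, s))\, \dvol_g \le E_0$, dominate the remaining time integral by $C'_M(\Evep((x, t), \sqrt{t}) + E_0 t)$. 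This yields
\[
|\omega(x, t)| \le C''_M\bigl(\Evep((x, t), \sqrt{t}) + C_1 E_0 t\bigr).
\]

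Finally, applying Lemma \ref{WeightedEnergyComparison} with $(x_*, t_*) = (x, t)$ converts the weighted energy centred at $(x, t)$ into the one centred at $(x_T, T)$, introducing the factor $(T/t)^{N/2-1}\exp(C_f (d_+(x_T, x))^2 / (T-t))$, which is always at least $1$. Multiplying both terms on the right of the previous display by this factor (so that the $C_1 E_0 t$ term becomes $C_1 E_0 T$ after using $t \le T$), and gathering the remaining $M$- and $T$-dependent constants into $C_3(T+1)$, one obtains the first displayed inequality in the statement. The second inequality follows by replacing $d_+$ with $2 d_g$ in the exponential using \eqref{dInequality}.

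The principal difficulty lies in the heat-kernel comparison $|G| \le C_M \Kap$ in the first step: unlike the Euclidean setting of \cite{BOS2}, where the heat kernel coincides exactly with the weight defining $\Evep$, here $G$ and $\Kap$ differ. Justifying the comparison with a universal time scale is where the curvature dependence of all constants—and hence the usefulness of the metric rescaling invariance noted in the introduction—plays a key role. The remaining ingredients (Duhamel, monotonicity, weighted energy comparison) follow the Euclidean template very closely.
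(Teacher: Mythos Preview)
Your approach is essentially the same as the paper's: Duhamel's formula, followed by a pointwise comparison of the heat kernel on $2$-forms with $\Kap$, then control of $\int V_\ep\, \Kap$ via the time-integrated monotonicity identity \eqref{TimeIntegratedMonotonicity} together with the lower bounds \eqref{HeatErrorTermLowerBound}, \eqref{WeightedEnergyDerivative} and Proposition \ref{MonotonicityFormula}, and finally Lemma \ref{WeightedEnergyComparison} to recentre at $(x_T,T)$. The paper is explicit that the only point requiring genuine care beyond \cite{BOS2} is the bound $|G(x,y,\tau)|\le C_M \Kap(y,\tau;x)$ for the form heat kernel, which it handles by citing heat-kernel estimates for differential forms from \cite{Lud}; your remark that this is ``the principal difficulty'' and that a curvature-dependent Gaussian mismatch may need to be absorbed is exactly in line with this, though you should be aware that generic off-diagonal Gaussian bounds for form heat kernels come with an exponent $e^{-d^2/(c\tau)}$ with $c>4$, so the comparison with $\Kap$ is not entirely immediate and does rely on something like the sharper estimates in \cite{Lud} (or a rescaling argument as you suggest).
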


\begin{proof}
The proof proceeds in the same way as Proposition $2.2$ of \cite{BOS2}, the idea being to represent $\omega(z)$ by Duhamel's formula and then exploit
the fact that, since $|f|\le{}\Vep(\uvep)$, the right-hand side of Duhamel's formula is controlled by the weighted energy.
In our setting we must estimate the heat kernel for $2$-forms, appearing in Duhamel's formula, by the approximate heat kernel $\Kap$, appearing in $\Evep$.
This may be done using estimates on the heat kernel for differential forms which are provided in \cite{Lud}.
Details can be found in A.3.3.1 and Proposition A.3.2 of \cite{Col}.
\end{proof}
The next proposition is a localization method that originated from
\cite{LiRi} and was used in \cite{BOS2}.
As in \cite{BOS2} this result is vital to our proof of Theorem
\ref{Theorem1} as it permits us to localize our estimate of
the weighted energy to a small coordinate ball.
It is based on a Pohozaev type inequality.

\begin{proposition}\label{PohozaevEnergyIdentity}
Let $(M,g)$ be an $N$-dimensional compact Riemannian manifold without boundary and suppose $0<t<T$ is chosen so that $(T-t)$ is
small enough that
\[
1-C_{5}(T-t)\ge\frac{1}{2}
\]
where $C_{5}>0$ depends linearly on the sectional curvature of $M$.
Then, there is a constant $C_{6}>0$ invariant under dilations of the metric $g$
by factors larger than one and $D_{f}>0$ dependent only on $f$ such that if $z_{T}=(x_{T},T)\in{}M\times(0,\infty)$
then
\begin{align}\label{PohozaevLocalizationFirstInequalityProof}
\int_{M\times\{t\}}\!{}\evep&(\uvep)\frac{(d_{+}(x,x_{T}))^{2}}{4(T-t)}
e^{-\frac{(d_{+}(x,x_{T}))^{2}}{4(T-t)}}\\
&\le(4\pi)^{\frac{N}{2}}C_{6}(T-t)^{\frac{N-2}{2}}\Evep\bigl(z_{T},\sqrt{T-t}\bigr)
+2D_{f}C_{0}(4\pi)^{\frac{N}{2}}(T-t)^{\frac{N}{2}+1}E_{0}
\nonumber\\
&+2D_{f}[4\pi(T-t)]^{\frac{N}{2}}
\int_{M\times\{t\}}\!{}[\Vep(\uvep)+\Xi(\uvep,z_{T})]
\Kap\bigl(x,\sqrt{T-t};x_{T}\bigr)  \nonumber
\end{align}
and consequently
\begin{align}
\int_{M\times\{t\}}\!{}&\evep(\uvep)e^{-\frac{(d_{+}(x,x_{T}))^{2}}{4(T-t)}}
\le2\int_{A\times\{t\}}\!{}\evep(\uvep)e^{-\frac{(d_{+}(x,x_{T}))^{2}}{4(T-t)}}
+\frac{(4\pi)^{\frac{N}{2}}C_{0}(T-t)^{\frac{N}{2}+1}}{N}E_{0}
\label{DistSqWeight:Bound} \\
&+\frac{2[4\pi(T-t)]^{\frac{N}{2}}}{N}
\int_{M\times\{t\}}\!{}[\Vep(\uvep)+\Xi(\uvep,z_{T})]\Kap(x,T-t;x_{T})  \nonumber
\end{align}
where
\[
A\ceq\biggl\{x\in{}M:\frac{(d_{+}(x,x_{T}))^{2}}{8(T-t)}\le{}C_{6}\biggr\}.
\]
\end{proposition}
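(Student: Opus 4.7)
The plan is to derive a parabolic Pohozaev-type identity, following the Euclidean strategy of \cite{LiRi, BOS2} but with the Euclidean position vector replaced by $\nabla r_+$ (see \eqref{rplus}) and the Euclidean heat kernel replaced by $\Kap$. Specifically, I would multiply \ref{PGLOriginal} by $\mathopen{}\left<\nabla \uvep, \nabla r_+\right>\mathclose{}\Kap(x, T-t; x_T)$ and integrate over $M \times \{t\}$. Integration by parts produces four contributions: first, the desired left-hand side of \eqref{PohozaevLocalizationFirstInequalityProof}, arising from the Gaussian factor in $\Kap$ hitting the kinetic term; second, a Hessian term $\int \text{Hess}(r_+)(\nabla \uvep, \nabla \uvep)\Kap$; third, a potential term involving $\Vep(\uvep)\Kap$; and fourth, a time-derivative term, which after reorganization with $\nabla \Kap$ is recast in terms of $\Xi(\uvep, z_T)\Kap$ via \eqref{Xi:Def}. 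Crucially, outside the transition zone of the profile function $f$ from \eqref{dplus} one has the exact identity $\nabla_x \Kap = -\frac{\nabla r_+}{2(T-t)}\Kap$; inside the zone this identity fails by a factor controlled by $\|f'\|_{L^\infty}^2$, contributing the constant $D_f$.

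Next, I would invoke the Hessian Comparison Theorem \eqref{HessianComparisonInequality} to estimate $\text{Hess}(r_+)(v,v) \le (1 + C_5(T-t))|v|^2$ on a geodesic ball about $x_T$, with $C_5$ linear in the sectional curvature bounds of $M$. Under the hypothesis $1 - C_5(T-t) \ge \tfrac{1}{2}$, this allows at least a fraction $\tfrac{1}{2}$ of the kinetic-energy contribution from the first term to remain on the left, while the Hessian surplus is absorbed as a positive multiple of $\int \evep(\uvep)\Kap$. By definition of $\Evep$ this quantity equals $(T-t)^{-1}\Evep(z_T, \sqrt{T-t})$, and after multiplying through by the Gaussian normalization $[4\pi(T-t)]^{N/2}$, the coefficient $(4\pi)^{N/2} C_6 (T-t)^{(N-2)/2}$ in \eqref{PohozaevLocalizationFirstInequalityProof} emerges. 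Residual contributions from $M \setminus B_s(x_T)$, where $\Kap$ is Gaussian-small, are traded for the initial energy $E_0$ via the monotonicity formula of Proposition \ref{MonotonicityFormula}, producing the $2D_f C_0 (4\pi)^{N/2}(T-t)^{N/2+1} E_0$ term on the right.

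For the second inequality \eqref{DistSqWeight:Bound}, I split $M = A \cup (M \setminus A)$. On $A$ the Gaussian factor is trivially bounded by $1$, producing the $2\int_A \evep(\uvep) e^{-(d_+(x,x_T))^2/(4(T-t))}$ contribution. On $M \setminus A$ one has $(d_+(x,x_T))^2/(8(T-t)) > C_6$, so the Gaussian can be absorbed into the stronger weight $\frac{(d_+(x,x_T))^2}{4(T-t)} e^{-(d_+(x,x_T))^2/(4(T-t))}$ at the cost of the factor $C_6^{-1}$, and the result follows by applying \eqref{PohozaevLocalizationFirstInequalityProof}; the $N$ in the denominator reflects the Gaussian volume normalization once the prefactors are tracked.

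The main obstacle will be the careful coefficient bookkeeping required to ensure that the Hessian-comparison-generated error is actually absorbable into the left-hand side under the precise condition $1 - C_5(T-t) \ge \tfrac{1}{2}$, and that the deviation between $\nabla r_+$ and the true geodesic radial field inside the transition zone of $f$ fits into the $D_f$-multiplied terms on the right rather than producing new uncontrolled errors. Conceptually no phenomenon new to the Riemannian setting arises beyond those already faced in the monotonicity formula, so the argument should closely mirror the proof of Proposition $2.3$ of \cite{BOS2}, with the Hessian Comparison Theorem replacing the explicit Euclidean Hessian of $\tfrac{1}{2}|x|^2$.
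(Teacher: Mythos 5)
The approach you propose is genuinely different from the paper's, and unfortunately it has a gap. The paper does not multiply \ref{PGLOriginal} by the spatial Pohozaev multiplier $\left<\nabla\uvep,\nabla r_+\right>\Kap$; instead it multiplies by $2(T-t)\pp_t\uvep\,e^{-(d_+(x,x_T))^2/(4(T-t))}$ and integrates by parts \emph{in time} over $M\times[T_1,T_2]$. The crucial term on the left of \eqref{PohozaevLocalizationFirstInequalityProof} arises because $\pp_t$ hitting the Gaussian produces exactly $-\frac{(d_+)^2}{4(T-t)^2}e^{-(d_+)^2/(4(T-t))}$, so the quantity $\int e_\vep\frac{(d_+)^2}{4(T-t)}e^{-\cdots}$ emerges cleanly from the time derivative of $(T-t)\int e_\vep e^{-\cdots}$, i.e.\ of $[4\pi(T-t)]^{N/2}\Evep(z_T,\sqrt{T-t})$. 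The remaining cross term $\int \pp_t\uvep\left<\nabla\uvep,\nabla r_+\right>e^{-\cdots}$ is then estimated by Young's inequality, using $|\nabla r_+|\le\|f'\|_\infty d_+$; this is where $D_f=(2-\|f'\|^2_{L^\infty})^{-1}$ comes from. Afterwards one sends $T_2\searrow T_1=t$, invokes \eqref{TimeIntegratedMonotonicity}, and uses \eqref{HeatErrorTermDistanceBound}, \eqref{HessianTermUpperBound}, together with the hypothesis $1-C_5(T-t)\ge\tfrac12$, to absorb the curvature error. The Hessian Comparison Theorem is only invoked at that stage, through the already-proved estimates for $\Phi$ and $\Psi$.

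The gap in your spatial Pohozaev route is that integration by parts in space does not produce the left-hand side of \eqref{PohozaevLocalizationFirstInequalityProof} with the right structure. If you test with $\left<\nabla\uvep,\nabla r_+\right>\Kap$ and integrate by parts, the terms involving $\nabla\Kap = -\frac{\nabla r_+}{2(T-t)}\Kap$ assemble into
\[
\frac{1}{2(T-t)}\int e_\vep|\nabla r_+|^2\Kap
-\frac{1}{2(T-t)}\int\bigl|\left<\nabla\uvep,\nabla r_+\right>\bigr|^2\Kap,
\]
and after recasting the time-derivative term via $\Xi$, the second piece becomes $-\frac{1}{4(T-t)}\int|\left<\nabla\uvep,\nabla r_+\right>|^2\Kap$. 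Using $|\left<\nabla\uvep,\nabla r_+\right>|^2\le|\nabla\uvep|^2|\nabla r_+|^2\le 2e_\vep|\nabla r_+|^2$, the combined quantity
\[
\frac{1}{2(T-t)}\int e_\vep|\nabla r_+|^2\Kap
-\frac{1}{4(T-t)}\int\bigl|\left<\nabla\uvep,\nabla r_+\right>\bigr|^2\Kap
\]
is only bounded below by $0$ (with equality attained when $\nabla\uvep$ is purely radial), and so cannot serve as a lower bound for $\int e_\vep\frac{(d_+)^2}{4(T-t)}e^{-\cdots}$; worse, if you keep the raw term before the $\Xi$ substitution, the coefficient $-\frac{1}{2(T-t)}\int|\left<\nabla\uvep,\nabla r_+\right>|^2\Kap$ can make the combination negative. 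An additional difficulty, less serious since $\Kap$ is Gaussian-small there, is that $|\nabla r_+|^2 = (d_+)^2(f'(d/\inj_g(M)))^2$ degenerates (to zero) outside $B_{\inj_g(M)}(x_T)$, so $|\nabla r_+|^2$ is not comparable to $(d_+)^2$ globally. In short, the distance-squared weight must come from a time derivative hitting the Gaussian rather than a spatial derivative, and this is why the paper's (and \cite{BOS2}'s Lemma $2.6$) multiplier is temporal.

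Your treatment of \eqref{DistSqWeight:Bound} from \eqref{PohozaevLocalizationFirstInequalityProof} by splitting $M=A\cup(M\setminus A)$ is sound and essentially matches the paper.
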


\begin{proof}
The proof is mostly similar to the one found in Proposition $2.4$
of \cite{BOS2}, the only exception being we replace usage of the
distance function $d$ with $d_{+}$ and use properties relating
to the definition of $d_{+}$.
We refer the reader to Lemma A.3.4 and A.3.5.2 of \cite{Col} for
more details.
First, for $0<T_{1}\le{}T_{2}<T$ and $x_{T}\in{}M$, we establish the inequality
\begin{align*}
\int_{T_{1}}^{T_{2}}\!\!\!{}\int_{M}&\!{}\frac{(d_{+}(x,x_{T}))^{2}}{4(T-t)}
\evep(\uvep)e^{-\frac{(d_{+}(x,x_{T}))^{2}}{4(T-t)}}\\
&\le[4\pi(T-T_{1})]^{\frac{N}{2}}D_{f}\Evep\bigl(z_{T},\sqrt{T-T_{1}}\bigr)
-[4\pi(T-T_{2})]^{\frac{N}{2}}D_{f}\Evep\bigl(z_{T},\sqrt{T-T_{2}}\bigr)
\end{align*}
where
\[
D_{f}\ceq
\bigl(2-\left\|f'\right\|_{L^{\infty}(\R)}^{2}\bigr)^{-1}.
\]
To do this, we take the dot product of \ref{PGLOriginal} with
$2(T-t)\pp_{t}\uvep{}e^{-\frac{(d_{+}(x,x_{T}))^{2}}{4(T-t)}}$,
integrate by parts in time, and apply elementary inequalities.
The only difference from Lemma $2.6$ of \cite{BOS2}
involves using properties of $d_{+}$, such as that
\[
|\nabla{}r_{+}(x)|\le\left\|f'\right\|_{L^{\infty}(\R)}d_{+}(x,y)
\]
where $r_{+}$ is as in \eqref{rplus} and $y\in{}M$.\\

Then, setting $T_{1}=t$, letting $T_{2}\searrow{}t$, and using
\eqref{TimeIntegratedMonotonicity} leads to
\begin{align}\label{DistanceWeightedInequality}
\int_{M\times\{t\}}\!{}\evep(\uvep)&\frac{(d_{+}(x,x_{T}))^{2}}{4(T-t)}
e^{-\frac{(d_{+}(x,x_{T}))^{2}}{4(T-t)}}
\le\frac{ND_{f}(4\pi)^{\frac{N}{2}}}{2}(T-t)^{\frac{N}{2}-1}\widetilde{E}_{\vep}(z_{T},\sqrt{T-t})\\
&+(4\pi)^{\frac{N}{2}}D_{f}(T-t)^{\frac{N}{2}}\int_{M\times\{t\}}\!{}[\Vep(\uvep)+\Xi(\uvep,z_{T})]
\Kap(x,T-t;x_{T})
\nonumber\\
&+(4\pi)^{\frac{N}{2}}D_{f}(T-t)^{\frac{N}{2}}\int_{M\times\{t\}}\!{}[\Phi(\uvep,z_{T})(x,t)
+\Psi(\uvep,z_{T})(x,t)]\dvol(x). \nonumber{}
\end{align}

Combining \eqref{HeatErrorTermDistanceBound} and
\eqref{HessianTermUpperBound} with
\eqref{DistanceWeightedInequality}
leads to
\begin{align*}
\Bigl[1-(4\pi)^{\frac{N}{2}}&D_{f}\bigl[\mu-\frac{2N\lambda}{3}\bigr](T-t)\Bigr]
\int_{M\times\{t\}}\!{}\evep(\uvep)\frac{(d_{+}(x,x_{T}))^{2}}{4(T-t)}
e^{-\frac{(d_{+}(x,x_{T}))^{2}}{4(T-t)}}\\
&\le(4\pi)^{\frac{N}{2}}D_{f}[N+C_{M}+D_{M}]
(T-t)^{\frac{N}{2}-1}\widetilde{E}_{\vep}(z_{T},\sqrt{T-t})\\
&+(4\pi)^{\frac{N}{2}}D_{f}(T-t)^{\frac{N}{2}}
\int_{M\times\{t\}}\!{}[\Vep(\uvep)+\Xi(\uvep,z_{T})]
\Kap(x,T-t;x_{T})\\
&+C_{0}(4\pi)^{\frac{N}{2}}D_{f}(T-t)^{\frac{N}{2}+1}E_{0}.
\end{align*}
Defining
$C_{5}\ceq
(4\pi)^{\frac{N}{2}}D_{f}\bigl[\mu-\frac{2N\lambda}{3}\bigr]$,
using that we assume $1-C_{5}(T-t)\ge\frac{1}{2}$, and
rearranging gives
\eqref{PohozaevLocalizationFirstInequalityProof}.
Setting $C_{6}\ceq{}2D_{f}[N+C_{M}+D_{M}]$ and arguing as in \cite{BOS2} then
gives \eqref{DistSqWeight:Bound}.
\end{proof}

The following permits us to find a good bound for
the weighted energy on a scale
$R_{1}$ by the weighted energy on a smaller scale $\delta_{0}R_{1}$.

\begin{proposition}\label{Averaging}
Fix $0<\delta_{0}<\frac{1}{16}$, $T>0$, and let $0<R\le\min\bigl\{\sqrt{T},1\bigr\}$.
There exists a constant $\vep_{1}>0$ depending only $R$, $T$, and $\delta_{0}$, such that, for $0<\vep\le\vep_{1}$,
there exists $R_{1}>0$, satisfying
\[
R_{1}\in\bigl(\sqrt{\vep},R\bigr)
\]
such that
\begin{align}\label{MontonicityAveragingInequality}
\{C_{7}E_{0}R_{1}+Z(R_{1})\}&-\{C_{7}E_{0}(\delta_{0}R_{1})+Z(\delta_{0}R_{1})\}\\
&\le\frac{4C_{7}e^{C_{8}}\mathopen{}\left|\log(\delta_{0})\right|\mathclose{}}{\mathopen{}\left|\log(\vep)\right|\mathclose{}}[RE_{0}+Z(R)]  \nonumber
\end{align}
where $C_{7}\ceq{}C_{1}e^{2C_{2}}$ and $C_{8}\ceq2C_{2}$.
We also have
\begin{align}\label{SmallEnergyAveraging}
\int_{T-R_{1}^{2}}^{T-(\delta_{0}R_{1})^{2}}\!\!\!\int_{M}\!{}
\mathopen{}\left[(\Xi(\uvep,z_{T}))(x,t)\right.\mathclose{}&\mathopen{}\left.+\Vep(\uvep(x,t))\right]\mathclose{}\Kap(x,T-t;x_{T})\dvol_{g}(x)\mrmd{}t\\
\le&\frac{4C_{7}e^{C_{8}}\mathopen{}\left|\log(\delta_{0})\right|\mathclose{}}{\mathopen{}\left|\log(\vep)\right|\mathclose{}}
[RE_{0}+Z(R)].  \nonumber
\end{align}
\end{proposition}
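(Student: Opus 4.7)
The plan is to combine the almost-monotonicity of $F(r) := C_{1}E_{0}r + e^{C_{2}r}Z(r)$ established in Proposition \ref{MonotonicityFormula} with a standard dyadic pigeonhole argument. Consider the geometric sequence $R_{k} := \delta_{0}^{k}R$, and let $K$ be the largest integer with $R_{K} \ge \sqrt{\vep}$; then by choosing $\vep_{1}$ small in terms of $R,T,\delta_{0}$, one ensures $K \ge |\log\vep|/(4|\log\delta_{0}|)$. Because $F$ is non-decreasing on $[0,\min\{\sqrt{T},1\}]$, the telescoping identity $\sum_{k=1}^{K}[F(R_{k-1}) - F(R_{k})] = F(R) - F(R_{K}) \le F(R)$ expresses $F(R)$ as a sum of $K$ non-negative terms, and pigeonhole yields some $k_{*}$ with $F(R_{k_{*}-1}) - F(R_{k_{*}}) \le F(R)/K$. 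Set $R_{1} := R_{k_{*}-1}$, so that $\delta_{0}R_{1} = R_{k_{*}}$ and $R_{1}\in(\sqrt{\vep},R)$ as required.

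The next step is to convert this $F$-increment bound into the $G$-increment bound in \eqref{MontonicityAveragingInequality}, where $G(r) := C_{7}E_{0}r + Z(r)$. The choices $C_{7} = C_{1}e^{2C_{2}}$ and $C_{8} = 2C_{2}$ are calibrated so that, for $r\in[0,1]$, one has $G(r) \le e^{2C_{2}} F(r)$ and $F(r) \le e^{C_{2}}[C_{7}E_{0}r + Z(r)]$, sandwiching the two quantities. Combined with $1/K \le 4|\log\delta_{0}|/|\log\vep|$, this sandwich yields $G(R_{1}) - G(\delta_{0}R_{1}) \le \frac{4 C_{7} e^{C_{8}}|\log\delta_{0}|}{|\log\vep|}[RE_{0} + Z(R)]$, which is \eqref{MontonicityAveragingInequality}.

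For \eqref{SmallEnergyAveraging}, the plan is to integrate the differential identity $Z'(r) = 2r\int_{M}[\Vep(\uvep) + \Xi(\uvep,z_{T}) + \Phi(\uvep,z_{T}) + \Psi(\uvep,z_{T})]$ from Lemma \ref{MonotonicityFormulaApproximateHeatKernel} on $[\delta_{0}R_{1},R_{1}]$ and change variables $t = T - r^{2}$. The lower bounds on $\int_{M}\Phi$ and $\int_{M}\Psi$ from Lemmas \ref{HeatKernelTermEstimate} and \ref{ApproximateHeatKernelMatrixHarnackPrinciple} let us isolate the desired time integral of $\Vep + \Xi$ against $\Kap$ on the left and bound it above by $Z(R_{1}) - Z(\delta_{0}R_{1})$ plus additive error terms. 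These errors are themselves controlled by multiples of $Z(r)$ and $r^{2}E_{0}$ for $r \in [\delta_{0}R_{1},R_{1}]$, and so are absorbed into the right-hand side of \eqref{MontonicityAveragingInequality} after adjusting constants.

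The main obstacle is the algebraic conversion in the second step: since $G$ and $F$ differ by the factor $e^{C_{2}r}$ multiplying $Z(r)$, one must carefully track sign contributions in the difference of the two exponential weights to ensure that no $O(1)$ remainder survives to spoil the decisive factor $|\log\delta_{0}|/|\log\vep|$. The additional $\Phi$ and $\Psi$ terms arising in the Riemannian setting are responsible for the explicit $e^{C_{2}}$ factors appearing in $C_{7}$ and $C_{8}$, but they do not present conceptual difficulty beyond the careful bookkeeping already required in the Euclidean setting of \cite{BOS2}.
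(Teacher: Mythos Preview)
Your overall strategy matches the paper's: pigeonhole over $\sim|\log\vep|/|\log\delta_0|$ geometric scales on a monotone quantity built from Proposition~\ref{MonotonicityFormula}, then deduce \eqref{SmallEnergyAveraging} from the differential identity \eqref{Diff:AHK} together with the lower bounds \eqref{HeatErrorTermLowerBound}, \eqref{WeightedEnergyDerivative}. The paper's sketch says exactly this.

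The gap is in your conversion step. A pointwise sandwich $G(r)\le e^{2C_2}F(r)$ and $F(r)\le e^{C_2}G(r)$ between nonnegative functions does \emph{not} imply a comparison of increments: from $F(R_1)-F(\delta_0R_1)\le F(R)/K$ one cannot conclude $G(R_1)-G(\delta_0R_1)\le cF(R)/K$. Concretely, $G(R_1)-G(\delta_0R_1)$ contains the term $C_7E_0R_1(1-\delta_0)$, while $F(R_1)-F(\delta_0R_1)$ contains only $C_1E_0R_1(1-\delta_0)$; the discrepancy $(C_7-C_1)E_0R_1$ is $O(E_0R)$, not $O\bigl([RE_0+Z(R)]/|\log\vep|\bigr)$, and no sandwich absorbs it. This is precisely the ``$O(1)$ remainder'' you worry about, and your stated device does not kill it.

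The paper avoids this by pigeonholing not on $F$ but on $H(r):=C_7e^{C_8}E_0r+e^{C_8r}Z(r)$. The point of the choices $C_8=2C_2$ and $C_7=C_1e^{2C_2}$ is that $H$ is still monotone \emph{and} one has the increment comparison $G(R_1)-G(\delta_0R_1)\le H(R_1)-H(\delta_0R_1)$ directly. To see the latter, set $W(r):=(e^{C_8r}-1)Z(r)$; the differential inequality $Z'(r)\ge -C_2Z(r)-\tilde{D}E_0$ from the proof of Proposition~\ref{MonotonicityFormula} gives $W'(r)\ge C_2(e^{C_8r}+1)Z(r)-(e^{C_8r}-1)\tilde{D}E_0\ge -(e^{C_8}-1)\tilde{D}E_0$ for $r\le 1$, and integrating shows $[H-G](R_1)-[H-G](\delta_0R_1)\ge(e^{C_8}-1)(C_7-\tilde{D})E_0R_1(1-\delta_0)\ge 0$. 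The doubling $C_8=2C_2$ is exactly what produces the extra $C_2Z$ term that makes this work; with $F$ (i.e.\ $C_8=C_2$) the corresponding bound fails. Once you pigeonhole on $H$ instead, your outline goes through, including the derivation of \eqref{SmallEnergyAveraging} where the same extra $C_2Z$ margin absorbs the $\Phi$, $\Psi$ error integrals.
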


\begin{proof}
The proof is essentially the same as in Proposition $2.6$ of
\cite{BOS2}.
The idea in proving \eqref{MontonicityAveragingInequality}
is to average increments of
$r\mapsto{}C_{7}e^{C_{8}}E_{0}r+e^{C_{8}r}Z(r)$
over time intervals
$\bigl[\delta_{0}^{j}R,\delta_{0}^{j-1}R\bigr]$
for $j=2,3,\ldots,k_{0}$ where
\[
k_{0}\approx
\biggl\lfloor\frac{\mathopen{}\left|\log(\vep)\right|\mathclose{}}
{2\mathopen{}\left|\log(\delta_{0})\right|\mathclose{}}
\biggr\rfloor
\]
and to find an interval,
$\bigl[\delta_{0}^{k_{1}}R,\delta_{0}^{k_{1}-1}R\bigr]$,
for which the increment is small.
This is achieved by repeatedly making use of Proposition
\ref{MonotonicityFormula}.
The inequality \eqref{SmallEnergyAveraging} then follows from
\eqref{TimeIntegratedMonotonicity}, additional estimates on the
error terms $\Phi$ and $\Psi$ in terms of the weighted energy
and the initial energy due to \eqref{HeatErrorTermLowerBound},
\eqref{WeightedEnergyDerivative},
as well as our choice of constants $C_{7}$ and $C_{8}$.
For more details we refer the reader to A.3.6.1 of \cite{Col}.
\end{proof}

\section{Overview of the proof of Theorem \ref{Theorem1}}
\label{Sec:Outline}

\hspace{15pt}We start by presenting a detailed outline of the proof of Theorem \ref{Theorem1}.
We closely follow the proof presented in \cite{BOS2}, so much so that this section may be used as a reader's guide to the arguments found there.
As we proceed, we will distinguish between
\begin{enumerate}
\item
arguments that can be adapted from the Euclidean to the Riemannian setting with only cosmetic changes; we will describe these but not present them in detail;
and
\item
places where more effort is needed in order to adjust earlier arguments to the present setting.
These points will be discussed at greater length in Section
\ref{Sec::SteppingStones}.
\end{enumerate}
Note that, unless otherwise specified, all metric related
quantities will be associated to $\gR$ and the metric will be
suppressed from the notation.\\

\begin{flushleft}
{\it Reduction via rescaling:}\\
\end{flushleft}
\par{}Throughout the proof of the theorem, $0<\delta_{0}<\frac{1}{16}$
will denote a fixed parameter whose precise value will not
be specified until a late stage of the proof.
Applying Proposition \ref{Averaging} with this choice of $\delta_{0}$
and $R$, $T$ as in the statement of Theorem \ref{Theorem1}
we find a suitable
time interval, $[\delta_{0}R_{1},R_{1}]$, in which the weighted
energy of $\uvep$ satisfies
\eqref{MontonicityAveragingInequality} and
\eqref{SmallEnergyAveraging}.
Next we define $\veep\colon{}M\times(0,\infty)\to\C$,
a rescaling of $\uvep$ in
$\vep$ and in time, by
\beq\label{vep}
\veep(x,t)\ceq\uvep\bigl(x,T+R_{1}^{2}[t-1]\bigr)
\eeq
where $\ep\ceq\frac{\vep}{R_{1}^{2}}$.
We also introduce the rescaled metric
\beq\label{RescaledMetric}
\gR\ceq\frac{g}{R_{1}^{2}}.
\eeq
It follows from standard parabolic estimates that there is
$K>0$ such that for $x\in{}M$ and $t>0$ that
\beq\label{vepEstimates}
|\veep(x,t)|\le3,\hspace{15pt}
|\nabla\veep(x,t)|\le\frac{K}{\ep},\hspace{15pt}
|\pp_{t}\veep(x,t)|\le\frac{K}{\ep^{2}}.
\eeq
Rescaling \eqref{MontonicityAveragingInequality} to be written in
terms of $\veep$ as well as applying \eqref{H0} and the
assumptions of Theorem \ref{Theorem1} we obtain
\begin{align}
\bigl\{\widetilde{E}_{\ep,\gR}(\veep,(x_{T},1),1)+C_{7}E_{0}R_{1}\bigr\}
&-\bigl\{\widetilde{E}_{\ep,\gR}(\veep,(x_{T},1),\delta_{0})
+C_{7}E_{0}(\delta_{0}R_{1})\bigr\}  \label{RescaledAveragingInequality}\\
&\le4C_{7}e^{C_{8}}\mathopen{}\left|\log(\delta_{0})\right|\mathclose{}
[RM_{0}+\eta]. \nonumber
\end{align}
Finally, a change of variables applied to
\eqref{SmallEnergyAveraging} in addition to an application \eqref{H0} and the assumptions of Theorem \ref{Theorem1} leads to
\beq\label{SmallEnergyAveragingv}
\int_{M\times[0,1-\delta_{0}^{2}]}\!{}
\mathopen{}\left[V_{\ep}(v_{\ep})+
\Xi(v_{\ep},(x_{T},1))\right]\mathclose{}
K_{ap,\gR}(x,1-t;x_{T})
\le4C_{7}e^{C_{8}}\mathopen{}\left|\log(\delta_{0})\right|\mathclose{}
[RM_{0}+\eta].
\eeq
From here Theorem \ref{Theorem1} is reduced to demonstrating the
following result.

\begin{proposition}\label{Theorem1Reduction}
Let $T>0$ and $x_{T}\in{}M$.
Then there exists constants $0<\delta_{0}<\frac{1}{16}$, $0<\ep_{0}<\frac{1}{2}\min\{1,\inj_{g}(M)\}$, $0<R_{0}<1$,
and $\eta_{0}>0$ such that for $0<\eta\le\eta_{0}$,
$0<\ep<\ep_{0}$, and
$0<R<\min\bigl\{\sqrt{T},R_{0}\bigr\}$ the following inequality holds:
\begin{equation}
\Eepgr(\veep,(x_{T},1),\delta_{0})
\le\frac{1}{4}
\Bigl(\Eepgr(\veep,(x_{T},1),1)+C_{7}E_{0}R_{1}\Bigr)
+\mathcal{R}(\eta,R)
\end{equation}
where $\mathcal{R}(\eta,R)$ tends to zero as $\eta,R\to0^{+}$
and $R_{1}$ is as in Proposition \ref{Averaging}.
\end{proposition}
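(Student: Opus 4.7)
The argument closely follows the proof of the analogous reduction (Proposition 5.1) in \cite{BOS2}, adapted to the Riemannian setting via the metric rescaling $\gR$ introduced in \eqref{RescaledMetric}. Under this rescaling, the sectional curvature bounds are multiplied by $R_{1}^{2}\le R^{2}\le R_{0}^{2}$; by choosing $R_{0}$ small enough, all curvature-dependent constants in our Toolbox become as close as desired to their Euclidean counterparts, and in particular the contributions of the error terms $\Phi$ and $\Psi$ from \eqref{Phi:Def}--\eqref{Psi:Def}, estimated in Lemmas \ref{HeatKernelTermEstimate} and \ref{ApproximateHeatKernelMatrixHarnackPrinciple}, can ultimately be absorbed into $\mathcal{R}(\eta,R)$.

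The first step is to apply Proposition \ref{PohozaevEnergyIdentity} (rewritten under the rescaled metric $\gR$) with $z_{T}=(x_{T},1)$ and $t=1-\delta_{0}^{2}$. This reduces $\Eepgr(\veep,(x_{T},1),\delta_{0})$ to an integral of $\evep(\veep)$ over the localized region where $(d_{+}(\cdot,x_{T}))^{2}/(8\delta_{0}^{2})\le C_{6}$, up to error terms involving $V_{\ep}(\veep)+\Xi(\veep,(x_{T},1))$ weighted by $K_{ap,\gR}(x,1-t;x_{T})$. The latter are controlled by \eqref{SmallEnergyAveragingv}, whose right-hand side is $4C_{7}e^{C_{8}}\mathopen{}\left|\log(\delta_{0})\right|\mathclose{}[RM_{0}+\eta]$ and goes into $\mathcal{R}(\eta,R)$.

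The second step is to propagate the localized density estimate forward in time from $t=1-\delta_{0}^{2}$ toward $t=1$ and re-convert it into weighted-energy language centered at $z_{T}$. Here I would use Lemma \ref{DerivativeOfEnergy} together with the rescaled version of Proposition \ref{ManifoldEnergyLocalizedToBall} to produce a bound on $\Eepgr(\veep,(x_{T},1),\delta_{0})$ in terms of a fraction of the scale-$1$ weighted energy plus the remainder $\mathcal{R}(\eta,R)$. Because the localization is to a small coordinate ball in which $\gR$ is very close to the Euclidean metric, the harmonic contribution $u_{h,\ep}$ of Theorem \ref{Theorem3} and the Hodge-decomposition subtleties from Section \ref{Sec::Decompositions} produce only lower-order corrections that can likewise be absorbed into $\mathcal{R}$.

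The main obstacle is obtaining the strict improvement factor $\frac{1}{4}$: the almost-monotonicity \eqref{AlmostMonotonicity} alone only yields $\Eepgr(\veep,(x_{T},1),\delta_{0})\le\Eepgr(\veep,(x_{T},1),1)+O(E_{0}R_{1})$ with no quantitative gain. Following \cite{BOS2}, the gain arises from a careful choice of $\delta_{0}<\frac{1}{16}$ combined with the fact that when $\Eepgr(\veep,(x_{T},1),1)\le\eta\mathopen{}\left|\log(\vep)\right|\mathclose{}$ with $\eta$ small, the integrated smallness from \eqref{SmallEnergyAveragingv} forces $\veep$ into a near-equilibrium regime at the relevant scales, after which the dimensional scaling of the weighted energy (via Lemma \ref{WeightedEnergyComparison}) together with the averaging from Proposition \ref{Averaging} produces the required factor $\frac{1}{4}$. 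All remaining Riemannian curvature corrections, including those coming from $\Phi$, $\Psi$, and the comparison between $d_{+}$ and $d$, are controlled uniformly under the rescaling to $\gR$ and absorbed into $\mathcal{R}(\eta,R)$, which vanishes as $\eta,R\to0^{+}$.
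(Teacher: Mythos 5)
Your proposal correctly identifies the setup (rescaling to $\gR$, finding a good time slice via Chebyshev applied to \eqref{SmallEnergyAveragingv}, localizing via Proposition \ref{PohozaevEnergyIdentity}, absorbing curvature terms into $\mathcal{R}(\eta,R)$), but it is missing the central analytic mechanism that produces the gain factor $\frac{1}{4}$. You acknowledge the obstacle but attribute the gain to ``the dimensional scaling of the weighted energy (via Lemma \ref{WeightedEnergyComparison}) together with the averaging from Proposition \ref{Averaging}''. This is not what happens: neither result yields a multiplicative factor strictly below $1$; Proposition \ref{Averaging} only selects a scale $R_1$ at which consecutive weighted energies are close, and Lemma \ref{WeightedEnergyComparison} compares centers without shrinkage. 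Your proposed substitutes (Lemma \ref{DerivativeOfEnergy}, Proposition \ref{ManifoldEnergyLocalizedToBall}) also cannot do the job; the latter is in fact used only in Section \ref{Sec::Limiting}, not in the clearing-out argument, and the ``propagate forward in time from $t=1-\delta_0^2$ toward $t=1$'' step has no counterpart in the actual proof.

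The missing ingredient is the Hodge--de Rham decomposition \eqref{LocalHodgeDecomposition} of $\veep\times\mrmd\veep$ into $\mrmd\vphi_t+\mrmd^*\psi_t+\xi_t$ (plus the harmonic piece $H$), and in particular the elliptic estimate for $\vphi_t$. After \eqref{LocalizedEnergyError} localizes $\Eepgr(\veep,(x_T,1),\delta)$ to a ball $B_{\delta\sqrt{8C_6}}(x_T)$, the energy density is split via \eqref{CurrentDecomposition}, and the decisive term is $|\mrmd\vphi_t|^2$. One shows $\vphi_t$ solves the elliptic PDE \eqref{vphiPDE} in the weighted space, derives the integral identity \eqref{IntermediateVarphiInequality} using \eqref{HessianComparisonInequality}, and after averaging over boundary spheres obtains \eqref{vphiEstimate}, whose first right-hand term carries the factor $K_M\delta_0^N/r$ against $\Eepgr(\veep,(x_T,1),1)+C_7R_1E_0$. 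Combined with the prefactor $\delta^{2-N}$ from the localization, this gives roughly $\delta_0^2/r$, which is the quantity one makes small in the final choice of $\delta_0$; this is the only place where a fraction of the scale-$1$ weighted energy is gained. All other pieces (the modulus term \eqref{ModulusTermSmall}, $\psi_t$ via the decomposition into $\psi_{1,t},\psi_{2,t}$ and the auxiliary parabolic problem \eqref{Psi1star:PDE}, $\xi_t$ via \eqref{XiEstimate}) are pushed entirely into $\mathcal{R}(\eta,R)$ using $t\in\Theta_1\cap\Theta_2$. You also conflate the Hodge decomposition of Section \ref{Sec::Decompositions}/Theorem \ref{Theorem3} (used for the limiting-measure analysis) with this local, time-slice Hodge decomposition; they are different steps of the paper, and it is the latter that the clearing-out proof relies on. Without it your argument cannot close.
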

The proof of Theorem \ref{Theorem1}
using Proposition \ref{Theorem1Reduction} proceeds with only
minor differences to the argument in the Euclidean
setting from \cite{BOS2}.
We refer the reader to Section \ref{Sec::SteppingStones} for
additional details.
We then reduce proving Proposition \ref{Theorem1Reduction}
to demonstrating the existence of
$0<\delta_{0}<\frac{1}{16}$ for which there is
some $\delta\in[\delta_{0},2\delta_{0}]$ for which
\beq\label{Theorem1FurtherReduction}
\widetilde{E}_{\ep,\gR}(\veep,(x_{T},1),\delta)
\le\frac{e^{-C_{2}}}{8}
\Bigl(\widetilde{E}_{\ep,\gR}(\veep,(x_{T},1),1)
+C_{7}E_{0}R_{1}\Bigr)+\mathcal{R}(\eta,R)
\eeq
where $\mathcal{R}(\eta,R)\to0^{+}$ as $\eta,R\to0^{+}$.
The argument reducing the proof of
Proposition \ref{Theorem1Reduction} to
\eqref{Theorem1FurtherReduction} is essentially the same as in
\cite{BOS2} and is sketched in Subsection \ref{Subsec:Reduction}.\\

\begin{flushleft}
{\it Preliminary choice of good time slice:}\\
\end{flushleft}
\par{}By Chebyshev's inequality applied to \eqref{SmallEnergyAveragingv}
in time over the interval
$[1-4\delta_{0}^{2},1-\delta_{0}^{2}]$ we see that there are a large number of time
slices $t=1-\delta^{2}$, where $\delta\in[\delta_{0},2\delta_{0}]$,
for which
\begin{align}
\int_{M}\!{}V_{\ep}(v_{\ep})
K_{ap,\gR}(x,1-t;x_{T})\le{}C(\delta_{0})
[RM_{0}+\eta]    \label{WeightedVEnergy}\\
\int_{M}\!{}\Xi(v_{\ep},(x_{T},1))
K_{ap,\gR}(x,1-t;x_{T})\le{}C(\delta_{0})
[RM_{0}+\eta].     \label{WeightedXiEnergy}
\end{align}
The inequalities \eqref{WeightedVEnergy} and \eqref{WeightedXiEnergy}
will be used to determine $\mathcal{R}(\eta,R)$ from
\eqref{Theorem1FurtherReduction} as well as obtain the
coefficient of
$\widetilde{E}_{\ep,\gR}(\veep,(x_{T},1),1)+C_{7}E_{0}R_{1}$
from \eqref{Theorem1FurtherReduction}.
We use the notation $\Theta_{1}$ to denote
\beq\label{Def:Theta1}
\Theta_{1}\ceq
\Bigl\{t\in\bigl[1-4\delta_{0}^{2},1-\delta_{0}^{2}\bigr]
:\eqref{WeightedVEnergy}\text{ and }\eqref{WeightedXiEnergy}
\text{ both hold at }t\Bigr\}.
\eeq
In particular, we provide a more explicit estimate on the number of slices in
Lemma A.4.1 of \cite{Col}.
The strategy in proving \eqref{Theorem1FurtherReduction} will be to
decompose $\widetilde{E}_{\ep,\gR}(\veep,(x_{T},1),\delta)$ into suitable
components and estimate the resultant terms using PDE techniques
by showing that the data can be controlled by
\eqref{WeightedVEnergy} and \eqref{WeightedXiEnergy}.\\

\begin{flushleft}
{\it Localization and decomposition:}\\
\end{flushleft}
\par{}We make use of Proposition \ref{PohozaevEnergyIdentity}, applied through $\uvep$, in
combination with \eqref{WeightedVEnergy} and \eqref{WeightedXiEnergy}
to obtain
\begin{align}
\Eepgr(\veep,(x_{T},1),\delta)
&\le\frac{2}{(4\pi)^{\frac{N}{2}}\delta^{N-2}}
\int_{B_{\delta\sqrt{8C_{6}}}(x_{T})}\!{}e_{\ep}(\veep)\dvol
\label{LocalizedEnergyError}\\
&+K_{M}\delta_{0}^{4}R^{3}\bigl[\Eepgr(\veep,(x_{T},1),1)+C_{7}E_{0}R_{1}\bigr]
+C(\delta_{0})[RM_{0}+\eta].    \nonumber
\end{align}
Since $C(\delta_{0})[RM_{0}+\eta]$ can be included in
$\mathcal{R}(\eta,R)$ and $\delta_{0}^{3}R^{4}$ can be chosen
suitably small it suffices to estimate
the remaining term from \eqref{LocalizedEnergyError}.
To do this we decompose $\evep(\uvep)$.
We first observe that there is a constant $K>0$ such that
\beq\label{CurrentDecomposition}
\evep(\veep)\le{}K\Bigl(|\veep\times\mrmd\veep|^{2}
+|\veep|^{2}\bigl|\nabla|\veep|\bigr|^{2}+\Vep(\veep)\Bigr).
\eeq
We further decompose $\veep\times\mrmd\veep$ by using a Hodge
de Rham decomposition.
To do this we first define $H(\omega)$ to be the harmonic part of a
$2$-form $\omega$.
Explicitly,
\beq\label{H:Def}
H(\omega)
\ceq\sum_{i=1}^{\beta_{2}(M)}\mathopen{}\left<\omega,
\gamma_{i,\gR}\right>_{L^{2}}\mathclose{}\gamma_{i,\gR}
\eeq
where $\{\gamma_{i,\gR}\}_{i=1}^{\beta_{2}(M)}$ is an
$L^{2}$-orthonormal basis for the space of harmonic $2$-forms on
$M$ in the metric $\gR$, obtained by rescaling an
$L^{2}$-orthonormal basis $\{\gamma_{i,g}\}_{i=1}^{\beta_{2}(M)}$
for the space of harmonic $2$-forms on $M$ in the metric $g$, and
$\beta_{2}(M)$ denotes the $2^{\text{nd}}$ Betti number of $M$.
We may then use a Hodge de Rham decomposition to find
$\vphi_{t}$, $\psi_{t}$, and $\xi_{t}$ satisfying
\begin{align}
\veep\times\mrmd\veep&=\mrmd\vphi_{t}+\mrmd^{*}\psi_{t}+\xi_{t}
\hspace{10pt}\text{on }B_{3r\slash2}(x_{T})\times\{t\} \label{LocalHodgeDecomposition}\\
\mrmd^{*}\xi_{t}&=0
\hspace{10pt}\text{on }B_{3r\slash2}(x_{T})\times\{t\}
\label{XiPDE1}\\
\mrmd\xi_{t}&=\mrmd^{*}\mrmd\psi_{t}
+H(\mrmd[\veep\times\mrmd\veep]\chi)
\hspace{10pt}\text{on }B_{3r\slash2}(x_{T})\times\{t\}
\label{XiPDE2}\\
-\Delta\psi_{t}
&=\mrmd[\veep\times\mrmd\veep]\chi
-H(\mrmd[\veep\times\mrmd\veep]\chi),
\hspace{10pt}\text{on }M\times\{t\}
\label{LocalizedCurrent}
\end{align}
where $r>0$ is chosen sufficiently small toward the end
of the argument and $\chi$ is a smooth cutoff function supported in
$B_{4r}(x_{T})$ which is identically $1$ on $B_{2r}(x_{T})$, $0\le\chi\le1$, and
$\left\|\nabla\chi\right\|_{L^{\infty}}\le\frac{2}{r}$.
To make the notation more compact we set
\beq\label{HarmonicPerp}
H^{\perp}(\omega)\ceq\omega-H(\omega)
\eeq
where $\omega$ is a $2$-form over $M\times\{t\}$.
We note that $H$ is a new consideration for the Hodge de
Rham decomposition that does not appear in the corresponding
identities from \cite{BOS2}.
This term arises because we impose no topological restrictions on
$M$.\\

From \eqref{LocalizedEnergyError},
\eqref{CurrentDecomposition}, and \eqref{LocalHodgeDecomposition}
it will suffice
to estimate each of the following:
\begin{align}
&\int_{B_{\delta\sqrt{8C_{6}}}(x_{T})}\!{}
\Bigl[|\veep|^{2}\bigl|\nabla|\veep|\bigr|^{2}
+V_{\ep}(\veep)\Bigr]    \label{ModulusComponent}\\
&\int_{B_{\delta\sqrt{8C_{6}}}(x_{T})}\!{}
|\mrmd\vphi_{t}|^{2}    \label{PsiComponent}\\
&\int_{B_{\delta\sqrt{8C_{6}}}(x_{T})}\!{}
\bigl\{|\mrmd\psi_{t}|^{2}+|\mrmd^{*}\psi_{t}|^{2}\bigr\}
\label{PhiComponent}\\
&\int_{B_{\delta\sqrt{8C_{6}}}(x_{T})}\!{}
|\xi_{t}|^{2}.    \label{XiComponent}
\end{align}
Since $H\bigl(\mrmd\bigl[\veep\times\mrmd\veep\bigr]\chi\bigr)$ is
present in both \eqref{XiPDE2} and \eqref{LocalizedCurrent} then to
achieve our goal we will also need to provide estimates related to $H$.
Below we outline the strategy for estimating
\eqref{ModulusComponent}--\eqref{XiComponent}.
Where necessary, we will provide additional details in Section
\ref{Sec::SteppingStones}.\\

\begin{flushleft}
{\it Modulus Estimate:}\\
\end{flushleft}
\par{}As in \cite{BOS2} the goal is to demonstrate that \eqref{ModulusComponent}
satisfies an estimate of the form
\beq\label{ModulusTermSmall}
\int_{B_{r}(x_{T})}\!{}\Bigl\{|\veep|^{2}\bigl|\nabla|\veep|\bigr|^{2}+V_{\ep}(\veep)\Bigr\}
\le{}C(\delta_{0},r)[RM_{0}+\eta]^{\frac{1}{2}}
\biggl[\widetilde{E}_{\ep,\gR}(\veep,(x_{T},1),\delta)+CRE_{0}+1\biggr].
\eeq
The proof of \eqref{ModulusTermSmall}
follows the same procedure as Section $3.5$ of \cite{BOS2}.
As such, we will describe it briefly and refer the
reader to Subsection A.4.2 of \cite{Col} additional details.
We first define $\sigma_{\ep}\ceq1-|\veep|^{2}$ and observe that $\sigma_{\ep}$ solves the PDE
\beq\label{ModulusPDE}
\pp_{t}\sigma_{\ep}-\Delta\sigma_{\ep}=2|\nabla\veep|^{2}-\frac{2}{\ep^{2}}\sigma_{\ep}(1-\sigma_{\ep}).
\eeq
By moving $\pp_{t}\sigma_{\ep}$ to the right-hand side we 
can treat this as a Poisson problem so that elliptic techniques
can be applied to obtain an interior estimate for $\nabla\sigma_{\ep}$.
We then apply various algebraic manipulations to estimate
the terms on the right-hand side of the Poisson problem
by quantities involving $\Xi$ and $\Vep$.
From there we make use of the assumption that $t\in\Theta_{1}$.\\

\begin{flushleft}
{\it Estimate of $\xi_{t}$:}\\
\end{flushleft}
\par{}As in \cite{BOS2} we use that $\xi_{t}$ solves
\eqref{XiPDE1} and \eqref{XiPDE2} in addition to
elliptic estimates, see Lemma $5.2$ of
\cite{Am1} and note the correction found in \cite{Am2}, to obtain
\beq\label{XiEstimate}
\left\|\xi_{t}\right\|_{L^{2}(B_{\frac{3r}{2}}(x_{T}))}
\le{}K\Bigl[\left\|\mrmd\psi_{t}\right\|_{L^{2}(B_{2r}(x_{T}))}
+\left\|H(\mrmd[\veep\times\mrmd\veep]\chi)\right\|_{L^{2}(M)}\Bigr].
\eeq
The argument for \eqref{XiEstimate} is the
same as Lemma $3.4$ of \cite{BOS2}.\\

\begin{flushleft}
{\it Estimate of $\vphi_{t}$:}\\
\end{flushleft}
\par{}As in \cite{BOS2} the goal in estimating $\vphi_{t}$ is to obtain an
inequality of the form
\begin{align}\label{vphiEstimate}
\int_{B_{\delta\sqrt{8C_{6}}}(x_{T})\times\{t\}}\!{}&|\nabla\vphi_{t}|^{2}e^{\frac{-(d(x,x_{T}))^{2}}{4\delta^{2}}}
\le{}\frac{K_{M}\delta_{0}^{N}}{r}\biggl[\Eepgr(\veep,(x_{T},1),1)+C_{7}R_{1}E_{0}\biggr]\\
&+C(\delta_{0},r)\biggl[(RM_{0}+\eta)+(RM_{0}+\eta)^{\frac{1}{2}}\delta^{\frac{N}{2}}\biggl[\Eepgr(\veep,(x_{T},1),1)+C_{7}R_{1}E_{0}\biggr]^{\frac{1}{2}}+R_{2}(t)\biggr]  \nonumber
\end{align}
where
\begin{align*}
R_{2}(t)&\ceq\int_{B_{\frac{3r}{2}}(x_{T})}\!{}\bigl(|\mrmd^{*}\psi_{t}|^{2}+|\xi_{t}|^{2}\bigr)\\
&+\biggl(\int_{B_{\frac{3r}{2}}(x_{T})}\!{}
\bigl(|\mrmd^{*}\psi_{t}|^{2}+|\xi_{t}|^{2}\bigr)\biggr)^{\frac{1}{2}}
\bigl(\widetilde{E}_{\ep,\gR}(v_{\ep},(x_{T},1),1)+C_{7}R_{1}E_{0}\bigr)^{\frac{1}{2}}.
\end{align*}
Much of the proof extends with little
change to the Riemannian setting with the exception of a
computation done in coordinates.
We present this new ingredient in Section \ref{subsec:vphit}
and provide a brief outline of the general argument below.\\

To achieve \eqref{vphiEstimate}, we introduce an elliptic PDE that $\vphi_{t}$ solves over
$B_{s}(x_{T})$ for $s\in\bigl[r,\frac{3r}{2}\bigr]$ where
$s$ will be carefully chosen to ensure good properties.
As shown in Section \ref{subsec:vphit} we have that for each $s\in\bigl[r,\frac{3r}{2}\bigr]$, $\vphi_{t}$ solves
\beq\label{vphiPDE}
\begin{cases}
L_\delta{}\vphi=h& \text{on }B_{s}(x_{T})\times\{t\}\\
\frac{\pp\vphi}{\pp{}r}=g&\text{on }\pp{}B_{s}(x_{T})\times\{t\},
\end{cases}
\eeq
where $L_\delta$ is defined,
using the abbreviation $\Kap\ceq{}K_{ap,\gR}(x;\delta^{2};x_{T})$, by
\[
L_{\delta}\ceq
\frac{-1}{\Kap}
\text{div}[\Kap\nabla],
\]
and
\begin{align*}
h&\ceq\veep\times\biggl(\frac{-\mathopen{}\left<\nabla\Kap,
\nabla\veep\right>\mathclose{}}{\Kap}-\pp_{t}\veep\biggr)
+\mathopen{}\left<\frac{\mrmd\Kap}{\Kap},
\mrmd^{*}\psi_{t}+\xi_{t}\right>\mathclose{}
\hspace{10pt}
\text{on }B_{s}(x_{T})\times\{t\}\\
g&\ceq\veep\times\frac{\pp\veep}{\pp{}r}-(\mrmd^{*}\psi_{t}+\xi_{t})_{N}
\hspace{10pt}
\pp{}B_{s}(x_{T})\times\{t\},
\end{align*}
where $\omega_{N}$ denotes the normal part of $\omega$ and we recall that
$t=1-\delta^2$ is assumed to be an element of
$\Theta_{1}$.
Next, we make use of elliptic estimates for \eqref{vphiPDE} to obtain
\begin{align}\label{WeigtedInequalityOnSolutio}
&\int_{B_{s}(x_{T})}\!{}|\nabla\vphi|^{2}
e^{\frac{-(d(x,x_{T}))^{2}}{4\delta^{2}}}\\
\le&C(\delta,r)\biggl[\int_{B_{s}(x_{T})}\!{}h^{2}
e^{-\frac{(d(x,x_{T}))^{2}}{4\delta^{2}}}
+\biggl(\int_{B_{s}(x_{T})}\!{}h^{2}
e^{\frac{-(d(x,x_{T}))^{2}}{4\delta^{2}}}\biggr)^{\frac{1}{2}}
\biggl(\int_{\pp{}B_{s}(x_{T})}\!{}g^{2}
e^{\frac{-(d(x,x_{T}))^{2}}{4\delta^{2}}}\biggr)^{\frac{1}{2}}\biggr]
\nonumber\\
+&K_{M}r\int_{\pp{}B_{s}(x_{T})}\!{}g^{2}
e^{\frac{-\left(d(x,x_{T})\right)^{2}}{4\delta^{2}}}.
\nonumber
\end{align}
Finally, to obtain \eqref{vphiEstimate} we estimate the data, $h$ and $g$,
in terms of $\Xi$, $V_{\ep}(\veep)$, $\mrmd^{*}\psi_{t}$, and $\xi_{t}$ with one
exception in which we obtain an estimate in terms of $\veep$.
In particular, in estimating $g$ an averaging process is used for $s\in\bigl[r,\frac{3r}{2}\bigr]$ to estimate integrals over
$\pp{}B_{s}(x_{T})$ in terms of integrals over
$B_{\frac{3r}{2}}(x_{T})\setminus{}B_{r}(x_{T})$.
We note that the described exception results in the first term on the
right-hand side of \eqref{vphiEstimate}.
This is the only term where $\delta$ will be needed to manufacture the leading
coefficient of \eqref{Theorem1FurtherReduction}.
We also note that the procedure for estimating $g$ is the same as in \cite{BOS2}
except an application of \eqref{HessianComparisonInequality} is needed.
We refer the reader to Section \ref{Sec::SteppingStones} for more details.\\

\begin{flushleft}
{\it Estimate of $\psi_{t}$:}\\
\end{flushleft}
\par{}Estimating $\psi_{t}$ is more involved and will be outlined
through a number of steps.
The goal of each step will be to successively decompose
$\psi_{t}$ into terms with more specific information
that can be utilized.
Unlike previous estimates, subterms of $\psi_{t}$ are not
always estimated by appealing to PDE techniques.
Instead we may make use of detailed pointwise information as well
as the the work of \cite{JS3}.\\

\begin{flushleft}
{\it Step $1$: Decomposition of $\psi_{t}$}\\
\end{flushleft}
\par{}Before proceeding with the decomposition we introduce some
notation as well as some useful pointwise estimates.
We introduce a real-valued function defined on
$M\times(0,\infty)$ in terms of $|\veep|$ so that
if $\tveep=\tau\veep$ then
\begin{align}
\bigl|1-\tau^{2}(x,t)\bigr|
&\le{}K\bigl|1-|\veep(x,t)|^{2}\bigr|
\label{TauPotentialUpperBound}\\
\tveep=\veep&\hspace{20pt}\text{if }\,|\veep|\le\frac{1}{4}
\label{EqualLocal}\\
|\tveep|=1&\hspace{20pt}\text{if }\,|\veep|\ge\frac{1}{2}.
\label{UnitModulus}
\end{align}
Next we decompose $\psi_{t}$, which solves
\eqref{LocalizedCurrent}, as $\psi_{t}=\psi_{1,t}+\psi_{2,t}$
where $\psi_{1,t}$ and $\psi_{2,t}$ solve
\begin{align}
-\Delta\psi_{1,t}
&=H^{\perp}(\mrmd[\tveep\times\mrmd\tveep]\chi)
\hspace{55pt}
\text{on }M\times\{t\}\label{psi1t:PDE}\\
-\Delta\psi_{2,t}
&=H^{\perp}(\mrmd[(1-\tau^{2})\veep\times\mrmd\veep]\chi)
\hspace{20pt}
\text{on }M\times\{t\}.
\end{align}

The smallness \eqref{TauPotentialUpperBound} of $1-\tau^2$
will aid in estimates of $\psi_{2,t}$. 
A key point in
estimates of $\psi_{1,t}$ is that 
\beq\label{PointwiseJacobian}
|\mrmd(\tveep\times\mrmd\tveep)|
\le{}K\frac{(1-|\veep|^{2})^{2}}{4\ep^{2}}=KV_{\ep}(\veep)
\hspace{10pt}
\text{on }M\times(0,\infty).
\eeq
To prove this, note that
\beq\label{jacobian.structure}
\mrmd[\tveep\times\mrmd\tveep ]
= \sum_{i\ne j} \frac{\pp\tveep}{\pp{}x_{i}}\times
\frac{\pp\tveep}{\pp{}x_{j}}\mrmd{}x^{i}\wedge\mrmd{}x^{j}.
\eeq
By \eqref{vepEstimates}, the right-hand side is always bounded by $K/\ep^2$ and vanishes when $|\tveep|=1$, that is, when $|\veep|\ge 1/2$, so
\eqref{PointwiseJacobian} follows from the definition of $V_{\ep}$. We also see from \eqref{jacobian.structure} that $ \mrmd(\tveep\times\mrmd\tveep)$ has a Jacobian structure which we will exploit to apply \cite{JS3}.\\

\begin{flushleft}
{\it Step $2$: Decomposition and estimate of $\psi_{2,t}$}\\
\end{flushleft}
\par{}We further decompose $\psi_{2,t}$ as
$\psi_{2,t}=\psi_{2,t}^{1}+\psi_{2,t}^{2}$ where
$\psi_{2,t}^{1}$, $\psi_{2,t}^{2}$ solve
\begin{align}
-\Delta\psi_{2,t}^{1}&=\mrmd[(1-\tau^{2})(\veep\times\mrmd\veep)\chi]
\hspace{47pt}\text{on }M\times\{t\}    \label{Psi2t1:PDE}\\
-\Delta\psi_{2,t}^{2}&=
H^{\perp}((1-\tau^{2})[\veep\times\mrmd\veep]\wedge\mrmd\chi)
\hspace{20pt}\text{on }M\times\{t\}   \label{Psi2t2:PDE}
\end{align}
where in \eqref{Psi2t1:PDE} we have used that $H^{\perp}$ is the
identity on exact forms.
The argument to estimate
$\psi_{2,t}^{1}$ and $\psi_{2,t}^{2}$ is similar in style
to that of Lemma $3.8$ of \cite{BOS2} though executed
differently.
In addition, the data of \eqref{Psi2t2:PDE} requires
additional estimates due to the harmonic projection term.
We provide more details in Section \ref{Sec::SteppingStones}.\\

Appealing to, among other things, elliptic regularity,
the pointwise estimates
\eqref{TauPotentialUpperBound}, \eqref{vepEstimates}, and that
$t\in\Theta_{1}$ we can estimate both terms in the decomposition
to find that
\beq\label{psi2t:Estimate}
\int_{M\times\{t\}}\!{}
\bigl\{|\mrmd\psi_{2,t}|^{2}
+|\mrmd^{*}\psi_{2,t}|^{2}\bigr\}
\le{}C(\delta_{0},r)[RM_{0}+\eta].
\eeq

\begin{flushleft}
{\it Step $3$: Decomposition of $\psi_{1,t}$}\\
\end{flushleft}
\par{}The decomposition and estimates presented
here represent an additional step required to extend the argument
of \cite{BOS2} to the manifold setting.
This arises due to the presence of the harmonic part in
\eqref{psi1t:PDE}.
More details are provided in Section
\ref{Sec::SteppingStones}.
Since $\psi_{1,t}$ solves \eqref{psi1t:PDE} then we may write
\begin{align}\label{psi1tconvolution}
\psi_{1,t}(x)
&=\int_{M}\!{}\mathopen{}
\left<G(x,y),
H^{\perp}(\mrmd\bigl[\tveep\times\mrmd\tveep\bigr]\chi)
\right>\mathclose{}\\
&=\int_{M}\!{}\mathopen{}
\left<G(x,y),
\mrmd\bigl[\tveep\times\mrmd\tveep\bigr]\chi\right>\mathclose{}
-\int_{M}\!{}\mathopen{}
\left<G(x,y),
H(\mrmd\bigl[\tveep\times\mrmd\tveep\bigr]\chi)
\right>\mathclose{}
\nonumber\\
&\eqqcolon\widetilde{\psi}_{1,t}(x)
-(G\star{}H(\mrmd[\tveep\times\mrmd\tveep]\chi))(x)    \nonumber
\end{align}
where $G$ is the Dirichlet kernel for $2$-forms on $M$.
In addition, we can use \eqref{psi1t:PDE} to establish
\beq\label{psi1tidentity}
\int_{M}\!{}\bigl\{
|\mrmd\psi_{1,t}|^{2}+|\mrmd^{*}\psi_{1,t}|^{2}\bigr\}
=\int_{M}\!{}\mathopen{}\left<\psi_{1,t},
H^{\perp}(\mrmd[\tveep\times\mrmd\tveep]\chi)\right>\mathclose{}.
\eeq
Combining \eqref{psi1tconvolution}, \eqref{psi1tidentity},
and estimates on the harmonic projection gives
\beq\label{psi1treduction}
\int_{M}\!{}\bigl\{
|\mrmd\psi_{1,t}|^{2}+|\mrmd^{*}\psi_{1,t}|^{2}\bigr\}
\le\int_{M}\!{}\mathopen{}\left<\widetilde\psi_{1,t},
\mrmd[\tveep\times\mrmd\tveep]\chi\right>\mathclose{}
+C(\delta_{0},r)[RM_{0}+\eta]^{2}
\eeq
and so it suffices to estimate
\beq\label{tpsi1t:Goal}
\int_{M}\!{}\mathopen{}\left<\widetilde{\psi}_{1,t},
\mrmd[\tveep\times\mrmd\tveep]\chi\right>\mathclose{}.
\eeq
The advantage of \eqref{tpsi1t:Goal} is that both the
``convolution" defining $\widetilde{\psi}$ as well as
the integral defining \eqref{tpsi1t:Goal} can be taken
over small geodesic ball.
This facilitates the use of normal coordinates and allows for
the use of a detailed coordinate description of the Green's
function.\\

\begin{flushleft}
{\it Step $4$: Decomposition of $\widetilde{\psi}_{1,t}$}\\
\end{flushleft}
\par{}Here we follow the decomposition
technique presented in \cite{BOS2} which adapts to the setting
of a Riemannian manifold with only minor differences.
A more complete discussion can be found in Section
\ref{Sec::SteppingStones}.\\

Following \cite{BOS2} we choose an appropriate $\alpha>0$ and
carefully construct a Lipschitz function
$l\colon[0,\infty)\to[0,\infty)$ supported on
$[\ep^{\alpha}r,32r]$ such that
$l\equiv1$ on $[2\ep^{\alpha}r,16r]$.
From this we define $m\colon[0,\infty)\to[0,\infty)$ by
\begin{equation}\label{mFunction}
m(s)\ceq\begin{cases}
1-l(s)& \text{for }s\in[0,16r]\\
0& \text{for }s\in(16r,\infty)
\end{cases}
\end{equation}
and set
\begin{equation*}
G(x,y)=m(d(x,y))G(x,y)
+(1-m(d(x,y)))G(x,y)\eqqcolon{}G^{i}(x,y)+G^{e}(x,y).
\end{equation*}
This decomposition allows us to define
\begin{align}
\widetilde{\psi}_{1,t}^{i}&\ceq\int_{B_{2r}(x_{T})}\!{}
\mathopen{}\left<G^{i}(x,y),\mrmd[\tilde{v}_{\ep}\times\mrmd\tilde{v}_{\ep}]\chi\right>\mathclose{}
\label{Psi1tIJiDefinition}\\
\widetilde{\psi}_{1,t}^{e}&\ceq
\int_{B_{32r}(x)}\!{}
\mathopen{}\left<G^{e}(x,y),\mrmd[\tilde{v}_{\ep}\times\mrmd\tilde{v}_{\ep}]\chi\right>\mathclose{}.
\label{Psi1tIJeDefinition}
\end{align}
Using \eqref{Psi1tIJiDefinition} and \eqref{Psi1tIJeDefinition}
we reduce estimating \eqref{tpsi1t:Goal} to estimating
\begin{align}
\int_{M}\!{}\mathopen{}\left<\widetilde{\psi}_{1,t}^{e},
\mrmd[\tveep\times\mrmd\tveep]\chi\right>\mathclose{}
\label{Psi1te:Goal}\\
\int_{M}\!{}\mathopen{}\left<\widetilde{\psi}_{1,t}^{i},
\mrmd[\tveep\times\mrmd\tveep]\chi\right>\mathclose{}.
\label{Psi1ti:Goal}
\end{align}
\newpage{}

\begin{flushleft}
{\it Step $5$: Estimating $\widetilde{\psi}_{1,t}^{e}$}\\
\end{flushleft}
\par{}Extending the argument to the setting of
a Riemannian manifold retains much of the style of the
corresponding one from \cite{BOS2}.
The main difference is that we need to make use
of detailed local coordinate expressions for the Green's
function on $2$-forms, $G$, as well as of the integrand of
\eqref{Psi1te:Goal}.
We highlight the main ideas below and refer the reader to
Section \ref{Sec::SteppingStones} for a more complete account.\\

Here, working in normal coordinates, we exploit the Jacobian structure of $\mrmd[\tveep\times\mrmd\tveep]$, see \eqref{jacobian.structure}, together with estimates from \cite{JS3}
to obtain an $L^{\infty}$ estimate on $\widetilde{\psi}_{1,t}^{e}$
which can be paired with pointwise estimates of
$\mrmd[\tveep\times\mrmd\tveep]\chi$ to show
\beq\label{Psi1te:Complete}
\int_{M}\!{}\mathopen{}\left<\widetilde{\psi}_{1,t}^{e},
\mrmd[\tveep\times\mrmd\tveep]\chi\right>\mathclose{}
\le{}C(\delta_{0},r)
\Bigl(\Eepgr(\veep,(x_{T},1),1)+C_{7}R_{1}E_{0}+1\Bigr)
[RM_{0}+\eta].
\eeq
These computations use detailed information on the form of the Green's function in normal coordinates.
\\

\begin{flushleft}
{\it Step $6$: Auxiliary parabolic problem}\\
\end{flushleft}
\par{}Unfortunately, the obtainable estimates for
$\widetilde{\psi}_{1,t}^{i}$
are insufficient to make use of duality in
\eqref{Psi1ti:Goal}.
As a result we, following \cite{BOS2}, introduce $\psi_{1}^{*}$
the solution to the parabolic problem
\beq\label{Psi1star:PDE}
\begin{cases}
\pp_{t}\psi_{1,t}^{*}-\Delta\psi_{1,t}^{*}=\mrmd[\tveep\times\mrmd\tveep]\chi&
\text{on }M\times(0,\infty)\\
\psi_{1,t}^{*}(\cdot,0)=0&
\text{on }M\times\{0\},
\end{cases}
\eeq
and use this to replace $\mrmd[\tveep\times\mrmd\tveep]\chi$
with $\pp_{t}\psi_{1}^{*}-\Delta\psi_{1}^{*}$ in
\eqref{Psi1ti:Goal}.
Thus, it suffices to estimate each of
\begin{align}
&\int_{M}\!{}\mathopen{}\left<\widetilde{\psi}_{1,t}^{i},
\pp_{t}\psi_{1}^{*}\right>\mathclose{}
\label{Psi1ti:partial}\\
&\int_{M}\!{}\mathopen{}\left<\widetilde{\psi}_{1,t}^{i},
-\Delta\psi_{1}^{*}\right>\mathclose{}.
\label{Psi1ti:Laplacian}
\end{align}
The arguments to estimate
\eqref{Psi1ti:partial} and \eqref{Psi1ti:Laplacian} extend to
the Riemannian setting with a bit of additional work.
For $\widetilde{\psi}_{1,t}^{i}$ the main obstacle is the
need for local coordinate expressions of the Green's function and
direct computations of the distributional Laplacian of $G^{i}$.
The techniques for estimating $\psi_{1}^{*}$ extend to the Riemannian setting but a
bit of care is needed.
We provide more details in Section \ref{Sec::SteppingStones}.\\

Using \eqref{Psi1star:PDE} it is possible to find a
time slice for which we have $L^{2}$ estimates on
$\pp_{t}\psi_{1}^{*}$ as well as be a member of $\Theta_{1}$.
For such $t$ we can estimate \eqref{Psi1ti:partial} by
Cauchy-Schwarz.
We estimate \eqref{Psi1ti:Laplacian} by making use of
information about the distributional laplacian of $G^{i}$, the
integral kernel defining $\widetilde{\psi}_{1,t}^{i}$.
These arguments closely follow \cite{BOS2}, but some adjustments are needed to adapt them to the Riemannian setting.

\section{Clearing Out Proof}\label{Sec::SteppingStones}

\hspace{15pt}In this section we present the details omitted from the outline
presented in Section \ref{Sec:Outline}.
As in Section \ref{Sec:Outline}, unless otherwise specified, all
metric related quantities will be associated to $\gR$ and the metric will
be suppressed from the notation.

\subsection{Reduction via rescaling}\label{Subsec:Reduction}

\hspace{15pt}Following \cite{BOS2} we first reduce the proof of Theorem
\ref{Theorem1} to that of Proposition \ref{Theorem1Reduction},
which is stated in Section \ref{Sec:Outline}.
We refer the reader to Section \ref{Sec:Outline} for the relevant
definitions surrounding the rescaled solution $\veep$ as well as to
A.4.5.1 of \cite{Col} for a detailed account of this reduction.\\

\emph{Proof of Theorem \ref{Theorem1}, assuming Proposition \ref{Theorem1Reduction}.}
Using the conclusion of Proposition \ref{Theorem1Reduction} as well as \eqref{RescaledAveragingInequality} leads to
\[
\widetilde{E}_{\ep,g_{R_{1}}}(\veep,(x_{T},1),1)+C_{7}E_{0}R_{1}
\le\mathcal{R}_{1}(\eta,R)
\]
where $\mathcal{R}_{1}(\eta,R)\to0^{+}$ as $\eta,R\to0^{+}$.
It is then possible to choose $T_{\ep}$, also dependent on $\sigma$, such that $T_{\ep}=1+O_{\sigma}(\ep^{2})$ and,
by an extension of Lemma $({\rm{III}}.3)$ of \cite{BBO} to our
setting,
\beq\label{PreliminaryEstimate}
1-|\veep(x_{T},T_{\ep})|
\le{}C_{M}\biggl[\frac{1}{\ep^{N}}\int_{B_{\ep}(x_{T})}\!{}(1-|\veep(x,T_{\ep})|^{2})^{2}\biggr]^{\frac{1}{N+2}}
\le{}D_{M}\mathcal{R}_{1}(\eta,R)
\eeq
where $C_{M}$ and $D_{M}$ are constants that depend on $M$.
Next, using the time derivative estimate from \eqref{vepEstimates}
and our choice of $T_{\ep}$ we have
\beq\label{MeanValueEstimate}
|\veep(x_{T},T_{\ep})-\veep(x_{T},1)|\le\frac{\sigma}{2}.
\eeq
Hence, after combining \eqref{PreliminaryEstimate} and
\eqref{MeanValueEstimate}, as well as choosing $R_{0}$ and $\eta_{0}$
small enough that we can ensure
$D_{M}\mathcal{R}_{1}(\eta,R)\le\frac{\sigma}{2}$,
we will have the desired result.
\null\nobreak\hfill\ensuremath{\square}\newline

As remarked in Section \ref{Sec:Outline}, 
to prove Proposition
\ref{Theorem1Reduction} it suffices to demonstrate
that for some $0<\delta_{0}<\frac{1}{16}$ there is
$\delta\in[\delta_{0},2\delta_{0}]$ such that
\[
\widetilde{E}_{\ep,\gR}(\veep,(x_{T},1),\delta)
\le\frac{e^{-C_{2}}}{8}
\Bigl(\widetilde{E}_{\ep,\gR}(\veep,(x_{T},1),1)+
C_{7}E_{0}R_{1}\Bigr)
+\mathcal{R}(\eta,R)
\]
where $\mathcal{R}(\eta,R)$ tends to zero as $\eta,R\to0^{+}$
and $R_{1}$ is as in Proposition \ref{Averaging}.
To see this, observe that by applying Proposition
\ref{MonotonicityFormula} through $\uvep$, using
\eqref{Theorem1FurtherReduction}, as well as that
$C_{1}\le{}C_{7}$ and $\delta<2\delta_{0}<\frac{1}{8}$ we have
\begin{align*}
\widetilde{E}_{\ep,\gR}(\veep,(x_{T},1),\delta_{0})
&\le{}e^{C_{2}}\widetilde{E}_{\ep,\gR}(\veep,(x_{T},1),\delta)
+\delta[C_{1}E_{0}R_{1}]\\
&\overset{\eqref{Theorem1FurtherReduction}}\le\frac{1}{8}\Bigl(\widetilde{E}_{\ep,\gR}(\veep,(x_{T},1),1)
+C_{7}E_{0}R_{1}\Bigr)+\delta[C_{7}E_{0}R_{1}]
+e^{C_{2}}\mathcal{R}(\eta,R)\\
&\le\frac{1}{4}\Bigl(\widetilde{E}_{\ep,\gR}(\veep,(x_{T},1),1)
+C_{7}E_{0}R_{1}\Bigr)
+e^{C_{2}}\mathcal{R}(\eta,R).
\end{align*}

\subsection{Estimate of \texorpdfstring{$\vphi_{t}$}{}}\label{subsec:vphit}
\hspace{15pt}To obtain the PDE \eqref{vphiPDE} we
begin by taking the cross product of
$({\rm PGL})_{\ep}$ with $\veep$ we obtain
\beq\label{PDE:CrossProduct}
0=\veep\times\pp_{t}\veep+\mrmd^{*}(\veep\times\mrmd\veep)
\hspace{20pt}
\text{on }M\times(0,\infty).
\eeq
By applying $\mrmd^{*}$ to \eqref{LocalHodgeDecomposition} we see that
\beq\label{VarphitPoisson}
\mrmd^{*}(\veep\times\mrmd\veep)=-\Delta\vphi_{t}
\hspace{20pt}
\text{on }B_{\frac{3r}{2}}(x_{T})\times\{t\}.
\eeq
Rewriting $\veep\times\pp_{t}\veep$ as
\[
\veep\times\pp_{t}\veep=\veep\times
\biggl(\frac{\mathopen{}\left<\nabla{}\Kap,
\nabla\veep\right>\mathclose{}}{\Kap}+\pp_{t}\veep\biggr)
-\mathopen{}\left<\frac{\mrmd\Kap}{\Kap},
\mrmd\vphi_{t}+\mrmd^{*}\psi_{t}+\xi_{t}\right>\mathclose{}
\]
and then applying \eqref{PDE:CrossProduct} and \eqref{VarphitPoisson}
leads to
\beq\label{ProvisionalvphiPDE}
-\Delta\vphi_{t}
-\mathopen{}\left<\frac{\nabla\Kap}{\Kap},\nabla\vphi_{t}\right>\mathclose{}
=\veep\times\biggl(\frac{-\mathopen{}\left<\nabla\Kap,
\nabla\veep\right>\mathclose{}}
{\Kap}-\pp_{t}\veep\biggr)
+\mathopen{}\left<\frac{\mrmd\Kap}{\Kap},
\mrmd^{*}\psi_{t}+\xi_{t}\right>\mathclose{}.
\eeq
Finally, noting that
\[
-\Delta\vphi_{t}
-\mathopen{}\left<\frac{\nabla\Kap}{\Kap},\nabla\vphi_{t}\right>\mathclose{}
=\frac{-1}{\Kap}\text{div}(\Kap\nabla\vphi_{t})
\]
we may rewrite \eqref{ProvisionalvphiPDE} as well as introduce
boundary conditions coming from \eqref{LocalHodgeDecomposition} to obtain
\eqref{vphiPDE}.
We use elliptic PDE techniques to estimate the $L^{2}$ norm of
$\nabla\vphi_{t}$ on $B_{s}(x_{T})$, where $s\in\bigl[r,\frac{3r}{2}\bigr]$ will be chosen
later, in terms of the data from \eqref{vphiPDE}.
First, we  multiply the PDE from \eqref{vphiPDE} by
$-2\delta^{2}\mathopen{}\left<\nabla{}v,\nabla{}K_{ap}\right>\mathclose{}$
and integrate by parts to obtain
\begin{align}
-2\delta^{2}\int_{B_{s}(x_{T})}\!{}h\mathopen{}\left<\nabla\vphi,\nabla\Kap\right>\mathclose{}
&=2\delta^{2}\int_{B_{s}(x_{T})}\!{}
\text{div}(\Kap\nabla{}\vphi)
\frac{\mathopen{}\left<\nabla{}\vphi,\nabla\Kap\right>\mathclose{}}{\Kap}  \label{EllipticIntegralIdentity}\\
&=-2\delta^{2}\int_{B_{s}(x_{T})}\!{}\Kap\mathopen{}\left<\nabla\vphi,\nabla\biggl(
\frac{\mathopen{}\left<\nabla\vphi,\nabla\Kap\right>\mathclose{}}{\Kap}\biggr)\right>\mathclose{}  \nonumber\\
&\hspace{8pt}+2\delta^{2}\int_{\pp{}B_{s}(x_{T})}\!{}
g\mathopen{}\left<\nabla\vphi,\nabla\Kap\right>\mathclose{}.    \nonumber
\end{align}
Then one can verify, for example by a pointwise computation in normal coordinates,
that
\beq\label{CoordinateComputation}
\Kap\mathopen{}\left<\nabla\vphi,\nabla\biggl(
\frac{\mathopen{}\left<\nabla\vphi,
\nabla\Kap\right>\mathclose{}}{\Kap}\biggr)\right>\mathclose{}
=
\text{Hess}(\Kap)(\nabla\vphi,\nabla\vphi)
-\frac{\mathopen{}\left<\nabla\vphi,\nabla\Kap\right>\mathclose{}^{2}}{\Kap}
+\frac{1}{2}\mathopen{}\left<\nabla\bigl(|\nabla\vphi|^{2}\bigr),\nabla\Kap\right>\mathclose{}.
\eeq
Using \eqref{CoordinateComputation} in \eqref{EllipticIntegralIdentity}
followed by integrating by parts leads to
\begin{align}
2\delta^{2}\int_{B_{s}(x_{T})}&\!{}
\biggl[\frac{-\Delta\Kap}{2}|\nabla\vphi|^{2}+{\rm{Hess}}(\Kap)(\nabla\vphi,\nabla\vphi)
-\frac{\mathopen{}\left<\nabla\vphi,\nabla\Kap\right>\mathclose{}^{2}}{\Kap}\biggr]
-2\delta^{2}\int_{B_{s}(x_{T})}\!{}h\mathopen{}\left<\nabla\vphi,\nabla{}G\right>\mathclose{}    \label{ProvisionalIntegralIdentity}\\
&=-\delta^{2}\int_{\pp{}B_{s}(x_{T})}\!{}|\nabla\vphi|^{2}\frac{\pp\Kap}{\pp{}r}
+2\delta^{2}\int_{\pp{}B_{s}(x_{T})}\!{}
g\mathopen{}\left<\nabla\vphi,\nabla{}\Kap\right>\mathclose{}.    \nonumber
\end{align}
We then compute the integrand of the braced term of
\eqref{ProvisionalIntegralIdentity} more
explicitly in terms of the function \eqref{dsquared} and
apply \eqref{HessianComparisonInequality} to obtain
\begin{align}\label{IntermediateVarphiInequality}
\int_{B_{s}(x_{T})}&\!{}\frac{(N-2)}{2}|\nabla\vphi|^{2}\Kap
-\int_{B_{s}(x_{T})}
\frac{(d(x,x_{T}))^{2}}{4\delta^{2}}\biggl[1+\frac{2N\lambda\delta^{2}}{3}-2\mu\delta^{2}\biggr]
|\nabla\vphi|^{2}\Kap
-2\delta^{2}\int_{B_{s}(x_{T})}\!{}h\mathopen{}\left<\nabla\vphi,\nabla\Kap\right>\mathclose{}\\
&\ge-\delta^{2}\int_{\pp{}B_{s}(x_{T})}\!{}|\nabla\vphi|^{2}
\frac{\pp\Kap}{\pp{}r}
+2\delta^{2}\int_{\pp{}B_{s}(x_{T})}\!{}
g\mathopen{}\left<\nabla\vphi,\nabla\Kap\right>\mathclose{}.    \nonumber
\end{align}
More details can be found in Lemma A.4.4 of \cite{Col}.
From there we appeal to standard estimates and choose
$\delta_{0}$ such that
$0<2\delta_{0}\le\frac{1}{2\sqrt{\frac{1}{2}-\frac{N\lambda}{6}+\frac{\mu}{2}}}$ to obtain
\begin{align*}
&-2\delta^{2}\int_{B_{s}(x_{T})}\!{}
h\mathopen{}\left<\nabla\vphi,\nabla\Kap\right>\mathclose{}
-\int_{B_{s}(x_{T})}
\frac{(d(x,x_{T}))^{2}}{4\delta^{2}}\biggl[1+\frac{2N\lambda\delta^{2}}{3}-2\mu\delta^{2}\biggr]
|\nabla\vphi|^{2}\Kap\\
&\le\frac{1}{2}\int_{B_{s}(x_{T})}\!{}h^{2}\Kap
\end{align*}
which, when combined with \eqref{IntermediateVarphiInequality},
leads to
\begin{align}\label{PrelimVarphiInequality}
\int_{B_{s}(x_{T})}&\!{}\frac{(N-2)}{2}|\nabla\vphi|^{2}\Kap
+\frac{1}{2}\int_{B_{s}(x_{T})}\!{}h^{2}\Kap
\\
&\ge-\delta^{2}\int_{\pp{}B_{s}(x_{T})}\!{}|\nabla\vphi|^{2}
\frac{\pp\Kap}{\pp{}r}
+2\delta^{2}\int_{\pp{}B_{s}(x_{T})}\!{}
g\mathopen{}\left<\nabla\vphi,\nabla\Kap\right>\mathclose{}.    \nonumber
\end{align}
Then an explicit computation of $\frac{\pp\Kap}{\pp{}r}$ and
$\mathopen{}\left<\nabla\vphi,\nabla\Kap\right>\mathclose{}$,
analogous to \cite{BOS2}, give
\begin{align}\label{BoundaryInequality}
\int_{\pp{}B_{s}(x_{T})}\!{}|\nabla_{\perp}\vphi|^{2}K_{ap}
&\le\int_{B_{s}(x_{T})}\!{}\frac{(N-2)}{s}|\nabla{}\vphi|^{2}
K_{ap}
+\frac{1}{s}\int_{B_{s}(x_{T})}\!{}h^{2}K_{ap}\\
&+\int_{\pp{}B_{s}(x_{T})}\!{}g^{2}K_{ap}  \nonumber
\end{align}
for each $s\in[r,3r\slash2]$ where
\[
\nabla_{\perp}\vphi\ceq\nabla\vphi-
\frac{\pp\vphi}{\pp{}r}\frac{\pp}{\pp{}r}.
\]
More details can be found in Corollary A.4.5 of \cite{Col}.
This estimate will permit us to obtain control over the $L^{2}$ norm of $\vphi$ on $\pp{}B_{s}(x_{T})$ in terms of $h$ and $g$.
Next if we multiply the PDE of \eqref{vphiPDE} by $\vphi$, integrate by parts, and make use of \eqref{BoundaryInequality}, the Poincar\'e-Wirtinger, and
Young's inequality we obtain
\begin{align}\label{vphiWeightedInequality}
\int_{B_{s}(x_{T})}\!{}
&|\nabla\vphi|^{2}e^{\frac{-(d(x,x_{T}))^{2}}{4\delta^{2}}}\\
&\le{}C(\delta,r)\biggl[\int_{B_{s}(x_{T})}\!{}h^{2}e^{-\frac{(d(x,x_{T}))^{2}}{4\delta^{2}}}
+\biggl(\int_{B_{s}(x_{T})}\!{}h^{2}e^{\frac{-(d(x,x_{T}))^{2}}{4\delta^{2}}}\biggr)^{\frac{1}{2}}
\biggl(\int_{\pp{}B_{s}(x_{T})}\!{}
g^{2}e^{\frac{-(d(x,x_{T}))^{2}}{4\delta^{2}}}\biggr)^{\frac{1}{2}}\biggr]  \nonumber\\
&+K_{M}r\int_{\pp{}B_{s}(x_{T})}\!{}g^{2}e^{\frac{-(d(x,x_{T}))^{2}}{4\delta^{2}}}  \nonumber
\end{align}
for each $s\in[r,3r\slash2]$.
We notice that when $h$ is the data from \eqref{vphiPDE} then we have the
pointwise estimate
\beq\label{hData:PointwiseBound}
h^{2}\le{}
C(\delta_{0},r)\Bigl[\Xi(\veep,(x_{T},1)
+|\mrmd^{*}\psi_{t}|^{2}+|\xi_{t}|^{2})\Bigr].
\eeq
As a result of \eqref{hData:PointwiseBound} and the assumption that
$t\in\Theta_{1}$ we have
\beq\label{hData:L2Bound}
\int_{B_{s}(x_{T})}\!{}h^{2}e^{\frac{-(d(x,x_{T}))^{2}}{4\delta^{2}}}
\le{}C(\delta_{0},r)\biggl[RM_{0}+\eta
+\int_{B_{\frac{3r}{2}}(x_{T})}\!{}
\bigl(|\mrmd^{*}\psi_{t}|^{2}+|\xi_{t}|^{2}\bigr)\biggr].
\eeq
A similar pointwise estimate for the data $g$ from \eqref{vphiPDE} leads to
\beq\label{gData:PrelimL2Bound}
\int_{\pp{}B_{s}(x_{T})}\!{}g^{2}e^{\frac{-(d(x,x_{T}))^{2}}{4\delta^{2}}}
\le{}K_{M}
\int_{\pp{}B_{s}(x_{T})}\!{}\Bigl[|\nabla\veep|^{2}+|\mrmd^{*}\psi_{t}|^{2}
+|\xi_{t}|^{2}\Bigr]e^{\frac{-(d(x,x_{T}))^{2}}{4\delta^{2}}}.
\eeq
Averaging to find a suitable
$s\in\bigl[r,\frac{3r}{2}\bigr]$ and
manipulating Gaussian functions gives
\begin{align}
\int_{\pp{}B_{s}(x_{T})}\!{}g^{2}e^{\frac{-(d(x,x_{T}))^{2}}{4\delta^{2}}}
\le{}\frac{K_{M}}{r}
\int_{B_{s}(x_{T})}\!{}|\nabla\veep|^{2}e^{\frac{-(d(x,x_{T}))^{2}}{4\delta^{2}}}
+\frac{K_{M}}{r}\int_{B_{s}(x_{T})}\!{}\Bigl[|\mrmd^{*}\psi_{t}|^{2}+|\xi_{t}|^{2}\Bigr]
\label{gData:L2Bound}\\
=\frac{(4\pi)^{\frac{N}{2}}K_{M}\delta^{N}}{r}\int_{B_{s}(x_{T})}\!{}|\nabla\veep|^{2}K_{ap,\gR}
+\frac{K_{M}}{r}\int_{B_{s}(x_{T})}\!{}\Bigl[|\mrmd^{*}\psi_{t}|^{2}+|\xi_{t}|^{2}\Bigr].
\nonumber
\end{align}
More details are provided in A.4.3.1 of \cite{Col}.
Combining \eqref{vphiWeightedInequality} with \eqref{hData:L2Bound} and
\eqref{gData:L2Bound} gives \eqref{vphiEstimate} if we choose $\delta_{0}$ small
enough that $2\delta_{0}\sqrt{8C_{6}}<r$.

\subsection{Estimate of \texorpdfstring{$\psi_{t}$}{}}
\hspace{15pt}The strategy from \cite{BOS2} to estimate $\psi_{t}$ extends to our setting with
a few modifications mostly caused by the use of coordinates and the
possibility of non-trivial homology.
In particular, some additional work is due to the presence of the harmonic
projection $H$.\\

We first observe that since $\psi_{t}$ solves \eqref{LocalizedCurrent} then we can represent $\psi_{t}$ as
\beq\label{PsiRepresentation}
\psi_{t}(x)=\int_{M}\!{}\mathopen{}\left<G(x,y),H^{\perp}(\mrmd[\veep(y)\times\mrmd\veep(y)]\chi(y))\right>\mathclose{}\dvol(y)
\eeq
where $G$ is the Dirichlet kernel for $2$-forms on $M$ with the metric $\gR$,
which can be constructed in coordinates using the results of \cite{Au} and
\cite{BdR}.
In particular, for each $x,y\in{}M$ we have $G(\cdot,y),G(x,\cdot)\in{}W^{1,1}(M;\wedge^{(2,2)}M)$,
$-\Delta_{x}G(\cdot,y)\in{}L^{1}(M;\wedge^{2}M)$,
and when $x,y$ are elements of a normal coordinate neighbourhood
then
\begin{align}
|G(x,y)|\le{}&K(d(x,y))^{2-N},
\quad
|DG(x,y)|\le{}K(d(x,y))^{1-N}  \label{GreensFunction:Properties1}\\
&\left|-\Delta_{x}G(x,y)\right|\le{}K(d(x,y))^{2-N}.  \label{GreensFunction:Properties2}
\end{align}
In particular, if we were to rescale the metric by a factor $a^{2}$ for $a>0$ we would
find
\beq\label{GreenFunctionScaling}
G_{a^{2}g}=a^{6-N}G_{g}
\eeq
where $G_{g}$ denotes the Green's function constructed using the metric $g$.
Next we introduce a smooth function $\rho$ such that
\beq\label{rho.def}
\rho(s)=1\hspace{5pt}\text{for }s\in[0,1\slash4],\qquad
\rho(s)=\frac{1}{s}\hspace{5pt}\text{for }s\ge1\slash2,\qquad
\left\|\rho'\right\|_{L^{\infty}(\R)}\le4
\eeq
as well as the localized version of $\veep$
\beq\label{Localizedu}
\tveep(x,t)\ceq\tau(x,t)\veep(x,t),\qquad\qquad  \tau(x,t)\ceq\rho(|\veep(x,t)|).
\eeq
Noting that
$\tveep\times\mrmd\tveep=\tau^{2}\veep\times\mrmd\veep$,
we split \eqref{PsiRepresentation} into
\begin{align}
\psi_{t}(x)
&=\int_{M}\!{}\mathopen{}\left<G(x,y),H^{\perp}(\mrmd[\tveep\times\mrmd\tveep]\chi)\right>\mathclose{}
+\int_{M}\!{}\mathopen{}\left<G(x,y),H^{\perp}(\mrmd[(1-\tau^{2})\veep\times\mrmd\veep]\chi)\right>\mathclose{}
\label{Psi12t:IntegralRepresentation}\\
&\eqqcolon\psi_{1,t}+\psi_{2,t}.    \nonumber
\end{align}
From the definitions of $\psi_{1,t}$ and $\psi_{2,t}$ we see that
\begin{align}
-\Delta\psi_{1,t}&=H^{\perp}(\mrmd[\tveep\times\mrmd\tveep]\chi)
\hspace{10pt}\text{on }M\times\left\{t\right\}
\label{Psi1tPDE}\\
-\Delta\psi_{2,t}&=H^{\perp}(\mrmd[(1-\tau^{2})\veep\times\mrmd\veep]\chi)
\hspace{10pt}\text{on }M\times\{t\}.
\label{Psi2tPDE}
\end{align}
We observe that there is $K>0$ such that
\begin{align}
\bigl|1-\tau^{2}(x,t)\bigr|&\le{}K\bigl|1-|\veep(x,t)|^{2}\bigr|
\label{TauEstimate}\\
\bigl|\mrmd[\tveep\times\mrmd\tveep]\chi\bigr|&\le{}KV_{\ep}(\veep)
\label{JacobianEstimate}
\end{align}
over $M\times\{t\}$.

\toclesslab\subsubsection{Estimate of Harmonic Projection}{Subsec:HarmProj}
Before we begin estimating $\psi_{1,t}$ and $\psi_{2,t}$ we
first obtain estimates for
$H(\mrmd[\veep\times\mrmd\veep]\chi)$.
Note that
\beq\label{CurrentSplitting}
\mrmd[\veep\times\mrmd\veep]\chi
=\mrmd[\tveep\times\mrmd\tveep]\chi
+\mrmd[(1-\tau^{2})(\veep\times\mrmd\veep)\chi]
+(1-\tau^{2})[\veep\times\mrmd\veep]\wedge\mrmd\chi
\eeq
and so the definition of $H$ implies that
\beq\label{CurrentSplittingHarmonic}
H(\mrmd[\veep\times\mrmd\veep]\chi)
=
H(\mrmd[\tveep\times\mrmd\tveep]\chi)
+H((1-\tau^{2})[\veep\times\mrmd\veep]\wedge\mrmd\chi).
\eeq
We now estimate each of the terms on the
right-hand side of \eqref{CurrentSplittingHarmonic}.
By straightforward estimates 
using \eqref{H:Def}, the definition of $H$,
we may find a constant $K$ such that
\begin{align}
\left\|H(\omega)\right\|_{L^{2}(\wedge^{2}M)}&\le{}KR_{1}^{\frac{N}{2}}
\left\|\omega\right\|_{L^{1}(\wedge^{2}M)}    \label{HCont:L2}\\
\left\|H(\omega)\right\|_{L^{\infty}(\wedge^{2}M)}&\le{}KR_{1}^{N}
\left\|\omega\right\|_{L^{1}(\wedge^{2}M)}    \label{HCont:LInf}
\end{align}
for all $2$-forms $\omega$, where the exponents on $R_{1}$ are due to
the scaling properties of the basis $\{\gamma_{\gR,i}\}_{i=1}^{\beta_{2}(M)}$ appearing in \eqref{H:Def}.
Next observe that by \eqref{JacobianEstimate}
and manipulations with Gaussian functions using that $\chi$ is
supported on $B_{4r}(x_{T})$, we have
\beq\label{vtildecurrent:L1}
\left\|\mrmd(\tilde{v}_{\ep}\times\mrmd\tilde{v}_{\ep})\chi\right\|
_{L^{1}(\wedge^{2}M)}
\le{}C(\delta_{0},r)\int_{M}\!{}\Vep(\veep)K_{ap,\gR}.
\eeq
Now we consider $(1-\tau^{2})[\veep\times\mrmd\veep]\wedge\mrmd\chi$.
Using \eqref{vepEstimates}, Cauchy-Schwarz, \eqref{TauEstimate}, and
manipulations with Gaussian functions using that $\chi$ is supported on
$B_{4r}(x_{T})$ leads to
\beq\label{vtilderemainder:L1}
\left\|(1-\tau^{2})[\veep\times\mrmd\veep]\wedge\mrmd\chi\right\|
_{L^{1}(\wedge^{2}M)}
\le{}C(\delta_{0},r)\biggl(\int_{M}\!{}\Vep(\veep)K_{ap,\gR}\biggr)^{\frac{1}{2}}.
\eeq
We will make use of various combinations of \eqref{HCont:L2} and
\eqref{HCont:LInf} with \eqref{vtildecurrent:L1} and
\eqref{vtilderemainder:L1}.\\

\toclesslab\subsubsection{Estimate of $\psi_{2,t}$}{Subsec:Psi2t}
The aim of this subsection is to establish the following estimate:
\beq\label{Psi2tEstimate}
\int_{M\times\{t\}}\!{}
\bigl[|\mrmd\psi_{2,t}|^{2}+|\mrmd^{*}\psi_{2,t}|^{2}\bigr]
\le{}C(\delta_{0},r)[RM_{0}+\eta].
\eeq
This will be achieved by making use of the Poisson problem \eqref{Psi2tPDE},
appealing to elliptic regularity, and applying our assumption that
$t\in\Theta_{1}$.
We note that, as in previous estimates, the goal is to show that the data from
the PDE \eqref{Psi2tPDE} can be estimated in terms of
\eqref{WeightedVEnergy} and \eqref{WeightedXiEnergy}.\\

We notice that $\psi_{2,t}$ can be further decomposed as
$\psi_{2,t}=\psi_{2,t}^{1}+\psi_{2,t}^{2}$ where $\psi_{2,t}^{1}$, $\psi_{2,t}^{2}$
satisfy
\begin{align}
-\Delta\psi_{2,t}^{1}&=\mrmd[(1-\tau^{2})(\veep\times\mrmd\veep)\chi]
\hspace{5pt}\text{on }M\times\{t\}    \label{Psi2t1PDE}\\
-\Delta\psi_{2,t}^{2}&=
H^{\perp}((1-\tau^{2})[\veep\times\mrmd\veep]\wedge\mrmd\chi)
\hspace{5pt}\text{on }M\times\{t\}   \label{Psi2t2PDE}
\end{align}
where in \eqref{Psi2t1PDE} we have used that $H^{\perp}$ is the
identity on exact forms.
Since $\psi_{2,t}^{1}$ solves \eqref{Psi2t1PDE} elliptic regularity gives
\beq\label{Psi2t1InitialEstimate}
\int_{M\times\{t\}}\!{}
\bigl\{|\mrmd\psi_{2,t}^{1}|^{2}+|\mrmd^{*}\psi_{2,t}^{1}|^{2}\bigr\}
\le{}K\left\||1-\tau^{2}|
\bigl|(\veep\times\mrmd\veep)\chi\bigr|\right\|_{L^{2}(\wedge^{2}M)}
^{2}.
\eeq
Using \eqref{vepEstimates}, \eqref{TauEstimate}, and manipulations with Gaussian
functions using that the support of $\chi$ is $B_{4r}(x_{T})$ gives
\beq\label{Psi2t1:DataEstimate}
\int_{M\times\{t\}}\!{}|1-\tau^{2}|^{2}|(\veep\times\mrmd\veep)\chi|^{2}
\le{}C(\delta_{0},r)\int_{M\times\{t\}}\!{}\Vep(\veep)K_{ap,\gR}.
\eeq
Combining \eqref{Psi2t1InitialEstimate}, \eqref{Psi2t1:DataEstimate}, and using
that $t\in\Theta_{1}$ leads to
\beq\label{Psi2t1:Estimate}
\int_{M\times\{t\}}\!{}
\bigl\{|\mrmd\psi_{2,t}^{1}|^{2}+|\mrmd^{*}\psi_{2,t}^{1}|^{2}\bigr\}
\le{}C(\delta_{0},r)[RM_{0}+\eta].
\eeq
Next, since $\psi_{2,t}^{2}$ solves \eqref{Psi2t2PDE} then elliptic regularity
gives
\beq\label{Psi2t2PreliminaryEstimate}
\int_{M\times\{t\}}\!{}\bigl\{|\mrmd\psi_{2,t}^{2}|^{2}
+|\mrmd^{*}\psi_{2,t}^{2}|^{2}\bigr\}
\le{}K\left\|
H^{\perp}((1-\tau^{2})[\veep\times\mrmd\veep]\wedge\mrmd\chi)\right\|
_{L^{2}(\wedge^{2}M)}
^{2}.
\eeq
It then follows from \eqref{HCont:L2}, \eqref{vtilderemainder:L1},
and similar considerations as in \eqref{Psi2t1:DataEstimate} that we have
\beq\label{Psi2t2:DataEstimate}
\left\|
H^{\perp}((1-\tau^{2})[\veep\times\mrmd\veep]\wedge\mrmd\chi)\right\|
_{L^{2}\wedge^{2}(M)}
^{2}\le{}
C(\delta_{0},r)\int_{M\times\{t\}}\!{}V_{\ep}(\veep)K_{ap,\gR}.
\eeq
From \eqref{Psi2t2PreliminaryEstimate}, \eqref{Psi2t2:DataEstimate}, and
the assumption that $t\in\Theta_{1}$ it follows that
\beq\label{Psi2t2:Estimate}
\int_{M\times\{t\}}\!{}\bigl\{|\mrmd\psi_{2,t}^{2}|^{2}
+|\mrmd^{*}\psi_{2,t}^{2}|^{2}\bigr\}
\le{}C(\delta_{0},r)[RM_{0}+\eta].
\eeq
Finally, combining \eqref{Psi2t1:Estimate} and \eqref{Psi2t2:Estimate} gives
\eqref{Psi2tEstimate}.\\

\toclesslab\subsubsection{Estimate of $\psi_{1,t}$}{Subsec:Psi1t}
Next we estimate $\psi_{1,t}$.
We proceed with the a slightly modified version of the strategy applied in
\cite{BOS2}.
The main difference is the need to estimate terms relating to the harmonic
projection $H$.
In addition, some computations need to be done in coordinates, for example the
estimate of the low frequency term $\widetilde{\psi}_{1,t}^{e}$.\\

Taking the inner product of \eqref{Psi1tPDE} with $\psi_{1,t}$ and integrating by
parts we obtain
\beq\label{Psi1tEstimate}
\int_{M\times\{t\}}\!{}
\bigl[|\mrmd\psi_{1,t}|^{2}+|\mrmd^{*}\psi_{1,t}|^{2}\bigr]
=\int_{M\times\{t\}}\!{}\mathopen{}\left<\psi_{1,t},
H^{\perp}(\mrmd[\tveep\times\mrmd\tveep]\chi)\right>\mathclose{}.
\eeq
Thus, to obtain control of the $L^{2}$ norms of
the differential and codifferential of $\psi_{1,t}$ it
suffices to estimate
\beq\label{Psi1tInnerProduct}
\int_{M\times\{t\}}\!{}\mathopen{}\left<\psi_{1,t},
H^{\perp}(\mrmd[\tveep\times\mrmd\tveep]\chi)\right>\mathclose{}.
\eeq
As a result, we focus on obtaining an upper bound of \eqref{Psi1tInnerProduct}.
We proceed through a series of steps.\\

\begin{flushleft}
{\it Step $1$: Localization}\\
\end{flushleft}
\par{}Due to the presence of the harmonic projection we will need a few additional
estimates not needed in \cite{BOS2}.
We begin by noting that $\psi_{1,t}$ can be decomposed as
\begin{align}\label{KernelRepresentation}
\psi_{1,t}&=\int_{M}\!{}\mathopen{}\left<G,H^{\perp}(\mrmd[\tveep\times\mrmd\tveep]\chi)\right>\mathclose{}\\
&=\int_{M}\!{}\mathopen{}\left<G,\mrmd[\tveep\times\mrmd\tveep]\chi\right>
\mathclose{}
-\int_{M}\!{}\mathopen{}\left<G,H(\mrmd[\tveep\times\mrmd\tveep]\chi)\right>
\mathclose{}    \nonumber\\
&\eqqcolon\widetilde{\psi}_{1,t}-G\star{}H(\mrmd[\tveep\times\mrmd\tveep]\chi).
\nonumber
\end{align}
We then use \eqref{KernelRepresentation} to rewrite \eqref{Psi1tInnerProduct} as
\begin{align}\label{Psi1tLocalization}
\int_{M}\!{}
\mathopen{}\left<\psi_{1,t},
H^{\perp}(\mrmd[\tveep\times\mrmd\tveep]\chi)\right>\mathclose{}
&=\int_{M}\!{}
\mathopen{}\left<\widetilde{\psi}_{1,t},
\mrmd[\tveep\times\mrmd\tveep]\chi\right>\mathclose{}\\
&-\int_{M}\!{}
\mathopen{}\left<G\star{}H\bigl(\mrmd[\tveep\times\mrmd\tveep]\chi\bigr),
\mrmd[\tveep\times\mrmd\tveep]\chi\right>\mathclose{}    \nonumber\\
&-\int_{M}\!{}
\mathopen{}\left<\psi_{1,t},
H\bigl(\mrmd[\tveep\times\mrmd\tveep]\chi\bigr)\right>\mathclose{}.
\nonumber
\end{align}
We now estimate the last two terms of \eqref{Psi1tLocalization}.
We will start with
\[
\int_{M\times\{t\}}\!{}\mathopen{}\left<\psi_{1,t},
H(\mrmd[\tveep\times\mrmd\tveep]\chi)\right>\mathclose{}.
\]
Observe that by the integral representation of $\psi_{1,t}$,
standard integral kernel estimates, as well as
$W^{1,1}$ estimates on $G$ we obtain
\beq\label{Pst1tLocalization:Prelim}
\int_{M\times\{t\}}\!{}|\psi_{1,t}|
\le{}KR_{1}^{-2}
\biggl\{
\int_{M}\!{}\bigl|\mrmd[\tilde{v}_{\ep}\times\mrmd\tilde{v}_{\ep}]\chi\bigr|
+\left\|H(\mrmd[\tilde{v}_{\ep}\times\mrmd\tilde{v}_{\ep}]\chi)\right\|_{L^{\infty}(\wedge^{2}M)}\vol(M)\biggr\}.
\eeq
Combining \eqref{HCont:LInf}, \eqref{vtildecurrent:L1}, \eqref{JacobianEstimate},
manipulations of Gaussians that use that $\chi$ is supported on $B_{4r}(x_{T})$,
as well as the assumption that $t\in\Theta_{1}$ with
\eqref{Pst1tLocalization:Prelim} we obtain
\beq\label{Psi1tLocalization:L1}
\int_{M}\!{}|\psi_{1,t}|
\le{}C(\delta_{0},r)R_{1}^{-2}[RM_{0}+\eta].
\eeq
Finally, by \eqref{Psi1tLocalization:L1}, \eqref{HCont:LInf}, and
\eqref{vtildecurrent:L1} we have
\beq\label{psi1tHarmonicPart}
\biggl|\int_{M}\!{}\mathopen{}\left<\psi_{1,t},H(\mrmd[\tveep\times\mrmd\tveep]\chi)\right>\mathclose{}\biggr|
\le{}C(\delta_{0},r)R_{1}^{N-2}\bigl[RM_{0}+\eta\bigr]^{2}.
\eeq
Next we estimate $G\star{}H(\mrmd[\tveep\times\mrmd\tveep]\chi)$.
Observe that by \eqref{HCont:LInf}, \eqref{vtildecurrent:L1}, and
manipulations with Gaussian functions which use that $\chi$ is supported on
$B_{4r}(x_{T})$ we have
\[
\left\|G\star{}H(\mrmd[\tveep\times\mrmd\tveep]\chi)\right\|
_{L^{\infty}(\wedge^{2}M)}
\le{}C(\delta_{0},r)R_{1}^{N-2}\int_{M\times\{t\}}\!{}\Vep(\veep)K_{ap,\gR}.
\]
Thus, combining this with \eqref{JacobianEstimate}, similar Gaussian function
manipulations to those in \eqref{Psi1tLocalization:L1},
and the assumption that $t\in\Theta_{1}$, we have
\beq\label{KernelHarmonicEstimate}
\biggl|\int_{M}\!{}
\mathopen{}\left<G\star{}H(\mrmd[\tilde{v}_{\ep}\times\mrmd\tilde{v}_{\ep}]\chi),\mrmd[\tilde{v}_{\ep}\times\mrmd\tilde{v}_{\ep}]\chi\right>\mathclose{}\biggr|
\le{}C(\delta_{0},r)R_{1}^{N-2}[RM_{0}+\eta]^{2}.
\eeq
\begin{flushleft}
{\it Step $2$: Decomposition of $\widetilde{\psi}_{1,t}$}\\
\end{flushleft}
\par{}As a result of the previous step we focus on estimating
\begin{equation}\label{TildePsiDefinition}
\int_{M\times\{t\}}\!{}\mathopen{}\left<\widetilde{\psi}_{1,t},
\mrmd[\tveep\times\mrmd\tveep]\chi\right>\mathclose{}.
\end{equation}
To achieve this, we proceed as in \cite{BOS2} and decompose
$\widetilde{\psi}_{1,t}$ by splitting $G$ into its the high and low frequency
parts.
For the low frequency term, $\widetilde{\psi}_{1,t}^{e}$, we will be interested in
establishing an $L^{\infty}$ estimate by appealing to the work of \cite{JS3}.
For the high frequency term, $\widetilde{\psi}_{1,t}^{i}$, we will be interested
in $L^{2}$ estimates in addition to an operator norm bound on the distributional
Laplacian of $G^{i}$, the integral kernel of $\widetilde{\psi}_{1,t}^{i}$.\\

Given $\alpha\in(2\slash3,3\slash4)$ and assuming that
$36r<\frac{\inj_{g}(M)}{2}$ we consider the function
$l\colon[0,\infty)\to[0,\infty)$ defined by
\begin{equation}\label{lFunction}
l(s)\ceq\begin{cases}
0& \text{if }s\le\ep^{\alpha}r\\
\Bigl(\Bigl(\frac{s}{\ep^{\alpha}r}\Bigr)^{N-1}-1\Bigr)\bigl(2^{N-1}-1\bigr)^{-1}&\text{if }\ep^{\alpha}r\le{}s\le{}2\ep^{\alpha}r\\
1& \text{if }2\ep^{\alpha}r\le{}s\le16r\\
\Bigl(2^{N-1}-\bigl(\frac{s}{16r}\bigr)^{N-1}\Bigr)\bigl(2^{N-1}-1\bigr)^{-1}& \text{if }
16r\le{}s\le{}32r\\
0& \text{if }s\ge{}32r.
\end{cases}
\end{equation}
From this we define $m\colon[0,\infty)\to[0,\infty)$ by
\begin{equation}\label{m:Function}
m(s)\ceq\begin{cases}
1-l(s)& \text{for }s\in[0,16r]\\
0& \text{for }s\in(16r,\infty)
\end{cases}
\end{equation}
and note that $m$ satisfies
\[
\begin{cases}
m(s)\equiv1& \text{for }s\in(0,\ep^{\alpha}r)\\
m(s)\equiv0& \text{for }s\in(2\ep^{\alpha}r,\infty)\\
|m'(s)|\le{}\frac{K}{\ep^{\alpha}r}.
\end{cases}
\]
Then we set
\begin{equation*}
G(x,y)=m(d(x,y))G(x,y)
+(1-m(d(x,y)))G(x,y)\eqqcolon{}G^{i}(x,y)+G^{e}(x,y).
\end{equation*}
This decomposition allows us to define
\begin{align}
\widetilde{\psi}_{1,t}^{i}&\ceq\int_{B_{2\ep^{\alpha}r}(x_{T})}\!{}
\mathopen{}\left<G^{i}(x,y),\mrmd[\tilde{v}_{\ep}\times\mrmd\tilde{v}_{\ep}]\chi\right>\mathclose{}
\label{Psi1tIJi:Definition}\\
\widetilde{\psi}_{1,t}^{e}&\ceq
\int_{B_{32r}(x)}\!{}
\mathopen{}\left<G^{e}(x,y),\mrmd[\tilde{v}_{\ep}\times\mrmd\tilde{v}_{\ep}]\chi\right>\mathclose{}.
\label{Psi1tIJe:Definition}
\end{align}
In the above $\widetilde{\psi}_{1,t}^{i}$ represents the high frequencies of $\widetilde{\psi}_{1,t}$ while $\widetilde{\psi}_{1,t}^{e}$ represents the low frequencies of $\widetilde{\psi}_{1,t}$.\\

Next, we note that by \eqref{GreensFunction:Properties1},
the definition of \eqref{m:Function},
and computations in normal coordinates we obtain
\begin{align}
\left\|\left\|G^{i}\right\|_{L_{x}^{1}(\wedge^{2}M)}\right\|_{L_{y}^{\infty}(\wedge^{2}M)},
\left\|\left\|G^{i}\right\|_{L_{y}^{1}(\wedge^{2}M)}\right\|_{L_{x}^{\infty}(\wedge^{2}M)}
&\le{}K_{M}\ep^{\alpha}r
\label{CompactGreenEstimate}\\
\left\|\left\|DG^{i}\right\|_{L_{x}^{1}(M)}\right\|
_{L_{y}^{\infty}(M)},
\left\|\left\|DG^{i}\right\|_{L_{y}^{1}(M)}\right\|
_{L_{x}^{\infty}(M)}&\le{}K_{M}\ep^{\alpha}r,
\label{GradientCompactGreenEstimate}
\end{align}
where $K_{M}$ is a constant depending only on $M$.
Similar computations in normal coordinates for the more delicate
estimate of $\widetilde{\psi}_{1,t}^{e}$ are presented in
detail below, see for example \eqref{CoordinateGreens}.
We refer the reader to Lemma A.4.9 of \cite{Col} for more details.

In addition, through direct computations related to the
distributional Laplacian of $G^{i}$ we obtain
\begin{equation}\label{LaplacianGreenEstimate}
\Bigl|\mathopen{}\left<G^{i}(\cdot,y),-\Delta{}h\right>\mathclose{}\Bigr|
\le{}K_{M}\left\|h\right\|_{L^{\infty}(\wedge^{2}M)}
\end{equation}
for all $h\in{}C^{2}\left(M;\wedge^{2}M\right)$ and each $y\in{}M$.
We refer the reader to Lemma A.4.9 of \cite{Col} for more details.
Estimates \eqref{CompactGreenEstimate} and \eqref{GradientCompactGreenEstimate}
along with the integral kernel expression for $\widetilde{\psi}_{1,t}^{i}$ allow us
to obtain
\beq\label{WidetildePsi1ti:L2}
\int_{M}|\widetilde{\psi}_{1,t}^{i}|^{2}
\le{}KC(\delta_{0},r)\ep^{2\alpha}
\Bigl(\widetilde{E}_{\ep,\gR}(v_{\ep},(x_{T},1),1)+C_{7}R_{1}E_{0}\Bigr)
\eeq
for $t\in\Theta_{1}$.
We refer the reader to Lemma A.4.11 of \cite{Col} for more details.\\

Finally, we use the decomposition of $\widetilde{\psi}_{1,t}$ in
\eqref{TildePsiDefinition} to conclude that it suffices to
estimate
\begin{align}
&\int_{M\times\{t\}}\!{}\mathopen{}\left<\widetilde{\psi}_{1,t}^{e},
\mrmd[\tveep\times\mrmd\tveep]\chi\right>\mathclose{}    \label{TildePsi1te:Pair}\\
&\int_{M\times\{t\}}\!{}\mathopen{}\left<\widetilde{\psi}_{1,t}^{i},
\mrmd[\tveep\times\mrmd\tveep]\chi\right>\mathclose{}.    \label{TildePsi1ti:Pair}
\end{align}

\begin{flushleft}
{\it Step $3$: Estimate of $\widetilde{\psi}_{1,t}^{e}$}\\
\end{flushleft}
\par{}We now focus on estimating the $L^{\infty}$ norm of
\begin{equation*}
\widetilde{\psi}_{1,t}^{e}=\int_{M}\!{}\mathopen{}\left<G^{e},
\mrmd[\tveep\times\mrmd\tveep]\chi\right>\mathclose{}.
\end{equation*}
Doing this will permit us to provide an upper bound on \eqref{TildePsi1te:Pair}.
We proceed in the same way as in \cite{BOS2} except we need to make use of
normal coordinates in order to have an explicit expression for the integrand.
The idea is to rewrite $\widetilde{\psi}_{1,t}^{e}$ into a distributional pairing
of the coordinate components of $\mrmd[\tveep\times\mrmd\tveep]$ and a Lipschitz
function.
Then, the work of \cite{JS3} is able to provide an $O(1)$ estimate for the
Lipschitz dual norm of the coordinate components of
$\mrmd[\tveep\times\mrmd\tveep]$.\\

We will estimate $\widetilde{\psi}_{1,t}^{e}$ at a fixed
$x\in {}B_{4r}(x_{T})$.
We let $y$ denote normal coordinates centered at $x$.
In these coordinates, of course $x$ corresponds to zero,
and if $p\in M$ is the point corresponding to the coordinate $y$,
then $d(x,p) = |y|$.
In the coordinates, $y$, we will write a $2$-form as
\beq\label{CoordinateForm}
\omega = \sum_{j_{1}\ne j_{2}}\omega_{j_{1} j_{2}}(y)
\mrmd{}y^{j_{1}}\wedge \mrmd{}y^{j_{2}}
\eqqcolon\sum_{J} \omega_{J}(y)\mrmd{}y^{J}.
\eeq
We recall that the Green's function $G$ is a tensor of type $(2,2)$ such that,
if the first and second components $z$ and $p$ are written respectively in
normal coordinates $\bar{y}$ and $y$ centred on $x$, then $G$ acts on $2$-forms
$\omega$ via
\[
\mathopen{}\left<G(z,p),\omega(p)\right>\mathclose{}
=\sum_{I}\biggl(\sum_{J}
G_{I}^{J}(\bar{y},y)\omega_{J}(y)\biggr)\mrmd\bar{y}^{I}.
\]
In particular,
\[
\mathopen{}\left<G(x,p),\omega(p)\right>\mathclose{}
=\sum_{I}\biggl(\sum_{J}
G_{I}^{J}(0,y)\omega_{J}(y)\biggr)\mrmd\bar{y}^{I}.
\]
In these coordinates, the Green's function for $2$-forms, evaluated with one argument fixed at $x$, has components
\[
G_{I}^{J}(0,y)=|y|^{2-N} H_{I}^{J}(y),
\qquad I = ( i_{1},i_{2}), \quad J = (j_{1},j_{2})
\]
where $H_{J}^{I}$ is a Lipschitz function in $y$ for each $I$ and $J$,
see \cite{BdR} and Proposition $4.12$ from \cite{Au},
$(\gR)_{ij}$ and $\gR^{ij}$ denote, respectively, the
metric tensor and its inverse with respect to these coordinates.
Due to our choice of $r$,
$G^{e}$ has support in the domain of this coordinate system, and so the above
discussion gives
\begin{align}\label{CoordinateGreens}
\widetilde{\psi}_{1,t}^{e}(x,t)&=
\sum_{I,J}\biggl[\int_{B_{32r}(0)}\!{}l(|y|)|y|^{2-N}
H_{I}^{J}(y)\frac{\pp\tveep}{\pp{}x_{j_{1}}}
\times\frac{\pp\tveep}{\pp{}x_{j_{2}}}\chi\sqrt{|\gR(y)|}\mrmd{}y
\biggr]\mrmd\bar{y}^{I}\\
&\eqqcolon\sum_{I}\biggl[\int_{B_{32r}(0)}\!{}
a_{I}(y)l(|y|)|y|^{2-N}\mrmd{}y\biggr]\mrmd\bar{y}^{I}    \nonumber
\end{align}
where we have set $|\gR(y)| = \det((\gR)_{ij}(y))$.
We now estimate each of the summands from \eqref{CoordinateGreens}.
Following the proof of Lemma $3.12$ of \cite{BOS2} and applying
Fubini's Theorem we obtain, for all $k\in{}L^{1}(M)$, that
\beq\label{IntegralRepresentationIdentity}
\int_{B_{32r}(0)}\!{}l(|y|)|y|^{2-N}k(y)\mrmd{}y
=\int_{\ep^{\alpha}r}^{16r}\!{}s^{-1}\mathcal{J}_{s}^{k}\mrmd{}s
+\frac{1}{N-2}\Bigl[\mathcal{J}_{16r}^{k}
-\mathcal{J}_{\ep^{\alpha}r}^{k}\Bigr]
\eeq
\[
\mathcal{J}_{s}^{k}\ceq{}s^{2-N}\int_{B_{2s}(0)}\!{}
k(y)h(|y|,s)\mrmd{}y
\]
and
\[
h(u,s)\ceq\frac{(N-1)(N-2)}{2^{N-1}-1}\cdot\begin{cases}
1& 0\le{}u\le{}s,\\
2-\frac{u}{s}& \text{if }s\le{}u\le{}2s,\\
0& \text{if }u\ge{}2s.
\end{cases}
\end{equation*}
We refer the reader to A.4.4.1 of \cite{Col} for more details.
We then use \eqref{IntegralRepresentationIdentity} with $k=a_{I}$.
As in \cite{BOS2}, we exploit the 
Jacobian structure of $\frac{\pp\tveep}{\pp{}x_{j_{1}}}
\times\frac{\pp\tveep}{\pp{}x_{j_{2}}}$ by applying Theorem $2.1$ of \cite{JS3} to
$\mathcal{J}_{s}^{a_{I}}$ to obtain
\beq\label{JacobianIntegralEstimate}
\sup_{s\in[\ep^{\alpha}r,16r]}
\bigl\{\mathcal{J}_{s}^{a_{I}}\bigr\}
\le{}C(\delta_{0},r)
\biggl(\frac{\Eepgr(\veep,(x_{T},1),1)+C_{7}R_{1}E_{0}}
{\mathopen{}\left|\log(\ep)\right|\mathclose{}}+\ep^{\beta}\biggr)
\eeq
for some $\beta>0$.
We refer the reader to Lemma A.4.10 of \cite{Col} for more details.
Combining \eqref{CoordinateGreens}, \eqref{IntegralRepresentationIdentity}, and
\eqref{JacobianIntegralEstimate} leads to
\beq\label{Psi1teEstimate}
\left\|\widetilde{\psi}_{1,t}^{e}\right\|_{L^{\infty}(\wedge^{2}M)}
\le{}C(\delta_{0},r)
\Bigl(\Eepgr(\veep,(x_{T},1),1)+C_{7}RE_{0}+1\Bigr).
\eeq
Observe that by \eqref{Psi1teEstimate}, \eqref{JacobianEstimate},
manipulations of Gaussian functions that use that the support of $\chi$ is
$B_{4r}(x_{T})$, in addition to using the assumption that $t\in\Theta_{1}$ we
have
\[
|\eqref{TildePsi1te:Pair}|
\le{}C(\delta_{0},r)
\Bigl(\Eepgr(\veep,(x_{T},1),1)+C_{7}RE_{0}+1\Bigr)
[RM_{0}+\eta].
\]
\newpage{}

\begin{flushleft}
{\it Step $4$: Auxiliary parabolic problem}\\
\end{flushleft}
\par{}Since we only have control over the $L^{1}$-norm of
$\mrmd[\tveep\times\mrmd\tveep]\chi$ then we fall slightly short of using
H\"older's inequality since the estimates \eqref{CompactGreenEstimate},
\eqref{GradientCompactGreenEstimate} only permit us to obtain the $L^{2}$
estimate \eqref{WidetildePsi1ti:L2}.
As a result, as in \cite{BOS2}, we introduce $\psi_{1}^{*}$ solving a parabolic
PDE to obtain better regularity through parabolic estimates.\\

We introduce $\psi_{1}^{*}$ solving
\beq\label{ComparisonPDE}
\begin{cases}
\pp_{t}\psi_{1}^{*}-\Delta\psi_{1}^{*}=\mrmd[\tveep\times\mrmd\tveep]\chi&
\text{on }M\times(0,\infty)\\
\psi_{1}^{*}(\cdot,0)=0&
\text{on }M\times\{0\}.
\end{cases}
\eeq
By appealing to standard parabolic techniques, the monotonicity formula, Gaffney's inequality, Lemma \ref{WeightedEnergyComparison}, as well
as Proposition \ref{HeatEstimate} and Proposition \ref{MonotonicityFormula}
we can show that
\begin{align}
\left\|\psi_{1}^{*}(\cdot,1-\delta^{2})\right\|_{L^{\infty}(\wedge^{2}M)}
&\le{}C(\delta_{0},r)
\Bigl(\Eepgr(\veep,(x_{T},1),1)+C_{7}R_{1}E_{0}\Bigr)    \label{Psi1StarL2}\\
\left\|D\psi_{1}^{*}\right\|_{L^{2}(\wedge^{2}(M\times[0,1-\delta_{0}^{2})])}^{2}
&\le{}C(\delta_{0},r)
\Bigl(\Eepgr(\veep,(x_{T},1),1)+C_{7}R_{1}E_{0}\Bigr).
\label{Psi1StarGradient}
\end{align}
We refer the reader to Lemma A.4.12 of \cite{Col} for more details.
Next, by using \eqref{PohozaevLocalizationFirstInequalityProof},
Proposition \ref{Averaging}, and Proposition \ref{MonotonicityFormula} as well as
its proof we obtain
\beq\label{vtEstimate}
\int_{M\times[0,1-\delta_{0}^{2}]}\!{}|\pp_{t}\veep|^{2}
e^{-\frac{(d_{+}(x,x_{T}))^{2}}{4(1-t)}}
\le{}K_{M}\Bigl(\Eepgr(\veep,(x_{T},1),1)+C_{7}R_{1}E_{0}\Bigr).
\eeq
We refer the reader to A.4.4.2 of \cite{Col} for additional details.
Finally, we argue that we can find $t\in[1-4\delta_{0}^{2},1-\delta_{0}^{2}]$ for which
\beq\label{ComparisonPsiAveraging}
\int_{M\times\{t\}}\!{}|\pp_{t}\psi_{1}^{*}|^{2}
\le{}C(\delta_{0},r)\ep^{-1}
\Bigl(\widetilde{E}_{\ep,\gR}(\veep,(x_{T},1),1)
+C_{7}R_{1}E_{0}\Bigr).
\eeq
We introduce the notation $\Theta_{2}$ to refer to
\beq\label{Theta2}
\Theta_{2}\ceq
\bigl\{t\in[1-4\delta_{0}^{2},1-\delta_{0}^{2}]:
\eqref{ComparisonPsiAveraging}\text{ holds at }t\bigr\}.
\eeq
To show \eqref{ComparisonPsiAveraging} we proceed as in \cite{BOS2}.
Taking the inner product of \eqref{ComparisonPDE} with
$\pp_{t}\psi_{1}^{*}$, integrating over
$M\times[0,1-\delta_{0}^{2}]$, and integrating by parts leads to
\beq\label{StartingPsi1starIdentity}
\int_{M\times[0,1-\delta_{0}^{2}]}\!{}|\pp_{t}\psi_{1}^{*}|^{2}
=-\frac{1}{2}\int_{M\times\{1-\delta_{0}^{2}\}}\!{}\bigl\{|\mrmd\psi_{1}^{*}|^{2}+|\mrmd^{*}\psi_{1}^{*}|^{2}\bigr\}
+\int_{M\times[0,1-\delta_{0}^{2}]}\!{}
\mathopen{}\left<\pp_{t}\psi_{1}^{*},\mrmd[\tveep\times\mrmd\tveep]\chi\right>\mathclose{}.
\eeq
Next introducing normal coordinates, $y$, centred at $x_{T}$
and expressing $\psi_{1}^{*}$ in these coordinates,
similar to \eqref{CoordinateForm}, as
\[
\psi_{1}^{*}=\sum_{I}\psi_{1,I}^{*}(y)\mrmd{}y^{I}
\]
we may write
$\mathopen{}\left<\mrmd[\tveep\times\mrmd\tveep]\chi,
\pp_{t}\psi_{1}^{*}\right>\mathclose{}$ as
\[
\sum_{I}\sum_{j_{1}<j_{2}}\gR^{IJ}(y)\pp_{t}\psi_{1,I}^{*}(y)
\bigl[\pp_{j_{1}}(\tveep(y)\times\pp_{j_{2}}\tveep(y))
-\pp_{j_{2}}(\tveep(y)\times\pp_{j_{1}}\tveep(y))\bigr]
\chi(y)\sqrt{|\gR(y)|}
\]
where $I=(i_{1},i_{2})$, $J=(j_{1},j_{2})$, $|\gR(y)|$ is as in
\eqref{CoordinateGreens}, and we have set
\[
\gR^{IJ}(y)\ceq
\det
\begin{pmatrix}
\gR^{i_{1}j_{1}}& \gR^{i_{1}j_{2}}\\
\gR^{i_{2}j_{1}}& \gR^{i_{2}j_{2}}
\end{pmatrix}
\]
where $\gR^{ij}$ denotes the $i,j$ component of the inverse of
metric tensor, $\gR$, in these coordinates.
By our choice of $\chi$ and $r>0$ we see that the support of
$\chi$ is contained in the domain of this coordinate system.
Setting $\chi_{\gR}^{IJ}\ceq\chi\gR^{IJ}\sqrt{|\gR|}$,
integrating by parts repeatedly as in \cite{BOS2},
and using that $\chi_{\gR}^{IJ}$ is supported on $B_{4r}(0)$
we can write
\[
\int_{M\times[0,1-\delta_{0}^{2}]}\!{}
\mathopen{}\left<\pp_{t}\psi_{1}^{*},\mrmd[\tveep\times\mrmd\tveep]\chi\right>\mathclose{}
=T_{1}+T_{2}+T_{3}+T_{4}
\]
where
\begin{align*}
T_{1}&\ceq2\sum_{I}\sum_{j_{1}<j_{2}}\int_{B_{4r}(0)\times[0,1-\delta_{0}^{2}]}\!{}
\bigl[\pp_{j_{1}}\psi_{1,I}^{*}(\pp_{t}\tveep\times\pp_{j_{2}}\tveep)
-\pp_{j_{2}}\psi_{1,I}^{*}(\pp_{t}\tveep\times\pp_{j_{1}}\tveep)\bigr]\chi_{\gR}^{IJ}\\
T_{2}&\ceq-\sum_{I}\sum_{j_{1}<j_{2}}\int_{B_{4r}(0)\times[0,1-\delta_{0}^{2}]}\!{}
(\tveep\times\pp_{t}\tveep)
\bigl[\pp_{j_{1}}\psi_{1,I}^{*}\pp_{j_{2}}\chi_{\gR}^{IJ}
-\pp_{j_{2}}\psi_{1,I}^{*}\pp_{j_{1}}\chi_{\gR}^{IJ}\bigr]\\
T_{3}&\ceq-\sum_{I}\sum_{j_{1}<j_{2}}\int_{B_{4r}(0)\times[0,1-\delta_{0}^{2}]}\!{}
\pp_{t}\psi_{1,I}^{*}
\bigl[\pp_{j_{1}}\chi_{\gR}^{IJ}(\tveep\times\pp_{j_{2}}\tveep)
-\pp_{j_{2}}\chi_{\gR}^{IJ}(\tveep\times\pp_{j_{1}}\tveep)\bigr]\\
T_{4}&\ceq-\sum_{I}\sum_{j_{1}<j_{2}}\int_{B_{4r}(0)\times\{1-\delta_{0}^{2}\}}\!{}
\bigl[\pp_{j_{1}}\psi_{1,I}^{*}(\tveep\times\pp_{j_{2}}\tveep)
-\pp_{j_{2}}\psi_{1,I}^{*}(\tveep\times\pp_{j_{1}}\tveep)\bigr]\chi_{\gR}^{IJ}.
\end{align*}
We estimate $T_{1}$, $T_{2}$, $T_{3}$, and $T_{4}$ as in
\cite{BOS2} by using
\eqref{Psi1StarL2}, \eqref{Psi1StarGradient}, \eqref{vepEstimates},
\eqref{vtEstimate}, and Proposition \ref{MonotonicityFormula}.
The only change required is in the estimate of $T_{4}$ in which
an additional appeal to Gaffney's inequality and $L^{2}$
estimates of $\psi_{1}^{*}$ obtained from the proof of
\eqref{Psi1StarGradient} are applied.
Proceeding in this way we obtain
\beq\label{Averagedpsi1stt}
\int_{M\times[0,1-\delta_{0}^{2}]}\!{}
|\pp_{t}\psi_{1}^{*}|^{2}
\le{}C(\delta_{0},r)\ep^{-1}
\biggl(\widetilde{E}_{\ep,\gR}(\veep,(x_{T},1),1)
+C_{7}R_{1}E_{0}\biggr).
\eeq
An application of Chebyshev's inequality then allows us
to find $t\in[1-4\delta_{0}^{2},1-\delta_{0}^{2}]$ for
which \eqref{ComparisonPsiAveraging} holds.\\

\begin{flushleft}
{\it Step $5$: Estimate of $\widetilde{\psi}_{1,t}^{i}$}\\
\end{flushleft}
\par{}We assume that $t\in\Theta_{1}\cap\Theta_{2}$.
Using \eqref{ComparisonPDE} we can write
\[
\int_{M\times\{t\}}\!{}\mathopen{}\left<\widetilde{\psi}_{1,t}^{i}
,\mrmd[\tveep\times\mrmd\tveep]\chi\right>\mathclose{}   
=\int_{M}\!{}\mathopen{}\left<\widetilde{\psi}_{1,t}^{i},
\pp_{t}\psi_{1}^{*}\right>\mathclose{}
+\int_{M}\!{}\mathopen{}\left<\widetilde{\psi}_{1,t}^{i}
,-\Delta\psi_{1}^{*}\right>\mathclose{}.
\]
The first term can be estimated using \eqref{WidetildePsi1ti:L2} and
\eqref{ComparisonPsiAveraging} to obtain
\beq\label{Psi1teIntegralEstimate}
\biggl|\int_{M\times\{t\}}\!{}
\mathopen{}\left<\widetilde{\psi}_{1,t}^{i},
\pp_{t}\psi_{1}^{*}\right>\mathclose{}\biggr|
\le{}C(\delta_{0},r)\ep^{\alpha-\frac{1}{2}}
\Bigl(\Eepgr(\veep,(x_{T},1),1)+C_{7}R_{1}E_{0}\Bigr).
\eeq
The second term can be estimated using \eqref{LaplacianGreenEstimate},
the proof of Proposition \ref{HeatEstimate}, and \eqref{WeightedVEnergy} to obtain
\beq\label{Psi1tiIntegralEstimate}
\biggl|\int_{M\times\{t\}}\!{}
\mathopen{}\left<\widetilde{\psi}_{1,t}^{i},
-\Delta\psi_{1}^{*}\right>\mathclose{}\biggr|
\le{}C(\delta_{0},r)
\Bigl(\Eepgr(\veep,(x_{T},1),1)+C_{7}R_{1}E_{0}\Bigr)
[RM_{0}+\eta].
\eeq
Combining estimates \eqref{Psi2tEstimate}, \eqref{KernelRepresentation},
\eqref{KernelHarmonicEstimate}, \eqref{Psi1teIntegralEstimate}, and
\eqref{Psi1tiIntegralEstimate} with \eqref{Psi1tEstimate} gives
\begin{align}\label{Psi1tFinalEstimate}
\int_{M\times\{t\}}\!{}&\bigl\{|\mrmd\psi_{1,t}|^{2}+|\mrmd^{*}\psi_{1,t}|^{2}\bigr\}
\le{}C(\delta_{0},r)\ep^{\alpha-\frac{1}{2}}
\Bigl(\Eepgr(\veep,(x_{T},1),1)+C_{7}R_{1}E_{0}\Bigr)\\
&+C(\delta_{0},r)
\Bigl(\Eepgr(\veep,(x_{T},1),1)+C_{7}R_{1}E_{0}+1\Bigr)
(RM_{0}+\eta+[RM_{0}+\eta]^{2}).    \nonumber
\end{align}
Finally, combining \eqref{XiEstimate} \eqref{vphiEstimate}, \eqref{Psi2tEstimate},
\eqref{psi1treduction}, the estimate of \eqref{TildePsi1te:Pair},
and \eqref{Psi1tFinalEstimate} and choosing the parameters
sufficiently small completes the proof of Proposition
\ref{Theorem1Reduction}.\newline

\section{Energy Decompositions}\label{Sec::Decompositions}

\hspace{15pt}In this section we present the proof of Theorem \ref{Theorem3}.
Compared to \cite{BOS2}, there are 
new considerations related to the homology of $M$.
More specifically, when applying the Hodge de Rham
decomposition we must, since we impose no homological restrictions
on $M$, consider the harmonic part.
In particular, these considerations are responsible for the
presence of $u_{h,\vep}$ in the conclusions of
Theorem \ref{Theorem3}.

We start by stating a local energy decomposition for solutions of
\ref{PGLOriginal}, valid in a region where the modulus is bounded
away from zero.

\begin{theorem}\label{Theorem2}
Suppose that $0<R<\inj_{g}(M)$, $T>0$, and $\Delta{}T>0$ are given.
Consider the cylinder
\[
\Lambda\ceq{}B_{R}(x_{0})\times[T,T+\Delta{}T].
\]
There exists a constant $0<\sigma\le\frac{1}{2}$ and $\beta>0$ depending only on $N$, such that if
\begin{equation}\label{LowerBoundAssumption}
|\uvep|\ge1-\sigma\hspace{10pt}\text{on }\Lambda,
\end{equation}
then
\beq\label{UniformEstimate}
\evep(\uvep)(x,t)\le{}C(\Lambda)\int_{\Lambda}\!{}\evep(\uvep),
\eeq
for any
$(x,t)\in\Lambda_{\frac{1}{2}}$.
Moreover,
\begin{equation}\label{EnergyMeasureDecomposition}
\evep(\uvep)=\frac{|\nabla\Phi_{\vep}|^{2}}{2}+\kappa_{\vep}\hspace{10pt}\text{in }\Lambda_{\frac{1}{2}},
\end{equation}
where the functions $\Phi_{\vep}$ and $\kappa_{\vep}$ are defined on $\Lambda_{\frac{1}{2}}$ and verify
\begin{align}\label{EnergyDecompositionHeatEquation}
\pp_{t}\Phi_{\vep}-\Delta\Phi_{\vep}=0&\hspace{10pt}\text{in }\Lambda_{\frac{1}{2}},\\
\|\kappa_{\vep}\|_{L^{\infty}\bigl(\Lambda_{\frac{1}{2}}\bigr)}\le{}C(\Lambda)\vep^{\beta},&\hspace{10pt}
\|\nabla\Phi_{\vep}\|_{L^{\infty}\bigl(\Lambda_{\frac{1}{2}}\bigr)}^{2}\le{}C(\Lambda)M_{0}\mathopen{}\left|\log(\vep)\right|\mathclose{}.
\label{DecompositionEstimates}
\end{align}
In addition, it follows from our choice of $\Phi_{\vep}$ that if $\uvep=\rho_{\vep}e^{i\vphi_{\vep}}$ on $\Lambda$ then
\begin{equation}\label{ArgumentConvergence}
\left\|\nabla\Phi_{\vep}-\nabla\vphi_{\vep}\right\|_{L^{\infty}(\Lambda_{\frac{1}{2}})}\le{}C(\Lambda)\vep^{\beta}.
\end{equation}
\end{theorem}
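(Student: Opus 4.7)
The plan is to prove Theorem \ref{Theorem2} by lifting to polar coordinates on the geodesic cylinder, where the hypothesis $|\uvep|\ge 1-\sigma$ together with the simple connectedness of $B_R(x_0)$ (since $R<\inj_g(M)$) allows a smooth polar representation, and then by comparing the phase with its own heat extension. First, I would derive \eqref{UniformEstimate}. Using Lemma \ref{DerivativeOfEnergy} with a cutoff function adapted to $\Lambda$, we already obtain $L^2$-in-time control of $|\pp_t\uvep|$ and an upper bound on $\int_\Lambda \evep(\uvep)$ in terms of the boundary data, but to get the pointwise estimate I would use standard interior parabolic regularity applied to \ref{PGLOriginal}, combined with a de Giorgi/Moser iteration or Schauder-type bootstrap. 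Because the reaction term $\frac{1}{\vep^2}\uvep(1-|\uvep|^2)$ becomes dangerous only when $1-|\uvep|^2$ is not small, assumption \eqref{LowerBoundAssumption} prevents blow-up, and scaling by $\vep$ turns the equation into one of bounded coefficients on balls of radius comparable to $\vep$; summing these local estimates yields the claimed uniform bound by $C(\Lambda)\int_\Lambda \evep(\uvep)$.

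Next, on the slightly smaller cylinder $\Lambda_{3/4}$ I would write $\uvep=\rho_\vep e^{i\vphi_\vep}$ with $\rho_\vep\ge 1-\sigma$ and $\vphi_\vep$ smooth, which is legitimate since $B_R(x_0)$ is simply connected. Substituting into \ref{PGLOriginal} and separating real and imaginary parts gives the coupled system
\begin{align*}
\pp_t\rho_\vep-\Delta\rho_\vep+\rho_\vep|\nabla\vphi_\vep|^2&=\tfrac{1}{\vep^2}\rho_\vep(1-\rho_\vep^2),\\
\pp_t\vphi_\vep-\Delta\vphi_\vep&=\tfrac{2}{\rho_\vep}\langle\nabla\rho_\vep,\nabla\vphi_\vep\rangle.
\end{align*}
Setting $\sigma_\vep\ceq 1-\rho_\vep^2$, a direct computation gives
\[
\pp_t\sigma_\vep-\Delta\sigma_\vep+\tfrac{2\rho_\vep^2}{\vep^2}\sigma_\vep=2|\nabla\uvep|^2.
\]
The linear reaction coefficient $2\rho_\vep^2/\vep^2\ge 2(1-\sigma)^2/\vep^2$ and the right-hand side, which by \eqref{UniformEstimate} and the energy hypothesis \eqref{H0} is controlled by $C(\Lambda)|\log\vep|$, together yield by comparison with the corresponding ODE that $\|\sigma_\vep\|_{L^\infty(\Lambda_{2/3})}\le C\vep^2|\log\vep|$. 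Bootstrapping this through parabolic Schauder estimates applied to the same equation on balls of radius $\sim\vep$, I then obtain $\|\sigma_\vep\|_{C^1(\Lambda_{1/2})}\le C(\Lambda)\vep^\beta$ for some $\beta>0$ depending only on $N$.

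Now I would define $\Phi_\vep$ as the solution of the heat equation on $\Lambda_{1/2}$ with parabolic boundary data equal to $\vphi_\vep$ on $\partial_p\Lambda_{1/2}$. This gives \eqref{EnergyDecompositionHeatEquation} by construction. The difference $w_\vep\ceq\vphi_\vep-\Phi_\vep$ solves $\pp_t w_\vep-\Delta w_\vep=\frac{2}{\rho_\vep}\langle\nabla\rho_\vep,\nabla\vphi_\vep\rangle$ with zero parabolic boundary data. Since $|\nabla\rho_\vep|\le C\vep^\beta$ from the previous step and $|\nabla\vphi_\vep|\le C\sqrt{|\log\vep|}$ by interior gradient estimates applied to $\uvep$, the source term is $O(\vep^\beta\sqrt{|\log\vep|})$; standard parabolic estimates then give $\|w_\vep\|_{C^1(\Lambda_{1/2})}\le C(\Lambda)\vep^{\beta/2}$ (absorbing the log), which is \eqref{ArgumentConvergence} after relabelling $\beta$, and also $\|\nabla\Phi_\vep\|_{L^\infty}^2\le C(\Lambda)M_0|\log\vep|$. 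Finally I would set $\kappa_\vep\ceq\evep(\uvep)-\frac{|\nabla\Phi_\vep|^2}{2}$ and write it as
\[
\kappa_\vep=\tfrac12|\nabla\rho_\vep|^2+\tfrac12(\rho_\vep^2-1)|\nabla\vphi_\vep|^2+\tfrac12(|\nabla\vphi_\vep|^2-|\nabla\Phi_\vep|^2)+\Vep(\uvep).
\]
The four terms are respectively $O(\vep^{2\beta})$, $O(\vep^\beta|\log\vep|)$, $O(\vep^{\beta/2}|\log\vep|)$, and $O(\vep^{2\beta-2})\cdot\vep^2=O(\vep^{2\beta})$ since $\Vep(\uvep)=\sigma_\vep^2/(4\vep^2)$, giving \eqref{DecompositionEstimates} after once more relabelling $\beta$.

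The main obstacle is the bootstrap from the $L^\infty$ estimate on $\sigma_\vep$ to a $C^1$ estimate with genuinely polynomial rate $\vep^\beta$: the growing factor $|\log\vep|$ inherited from the energy bound \eqref{H0} must be absorbed at each stage without destroying the power of $\vep$, which requires rescaling the equation on balls of radius $\vep$ and carefully tracking the interaction between the strongly repulsive linear term $\sigma_\vep/\vep^2$ and the logarithmically large source $|\nabla\uvep|^2$. Once this is accomplished, the rest of the argument is essentially algebraic manipulation combined with one application of maximal regularity for the heat equation.
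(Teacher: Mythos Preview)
Your approach is correct and coincides with the argument of \cite{BOS2}, to which the paper defers entirely (the paper omits the proof, observing only that the analysis is purely local and hence carries over verbatim to the Riemannian setting). The polar decomposition on the simply connected ball, the parabolic barrier for $\sigma_\vep=1-|\uvep|^2$, the heat extension of $\vphi_\vep$ defining $\Phi_\vep$, and the algebraic splitting of $\kappa_\vep$ are exactly the ingredients of the original proof.
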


This is an adaptation to the present setting of Theorem $2$ in
\cite{BOS2}. Since the analysis is entirely local,
and because it does not involve any delicate properties of 
test functions adapted to the metric, the proof ends up being
essentially identical in the Riemannian case.
This being the case, we omit all details here.
An interested reader can consult \cite{BOS2}, or A.6.0.1 of \cite{Col},
where it is verified in detail that the arguments of \cite{BOS2}
remain valid on a manifold.\\

As was done in \cite{BOS2}, we record a straightforward
consequence obtained by combining the results of Theorem
\ref{Theorem1} with Proposition
\ref{ManifoldEnergyLocalizedToBall}.

\begin{proposition}\label{Proposition2}
Let $\uvep$ be a solution of \ref{PGLOriginal} verifying assumption \eqref{H0} and $\sigma>0$ be given.
Let $x_{T}\in{}M$, $T>0$,
and $0<2\vep<R^{2}<R(\sigma)$ where $R(\sigma)$ is as in Theorem \ref{Theorem1}.
There exists a positive continuous function $\lambda$ defined on $(0,\infty)$ such that, if
\[
\check{\eta}(x_{T},T,R)\ceq\frac{1}{(4\pi)^{\frac{N}{2}}R^{N-2}\mathopen{}\left|\log(\vep)\right|\mathclose{}}
\int_{B_{\lambda(T)R}(x_{T})}\!{}\evep(\uvep(\cdot,T))\le\frac{\eta_{1}(\sigma)}{2}
\]
then
\[
|\uvep(x,t)|\ge1-\sigma\hspace{10pt}\text{for }t\in[T+T_{0},T+T_{1}]\hspace{5pt}\text{and }x\in{}B_{\frac{R}{2}}(x_{T}).
\]
Here $T_{0}$ and $T_{1}$ are defined by
\[
T_{0}\ceq\mathopen{}\left(\frac{2\check{\eta}}{\eta_{1}}\right)^{\frac{2}{N-2}}\mathclose{}R^{2},\hspace{10pt}T_{1}\ceq{}R^{2}.
\]
In particular, a more precise estimate shows that we can find $\lambda$ defined on $(0,\infty)\times(0,\infty)$ satisfying
\[
\lambda(T,R)\sim\sqrt{\Biggl|\frac{8}{c_{*}^{2}}\log\Biggl(\frac{(4\pi)^{\frac{N}{2}}}{M_{0}e^{C_{2}}}\biggl[\frac{2}{T+2R^{2}}\biggr]^{\frac{N-2}{2}}\Biggr)\Biggr|}
\]
as $(T,R)\to(0,0)$.
In particular, $\lambda(T,R)R$ is bounded as $R\to0^{+}$ for any $T>0$.
\end{proposition}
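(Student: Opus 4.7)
The plan is to combine the clearing out theorem, Theorem~\ref{Theorem1}, with the localization estimate of Proposition~\ref{ManifoldEnergyLocalizedToBall}. Fix $(x,t)$ with $x\in{}B_{R/2}(x_T)$ and $t\in[T+T_{0},T+T_{1}]$, set $R_{*}\ceq\sqrt{t-T}$, and note that $\sqrt{T_{0}}\le{}R_{*}\le{}R$. By definition of the weighted energy,
\[
\Evep(\uvep,(x,t),R_{*})=\frac{1}{(4\pi)^{\frac{N}{2}}R_{*}^{N-2}}\int_{M}\!{}\evep(\uvep(\cdot,T))e^{-\frac{(d_{+}(\cdot,x))^{2}}{4R_{*}^{2}}}\dvol_{g},
\]
so it suffices to bound the right-hand side by $\eta(\sigma)\mathopen{}\left|\log(\vep)\right|\mathclose{}$, with the identification $\eta_{1}(\sigma)\ceq\eta(\sigma)$ taken from Theorem~\ref{Theorem1}, and then apply Theorem~\ref{Theorem1} at $(x,t)$ with radius $R_{*}$.

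To estimate the integral, I apply Proposition~\ref{ManifoldEnergyLocalizedToBall} with $x_{T}$ replaced by $x$ and $R$ replaced by $R_{*}$, splitting it into a local piece on $B_{\lambda R_{*}}(x)$ and a Gaussian tail controlled by $M_{0}e^{-c_{*}^{2}\lambda^{2}/8}[\cdots]\mathopen{}\left|\log(\vep)\right|\mathclose{}$. Since $R_{*}\le{}R$ and $x\in{}B_{R/2}(x_{T})$, the inclusion $B_{\lambda R_{*}}(x)\subset{}B_{\lambda R+R/2}(x_{T})\subset{}B_{\lambda(T)R}(x_{T})$ holds provided $\lambda(T)\ge\lambda+\tfrac{1}{2}$. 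Combined with the hypothesis $\check{\eta}\le\eta_{1}/2$ and the inequality $R_{*}^{N-2}\ge{}T_{0}^{(N-2)/2}=(2\check{\eta}/\eta_{1})R^{N-2}$ (which follows directly from the definition of $T_{0}$), the local contribution satisfies
\[
\frac{1}{(4\pi)^{\frac{N}{2}}R_{*}^{N-2}}\int_{B_{\lambda(T)R}(x_{T})}\!{}\evep(\uvep(\cdot,T))\le\check{\eta}\biggl(\frac{R}{R_{*}}\biggr)^{N-2}\mathopen{}\left|\log(\vep)\right|\mathclose{}\le\frac{\eta_{1}(\sigma)}{2}\mathopen{}\left|\log(\vep)\right|\mathclose{}.
\]

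The Gaussian tail is absorbed by choosing $\lambda$, and hence $\lambda(T)$, large enough that $M_{0}e^{-c_{*}^{2}\lambda^{2}/8}[\cdots]/((4\pi)^{N/2}R_{*}^{N-2})\le(\eta_{1}/2)\mathopen{}\left|\log(\vep)\right|\mathclose{}$. Tracking the explicit prefactors $(2R_{*}^{2}/(T+2R_{*}^{2}))^{(N-2)/2}$ and $R_{*}^{N-2}\sqrt{T}$ that appear inside the bracket of Proposition~\ref{ManifoldEnergyLocalizedToBall}, and solving the resulting inequality for $\lambda$ in the worst case $R_{*}=\sqrt{T_{0}}$, produces precisely the asymptotic form of $\lambda(T,R)$ claimed at the end of the statement. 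With the two estimates combined, $\Evep(\uvep,(x,t),R_{*})\le\eta_{1}(\sigma)\mathopen{}\left|\log(\vep)\right|\mathclose{}$, and the admissibility condition $\sqrt{2\vep}<R_{*}<\min\{R(\sigma),\sqrt{t}\}$ of Theorem~\ref{Theorem1} follows from $R<R(\sigma)$ together with a possible mild enlargement of $T_{0}$ to ensure $T_{0}\ge 2\vep$.

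The main obstacle is the uniformity of the choice of $\lambda$ over the time interval $[T+T_{0},T+T_{1}]$: because the decay prefactor in Proposition~\ref{ManifoldEnergyLocalizedToBall} depends on $R_{*}$ and not merely on $R$, the same $\lambda$ must control the tail at the lower endpoint, where $R_{*}$ is smallest and the polynomial loss $R^{N-2}/R_{*}^{N-2}$ is largest. Balancing the Gaussian factor $e^{-c_{*}^{2}\lambda^{2}/8}$ against this polynomial loss is what dictates the $T$-dependent asymptotic size of $\lambda$ stated in the conclusion; the remaining steps are elementary manipulations of Gaussian exponentials and geodesic ball inclusions.
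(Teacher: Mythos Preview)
Your proposal is correct and follows essentially the same approach the paper indicates, namely deducing Proposition~\ref{Proposition2} as a straightforward consequence of Theorem~\ref{Theorem1} combined with Proposition~\ref{ManifoldEnergyLocalizedToBall}. The key steps you outline---applying the localization estimate at the shifted point with radius $R_{*}=\sqrt{t-T}$, using the ball inclusion $B_{\lambda R_{*}}(x)\subset B_{\lambda(T)R}(x_{T})$ together with the definition of $T_{0}$ to bound the local piece by $\tfrac{\eta_{1}}{2}\logeps$, and then choosing $\lambda$ to absorb the Gaussian tail---are exactly the intended argument.
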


Following \cite{BOS2} we also record the following consequence of
Proposition \ref{Proposition2} combined with Theorems
\ref{Theorem1} and \ref{Theorem2} for future use.

\begin{proposition}\label{Proposition4}
For each $\sigma>0$ there exists positive constants $\eta_{2}(\sigma)$ and $R(\sigma)$ as well as a positive function $\lambda$
defined on $(0,\infty)$ such that if, for $x\in{}M$, $t>0$, and $\sqrt{2\vep}<r<R(\sigma)$ we have
\begin{equation*}
\int_{B_{\lambda(t)r}(x)}\!{}e_{\vep}(u_{\vep})\le\eta_{2}r^{N-2}\mathopen{}\left|\log(\vep)\right|\mathclose{},
\end{equation*}
then
\begin{equation*}
e_{\vep}(u_{\vep})=\frac{|\nabla\Phi_{\vep}|^{2}}{2}+\kappa_{\vep}
\end{equation*}
in $\Lambda_{\frac{1}{4}}(x,t,r)\ceq{}B_{\frac{r}{4}}(x)\times\bigl[t+\frac{15}{16}r^{2},t+r^{2}\bigr]$, where $\Phi_{\vep}$ and
$\kappa_{\vep}$ are as in Theorem \ref{Theorem2}.
In particular,
\begin{equation*}
\mu_{\vep}=\frac{e_{\vep}(u_{\vep})}{\mathopen{}\left|\log(\vep)\right|\mathclose{}}\le{}C(t,r)\hspace{10pt}\text{on }\Lambda_{\frac{1}{4}}(x,t,r).
\end{equation*}
\end{proposition}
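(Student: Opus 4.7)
The plan is to combine Proposition \ref{Proposition2} with Theorem \ref{Theorem2}, selecting the parameters so that the two results dovetail. First, I would take $\sigma$ as given and apply Proposition \ref{Proposition2} to obtain $\eta_1(\sigma)$, $R(\sigma)$, and the function $\lambda$. Under the hypothesis of Proposition \ref{Proposition4}, the quantity $\check\eta(x,t,r)$ defined in Proposition \ref{Proposition2} satisfies $\check\eta \le \eta_2/(4\pi)^{N/2}$, so choosing $\eta_2(\sigma)$ small enough compared to $\eta_1(\sigma)$ and $N$ guarantees both $\check\eta \le \eta_1/2$ and the sharper bound
\[
T_0 = \left(\frac{2\check\eta}{\eta_1}\right)^{\!\!\tfrac{2}{N-2}}r^2 \le \tfrac{3}{4}\,r^2.
\]
Proposition \ref{Proposition2} then yields
\[
|u_\vep| \ge 1 - \sigma \quad \text{on} \quad \Lambda' \ceq B_{r/2}(x) \times \bigl[t + \tfrac{3}{4}r^2,\, t + r^2\bigr].
\]

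Next I would apply Theorem \ref{Theorem2} directly to $\Lambda'$. Writing $\Lambda' = B_{R'}(x) \times [T', T' + \Delta T']$ with $R' = r/2$, $T' = t + \tfrac{3}{4}r^2$, and $\Delta T' = r^2/4$, an immediate computation shows
\[
\Lambda'_{1/2} = B_{r/4}(x) \times \bigl[t + \tfrac{15}{16}r^2,\, t + r^2\bigr] = \Lambda_{1/4}(x,t,r),
\]
which matches exactly the cylinder appearing in the conclusion. Theorem \ref{Theorem2} therefore provides functions $\Phi_\vep$ and $\kappa_\vep$ on $\Lambda_{1/4}(x,t,r)$ giving the decomposition $e_\vep(u_\vep) = \tfrac{1}{2}|\nabla\Phi_\vep|^2 + \kappa_\vep$ together with the estimates \eqref{EnergyDecompositionHeatEquation}--\eqref{DecompositionEstimates}. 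The one technical point to record is that $R(\sigma)$ in the statement must also be chosen small enough that $r/2 < \inj_g(M)$, in order to apply Theorem \ref{Theorem2}; taking $R(\sigma)$ to be the minimum of the $R(\sigma)$ produced by Proposition \ref{Proposition2} and $\inj_g(M)/2$ suffices.

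Finally, the $L^\infty$ bound on $\mu_\vep$ over $\Lambda_{1/4}(x,t,r)$ follows by dividing the decomposition through by $|\log\vep|$: the term $|\nabla\Phi_\vep|^2/(2|\log\vep|)$ is bounded by $C(t,r)\,M_0$ in view of \eqref{DecompositionEstimates}, while $\kappa_\vep/|\log\vep|$ is bounded by $C(t,r)\vep^\beta/|\log\vep|$, which tends to zero as $\vep \to 0^+$. The constant $C(t,r)$ inherits its dependence from $C(\Lambda')$ in Theorem \ref{Theorem2}, which depends only on the geometry of $\Lambda'$ and therefore only on $t$ and $r$. No genuinely new difficulty arises here; the entire argument is essentially parameter bookkeeping aligning the parabolic cylinders of Proposition \ref{Proposition2} and Theorem \ref{Theorem2}.
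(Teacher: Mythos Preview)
Your proposal is correct and follows exactly the approach the paper indicates: the paper simply states that Proposition~\ref{Proposition4} is a consequence of Proposition~\ref{Proposition2} combined with Theorems~\ref{Theorem1} and~\ref{Theorem2}, and your argument carries out precisely this combination (Theorem~\ref{Theorem1} being implicitly invoked through Proposition~\ref{Proposition2}). Your verification that $\Lambda'_{1/2}=\Lambda_{1/4}(x,t,r)$ is the key bookkeeping step, and it is correct.
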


We use the remainder of this section to prove Theorem \ref{Theorem3}.
We begin by introducing some notation.
We let $\Omega\ceq{}M\times(t_{1},t_{2})$, where $0<t_{1}<t_{2}<\infty$
and use $\delta$ to denote the exterior derivative on
$M\times(0,\infty)$.
In addition we let $\delta^{*}$ denote its formal
adjoint with respect to the natural product metric.
If 
$\eta$ is a $k$-form on $M\times (0,\infty)$ and $\Sigma$ is a smooth hypersurface,
we will write $\eta_T$ to denote the $k$-form on $\Sigma$ defined by
\[
\eta_T \ceq i^*\eta = \mbox{ the tangential part of $\eta$  on $\Sigma$},
\]
where $i\colon\Sigma\to M\times(0,\infty)$ is the inclusion map. We also write
\[
\eta_N := \eta - \eta_T = \mbox{ the normal part of $\eta$ on $\Sigma$.}
\]

We note that if $\uvep$ solves \ref{PGLOriginal}
and satisfies \eqref{H0} then standard parabolic
estimates give, for sufficiently small $\vep$,
that
\begin{align}
\int_{M\times\{t\}}\!{}\evep(\uvep)&\le{}M_{0}
\mathopen{}\left|\log(\vep)\right|\mathclose{}\hspace{10pt}
\forall{}t>0,  \label{EnergyLogarithmicBoundt}\\
|\uvep(x,t)|&\le3\hspace{50pt}
\forall(x,t)\in\Omega.    \label{UniformUpperBound}
\end{align}
In particular, \eqref{EnergyLogarithmicBoundt} allows us to
conclude that
\begin{align}
\int_{M\times[0,t_{2}]}\!{}\evep(\uvep)&\le{}
C(\Omega)M_{0}\mathopen{}\left|\log(\vep)\right|\mathclose{}
\label{EnergyLogarithmicBound}\\
\int_{\pp\Omega}\!{}\evep(\uvep)&\le{}
2M_{0}\mathopen{}\left|\log(\vep)\right|\mathclose{}.
\label{BoundaryEnergyLogarithmicBound}
\end{align}
The next result is the main decomposition tool used in the proof of Theorem
\ref{Theorem3}.
\begin{proposition}\label{ControlledHodgeDeRham}
Assume that $\uvep$ is a solution to \ref{PGLOriginal} on $M\times(0,\infty)$ that
satisfies \eqref{H0}.
Then there is a smooth $1$-form $\gamma$ dependent only on the initial data of
$\uvep$ such that, on $\Omega$, there exists a smooth function $\Phi$, a smooth
$1$-form $\zeta$, and a smooth $2$-form $\Psi$ for which
\beq\label{ControlledHodgeDeRhamDecomposition}
\uvep\times\delta\uvep=\delta\Phi+\delta^{*}\Psi+\gamma+\zeta,
\hspace{10pt}
\delta\Psi=0\text{ in }\Omega,
\hspace{10pt}
\Psi_{T}=0\text{ on }\pp\Omega,
\eeq
and
\beq\label{LogarithmicControlGradientHodgeDeRhamDecomposition}
\left\|\Phi\right\|_{W^{1,2}\left(\Omega\right)}+\left\|D\Psi\right\|_{L^{2}\left(\Omega\right)}
+\left\|\gamma\right\|_{L^{2}\left(\Sigma\right)}\le{}C(\Omega)
\sqrt{(M_{0}+1)\left|\log(\vep)\right|}.
\eeq
In addition, we have that $\gamma$ is constant in time, independent of $\mrmd{}t$,
a harmonic $1$-form on $M$ for all $t>0$, and there is a time independent
$\bbS^{1}$-valued function $u_{h,\vep}$ such that $ju_{h,\vep}=\gamma$.
Moreover, for any $1\le{}p<\frac{N+1}{N}$,
\beq\label{DelicateLpEstimates}
\begin{cases}
\left\|D\Psi\right\|_{L^{p}(\Omega)}\le{}C(p,\Omega)(M_{0}+1),&\\
\left\|\zeta\right\|_{L^{p}(\Omega)}\le{}C(p,\Omega)(M_{0}+1)\vep^{\frac{1}{2}},
\end{cases}
\eeq
where $C(p,\Omega)$ is a constant depending only on $p$ and $\Omega$.
\end{proposition}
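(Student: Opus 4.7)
The plan is to perform a Hodge--Morrey-type decomposition of the $1$-form $\alpha_\vep \ceq \uvep\times\delta\uvep$ on the space-time cylinder $\Omega = M\times(t_1,t_2)$, subtracting off a time-independent harmonic representative $\gamma$ arising from the initial data before inverting $\delta,\delta^*$ on the remainder. The starting estimate $\|\alpha_\vep\|_{L^2(\Omega)} + \|\alpha_\vep\|_{L^2(\pp\Omega)}\le C(\Omega)\sqrt{(M_0+1)|\log\vep|}$ is immediate from $|\uvep|\le 3$ in \eqref{UniformUpperBound} and the energy bounds \eqref{EnergyLogarithmicBound}--\eqref{BoundaryEnergyLogarithmicBound}. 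To construct $\gamma$, let $\mathcal{H}^1(M,g)$ denote the finite-dimensional space of harmonic $1$-forms on $M$ and let $\Lambda\subset\mathcal{H}^1(M,g)$ denote the discrete lattice of forms whose de~Rham class lies in $2\pi H^1(M;\Z)$; by the standard correspondence between $H^1(M;\Z)$ and homotopy classes of maps $M\to\bbS^1$, each element of $\Lambda$ equals $ju_h$ for some smooth $u_h\colon M\to\bbS^1$. Since $\Lambda$ is cocompact in $\mathcal{H}^1(M,g)$, I would pick $\gamma\in\Lambda$ within $O(1)$ of the harmonic projection $H_1(j\uvep^0)$ of the initial vorticity in $L^\infty(M)$; the associated time-independent lift $u_{h,\vep}$ then satisfies $|\gamma|=|\nabla u_{h,\vep}|\le K_M\sqrt{M_0|\log\vep|}$ by norm equivalence on $\mathcal{H}^1(M,g)$ and the $L^2$ bound on $j\uvep^0$.

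With $\gamma$ extended trivially in $t$ to $\Omega$ and $\tilde\alpha_\vep\ceq\alpha_\vep-\gamma$, I would solve the Hodge system
\[
-\Delta\Phi=\delta^*\tilde\alpha_\vep,\qquad -\Delta\Psi=\delta\tilde\alpha_\vep\text{ in }\Omega,\quad \Psi_T=0\text{ on }\pp\Omega,
\]
with Neumann-type data on $\Phi$ dictated by the Hodge--Morrey framework on $\overline\Omega$, and set $\zeta\ceq\tilde\alpha_\vep-\delta\Phi-\delta^*\Psi$. By design $\zeta$ is the residual harmonic $1$-form on $\Omega$ with respect to the prescribed boundary conditions; since such forms pull back from $\mathcal{H}^1(M,g)$ and we have already subtracted the $M$-harmonic content at $t=0$, $\zeta$ records only the drift of the harmonic projection across $[t_1,t_2]$. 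The $L^2$ bounds in \eqref{LogarithmicControlGradientHodgeDeRhamDecomposition} follow by pairing the equations against $\Phi$ and $\Psi$, integrating by parts, and applying Cauchy--Schwarz with the $L^2$ bound on $\alpha_\vep$.

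The main obstacle is the improved $L^p$ bounds in \eqref{DelicateLpEstimates} for $p<(N+1)/N$, which lie strictly below the $L^2$ scale. The key input is that the source term $\delta\tilde\alpha_\vep = \delta\uvep\times\delta\uvep$ has Jacobian structure and is pointwise bounded by $KV_\vep(\uvep)$ where $|\uvep|\ge 1/2$ (cf.~\eqref{PointwiseJacobian}); combining this with the Hardy-space-type estimates of \cite{JS3} and elliptic regularity for the Hodge Laplacian on $\Psi$ upgrades to the uniform bound $\|D\Psi\|_{L^p}\le C(p,\Omega)(M_0+1)$, consistent with the Calder\'on--Zygmund exponent $(N+1)/N$ in the $(N+1)$-dimensional cylinder. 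For $\zeta$, the $\vep^{1/2}$-smallness is harvested by differentiating $t\mapsto H_1(j\uvep(\cdot,t))$ along \ref{PGLOriginal} and estimating the resulting expression in $L^p$, where an explicit $\vep^{1/2}$-factor arises from Cauchy--Schwarz applied to $(1-|\uvep|^2)$ together with the energy bound $\int V_\vep(\uvep)\le M_0|\log\vep|$. I expect the Jacobian-regularity step, and the verification that the lattice-rounded $\gamma$ remains compatible with the true harmonic projection uniformly in $t$ (up to the $\vep^{1/2}$ error), to require the most care.
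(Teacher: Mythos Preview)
Your construction of $\gamma$ as a lattice-rounded harmonic projection at $t=0$ matches the paper's approach, and the $L^2$ part of the decomposition would go through essentially as you describe. The genuine gap is in the $L^p$ estimate \eqref{DelicateLpEstimates} for $D\Psi$. If you apply Hodge--Morrey directly to $\tilde\alpha_\vep=\uvep\times\delta\uvep-\gamma$ with the tangential boundary conditions $(\bar\Phi)_T=\Psi_T=\eta_T=0$, then the induced second boundary condition for the $\Psi$-problem is $(\delta^*\Psi)_T=(\tilde\alpha_\vep)_T$ on $M\times\{t_i\}$. Writing the slice Hodge decomposition $\uvep\times\mrmd\uvep=\mrmd\Phi_\vep^i+\mrmd^*\Psi_\vep^i+\gamma_\vep^i$, this boundary trace contains the exact piece $\mrmd\Phi_\vep^i$, for which one only has $\|\mrmd\Phi_\vep^i\|_{L^2}\le C\sqrt{M_0|\log\vep|}$ and no uniform $L^p$ bound. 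The Stampacchia-duality elliptic estimate for $\Psi$ then fails to produce $\|\Psi\|_{W^{1,p}}\le C(M_0+1)$.

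The paper addresses this by a gauge transformation before the space-time Hodge step: it extends the boundary potentials to a time-linear $\Phi_\vep^{1,2}$, sets $w_\vep=\uvep e^{-i\Phi_\vep^{1,2}}\bar u_{h,\vep}$, and applies Hodge--Morrey to $w_\vep\times\delta w_\vep$. This removes the large exact part from the boundary data, so that the trace $(\delta^*\Psi_\vep)_T=A_\vep$ consists only of $\mrmd^*\Psi_\vep^i$, $\gamma_\vep^i-\lfloor\gamma_\vep^0\rfloor$, and $(1-|\uvep|^2)$-weighted terms, each of which has a uniform $L^p$ bound (using the Jerrard--Soner estimate and the drift computation you outlined). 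Correspondingly, $\zeta$ in the paper is not the space-time harmonic residual alone: it is the sum of the harmonic residual $\eta=a_\vep\,\mrmd t$ (shown to satisfy $|a_\vep|\le C\vep|\log\vep|$ via a direct computation using \ref{PGLOriginal}) and the gauge-transformation error $-(1-|\uvep|^2)(\delta\Phi_\vep^{1,2}+\gamma)$, whose $\vep^{1/2}$ smallness in $L^p$ comes from H\"older with $\|1-|\uvep|^2\|_{L^2}\le C\vep\sqrt{|\log\vep|}$. Your description of $\zeta$ as recording only the harmonic drift misses this second, dominant contribution.
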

\begin{proof}
As in \cite{BOS2} we split the proof into two steps.
We begin by dealing with $\Sigma\ceq\pp\Omega$.
Notice that $\Sigma=\left(M\times\left\{t_{1}\right\}\right)\sqcup\left(M\times\left\{t_{2}\right\}\right)$.\\

\emph{Step $1$: \emph{HdR} decompositions on $\Sigma$.}
Since $\pp\Sigma=\varnothing$ then a standard Hodge-de Rham decomposition 
applied to the tangential part of $\uvep\times\delta\uvep$ 
allows us to write
\beq\label{BoundaryHodgeDecompositionSigma}
(\uvep\times\delta\uvep)_{T}=\uvep\times\mrmd\uvep
=\mrmd\Phi_{\vep}^{i}+\mrmd^{*}\Psi_{\vep}^{i}+\gamma_{\vep}^{i}
\hspace{5pt}\text{on }M\times\{t_{i}\},
\eeq
for $i=1,2$, with 
\begin{align}
\mrmd\Psi_{\vep}^{i}=0&\hspace{5pt}\text{ on }M\times\{t_{i}\}\text{ for }i=1,2, \label{PsiepsiloniClosed}\\
\mrmd\gamma_{\vep}^{i}=0=\mrmd^{*}\gamma_{\vep}^{i}&\hspace{5pt}\text{ on }M\times\{t_{i}\}
\text{ for }i=1,2,\\
\int_{M\times\{t_{i}\}}\!{}\Phi_{\vep}^{i}\dvol_{g}=0
&\hspace{5pt}\text{ for }i=1,2 .  \label{ZeroAverage}
\end{align}
See for example Theorem $5$ of Section $5.2.5$ of \cite{GMS}, which also shows that 
\beq\label{LogarithmicBound}
\left\|\Phi_{\vep}^{i}\right\|_{W^{1,2}(M\times\{t_{i}\})}^{2}
+\left\|\Psi_{\vep}^{i}\right\|_{W^{1,2}(M\times\{t_{i}\})}^{2}
+\left\|\gamma_{\vep}^{i}\right\|_{L^{2}(M\times\{t_{i}\})}^{2}\le{}KM_{0}\mathopen{}\left|\log(\vep)\right|\mathclose{}
\eeq
for $i=1,2$.
Next observe that by applying $\mrmd$ to \eqref{BoundaryHodgeDecompositionSigma} at $t=t_{i}$ for $i=1,2$ and using \eqref{PsiepsiloniClosed} we obtain
\beq\label{JacobianHarmonicPDE}
-\Delta_{M}\Psi_{\vep}^{i}=J_{M}\uvep\hspace{10pt}\text{on }M\times\{t_{i}\}
\eeq
for $i=1,2$ where $J_{M}\uvep\ceq\frac{1}{2}\mrmd[\uvep\times\mrmd\uvep]$ and
$\Delta_{M}$ is the Laplacian on $M$.
Thus, by Theorem $2.1$ of \cite{JS3}, the Sobolev
Embedding Theorem, duality, and elliptic regularity, see Lemma $2.9$ of \cite{BBO} and
Propositions $5.17$ and $6.5$ of \cite{ISS}, we have that for all
$q>N$, $p\ceq\frac{q}{q-1}$, and $\alpha\ceq1-\frac{N}{q}$ that
\begin{align}\label{W1pBoundForPsi}
\left\|\Psi_{\vep}^{i}\right\|_{W^{1,p}(M\times\{t_{i}\})}
&\le{}C(p,M)\left\|J_{M}\uvep\right\|_{W^{-1,p}(M\times\{t_{i}\})}\\
&\le{}C(p,M)\left\|J_{M}\uvep\right\|_{\left[C^{0,\alpha}(M\times\left\{t_{i}\right\})\right]^{*}}
\le{}C(p,M)(M_{0}+1)  \nonumber
\end{align}
for $i=1,2$.\\

Next we provide an approximation to the harmonic parts from \eqref{BoundaryHodgeDecompositionSigma} that stores most of the energy.
This is a new ingredient needed to extend the corresponding result of \cite{BOS2}
to our setting.
As a result, we go over the associated estimates in more detail.\\

We consider a collection of closed curves,
$\{c_{j}\}_{j=1}^{\beta_{1}(M)}$ where $\beta_{1}(M)$ is the
first Betti number of $M$,
generating the first homology group $H_{1}(M)$.
It follows from item $(ii)$ of Theorem $4$ of Section $5.3.2$ and Theorem $6$ of Section $5.2.5$ of \cite{GMS} that, associated to these curves, we can find
a basis $\{c^{k}\}_{k=1}^{\beta_{1}(M)}$ for $H^{1}(M)$, the space of harmonic $1$-forms on $M$,
satisfying
\[
\int_{c_{j}}\!{}c^{k}=2\pi\delta_{jk}.
\]
Using this basis we can express $\gamma_{\vep}^{0}$, the harmonic part of $\uvep\times\mrmd\uvep$ at $t=0$, as
\beq\label{HarmonicRepresentation}
\gamma_{\vep}^{0}=\sum_{k=1}^{\beta_{1}(M)}a_{k}^{0}(\vep)c^{k}
\eeq
where $a_{k}^{0}(\vep)$ may depend on $\vep$.
From the representation \eqref{HarmonicRepresentation} we may define
\beq\label{HarmonicFormApproximation}
\lfloor\gamma_{\vep}^{0}\rfloor\ceq\sum_{k=1}^{\beta_{1}(M)}\lfloor{}a_{k}^{0}(\vep)\rfloor{}c^{k}
\eeq
which is a harmonic $1$-form on $M$.
Notice that we may extend \eqref{HarmonicFormApproximation} to $M\times(0,\infty)$, in particular to $\Omega$, by being constant in time to obtain
\beq\label{HarmonicFormApproximationExtension}
\gamma_{\vep}(x,t)\ceq\lfloor\gamma_{\vep}^{0}\rfloor(x).
\eeq
We observe that this extension has no term corresponding to $\mrmd{}t$.
We also note that by construction we have
\beq\label{EstimateAtZero}
\left\|\gamma_{\vep}^{0}-\lfloor\gamma_{\vep}^{0}\rfloor\right\|_{L^{2}(\wedge^{1}M)}
=\left\|\sum_{k=1}^{\beta_{1}(M)}\bigl(a_{k}^{0}(\vep)-\lfloor{}a_{k}^{0}(\vep)\rfloor\bigr)c^{k}\right\|_{L^{2}(\wedge^{1}M)}
\le\sum_{k=1}^{\beta_{1}(M)}\left\|c^{k}\right\|_{L^{2}(\wedge^{1}M)}
\eeq
where the rightmost quantity is not dependent on $\vep$.
Next we establish that $\gamma_{\vep}^{1}$ and $\gamma_{\vep}^{2}$ are not too far from $\lfloor\gamma_{\vep}^{0}\rfloor$.
We only demonstrate this for $\gamma_{\vep}^{1}$ as the proof is similar for $\gamma_{\vep}^{2}$.
To do this we first extend $\gamma_{\vep}^{0},\gamma_{\vep}^{1}$ to $M\times(0,\infty)$, in particular $\Omega$,
by being constant in time and, respectively, use
$\Gamma_{\vep}^{0},\Gamma_{\vep}^{1}$ to denote this.
Next we define the $2$-form $\eta$ by
\beq\label{ComparisonHarmonicForm}
\eta\ceq\bigl(\Gamma_{\vep}^{1}-\Gamma_{\vep}^{0}\bigr)\wedge\mrmd{}t.
\eeq
Observe that
\[
\delta^{*}\eta=-\bigl[\mrmd^{*}\bigl(\Gamma_{\vep}^{1}-\Gamma_{\vep}^{0}\bigr)\bigr]\wedge\mrmd{}t=0
\]
where we have identified $\Gamma_{\vep}^{1}-\Gamma_{\vep}^{0}$ with an element of $H^{1}(M)$ since this $1$-form is
independent of $t$ and $\mrmd{}t$.
From this computation we can now see that, after integrating by parts, we obtain
\begin{align}\label{HarmonicDifferenceEstimate}
2\int_{M\times[0,t_{1}]}\!{}\left<J\uvep,\eta\right>_{M\times[0,t_{1}]}
&=\int_{M\times[0,t_{1}]}\!{}\left<\delta\left(\uvep\times\delta\uvep\right),\eta\right>_{M\times[0,t_{1}]}\\
&=\int_{M\times\{t_{1}\}}\!{}(\uvep\times\mrmd{}\uvep)\wedge\star\eta_{N}
+\int_{M\times\{0\}}\!{}(\uvep\times\mrmd\uvep)\wedge\star\eta_{N}  \nonumber
\end{align}
where $J\uvep\ceq\frac{1}{2}\delta[\uvep\times\delta\uvep]$.
By noting that $\eta_{N}=(\gamma_{\vep}^{1}-\gamma_{\vep}^{0})\wedge\mrmd{}t$ at $M\times\{t_{1}\}$ and
$\eta_{N}=-(\gamma_{\vep}^{1}-\gamma_{\vep}^{0})\wedge\mrmd{}t$ at $M\times\{0\}$ we can rewrite this last
expression as
\begin{align}\label{HarmonicRewriting}
&\int_{M\times\{t_{1}\}}\!{}(\uvep\times\mrmd\uvep)\wedge\star\eta_{N}
+\int_{M\times\{0\}}\!{}(\uvep\times\mrmd\uvep)\wedge\star\eta_{N}\\
=&(-1)^{N-1}\biggl[\int_{M\times\{t_{1}\}}\!{}
\left<\uvep\times\mrmd\uvep,\gamma_{\vep}^{1}-\gamma_{\vep}^{0}\right>_{M}
-\int_{M\times\{0\}}\!{}
\left<\uvep\times\mrmd\uvep,\gamma_{\vep}^{1}-\gamma_{\vep}^{0}\right>_{M}\biggr] \nonumber\\
=&(-1)^{N-1}\biggl[\int_{M}\!{}
\left<\gamma_{\vep}^{1},\gamma_{\vep}^{1}-\gamma_{\vep}^{0}\right>_{M}
-\int_{M}\!{}
\left<\gamma_{\vep}^{0},\gamma_{\vep}^{1}-\gamma_{\vep}^{0}\right>_{M}\biggr] \nonumber\\
=&(-1)^{N-1}\int_{M}\!{}\left|\gamma_{\vep}^{1}-\gamma_{\vep}^{0}\right|_{M}^{2}  \nonumber
\end{align}
where we have used \eqref{BoundaryHodgeDecompositionSigma} in the third line.
Putting \eqref{HarmonicDifferenceEstimate} and \eqref{HarmonicRewriting} together and using the Jerrard-Soner estimate, Theorem $2.1$ of
\cite{JS3}, together with equivalence of norms on $H^{1}(M)$ gives
\begin{align*}
\left\|\gamma_{\vep}^{1}-\gamma_{\vep}^{0}\right\|_{L^{2}(\wedge^{1}M)}^{2}
&=2\biggl|\int_{M\times[0,t_{1}]}\!{}\left<J\uvep,\eta\right>_{M\times[0,t_{1}]}\biggr|
\le2\left\|J\uvep\right\|
_{\left(C^{0,\alpha}(\wedge^{2}M\times[0,t_{1}])\right)^{*}}\left\|\eta\right\|_{C^{0,\alpha}(\wedge^{2}M\times[0,t_{1}])}\\
&\le{}C(\alpha,\Omega)\left\|\eta\right\|_{L^{2}(\wedge^{2}M\times[0,t_{1}])}
\Biggl[\frac{\int_{M\times[0,t_{1}]}\!{}\evep(\uvep)}{{\mathopen{}\left|\log(\vep)\right|\mathclose{}}}
+1\Biggr]    \nonumber\\
&=C(\alpha,\Omega)\left\|\gamma_{\vep}^{1}-\gamma_{\vep}^{0}\right\|_{L^{2}(\wedge^{1}M)}
\Biggl[\frac{\int_{M\times[0,t_{1}]}\!{}\evep(\uvep)}{{\mathopen{}\left|\log(\vep)\right|\mathclose{}}}
+1\Biggr]    \nonumber\\
&\le{}C(\alpha,\Omega)(M_{0}+1)\left\|\gamma_{\vep}^{1}-\gamma_{\vep}^{0}\right\|_{L^{2}(\wedge^{1}M)}.  \nonumber
\end{align*}
Thus, we obtain
\beq\label{InitialTimeEstimate}
\left\|\gamma_{\vep}^{1}-\gamma_{\vep}^{0}\right\|_{L^{2}(\wedge^{1}M)}\le{}C(\alpha,\Omega)(M_{0}+1).
\eeq
It then follows from \eqref{EstimateAtZero} and \eqref{InitialTimeEstimate} that for $i=1,2$
\beq\label{FinalTimeEstimate}
\left\|\gamma_{\vep}^{i}-\lfloor\gamma_{\vep}^{0}\rfloor\right\|_{L^{2}(\wedge^{1}M)}
\le{}C(\alpha,\Omega)(M_{0}+1).
\eeq
Next we notice that since the integral of $\gamma_{\vep}$ over every closed loop in $M$ has a value in $2\pi\Z$ then there
exists $u_{h,\vep}\colon{}M\to\bbS^{1}$ such that
\beq\label{HarmonicCurrent1}
u_{h,\vep}\times\mrmd{}u_{h,\vep}=\gamma_{\vep}.
\eeq
We may extend $u_{h,\vep}$ to be constant in time to obtain $u_{h,\vep}\colon\Omega\to\bbS^{1}$ such that
\beq\label{HarmonicCurrent2}
u_{h,\vep}\times\delta{}u_{h,\vep}=\gamma_{\vep},
\eeq
$u_{h,\vep}$ is independent of $t$ and $u_{h,\vep}\times\delta{}u_{h,\vep}$ is independent of $\mrmd{}t$.
We also consider the linear extension $\Phi_{\vep}^{1,2}$ of $\Phi_{\vep}^{1}$ to $\Phi_{\vep}^{2}$ in $\Omega$ defined by
\[
\Phi_{\vep}^{1,2}(x,t)\ceq\biggl(\frac{t_{2}-t}{t_{2}-t_{1}}\biggr)\Phi_{\vep}^{1}(x)
+\biggl(\frac{t-t_{1}}{t_{2}-t_{1}}\biggr)\Phi_{\vep}^{2}(x).
\]
Note that by \eqref{LogarithmicBound} this extension satisfies
\beq\label{LogarithmicBoundOnExtension}
\left\|\Phi_{\vep}^{1,2}\right\|_{W^{1,2}(\Omega)}\le{}K(\Omega)
\sqrt{M_{0}\mathopen{}\left|\log(\vep)\right|\mathclose{}}.
\eeq

\emph{Step $2$: ``Gauge transformation'' of $\uvep$.}
On $\Omega$ we consider the map $w_{\vep}$ defined by
\[
w_{\vep}\ceq{}\uvep{}e^{-i\Phi_{\vep}^{1,2}}\overline{u_{h,\vep}}\hspace{10pt}\text{in }\Omega.
\]
Notice that $|w_{\vep}|=|\uvep|$.
Moreover, one can show
\beq\label{TransferenceIdentity}
w_{\vep}\times\delta{}w_{\vep}
=\uvep\times\delta{}\uvep-\delta\Phi_{\vep}^{1,2}-\gamma_{\vep}
+(1-|\uvep|^{2})(\delta\Phi_{\vep}^{1,2}+\gamma_{\vep}).
\eeq
Since $|\uvep|\le3$ then
\beq\label{GradientOfModifiedFunction}
|\nabla_{x}w_{\vep}|\le|\nabla_{x}\uvep|+3|\nabla_{x}\Phi_{\vep}^{1,2}|
+3|\gamma_{\vep}|
\eeq
and hence
\beq\label{LogarithmicEnergyBoundModifiedFunction}
\left\|\nabla{}w_{\vep}\right\|_{L^{2}(\Omega)}^{2}
+\vep^{-2}\left\|1-|w_{\vep}|^{2}\right\|_{L^{2}(\Omega)}^{2}
\le{}KM_{0}\mathopen{}\left|\log(\vep)\right|\mathclose{}.
\eeq
By H\"{o}lder's inequality, \eqref{UniformUpperBound}, \eqref{EnergyLogarithmicBound}, \eqref{LogarithmicBound}, and
\eqref{LogarithmicBoundOnExtension} we have that for $1\le{}p<2$
\begin{align}
\left\|(1-|\uvep|^{2})\delta\Phi_{\vep}^{1,2}\right\|_{L^{p}(\Omega)}^{p}
&\le{}K(\Omega)M_{0}\vep^{2-p}\mathopen{}\left|\log(\vep)\right|\mathclose{}  \label{SmallErrorTermEstimates1}\\
\left\|(1-|\uvep|^{2})\gamma_{\vep}\right\|_{L^{p}(\Omega)}^{p}
&\le{}K(\Omega)M_{0}\vep^{2-p}\mathopen{}\left|\log(\vep)\right|\mathclose{}  \label{SmallErrorTermEstimates2}
\end{align}
and similarly
\begin{align}
\left\|(1-|\uvep|^{2})\mrmd\Phi_{\vep}^{i}\right\|_{L^{p}(M\times\{t_{i}\})}^{p}
&\le{}K(\Omega)M_{0}\vep^{2-p}\mathopen{}\left|\log(\vep)\right|\mathclose{}  \label{SmallErrorBoundaryEstimates1}\\
\left\|(1-|\uvep|^{2})\gamma_{\vep}^{i}\right\|_{L^{p}(M\times\{t_{i}\})}^{p}
&\le{}K(\Omega)M_{0}\vep^{2-p}\mathopen{}\left|\log(\vep)\right|\mathclose{}  \label{SmallErrorBoundaryEstimates2}
\end{align}
for $i=1,2$.
Next, by Corollary $5.6$ of \cite{ISS}, we have the following Hodge decomposition of $w_{\vep}\times\delta{}w_{\vep}$ on $\Omega$:
\beq\label{HodgeDecompositionOverOmega}
\begin{cases}
w_{\vep}\times\delta{}w_{\vep}=\delta\overline{\Phi}_{\vep}+\delta^{*}\Psi_{\vep}+\eta& \text{in }\Omega,\\
\delta\Psi_{\vep}=0& \text{in }\Omega,\\
\left(\overline{\Phi}_{\vep}\right)_{T}=0,\,\left(\Psi_{\vep}\right)_{T}=0,\,\eta_{T}=0& \text{on }\Sigma\\
\delta\eta=\delta^{*}\eta=0&\text{on }\Omega
\end{cases}
\eeq
also satisfying
\beq\label{OmegaHodgeLogarithmicBound}
\left\|\overline{\Phi}_{\vep}\right\|_{W^{1,2}(\Omega)}+\left\|\Psi_{\vep}\right\|_{W^{1,2}(\Omega)}
\le{}K(\Omega)\sqrt{M_{0}\mathopen{}\left|\log(\vep)\right|\mathclose{}}.
\eeq
Next we include another new estimate needed to extend the argument of \cite{BOS2}
to the setting of a Riemannian manifold.
As this represents an addition to the argument from \cite{BOS2} we provide a more
detailed discussion. 
We will write
\[
H^{1}_{T}(\Omega) := \{ \mbox{ $1$-forms $\eta$ on $\Omega$} : \delta\eta = \delta^*\eta = 0\mbox{ in }\Omega, \ \eta_T=0\mbox{ on }\partial \Omega\}.
\]
It follows from the discussion in Lemma $10$ of Section $5.3$ of \cite{BJOS} that $H^1_T(\Omega)$ is a real vector space of dimension  $(\#\mbox{ of components of }\pp \Omega)-1 = 1.$ 
Since $H^1_T(\Omega)$ clearly includes all $1$-forms of the form $\eta = {const}\,\mrmd t$,
we deduce that
\beq\label{HodgeDecompositionOverOmegaUpdated}
\eta=a_{\vep}\mrmd{}t
\eeq
where $a_{\vep}\in\R$ that may depend on $\vep$.
Next we observe that
\begin{equation}\label{HarmonicCoefficientBound}
a_{\vep}\cdot\vol_{g}(M)(t_{2}-t_{1})=\int_{\Omega}\!{}\left<j_{\Omega}w_{\vep},\mrmd{}t\right>_{\Omega}
\end{equation}
where we have used the abbreviation $j_{\Omega}w_{\vep}\ceq{}w_{\vep}\times\delta{}w_{\vep}$.
By \eqref{TransferenceIdentity} and the fact that $\gamma_{\vep}$ is independent of $\mrmd{}t$ we can rewrite
\eqref{HarmonicCoefficientBound} as
\begin{align*}
\int_{\Omega}\!{}\left<j_{\Omega}w_{\vep},\mrmd{}t\right>_{\Omega}
&=\int_{\Omega}\!{}\left<j_{\Omega}\uvep,\mrmd{}t\right>_{\Omega}
-\int_{\Omega}\!{}\left<\delta\Phi_{\vep}^{1,2},\mrmd{}t\right>_{\Omega}
+\int_{\Omega}\!{}\left<\bigl(1-\left|\uvep\right|^{2}\bigr)\delta\Phi_{\vep}^{1,2},\mrmd{}t\right>_{\Omega}\\
&\eqqcolon(A)+(B)+(C).
\end{align*}
Observe that since $\uvep$ solves \ref{PGLOriginal} then we have
\begin{align}\label{(A)Estimate}
(A)&=\int_{t_{1}}^{t_{2}}\!\!\int_{M}\!{}\uvep\times\pp_{t}\uvep\dvol_{g}(x)\mrmd{}t
=\int_{t_{1}}^{t_{2}}\!\!\int_{M}\!{}\uvep\times\Delta_{M}\uvep\dvol_{g}(x)\mrmd{}t\\
&=-\int_{t_{1}}^{t_{2}}\!\left[\int_{M}\!{}\left<\mrmd{}^{*}(\uvep\times\mrmd\uvep),1\right>_{M}\right]\mrmd{}t
=-\int_{t_{1}}^{t_{2}}\!\left[\int_{M}\!{}\left<\uvep\times\mrmd\uvep,\mrmd(1)\right>_{M}\right]\mrmd{}t
=0  \nonumber
\end{align}
where we integrated by parts over $M$.
Next, observe that by \eqref{ZeroAverage}
\beq\label{(B)Estimate}
(B)=\int_{M}\!\int_{t_{1}}^{t_{2}}\!{}\pp_{t}\Phi_{\vep}^{1,2}\mrmd{}t\,\dvol_{g}(x)
=\int_{M}\!{}\left[\Phi_{\vep}^{2}-\Phi_{\vep}^{1}\right]\dvol_{g}(x)=0.
\eeq
Finally, observe that by Cauchy-Schwarz, \eqref{EnergyLogarithmicBound}, and \eqref{LogarithmicBoundOnExtension}
\beq\label{(C)Estimate}
|(C)|\le\int_{t_{1}}^{t_{2}}\!\!\!\int_{M}\!{}\bigl|1-|\uvep|^{2}\bigr|\bigl|\pp_{t}\Phi_{\vep}^{1,2}\bigr|
\le{}C(\Omega)M_{0}\vep\mathopen{}\left|\log(\vep)\right|\mathclose{}.
\eeq
Combining \eqref{(A)Estimate}, \eqref{(B)Estimate}, and \eqref{(C)Estimate} with \eqref{HarmonicCoefficientBound} we obtain
\beq\label{HarmonicCoefficientEstimate}
|a_{\vep}|\le{}C(\Omega)M_{0}\vep\mathopen{}\left|\log(\vep)\right|\mathclose{}.
\eeq
Next, note that $\Psi_{\vep}$ satisfies
\beq\label{JacobianEllipticProblem}
\begin{cases}
-\Delta_{\Omega}\Psi_{\vep}=\omega_{\vep}\ceq{}Jw_{\vep}& \text{in }\Omega,\\
\left(\Psi_{\vep}\right)_{T}=0& \text{on }\pp\Omega\\
\left(\delta^{*}\Psi_{\vep}\right)_{T}=A_{\vep}
\ceq\mrmd^{*}\Psi_{\vep}^{i}+\bigl(\gamma_{\vep}^{i}-\lfloor\gamma_{\vep}^{0}\rfloor\bigr)
+\bigl(1-|\uvep|^{2}\bigr)\bigl(\mrmd\Phi_{\vep}^{i}+\lfloor\gamma_{\vep}^{0}\rfloor\bigr)& 
\text{on }M\times\{t_{i}\}
\end{cases}
\eeq
for $i=1,2$ where $\Delta_{\Omega}$ is the Laplacian on $\Omega$.
By \eqref{W1pBoundForPsi}, \eqref{InitialTimeEstimate}, \eqref{FinalTimeEstimate}, \eqref{SmallErrorBoundaryEstimates1}, and
\eqref{SmallErrorBoundaryEstimates2} we have, for $i=1,2$, $1\le{}p<\frac{N+1}{N}$,
and $q=\frac{p}{p-1}$ that
\beq\label{BoundaryCodifferentialBound}
\left\|A_{\vep}\right\|_{\bigl(W^{1-\frac{1}{q},q}\left(\pp\Omega\right)\bigr)^{*}}
\le\left\|A_{\vep}\right\|_{\left[L^{q}\left(\pp\Omega\right)\right]^{*}}
=\left\|A_{\vep}\right\|_{L^{p}(\pp\Omega)}\le{}C(p,\Omega)(M_{0}+1).
\eeq
Arguing as in \eqref{W1pBoundForPsi} we also have
\[
\left\|\omega_{\vep}\right\|_{\left[W^{1,q}\left(\Omega\right)\right]^{*}}
\le{}C(p,\Omega)\left\|\omega_{\vep}\right\|_{\left[C^{0,\alpha}\left(\Omega\right)\right]^{*}}
\le{}C(\alpha,\Omega)(M_{0}+1).
\]
Thus, by elliptic regularity, obtained by a Stampacchia duality argument obtained by
combining Proposition $A.2$ of \cite{BBO} and Corollary $5.6$ of \cite{ISS}, we have
\beq\label{PsiOmegaEstimate}
\left\|\Psi_{\vep}\right\|_{W^{1,p}(\Omega)}\le{}C(p,\Omega)(M_{0}+1).
\eeq
We refer the reader to \cite{BOS2} as well as Proposition
\ref{ControlledHodgeDeRham} for more details regarding this estimate.
We set
\[
\Psi\ceq\Psi_{\vep},\hspace{10pt}
\Phi\ceq\Phi_{\vep}^{1,2}+\overline{\Phi}_{\vep},\hspace{10pt}
\gamma\ceq\gamma_{\vep},\hspace{10pt}
\zeta\ceq-\bigl(1-|\uvep|^{2}\bigr)\bigl(\delta\Phi_{\vep}^{1,2}+\gamma_{\vep}\bigr)+\eta.
\]
Then
\begin{align*}
\uvep\times\delta\uvep
&=w_{\vep}\times\delta{}w_{\vep}+|\uvep|^{2}\bigl(\delta\Phi_{\vep}^{1,2}+\gamma_{\vep}\bigr)\\
&=\delta\overline{\Phi}_{\vep}+\delta^{*}\Psi_{\vep}+\eta+\delta\Phi_{\vep}^{1,2}+\gamma_{\vep}
-\bigl(1-|\uvep|^{2}\bigr)\bigl(\delta\Phi_{\vep}^{1,2}+\gamma_{\vep}\bigr)\\
&=\delta\Phi+\delta^{*}\Psi+\gamma+\zeta.
\end{align*}
The conclusion follows from \eqref{HodgeDecompositionOverOmega}, \eqref{LogarithmicBoundOnExtension},
\eqref{SmallErrorTermEstimates1}, \eqref{SmallErrorTermEstimates2}, \eqref{OmegaHodgeLogarithmicBound}, \eqref{HarmonicCoefficientEstimate}, and
\eqref{PsiOmegaEstimate}.
\end{proof}
Next we demonstrate, following \cite{BOS2}, that the phase portion, $\Phi$, of $\uvep$ is close to
satisfying the heat equation.
\begin{lemma}\label{EvolutionOfThePhase}
Suppose $\uvep$ satisfies \ref{PGLOriginal} on $M\times(0,\infty)$ and
\eqref{H0}, and suppose, for $0<t_{1}<t_{2}<\infty$,
we set $\Omega=M\times(t_{1},t_{2})$.
For $\vep>0$ sufficiently small we let $\Phi$, $\Psi$, $\gamma$, and
$\zeta$ satisfy the conclusions of Proposition \ref{ControlledHodgeDeRham}.
Then the function $\Phi$ verifies the equation
\begin{equation}\label{CloseToHeat}
\partial_{t}\Phi-\Delta\Phi=-\mrmd^{*}(\delta^{*}\Psi+\zeta-P_{t}(\delta^{*}\Psi+\zeta)\mrmd{}t)
-P_{t}(\delta^{*}\Psi+\zeta)\hspace{5pt}\text{in }\Omega.
\end{equation}
Here, for a $1$-form $\omega$ on $\Omega$, $P_{t}(\omega)$ denotes its $\mrmd{}t$ component.
\end{lemma}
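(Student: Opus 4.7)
The plan is to derive the equation for $\partial_t \Phi - \Delta \Phi$ by decomposing the identity
\[
\uvep \times \delta \uvep = \delta \Phi + \delta^{*}\Psi + \gamma + \zeta
\]
from Proposition \ref{ControlledHodgeDeRham} into its $M$-tangential and $\mrmd t$-normal components, using the PGL equation to control the normal part and exploiting the fact that $\gamma$ is $M$-harmonic and independent of $\mrmd t$.

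First I would take the $\mrmd t$-component ($P_t$) of the decomposition. Since $\gamma$ has no $\mrmd t$ part (it is a time-independent harmonic $1$-form on $M$ extended trivially in $t$), this yields
\[
\uvep \times \partial_t \uvep = \partial_t \Phi + P_t(\delta^{*}\Psi) + P_t(\zeta).
\]
On the other hand, crossing $(\mathrm{PGL})_{\vep}$ with $\uvep$ eliminates the potential term (since $\uvep\times\uvep = 0$) and gives the standard conservation identity
\[
\uvep \times \partial_t \uvep \;=\; \uvep \times \Delta \uvep \;=\; -\mrmd^{*}(\uvep \times \mrmd \uvep),
\]
where here $\mrmd^{*}$ is the codifferential on $M$. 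Hence
\[
\partial_t \Phi \;=\; -\mrmd^{*}(\uvep \times \mrmd \uvep) - P_t(\delta^{*}\Psi) - P_t(\zeta). \tag{$\ast$}
\]

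Next I would extract the $M$-tangential part of the Hodge decomposition, namely
\[
\uvep \times \mrmd \uvep \;=\; \mrmd \Phi + (\delta^{*}\Psi)_T + \gamma + \zeta_T,
\]
and apply $\mrmd^{*}$ on $M$. Because $\gamma$ is harmonic on $M$ we have $\mrmd^{*}\gamma = 0$, and on functions the identity $\mrmd^{*}\mrmd \Phi = -\Delta \Phi$ holds, so
\[
\mrmd^{*}(\uvep \times \mrmd \uvep) \;=\; -\Delta \Phi + \mrmd^{*}\!\bigl((\delta^{*}\Psi)_T + \zeta_T\bigr).
\]
Substituting this into $(\ast)$ then gives
\[
\partial_t \Phi - \Delta \Phi \;=\; -\mrmd^{*}\!\bigl((\delta^{*}\Psi)_T + \zeta_T\bigr) - P_t(\delta^{*}\Psi) - P_t(\zeta).
\]

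Finally, I would recognize that for any $1$-form $\omega$ on $\Omega$ the tangential part is given by $\omega_T = \omega - P_t(\omega)\,\mrmd t$, so
\[
(\delta^{*}\Psi)_T + \zeta_T \;=\; \delta^{*}\Psi + \zeta - P_t(\delta^{*}\Psi + \zeta)\,\mrmd t,
\]
which together with linearity of $P_t$ yields precisely \eqref{CloseToHeat}. The argument is essentially algebraic, so the main (and only) obstacle is careful bookkeeping of the $M$/$\mrmd t$ splitting of $\delta^{*}$ acting on forms on $\Omega$; in particular one must ensure that the ``$\Delta$'' in the conclusion is the Laplacian on $M$ (i.e., $-\mrmd^{*}\mrmd$ on functions) rather than a spacetime Hodge Laplacian, which is guaranteed by the fact that we applied $\mrmd^{*}$ on $M$ to the purely tangential slice of the decomposition.
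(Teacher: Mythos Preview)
Your proposal is correct and follows essentially the same approach as the paper: both split the Hodge decomposition $\uvep\times\delta\uvep=\delta\Phi+\delta^{*}\Psi+\gamma+\zeta$ into its $M$-tangential and $\mrmd t$ components, use the cross-product identity $\uvep\times\partial_t\uvep=-\mrmd^{*}(\uvep\times\mrmd\uvep)$ from $(\mathrm{PGL})_{\vep}$, and then combine these using $\mrmd^{*}\gamma=0$. Your write-up is in fact slightly more explicit about the bookkeeping (e.g.\ identifying $\omega_T=\omega-P_t(\omega)\,\mrmd t$ and flagging that $\Delta$ is the spatial Laplacian), but the argument is the same.
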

\begin{proof}
By Proposition \ref{ControlledHodgeDeRham} we have
\beq\label{PGLHodgeDecomposition}
\uvep\times\delta\uvep=\delta\Phi+\delta^{*}\Psi+\gamma+\zeta,
\eeq
where $\Phi$, $\Psi$, $\gamma$, and $\zeta$ verify the conclusions of Proposition
\ref{ControlledHodgeDeRham}.
Taking the cross product of \ref{PGLOriginal} with $\uvep$ leads to
\beq\label{CurrentEquation}
\uvep\times\pp_{t}\uvep=-\mrmd^{*}(\uvep\times\mrmd\uvep)\hspace{5pt}
\text{in }\Omega.
\eeq
On the other hand, we also have by \eqref{PGLHodgeDecomposition}
\beq\label{SeparationOfDerivatives}
\begin{cases}
\uvep\times\mrmd\uvep&=\mrmd\Phi+\gamma+(\delta^{*}\Psi+\zeta)-P_{t}(\delta^{*}\Psi+\zeta)\mrmd{}t,\\
\uvep\times\pp_{t}\uvep&=\Phi_{t}+P_{t}(\delta^{*}\Psi+\zeta).
\end{cases}
\eeq
Notice that $\mrmd^{*}\gamma=0$ since $\gamma$ is a harmonic $1$-form on $M\times\{t\}$ for all $t$.
As a result of this last observation along with \eqref{CurrentEquation} and \eqref{SeparationOfDerivatives} we obtain the conclusion.
\end{proof}
With the above ingredients in hand, the proof of  Theorem \ref{Theorem3}
exactly follows  arguments in \cite{BOS2}. We recall some details for the
convenience of the reader.\\[1pt]

\hspace{-15pt}{\it Proof of Theorem \ref{Theorem3}}
\par{}Let $\uvep$ be a solution of \ref{PGLOriginal} verifying \eqref{H0} on
$M\times(0,\infty)$.
Let $\calK$ be a compact subset of $M\times(0,\infty)$.
Choose $0<t_{1}<t_{2}<\infty$ so that $\calK\subset{}M\times(t_{1},t_{2})$.
Let $\Omega\ceq{}M\times(t_{1},t_{2})$ and suppose that $\Phi$, $\Psi$, $\gamma$, and $\zeta$ be as in Proposition \ref{ControlledHodgeDeRham} and
Lemma \ref{EvolutionOfThePhase}.
We choose $t_{3}$ and $t_{4}$ such that $t_{1}<t_{3}<t_{4}<t_{2}$ and so that $\calK\subset{}M\times(t_{3},t_{4})\eqqcolon\Lambda$.
By perhaps perturbing $t_{3}$ and $t_{4}$ we may assume
\beq\label{PhaseBoundaryLogarithmicBound}
\int_{\partial\Lambda}\!{}|\Phi|^{2}+
\int_{\partial\Lambda}\!{}|\nabla_{x,t}\Phi|^{2}\le{}C(\calK)(M_{0}+1)\mathopen{}\left|\log(\vep)\right|\mathclose{}.
\eeq
This is possible because of \eqref{LogarithmicControlGradientHodgeDeRhamDecomposition}.
We split the proof into two steps.\\

\emph{Step }$1$:\emph{ Defining }$\vphi_{\vep}$.
Let $\varphi_{\vep}$ verify the homogeneous heat equation
\beq\label{PhaseHeatEquation}
\begin{cases}
\pp_{t}\vphi_{\vep}-\Delta\vphi_{\vep}=0&\text{in }\Lambda\\
\vphi_{\vep}=\Phi&\text{on }\mathcal{O}_{1}
\end{cases}
\eeq
and define 
\[
w_{\vep}\ceq\uvep{}e^{-i\vphi_{\vep}}\bar{u}_{h,\vep}
\]
where $u_{h,\vep}$ is the $\bbS^{1}$-valued function described in Proposition
\ref{ControlledHodgeDeRham} satisfying
\[
ju_{h,\vep}=\gamma.
\]
From the standard regularity theory for the heat equation, see Theorems $8$ and $9$ of
\cite{Ev}, in addition to
\eqref{LogarithmicControlGradientHodgeDeRhamDecomposition} we have
\beq\label{GradientPhaseRegularity}
\left\|\nabla\vphi_{\vep}\right\|_{L^{\infty}(\calK)}^{2}
\le{}C(\calK)\left\|\Phi\right\|_{W^{1,2}(\mathcal{O}_{0})}^{2}
\le{}C(\calK)(M_{0}+1)\mathopen{}\left|\log(\vep)\right|\mathclose{}.
\eeq

Next, for later use we set $\Phi_{1} = \Phi - \varphi_\vep$. Then $\Phi_1$
solves
\beq\label{HeatProblemCorrection}
\begin{cases}
\pp_{t}\Phi_{1}-\Delta\Phi_{1}=-\mrmd^{*}(\delta^{*}\Psi+\zeta-P_{t}(\delta^{*}\Psi+\zeta)\mrmd{}t)-P_{t}(\delta^{*}\Psi+\zeta)
\hspace{5pt}&\text{in }\Lambda,\\
\Phi_{1}=0&\text{on }\mathcal{O}_{1}.
\end{cases}
\eeq
Since by \eqref{DelicateLpEstimates} we have
\[
\left\|\delta^{*}\Psi+\zeta-P_{t}(\delta^{*}\Psi+\zeta)\mrmd{}t\right\|_{L^{p}(\Lambda)}
+\left\|P_{t}(\delta^{*}\Psi+\zeta)\right\|_{L^{p}(\Lambda)}
\le{}C(p,\calK)(M_{0}+1)
\]
it follows from standard estimates for the non-homogeneous heat equation that
\beq\label{GradientEstimateCorrection}
\left\|\nabla\Phi_{1}\right\|_{L^{p}(\Lambda)}\le{}C(p,\calK)(M_{0}+1).
\eeq

\emph{Step }$2$: $W^{1,p}$\emph{ estimates for }$w_{\vep}$.
First observe that
\[
|w_{\vep}|^{2}|\nabla{}w_{\vep}|^{2}
=|w_{\vep}|^{2}\bigl|\nabla|w_{\vep}|\bigr|^{2}
+|w_{\vep}\times\nabla{}w_{\vep}|^{2},
\]
and hence
\beq\label{LargeLevelSetsEstimate}
\int_{\calK\cap\{|\uvep|\ge\frac{1}{2}\}}\!{}|\nabla{}w_{\vep}|^{p}
\le{}C(p)\biggl[\int_{\calK}\!{}|w_{\vep}\times\delta{}w_{\vep}|^{p}
+\int_{\calK}\!{}\bigl|\nabla|w_{\vep}|\bigr|^{p}\biggr].
\eeq
On the other hand, by standard estimates for \ref{PGLOriginal},
\eqref{GradientPhaseRegularity},
\eqref{LogarithmicControlGradientHodgeDeRhamDecomposition},
and equivalence of norms for $\gamma$ we have
\[
|\nabla{}w_{\vep}|\le|\nabla\uvep|+3|\nabla\vphi_{\vep}|+3|\gamma|
\le{}C(\calK)M_{0}\vep^{-1},
\]
where we have used that since $|u_{h,\vep}|=1$ then $|\nabla{}u_{h,\vep}|=|ju_{h,\vep}|=|\gamma|$.
As a result, we have
\beq\label{SmallLevelSetsEstimate}
\int_{\calK\cap\{|\uvep|\le\frac{1}{2}\}}\!{}|\nabla{}w_{\vep}|^{p}
\le{}C(p,\calK)M_{0}^{p}\vep^{2-p}\int_{\calK}\!{}V_{\vep}(u_{\vep})
\le{}C(p,\calK)M_{0}^{p+1}.
\eeq
By the definition of $w_{\vep}$ and Proposition \ref{ControlledHodgeDeRham} we have
\beq\label{ModifiedPGLSolutionHodgeDecomposition}
w_{\vep}\times\delta{}w_{\vep}=\delta^{*}\Psi+\delta\Phi_{1}
+\zeta+\bigl(1-|u_{\vep}|^{2}\bigr)(\delta\varphi_{\vep}+\gamma).
\eeq
By H\"{o}lder's inequality we have
\begin{align*}
\left\|\bigl(1-|\uvep|^{2}\bigr)\delta\vphi_{\vep}\right\|_{L^{p}(\calK)}
+\left\|\bigl(1-|\uvep|^{2}\bigr)\gamma\right\|_{L^{p}(\calK)}
&\le{}C(p,\calK)M_{0}\vep^{\frac{2-p}{p}}\mathopen{}\left|\log(\vep)\right|\mathclose{},
\end{align*}
and hence, when combined with \eqref{ModifiedPGLSolutionHodgeDecomposition}, Proposition \ref{ControlledHodgeDeRham}, and
\eqref{GradientEstimateCorrection}, we have
\beq\label{ModifiedCurrentPartEstimate}
\int_{\calK}\!{}|w_{\vep}\times\delta{}w_{\vep}|^{p}\le{}C(p,\calK)(M_{0}+1).
\eeq
The proof for the gradient of the modulus remains the same as in \cite{BOS2} except we use a cutoff function in time, $\chi_{\calK}$, and work over a
set $\calK'\ceq{}M\times[t_{4},t_{5}]\subset\Omega$ containing $\calK$.
Following this procedure we have, for $1\le{}p<2$, that
\beq\label{ModGrad:Estimate}
\int_{B_{\calK}}\!{}\bigl|\nabla|w_\vep| \bigr|^{p}
\le{}C(\calK)(M_{0}+1)\vep^{1-\frac{p}{2}}
\mathopen{}\left|\log(\vep)\right|\mathclose{}.
\eeq
We refer the reader to \cite{BOS2}  for additional details.
Combining \eqref{ModGrad:Estimate} with \eqref{LargeLevelSetsEstimate},
\eqref{SmallLevelSetsEstimate}, and \eqref{ModifiedCurrentPartEstimate} completes
the proof.
\hfill\qedsymbol\\

\section{Analysis of Limiting Measures}\label{Sec::Limiting}
\hspace{15pt}In this section we complete the proof of Theorem \ref{BOSTheorem}.
To do this we will, as in \cite{BOS2}, combine the results of
Theorems \ref{Theorem1}, \ref{Theorem2}, \ref{Theorem3}
as well as their consequences
and apply a detailed analysis of the limiting energy measure.
Much of the corresponding proof used in \cite{BOS2} carries
over to the general setting with minor variations.
However, new ingredients are needed in the globalization of
$\Phi_{*}$ due to the presence of $u_{h,\vep}$ from
Theorem \ref{Theorem3}.
We refer the reader to Section A.7 of \cite{Col} for more detail.\\

We fix solutions $\{\uvep\}_{0<\vep<1}$  of
\ref{PGLOriginal} satisfying assumption \eqref{H0},
and we define Radon measures over
$M\times[0,\infty)$ and its time slices by
\[
\begin{aligned}
\mu_{\vep}(x,t) &\ceq\frac{\evep(\uvep(x,t))}{\mathopen{}\left|\log(\vep)\right|\mathclose{}}\dvol_{g}(x)\mrmd{}t
\\
\mu_{\vep}^{t}(x) &\ceq\frac{\evep(\uvep(x,t))}{\mathopen{}\left|\log(\vep)\right|\mathclose{}}\dvol_{g}(x).
\end{aligned}
\]
As a result of assumption \eqref{H0} and standard estimates for
\ref{PGLOriginal}, together with well-known arguments from \cite{Brak, Ilmanen}, there is a
subsequence $\vep_{n}\to0^{+}$ and Radon measures $\mus$ and $\mus^t$, 
defined on $M\times[0,\infty)$ and on $M$ respectively, such that
\beq\label{WeakStarMeasureLimit}
\begin{aligned}
&\mu_{\vep_{n}}\rightharpoonup\mus\hspace{5pt}\text{as measures,}\\
&\mu_{\vep_{n}}^{t}\rightharpoonup\mus^{t}\text{ as measures for all
$t>0$, \quad where }\mus=\mus^{t}\mrmd{}t.
\end{aligned}
\eeq
We will write $\vep$ instead of $\vep_{n}$ when this is not misleading.
We also identify the measure $\mus^{t}$ with a measure on $M\times\{t\}$, and we will sometimes identify $M$ with $M\times\{t\}$.
We record a consequence of the monotonicity formula on the limit measures.
\begin{lemma}\label{MonotonicityOfLimitMeasure}
For each $t>0$ and $x\in{}M$, the function $r\mapsto\mathscr{G}_{\mu}((x,t),\cdot)$ defined on $(0,\infty)$ by
\[
\mathscr{G}((x,t),r)
\ceq\frac{e^{C_{2}r}}{(4\pi)^{\frac{N}{2}}r^{N-2}}\int_{M}\!{}e^{-\frac{(d_{+}(x,y))^{2}}{4r^{2}}}\mrmd\mus^{t-r^{2}}(y)
+C_{1}M_{0}r,
\]
where $C_{1}$ and $C_{2}$ are determined by Proposition \ref{MonotonicityFormula}, is non-decreasing for
$0<r<\min\{\sqrt{t},1\}$.
\end{lemma}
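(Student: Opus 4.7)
The plan is to realize $\mathscr{G}((x,t),r) - C_1 M_0 r$ as a pointwise-in-$r$ limit, as $\vep_n \to 0^+$, of quantities that are already non-decreasing in $r$ for each fixed $\vep$ by virtue of Proposition \ref{MonotonicityFormula}. Pointwise limits of non-decreasing functions are non-decreasing, so this will yield the result with essentially no new ingredients.

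First I would rewrite $Z(r)$ from \eqref{ZR}, taking the center $y$ of the statement of Proposition \ref{MonotonicityFormula} to be our fixed $x$ and $T$ to be our $t$. Using the explicit form \eqref{AHK} of $\Kap$,
\[
Z_\vep(r) \;=\; \frac{r^{2-N}}{(4\pi)^{N/2}} \int_M e^{-(d_+(x,z))^{2}/(4r^{2})} \evep(\uvep(z, t-r^2))\,\dvol_g(z).
\]
Dividing by $\mathopen{}\left|\log(\vep)\right|\mathclose{}$ and recognizing $\mu_\vep^{t-r^2}$ gives
\[
\frac{Z_\vep(r)}{\mathopen{}\left|\log(\vep)\right|\mathclose{}} \;=\; \frac{r^{2-N}}{(4\pi)^{N/2}} \int_M e^{-(d_+(x,z))^{2}/(4r^{2})}\,\mrmd\mu_\vep^{t-r^2}(z).
\]
For each fixed $r \in (0, \min\{\sqrt{t},1\})$, the integrand $z\mapsto e^{-(d_+(x,z))^{2}/(4r^2)}$ is continuous and bounded on the compact manifold $M$, so by the weak-$*$ convergence $\mu_{\vep_n}^{t-r^2}\rightharpoonup \mus^{t-r^2}$ from \eqref{WeakStarMeasureLimit},
\[
\lim_{n\to\infty} e^{C_2 r}\,\frac{Z_{\vep_n}(r)}{\mathopen{}\left|\log(\vep_n)\right|\mathclose{}} \;=\; \frac{e^{C_2 r}}{(4\pi)^{N/2}\,r^{N-2}} \int_M e^{-(d_+(x,y))^{2}/(4r^{2})}\,\mrmd\mus^{t-r^2}(y).
\]

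Next, Proposition \ref{MonotonicityFormula} applied to $\uvep$ states that $r\mapsto C_1 E_0 r + e^{C_2 r} Z_\vep(r)$ is non-decreasing on $[0,\min\{\sqrt{t},1\}]$. Dividing by $\mathopen{}\left|\log(\vep)\right|\mathclose{}$ preserves monotonicity, and by hypothesis \eqref{H0} we have $E_0/\mathopen{}\left|\log(\vep)\right|\mathclose{} \le M_0$, so the linear function $r\mapsto C_1\bigl(M_0 - E_0/\mathopen{}\left|\log(\vep)\right|\mathclose{}\bigr) r$ is itself non-decreasing. Adding it to the previous non-decreasing function shows that
\[
r \;\longmapsto\; C_1 M_0\, r + e^{C_2 r}\,\frac{Z_\vep(r)}{\mathopen{}\left|\log(\vep)\right|\mathclose{}}
\]
is non-decreasing on $[0,\min\{\sqrt{t},1\}]$ for every admissible $\vep$.

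Finally I would pass to the pointwise limit $\vep_n\to 0^+$. The first term is independent of $\vep$, and the second converges by the step above to the Gaussian-weighted integral against $\mus^{t-r^2}$ displayed above. The sum therefore converges pointwise to $\mathscr{G}((x,t),r)$ on $(0,\min\{\sqrt{t},1\})$, and since a pointwise limit of non-decreasing functions is non-decreasing, the conclusion follows. No step presents a real obstacle: the only thing to verify carefully is that the weighted-energy integral is tested against a $\vep$-independent continuous bounded function on $M$ for each fixed $r$, which is precisely what makes weak-$*$ convergence applicable.
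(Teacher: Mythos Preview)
Your proof is correct and is exactly the natural argument the paper has in mind: the lemma is stated in the paper without proof, as an immediate consequence of passing Proposition \ref{MonotonicityFormula} to the limit along the subsequence in \eqref{WeakStarMeasureLimit}, and your write-up carries this out cleanly, including the small observation that one may replace $E_0/\mathopen{}\left|\log(\vep)\right|\mathclose{}$ by $M_0$ at the cost of adding a non-decreasing linear term.
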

Next, we record an important consequence of the previous analysis.
\begin{theorem}\label{Theorem5}
There exists an absolute constant $\eta_{2}>0$, and a positive continuous function $\lambda$ defined on $(0,\infty)$ such that if,
for $x\in{}M$, $t>0$, and $r>0$ sufficiently small,
and
\beq\label{MeasureDensityBound}
\mus^{t}\left(B_{\lambda(t)r}(x)\right)<\eta_{2}r^{N-2},
\eeq
then for every $s\in\bigl[t+\frac{15}{16}r^{2},t+r^{2}\bigr]$, $\mus^{s}$ is absolutely continuous with respect to the volume measure on the ball
$B_{\frac{r}{4}}(x)$.
More precisely,
\[
\mus^{s}=\frac{|\nabla\Phi_{*}|^{2}}{2}\dvol_{g}(x)\hspace{5pt}\text{on }B_{\frac{r}{4}}(x),
\]
where $\Phi_{*}$ satisfies the heat equation in
$\Lambda_{\frac{1}{4}}=\Lambda_{\frac{1}{4}}(x,t,r,r^{2})$.
\end{theorem}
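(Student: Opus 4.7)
I propose proving Theorem \ref{Theorem5} by applying Proposition \ref{Proposition4} along the subsequence $\{\vep_{n}\}$ and carefully extracting the limit. First I would exploit the weak-$*$ convergence $\mu_{\vep_{n}}^{t}\rightharpoonup\mus^{t}$, combined with the standard upper-semicontinuity of measure on closed sets (and a harmless shrinkage of the function $\lambda$ from Proposition \ref{Proposition2} so that the chosen $\eta_{2}$ is strictly smaller than the corresponding absolute constant in Proposition \ref{Proposition4}), to translate the hypothesis $\mus^{t}(B_{\lambda(t)r}(x))<\eta_{2}r^{N-2}$ into the quantitative energy bound
\[
\int_{B_{\lambda(t)r}(x)}\!{}e_{\vep_{n}}(u_{\vep_{n}}(\cdot,t))\,\dvol_{g}
\le\eta_{2}\,r^{N-2}\logeps
\]
for all sufficiently large $n$. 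Proposition \ref{Proposition4} then supplies smooth functions $\Phi_{\vep_{n}}$ and $\kappa_{\vep_{n}}$ on $\Lambda_{\frac{1}{4}}\ceq\Lambda_{\frac{1}{4}}(x,t,r)$ such that $\pp_{t}\Phi_{\vep_{n}}-\Delta\Phi_{\vep_{n}}=0$, $\|\kappa_{\vep_{n}}\|_{L^{\infty}(\Lambda_{\frac{1}{4}})}\le C\vep_{n}^{\beta}$, $\|\nabla\Phi_{\vep_{n}}\|_{L^{\infty}(\Lambda_{\frac{1}{4}})}^{2}\le C(\Lambda_{\frac{1}{4}})M_{0}\logeps$, and $e_{\vep_{n}}(u_{\vep_{n}})=\tfrac{1}{2}|\nabla\Phi_{\vep_{n}}|^{2}+\kappa_{\vep_{n}}$ throughout $\Lambda_{\frac{1}{4}}$.

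Next I would normalize and rescale. Introduce $\widetilde{\Phi}_{n}\ceq(\Phi_{\vep_{n}}-c_{n})/\logeps^{1/2}$, where $c_{n}$ is the spatial average of $\Phi_{\vep_{n}}$ on a fixed time slice of $\Lambda_{\frac{1}{4}}$. Then $\widetilde{\Phi}_{n}$ still solves the homogeneous heat equation on $\Lambda_{\frac{1}{4}}$, enjoys a uniform gradient bound $|\nabla\widetilde{\Phi}_{n}|^{2}\le C(\Lambda_{\frac{1}{4}})M_{0}$, and is uniformly bounded in $L^{\infty}$ by the gradient bound combined with the Poincaré inequality on slices. Interior parabolic Schauder estimates then upgrade this to uniform $C^{k}_{loc}$ bounds on every compact subset of $\Lambda_{\frac{1}{4}}$, so along a further subsequence $\widetilde{\Phi}_{n}\to\Phi_{*}$ in $C^{\infty}_{loc}(\Lambda_{\frac{1}{4}})$, where $\Phi_{*}$ is itself a smooth solution of the heat equation.

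It then remains to pass to the limit in the energy measure. For any test function $\chi\in C_{c}(B_{\frac{r}{4}}(x))$ and any $s\in\bigl[t+\tfrac{15}{16}r^{2},t+r^{2}\bigr]$,
\begin{align*}
\int\!{}\chi\,\mrmd\mu_{\vep_{n}}^{s}
&=\int\!{}\chi\,\frac{e_{\vep_{n}}(u_{\vep_{n}})}{\logeps}\dvol_{g}\\
&=\int\!{}\chi\,\tfrac{1}{2}|\nabla\widetilde{\Phi}_{n}|^{2}\dvol_{g}
+\int\!{}\chi\,\frac{\kappa_{\vep_{n}}}{\logeps}\dvol_{g}.
\end{align*}
The last term vanishes in the limit thanks to the $\vep^{\beta}$-smallness of $\kappa_{\vep_{n}}$, while $C^{\infty}_{loc}$-convergence of $\widetilde{\Phi}_{n}$ identifies the limit of the penultimate term as $\int\chi\,\tfrac{1}{2}|\nabla\Phi_{*}|^{2}\dvol_{g}$. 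Combined with $\mu_{\vep_{n}}^{s}\rightharpoonup\mus^{s}$, this yields $\mus^{s}\rest B_{\frac{r}{4}}(x)=\tfrac{1}{2}|\nabla\Phi_{*}|^{2}\dvol_{g}$, which is precisely the desired conclusion.

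The main obstacle is the normalization step: because $\|\nabla\Phi_{\vep_{n}}\|_{L^{\infty}}$ grows like $\logeps^{1/2}$, producing a meaningful limit requires simultaneously dividing by $\logeps^{1/2}$ and subtracting a well-chosen additive constant. Here the heat equation plays an essential role — without its smoothing we would only obtain weak $L^{2}$ convergence of $\nabla\widetilde{\Phi}_{n}$, which would deliver the inequality $\mus^{s}\le\tfrac{1}{2}|\nabla\Phi_{*}|^{2}\dvol_{g}$ but not equality. Parabolic interior regularity circumvents this by producing strong (indeed $C^{\infty}_{loc}$) convergence of gradients, at the mild cost of passing from $\Lambda_{\frac{1}{2}}$ to the interior cylinder $\Lambda_{\frac{1}{4}}$.
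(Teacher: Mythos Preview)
Your proposal is correct and is exactly the argument the paper has in mind: the paper's proof consists of the single sentence ``straightforward consequence of Proposition \ref{Proposition4}'', and you have accurately reconstructed what that consequence entails (Portmanteau to transfer the smallness to the $\vep_{n}$-level, apply Proposition \ref{Proposition4}, normalize $\Phi_{\vep_{n}}/\sqrt{\logeps}$, use interior parabolic regularity for the heat equation to upgrade to $C^{\infty}_{\mathrm{loc}}$ convergence, and pass to the limit in the energy identity). The only cosmetic point is that your ``further subsequence'' is harmless because $\mus^{s}$ is already the limit along the full sequence $\{\vep_{n}\}$, so uniqueness of weak limits forces the density $|\nabla\Phi_{*}|^{2}/2$ to be independent of the subsequence chosen.
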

\begin{proof}
The proof is the same as in \cite{BOS2} and is a straightforward
consequence of Proposition \ref{Proposition4}.
\end{proof}

\subsection{Densities and the concentration set}\label{Densities and the concentration set}
We begin by introducing some notation for measure densities.
\begin{definition}\label{LowerAndUpperMeasureDensities}
Let $\nu$ be a Radon measure on $M$.
For $m\in\N$, the $m$-dimensional lower and upper densities of $\nu$ at the point $x$, denoted $\Theta_{*,m}(\nu,x)$ and
$\Theta_{m}^{*}(\nu,x)$ respectively, are defined by
\[
\Theta_{*,m}(\nu,x)\ceq\liminf_{r\to0^{+}}\frac{\nu(B_{r}(x))}{\omega_{m}r^{m}},
\hspace{10pt}
\Theta_{m}^{*}(\nu,x)\ceq\limsup_{r\to0^{+}}\frac{\nu(B_{r}(x))}{\omega_{m}r^{m}}
\]
where $\omega_{m}$ denotes the volume of the $m$-dimensional Euclidean unit ball in the standard metric.
When both quantities coincide, $\nu$ admits an $m$-dimensional density $\Theta_{m}(\nu,x)$ at the point $x$, defined as the common value.
\end{definition}
Next, following \cite{BOS2}, we record a lemma regarding upper
bounds on measure densities.
\begin{lemma}\label{UpperBoundDensityBound}
For all $x\in{}M$ and for all $t>0$,
\[
\Theta_{*,N-2}(\mus^{t},x)\le\Theta_{N-2}^{*}(\mus^{t},x)
\le\frac{M_{0}e^{\frac{1}{4}}}{\omega_{N-2}}\Bigl[e^{C_{2}t}t^{\frac{2-N}{2}}+(4\pi)^{\frac{N}{2}}C_{1}\sqrt{t}\Bigr].
\]
\end{lemma}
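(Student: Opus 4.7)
The first inequality $\Theta_{*,N-2}(\mus^t,x) \le \Theta^*_{N-2}(\mus^t,x)$ is the trivial fact that $\liminf \le \limsup$, so only the upper bound on $\Theta^*_{N-2}(\mus^t,x)$ requires work. The plan is to exploit the monotonicity of $r\mapsto \mathscr{G}((x,t_0),r)$ from Lemma \ref{MonotonicityOfLimitMeasure}, applied to the space-time point $(x_0,t_0)=(x,t+r^2)$ at scale $r$, so that the measure appearing in $\mathscr{G}$ is precisely $\mus^{t_0-r^2}=\mus^t$. Together with the elementary observation that on a small ball the Gaussian weight is uniformly bounded below, this will convert an upper bound on $\mathscr{G}$ at a fixed larger scale into an upper bound on $\mus^t(B_r(x))/r^{N-2}$.

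First, I would use that once $r<\inj_g(M)/2$, the definition \eqref{dplus} gives $d_+(x,y)=d_g(x,y)\le r$ for $y\in B_r(x)$, so the Gaussian weight satisfies $\exp[-(d_+(x,y))^2/(4r^2)] \ge e^{-1/4}$ on $B_r(x)$. Consequently
\[
\mus^t(B_r(x)) \le e^{1/4}\int_M e^{-\frac{(d_+(x,y))^2}{4r^2}}\,\mrmd\mus^t(y)
= e^{1/4}(4\pi)^{N/2}r^{N-2}e^{-C_2 r}\bigl[\mathscr{G}((x,t+r^2),r)-C_1M_0r\bigr].
\]
Dividing by $\omega_{N-2}r^{N-2}$ and dropping the negative $C_1M_0r$ term yields
\[
\frac{\mus^t(B_r(x))}{\omega_{N-2}r^{N-2}} \le \frac{e^{1/4}(4\pi)^{N/2}}{\omega_{N-2}}\,\mathscr{G}((x,t+r^2),r).
\]

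Next, I would invoke the monotonicity from Lemma \ref{MonotonicityOfLimitMeasure} to replace $r$ with the larger scale $R\ceq \min\{\sqrt{t+r^2},1\}$, giving $\mathscr{G}((x,t+r^2),r)\le \mathscr{G}((x,t+r^2),R)$. At this larger scale,
\[
\mathscr{G}((x,t+r^2),R) = \frac{e^{C_2R}}{(4\pi)^{N/2}R^{N-2}}\int_M e^{-\frac{(d_+(x,y))^2}{4R^2}}\,\mrmd\mus^{t+r^2-R^2}(y) + C_1M_0R \le \frac{e^{C_2R}\,M_0}{(4\pi)^{N/2}R^{N-2}}+C_1M_0R,
\]
where the final inequality uses the Gaussian weight is bounded by $1$ together with the total mass bound $\mus^s(M)\le M_0$, itself a consequence of \eqref{H0}, \eqref{EnergyLogarithmicBoundt}, and weak-$*$ lower semicontinuity of total mass.

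Finally, I would pass to the limsup as $r\to 0^+$: $R$ converges to $\min\{\sqrt t,1\}$, and in either case the resulting expression is dominated by $\bigl[\,e^{C_2 t}\,t^{(2-N)/2}+(4\pi)^{N/2}C_1\sqrt{t}\,\bigr]$ times $M_0$, yielding the stated bound after multiplying by $e^{1/4}(4\pi)^{N/2}/\omega_{N-2}$. No substantial obstacle is expected, since every ingredient is either standard (Besicovitch-style density estimates), already developed in this paper (the monotone quantity $\mathscr{G}$), or a consequence of the a priori energy bound \eqref{H0}; the only mild point of care is the elementary bookkeeping to check that the final coefficient is $M_0 e^{1/4}/\omega_{N-2}$ as stated.
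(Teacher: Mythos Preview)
Your approach is essentially identical to the paper's: both bound the Gaussian weight below by $e^{-1/4}$ on $B_r(x)$ using $d_+=d_g$ for small $r$, then apply the monotonicity of $\mathscr{G}$ from Lemma~\ref{MonotonicityOfLimitMeasure} together with the total mass bound coming from \eqref{H0}. The only slip is in your last step: for $t<1$ the choice $R\to\sqrt t$ produces $e^{C_2\sqrt t}$, which is larger than the stated $e^{C_2 t}$, so the displayed constant is not literally recovered; this is a harmless bookkeeping discrepancy and the method is correct.
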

\begin{proof}
The first inequality follows from the definition of lower and upper densities while the second inequality follows
from the fact that $d$ agrees with $d_{+}$ on $B_{r}(x)$ for
each $x\in{}M$ and
$0<r<\min\Bigl\{1,\frac{\inj_{g}(M)}{2}\Bigr\}$
combined with
Lemma \ref{MonotonicityOfLimitMeasure} and \eqref{H0}.
\end{proof}
Proceeding as in \cite{BOS2} we introduce  a suitable notion
(not the usual one in this context) of parabolic $m$-dimensional
lower density of a Radon measure $\nu$.
\begin{definition}\label{ParabolicLowerDensity}
Let $\nu$ be a Radon measure on $M\times[0,\infty)$ such that $\nu=\nu^{t}\mrmd{}t$.
For $t>0$ and $m\in\N$, the parabolic $m$-dimensional density of $\nu$ at the point $(x,t)$ is defined by
\begin{equation*}
\Theta_{m}^{P}\left(\nu,(x,t)\right)
\ceq\lim_{r\to0^{+}}\frac{1}{(4\pi)^{\frac{N}{2}}r^{m}}\int_{M}\!{}e^{-\frac{(d_{+}(x,y))^{2}}{4r^{2}}}\mrmd\nu^{t-r^{2}}(y)
\end{equation*}
when it exists.
\end{definition}
Observe that since $r\mapsto\mathscr{G}_{\mu}((x,t),r)$ is non-decreasing then $\Theta_{N-2}^{P}(\mu^{*},(x,t))$ is defined everywhere in $M\times(0,\infty)$.
Next, analogously to \cite{BOS2}, we will relate the parabolic
density to the lower $(N-2)$-dimensional measure density.
\begin{lemma}\label{ParabolicDensityAndLowerDensity}
Suppose $x\in{}M$ and $t>0$.
Then, there exists $K_{M}>0$, depending on $M$, such that
\beq\label{ParabolicDensityRelation}
\Theta_{N-2}^{P}(\mu^{*},(x,t))\ge{}K_{M}\Theta_{*,N-2}(\mus^{t},x)).
\eeq
\end{lemma}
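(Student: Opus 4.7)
The plan is to combine a Gaussian lower bound for the parabolic density with the energy-dissipation inequality of Lemma \ref{DerivativeOfEnergy}, using the monotonicity formula of Lemma \ref{MonotonicityOfLimitMeasure} to control the resulting error term.

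First I would exploit the bound $d_{+}(x,y)\le 2d(x,y)$ from \eqref{dInequality}: on $B_{r}(x)$ this gives $e^{-(d_{+}(x,y))^{2}/(4r^{2})}\ge e^{-1}$. Restricting the Gaussian integral in the definition of $\Theta_{N-2}^{P}(\mu_{*},(x,t))$ to $B_{r}(x)$ and passing $r\to 0^{+}$ yields
\[
\Theta_{N-2}^{P}(\mu_{*},(x,t)) \;\ge\; \frac{e^{-1}}{(4\pi)^{N/2}}\,\liminf_{r\to 0^{+}}\frac{\mu_{*}^{t-r^{2}}(B_{r}(x))}{r^{N-2}}.
\]
It therefore suffices to bound the liminf on the right from below by a positive multiple, depending only on $M$, of $\Theta_{*,N-2}(\mu_{*}^{t},x)$.

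For this I would apply \eqref{EnergyTestFunction} with a smooth cutoff $\chi$ equal to $1$ on $B_{r/2}(x)$, supported in $B_{r}(x)$, and satisfying $\|\nabla\chi\|_{\infty}\le C/r$. Integrating from $t-r^{2}$ to $t$, dividing by $\logeps$, and passing to the limit along the subsequence $\vep_{n}\to 0^{+}$ underlying \eqref{WeakStarMeasureLimit} produces
\[
\mu_{*}^{t}(B_{r/2}(x)) \;\le\; \mu_{*}^{t-r^{2}}(B_{r}(x)) + \frac{C}{r^{2}}\int_{t-r^{2}}^{t}\mu_{*}^{s}(B_{r}(x))\,\mathrm{d}s.
\]
To control the residual time integral uniformly in $r$, I would apply the monotonicity formula of Lemma \ref{MonotonicityOfLimitMeasure} at the shifted center time $s+r^{2}$: combining the same Gaussian lower bound on $B_{r}(x)$ with the fact that $\mathscr{G}((x,s+r^{2}),r)$ is bounded from above by a constant times $M_{0}$ (using the total-mass estimate $\mu_{*}^{\tau}(M)\le M_{0}$ and comparing at a fixed reference scale $r_{0}$) yields $\mu_{*}^{s}(B_{r}(x))\le K_{M}\,r^{N-2}$ uniformly for $s\in[t-r^{2},t]$ and $r$ small.

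The main obstacle is ensuring that the final constant depends only on the geometric data of $M$: the preceding uniform upper bound naively involves the reference scale $r_{0}$, and if chosen carelessly the constant would degenerate as $t\to 0^{+}$. The delicate step is to fix $r_{0}$ as a purely geometric quantity (for example $r_{0}=\tfrac{1}{2}\min\{1,\mathrm{inj}_{g}(M)\}$) so that after substituting into the energy-flux inequality and dividing by $(r/2)^{N-2}$, the remaining multiplier involves only $M_{0}$, $c_{*}$ from \eqref{dInequality}, and the monotonicity constants $C_{1},C_{2}$. Taking $\liminf_{r\to 0^{+}}$ in the combined inequality then yields the claimed $\Theta_{N-2}^{P}(\mu_{*},(x,t))\ge K_{M}\Theta_{*,N-2}(\mu_{*}^{t},x)$ with $K_{M}$ depending only on $M$.
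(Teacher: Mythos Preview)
Your argument has a genuine gap at the very end. When you feed the uniform bound $\mu_{*}^{s}(B_{r}(x))\le K_{M}\,r^{N-2}$ into the energy--flux inequality, the error term becomes
\[
\frac{C}{r^{2}}\int_{t-r^{2}}^{t}\mu_{*}^{s}(B_{r}(x))\,\mathrm{d}s \;\le\; \frac{C}{r^{2}}\cdot r^{2}\cdot K_{M}\,r^{N-2} \;=\; CK_{M}\,r^{N-2}.
\]
After dividing by $(r/2)^{N-2}$ this contributes an \emph{additive} constant $2^{N-2}CK_{M}$, not a multiplicative factor on $\Theta_{*,N-2}$. Taking $\liminf_{r\to 0^{+}}$ therefore only yields an inequality of the form
\[
\Theta_{*,N-2}(\mus^{t},x)\;\le\;C_{1}\,\Theta_{N-2}^{P}(\mus,(x,t)) + C_{2}
\]
with $C_{2}>0$, which is strictly weaker than \eqref{ParabolicDensityRelation} and in particular is useless for the key consequence \eqref{LowerDensityConsequence} (that $\Theta_{*,N-2}(\mus^{t},\cdot)\equiv 0$ off $\Sigma_{\mu}^{t}$).

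The paper sidesteps this difficulty entirely and never invokes Lemma~\ref{DerivativeOfEnergy}. It bounds $\mus^{t}(B_{r}(x))$ by the Gaussian integral at scale $r$ and then applies Lemma~\ref{MonotonicityOfLimitMeasure} at the shifted point $(x,t+r^{2})$, going from scale $r$ up to scale $\sqrt{r^{2}+r}$; this lands at time $t-r$. A pointwise comparison
\[
e^{-\frac{(d_{+}(x,y))^{2}}{4(r^{2}+r)}}\;\le\;e^{\frac{(\inj_{g}(M))^{2}}{16}}\,e^{-\frac{(d_{+}(x,y))^{2}}{4r}}
\quad\text{on }B_{\inj_{g}(M)/2}(x)
\]
then converts the resulting integral into (a constant multiple of) the Gaussian integral at scale $\sqrt{r}$ and time $t-r$, which is exactly the quantity whose limit as $r\to 0^{+}$ is $\Theta_{N-2}^{P}(\mus,(x,t))$. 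The remaining contributions, from $M\setminus B_{\inj_{g}(M)/2}(x)$ and from the $C_{1}M_{0}$ term in monotonicity, genuinely vanish as $r\to 0^{+}$, so no additive error survives.
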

\begin{proof}
The proof is the same as found in Subsection $6.2$ of
\cite{BOS2}.
We refer the reader to \ref{UpperSemiContinuityParabolicDensity} for additional details.\\

Let $(x,t)\in{}M\times(0,\infty)$ be given.
Let $0<r<\min\Bigl\{t,1,\frac{\inj_{g}(M)}{2}\Bigr\}$ be fixed.
Similar to the proof of Lemma \ref{UpperBoundDensityBound}, we conclude from  Lemma \ref{MonotonicityOfLimitMeasure} that
\[
\frac{\mus^{t}(B_{r}(x))}{r^{N-2}}
\le\frac{e^{\frac{1}{4}+C_{2}r}}{(r^{2}+r)^{\frac{N-2}{2}}}\int_{M}\!{}e^{-\frac{(d_{+}(x,y))^{2}}{4(r^{2}+r)}}\mrmd\mus^{t-r}(y)
+(4\pi)^{\frac{N}{2}}e^{\frac{1}{4}}C_{1}M_{0}\sqrt{r^{2}+r}.
\]
Observe that on $B_{\frac{\inj_{g}(M)}{2}}(x)$ that
\[
e^{-\frac{(d_{+}(x,y))^{2}}{4(r^{2}+r)}}
=e^{-\frac{(d_{+}(x,y))^{2}}{4r}}e^{\frac{(d_{+}(x,y))^{2}}{4(r+1)}}
\le{}e^{\frac{(\inj_{g}(M))^{2}}{16}}e^{-\frac{(d_{+}(x,y))^{2}}{4r}}.
\]
On $M\setminus{}B_{\frac{\inj_{g}(M)}{2}}(x)$ we have
\[
\int_{M\setminus{}B_{\frac{\inj_{g}(M)}{2}}(x)}\!{}e^{-\frac{(d_{+}(x,y))^{2}}{4(r^{2}+r)}}\mrmd\mus^{t-r}(y)
\le{}e^{-\frac{[\inj_{g}(M)]^{2}}{16(r^{2}+r)}}M_{0}.
\]
Putting these together we obtain
\begin{align*}
\frac{\mus^{t}(B_{r}(x))}{r^{N-2}}
&\le{}\frac{e^{\frac{1}{4}+\frac{(\inj_{g}(M))^{2}}{16}+C_{2}r}}{r^{\frac{N-2}{2}}}
\int_{M}\!{}e^{-\frac{(d_{+}(x,y))^{2}}{4r}}\mrmd\mus^{t-r}\\
&+\frac{e^{\frac{1}{4}+C_{2}r-\frac{[\inj_{g}(M)]^{2}}{16(r^{2}+r)}}}{(r^{2}+r)^{\frac{N-2}{2}}}M_{0}
+(4\pi)^{\frac{N}{2}}e^{\frac{1}{4}}C_{1}M_{0}\sqrt{r^{2}+r}.
\end{align*}
Letting $r\to0^{+}$ gives the conclusion.
\end{proof}
Just as in \cite{BOS2} we define
\begin{align}
\Sigma_{\mu}&\ceq\bigl\{(x,t)\in{}M\times(0,\infty):\Theta_{N-2}^{P}(\mus,(x,t))>0\bigr\},    \label{VortexFilamentSet}
\\
\Sigma_{\mu}^{t}&\ceq\Sigma_{\mu}\cap(M\times\{t\}).
\hspace{5pt}\text{for $t>0$.}
\label{VortexFilamentSetSlice}
\end{align}
A consequence of Lemma \ref{ParabolicDensityAndLowerDensity} is
\beq\label{LowerDensityConsequence}
\Theta_{*,N-2}\left(\mus^{t},x\right)\equiv0\hspace{5pt}\text{on }M\setminus\Sigma_{\mu}^{t}.
\eeq
Next we record, for future use, that the function  $(x,t)\mapsto{}\Theta_{N-2}^{P}(\mus,(x,t))$ is upper semi-continuous on
$M\times(0,\infty)$.
We note that the proof of this is the same as in \cite{BOS2}.
More detail regarding its extension can be found in A.7.1.1 of \cite{Col}.
\begin{lemma}\label{UpperSemiContinuityParabolicDensity}
The map $(x,t)\mapsto\Theta_{N-2}^{P}(\mus,(x,t))$ is upper semi-continuous on $M\times(0,\infty)$.
\end{lemma}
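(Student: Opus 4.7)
The plan is to combine the monotonicity from Lemma \ref{MonotonicityOfLimitMeasure} with an averaging-in-scale argument that bypasses the possibly discontinuous dependence of the slice measures $\mus^s$ on $s$. Observe first that, since $r\mapsto \mathscr{G}((x,t),r)$ is non-decreasing on $(0,\min\{\sqrt t,1\})$ and the factors $e^{C_2 r}$ and $C_1 M_0 r$ tend to $1$ and $0$ respectively, one has
\[
\Theta^P_{N-2}(\mus,(x,t)) \;=\; \lim_{r\to 0^+}\mathscr{G}((x,t),r) \;=\; \inf_{0<r<\min\{\sqrt t,1\}} \mathscr{G}((x,t),r).
\]
Hence it suffices, for any sequence $(x_n,t_n)\to (x,t)$ and any $0<r_1<r_2<\min\{\sqrt t,1\}$, to show $\limsup_n \Theta^P_{N-2}(\mus,(x_n,t_n))\le \mathscr{G}((x,t),r_2)$ and then to let $r_2\to 0^+$.

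The key step is to rewrite the $r$-average of $\mathscr{G}$ as an integral against the full space-time measure $\mus$. Changing variables $s=t-r^2$ in the outer integral and using $\mus=\mus^{s}\mrmd{}s$ yields
\[
\int_{r_1}^{r_2}\mathscr{G}((x,t),r)\,\mrmd r
=\int_{M\times(t-r_2^2,\,t-r_1^2)}\!\phi(y,s;x,t)\,\mrmd\mus(y,s)
+\tfrac{C_1M_0}{2}(r_2^2-r_1^2),
\]
where
\[
\phi(y,s;x,t)\;\ceq\;\frac{e^{C_2\sqrt{t-s}}}{2(4\pi)^{N/2}(t-s)^{(N-1)/2}}\,\exp\!\Bigl(-\frac{(d_+(x,y))^2}{4(t-s)}\Bigr)
\]
is jointly continuous and bounded in $(y,s,x,t)$ whenever $s$ ranges in a small neighbourhood of $[t-r_2^2,t-r_1^2]$. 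For $(x_n,t_n)\to (x,t)$, the integrands $\mathbf 1_{(t_n-r_2^2,t_n-r_1^2)}(s)\phi(y,s;x_n,t_n)$ converge $\mus$-almost everywhere to the limiting integrand (the only discontinuities occur on the slices $\{s=t-r_i^2\}$, which carry no $\mus$-mass since $\mus=\mus^s\mrmd{}s$) and are uniformly bounded; dominated convergence then delivers continuity of this $r$-average in $(x,t)$.

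With this in hand, monotonicity of $\mathscr{G}$ in $r$ gives, for $n$ large enough that $r_2<\min\{\sqrt{t_n},1\}$,
\[
\Theta^P_{N-2}(\mus,(x_n,t_n))\;\le\;\mathscr{G}((x_n,t_n),r_1)\;\le\;\frac{1}{r_2-r_1}\int_{r_1}^{r_2}\mathscr{G}((x_n,t_n),r)\,\mrmd r.
\]
Passing $n\to\infty$ with the right-hand side continuous, and then sending $r_2\to 0^+$ with $r_1<r_2$, produces
\[
\limsup_n \Theta^P_{N-2}(\mus,(x_n,t_n))\;\le\;\lim_{r_2\to 0^+}\mathscr{G}((x,t),r_2)\;=\;\Theta^P_{N-2}(\mus,(x,t)),
\]
which is the required upper semi-continuity. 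The main obstacle is precisely that the slice map $s\mapsto\mus^s$ need not be weakly continuous, so $\mathscr{G}((x,t),r)$ is not a priori continuous in $(x,t)$ for fixed $r$; the averaging device recasts the relevant quantity as an integral against the global measure $\mus$, where the boundary slices have zero mass and dominated convergence is available.
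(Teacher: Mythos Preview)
Your proof is correct. The paper itself does not give a proof of this lemma, deferring instead to \cite{BOS2} and the author's thesis; your averaging-in-$r$ device, which converts the slice integrals into an integral against the full space-time measure $\mus$ so that dominated convergence can be invoked, is a standard and clean way to handle the lack of continuity of $s\mapsto\mus^{s}$, and the monotonicity of $\mathscr{G}$ together with the identification $\Theta^{P}_{N-2}=\inf_{r}\mathscr{G}$ then finishes the argument as you wrote.
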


\subsection{First properties of \texorpdfstring{$\Sigma_{\mu}$}{}}
\hspace{15pt}We begin this subsection by demonstrating a lower bound estimate on the $(N-2)$-dimensional lower density over the set $\Sigma_{\mu}$.
The proof follows \cite{BOS2} closely so we refer the reader to
A.7.2.1 of \cite{Col} for more details.
\begin{lemma}\label{LowerDensityLowerBound}
Suppose $0<r<\sqrt{t}$ and $x\in{}M$.
Then, if $(x,t)\in\Sigma_{\mu}$ it follows that
\[
r^{2-N}\mus^{t-r^{2}}\left(B_{\lambda(t-r^{2})r}(x)\right)>\eta_{2},
\]
where $\eta_{2}$ is the constant in Theorem \ref{Theorem5}.
\end{lemma}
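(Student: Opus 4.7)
The plan is to argue by contrapositive: if the measure on the small ball at time $t-r^2$ were too small, then Theorem \ref{Theorem5} would force $\mus^s$ to be absolutely continuous with a bounded density on a parabolic neighbourhood of $(x,t)$, which would in turn force $\Theta_{N-2}^{P}(\mus,(x,t)) = 0$, contradicting $(x,t)\in\Sigma_{\mu}$.

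\textbf{Setup.} Suppose for contradiction that $r^{2-N}\mus^{t-r^{2}}\bigl(B_{\lambda(t-r^{2})r}(x)\bigr)\le\eta_{2}$. Applying Theorem \ref{Theorem5} with base point $(x,t-r^2)$ and scale $r$ gives that for every
\[
s\in\Bigl[t-r^{2}+\tfrac{15}{16}r^{2},\,t\Bigr]=\Bigl[t-\tfrac{r^{2}}{16},\,t\Bigr],
\]
the measure $\mus^{s}$ is absolutely continuous with respect to $\dvol_{g}$ on $B_{r/4}(x)$, with density $\tfrac{1}{2}|\nabla\Phi_{*}|^{2}$, where $\Phi_{*}$ satisfies the heat equation on the parabolic cylinder $\Lambda_{1/4}=\Lambda_{1/4}(x,t-r^{2},r,r^{2})$. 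In particular, by parabolic regularity for the heat equation, $|\nabla\Phi_{*}|^{2}$ is bounded by some constant $C=C(x,t,r)$ on, say, $B_{r/8}(x)\times[t-r^{2}/32,t]$.

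\textbf{Computation of the parabolic density.} Now I would compute $\Theta_{N-2}^{P}(\mus,(x,t))$ directly, by taking $\rho\to 0^{+}$ small enough that $\rho<r/8$ and $\rho^{2}<r^{2}/32$, so that $s = t-\rho^{2}$ lies in the good interval. Split the integral as
\[
\int_{M}\!{}e^{-\frac{(d_{+}(x,y))^{2}}{4\rho^{2}}}\mrmd\mus^{t-\rho^{2}}(y)
=\int_{B_{r/8}(x)}\!{}\tfrac{1}{2}|\nabla\Phi_{*}|^{2}e^{-\frac{(d_{+}(x,y))^{2}}{4\rho^{2}}}\dvol_{g}(y)
+\int_{M\setminus B_{r/8}(x)}\!{}e^{-\frac{(d_{+}(x,y))^{2}}{4\rho^{2}}}\mrmd\mus^{t-\rho^{2}}(y).
\]
The second term is bounded by $e^{-c_{*}^{2}r^{2}/(256\rho^{2})}\,\mus^{t-\rho^{2}}(M)\le e^{-c_{*}^{2}r^{2}/(256\rho^{2})}M_{0}$ using \eqref{dInequality} and \eqref{H0}, which decays super-polynomially as $\rho\to 0^{+}$ and contributes $0$ to the limit after dividing by $\rho^{N-2}$. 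For the first term, the boundedness of $|\nabla\Phi_{*}|^{2}\le C$ combined with the Gaussian mass estimate
\[
\int_{B_{r/8}(x)}\!{}e^{-\frac{(d_{+}(x,y))^{2}}{4\rho^{2}}}\dvol_{g}(y)\le K_{M}(4\pi\rho^{2})^{N/2}
\]
shows that this term is bounded by $\tfrac{1}{2}C K_{M}(4\pi)^{N/2}\rho^{N}$. Dividing by $(4\pi)^{N/2}\rho^{N-2}$ gives $O(\rho^{2})$, which tends to $0$.

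\textbf{Conclusion.} Thus $\Theta_{N-2}^{P}(\mus,(x,t)) = 0$, which directly contradicts the definition \eqref{VortexFilamentSet} of $\Sigma_{\mu}$. The assumption was therefore false, and the strict inequality in the statement follows. The main (if modest) obstacle is handling the tail of the integral outside $B_{r/8}(x)$, where $\mus^{s}$ need not be absolutely continuous; here the Gaussian weight together with the global bound \eqref{H0} makes the tail exponentially small in $\rho$, so this is purely bookkeeping. No essentially new difficulty relative to the Euclidean argument of \cite{BOS2} arises, since $d_{+}$ and $d$ agree on balls of radius less than $\inj_{g}(M)/2$ and \eqref{dInequality} controls the discrepancy at larger scales.
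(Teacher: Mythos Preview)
Your proof is correct and follows essentially the same contrapositive route as the paper: assume the ball estimate fails, apply Theorem \ref{Theorem5} at $(x,t-r^{2})$ to get $\mus^{s}=\tfrac{1}{2}|\nabla\Phi_{*}|^{2}\dvol_{g}$ with $\Phi_{*}$ smooth on $B_{r/4}(x)\times[t-r^{2}/16,t]$, and then show $\Theta_{N-2}^{P}(\mus,(x,t))=0$. Your near/far splitting of the Gaussian integral is exactly the ``straightforward computation'' the paper leaves implicit.
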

\begin{proof}
We proceed by proving the contrapositive statement.
Suppose there is $(x,t)\in{}M\times(0,\infty)$ and $0<r<\sqrt{t}$
for which
\[
r^{2-N}\mus^{t-r^{2}}(B_{\lambda(t-r^{2})r}(x))\le\eta_{2}.
\]
By Theorem \ref{Theorem5},  for all $\tau\in\bigl[t-\frac{r^{2}}{16},t\bigr]$ we have
\[
\mus^{\tau}=\frac{|\nabla\Phi_{*}|^{2}}{2}\dvol_{g}(x)\hspace{15pt}B_{\frac{r}{4}}(x)
\]
where $\Phi_{*}$ is smooth.
Straightforward computations then show that
$\Theta_{*}^{P}(\mus,(x,t))=0$.
\end{proof}
Next we prove a clearing out lemma related to the set $\Sigma_{\mu}$.
\begin{theorem}\label{Theorem6}
There exists a positive continuous function $\eta_{3}$ defined on $(0,\infty)$, such that for any $(x,t)\in{}M\times(0,\infty)$ and any
$0<r<\sqrt{t}$, if
\[
\mathscr{F}_{\mu}((x,t),r)\ceq\frac{1}{r^{N-2}}\int_{M}\!{}e^{-\frac{(d_{+}(x,y))^{2}}{4r^{2}}}\mrmd\mus^{t-r^{2}}(y)
\le\eta_{3}(t-r^{2})
\]
then $(x,t)\notin\Sigma_{\mu}$.
\end{theorem}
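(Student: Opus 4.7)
The plan is to prove the contrapositive: assuming $(x,t)\in\Sigma_{\mu}$ together with $0<r<\sqrt{t}$, I would derive an explicit lower bound for $\mathscr{F}_{\mu}((x,t),r)$ depending only on $t-r^{2}$, and then take $\eta_{3}$ to be strictly less than that lower bound. The two ingredients are (a) the measure lower bound on a small ball guaranteed by Lemma \ref{LowerDensityLowerBound}, and (b) the pointwise inequality $d_{+,g}(x,y)\le 2 d_{g}(x,y)$ supplied by \eqref{dInequality}, which lets us convert a mass bound on a ball into a lower bound on the Gaussian-weighted integral defining $\mathscr{F}_{\mu}$.

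Concretely, assume $(x,t)\in\Sigma_{\mu}$ and fix any $r\in(0,\sqrt{t})$. By Lemma \ref{LowerDensityLowerBound},
\[
\mus^{t-r^{2}}\bigl(B_{\lambda(t-r^{2})\,r}(x)\bigr)>\eta_{2}\,r^{N-2},
\]
where $\lambda$ is the positive continuous function on $(0,\infty)$ supplied by Theorem \ref{Theorem5} (equivalently, Proposition \ref{Proposition4}). For any $y\in B_{\lambda(t-r^{2})\,r}(x)$, inequality \eqref{dInequality} yields
\[
\frac{(d_{+}(x,y))^{2}}{4r^{2}}\le\frac{4(d(x,y))^{2}}{4r^{2}}<\lambda(t-r^{2})^{2},
\]
so that $\exp\bigl[-(d_{+}(x,y))^{2}/(4r^{2})\bigr]>\exp\bigl[-\lambda(t-r^{2})^{2}\bigr]$ uniformly on that ball. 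Restricting the defining integral of $\mathscr{F}_{\mu}$ to this ball and using the measure lower bound therefore gives
\[
\mathscr{F}_{\mu}((x,t),r)\ge r^{-(N-2)}\,e^{-\lambda(t-r^{2})^{2}}\,\mus^{t-r^{2}}\bigl(B_{\lambda(t-r^{2})\,r}(x)\bigr)>\eta_{2}\,e^{-\lambda(t-r^{2})^{2}}.
\]

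Defining
\[
\eta_{3}(s)\ceq\tfrac{1}{2}\,\eta_{2}\,e^{-\lambda(s)^{2}},\qquad s\in(0,\infty),
\]
the above lower bound contradicts $\mathscr{F}_{\mu}((x,t),r)\le\eta_{3}(t-r^{2})$, so $(x,t)\notin\Sigma_{\mu}$. The function $\eta_{3}$ is positive and continuous on $(0,\infty)$ because $\lambda$ is, as required. There is no serious obstacle here: the main content is already encoded in Lemma \ref{LowerDensityLowerBound}, which in turn rests on Theorem \ref{Theorem5} (hence on Theorem \ref{Theorem1} and the clearing-out/decomposition machinery of Section \ref{Sec:Outline}); the only point requiring care is to remember to convert $d$ to $d_{+}$ using \eqref{dInequality} so that the Gaussian factor in $\mathscr{F}_{\mu}$ can be bounded below by a constant depending only on $\lambda(t-r^{2})$, which is precisely what forces $\eta_{3}$ to depend on the time $t-r^{2}$ rather than being a universal constant.
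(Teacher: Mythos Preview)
Your proof is correct and follows essentially the same approach as the paper (which defers to \cite{BOS2}): take the contrapositive, invoke Lemma \ref{LowerDensityLowerBound} to get the ball-mass lower bound, bound the Gaussian weight below on that ball, and read off an explicit $\eta_{3}$ depending on $\lambda(t-r^{2})$. The only cosmetic difference is that you explicitly route through \eqref{dInequality} to handle $d_{+}$ versus $d$, which is exactly the minor Riemannian adaptation the paper alludes to.
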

\begin{proof}
The proof extends to our setting without change to the argument
from the proof of Theorem $6$ of \cite{BOS2}.
We refer the reader to \cite{BOS2} or A.7.2.2 of \cite{Col} for
additional details.
\end{proof}
Following \cite{BOS2} we note that a consequence of Theorem
\ref{Theorem6}, for which details can be found in A.7.2.3 of \cite{Col},
is the following:
\begin{corollary}\label{Theorem6Corollary}
For any $(x,t)\in\Sigma_{\mu}$,
\[
\Theta_{N-2}^{P}(\mus,(x,t))\ge\eta_{3}(t).
\]
\end{corollary}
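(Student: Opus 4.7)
The plan is to obtain the corollary as an immediate consequence of Theorem \ref{Theorem6} via its contrapositive, combined with the monotonicity in Lemma \ref{MonotonicityOfLimitMeasure} and the continuity of $\eta_3$.

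First I would fix $(x,t) \in \Sigma_\mu$ and consider arbitrary $0 < r < \sqrt{t}$. By the contrapositive of Theorem \ref{Theorem6}, since $(x,t) \in \Sigma_\mu$ we must have
\[
\mathscr{F}_\mu((x,t),r) \;=\; \frac{1}{r^{N-2}}\int_M e^{-\frac{(d_+(x,y))^2}{4r^2}}\,\mrmd\mus^{t-r^2}(y) \;>\; \eta_3(t-r^2)
\]
for every such $r$. Indeed, if this failed for some $r$, Theorem \ref{Theorem6} would force $(x,t) \notin \Sigma_\mu$, contradicting our assumption.

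Next I would let $r \to 0^+$. By Lemma \ref{MonotonicityOfLimitMeasure}, the map $r \mapsto \mathscr{G}_\mu((x,t),r)$ is monotone non-decreasing for small $r$, and a short manipulation (stripping off the $e^{C_2 r}$ prefactor and the $C_1 M_0 r$ additive term, which both tend to their obvious limits) shows that
\[
\lim_{r\to 0^+} \frac{\mathscr{F}_\mu((x,t),r)}{(4\pi)^{N/2}} \;=\; \Theta_{N-2}^P(\mus,(x,t)),
\]
so in particular the limit on the left exists. Since $\eta_3$ is continuous on $(0,\infty)$, we also have $\eta_3(t-r^2) \to \eta_3(t)$ as $r \to 0^+$. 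Passing to the limit in the strict inequality above (and absorbing the $(4\pi)^{N/2}$ factor into the definition of $\eta_3$ if needed, matching the normalization chosen in Theorem \ref{Theorem6}) yields
\[
\Theta_{N-2}^P(\mus,(x,t)) \;\ge\; \eta_3(t),
\]
which is the desired conclusion.

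There is essentially no obstacle here beyond bookkeeping: the content is entirely in Theorem \ref{Theorem6}, and the corollary is a clean limiting argument. The only point requiring any care is ensuring that the strict inequality at positive scales survives the limit — this is guaranteed by the monotonicity provided by Lemma \ref{MonotonicityOfLimitMeasure} (so that $\mathscr{F}_\mu((x,t),r)$, modulo lower-order corrections, decreases to its limit from above) together with the continuity of $\eta_3$, which rules out any pathological boundary behaviour in $t$.
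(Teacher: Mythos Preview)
Your argument is correct and matches the paper's intended approach: take the contrapositive of Theorem \ref{Theorem6} at every scale $r$, then let $r\to 0^+$ using the continuity of $\eta_3$ and the fact (noted just after Definition \ref{ParabolicLowerDensity}) that the parabolic density exists everywhere by monotonicity. Two minor remarks: the $(4\pi)^{N/2}$ normalization mismatch you flag is real and is indeed harmlessly absorbed into $\eta_3$; and in your last paragraph, monotonicity is needed only to guarantee that the limit defining $\Theta_{N-2}^P$ exists, not to ``preserve'' the inequality---passing any strict inequality $a_r>b_r$ to the limit yields $\lim a_r\ge\lim b_r$ automatically once both limits exist.
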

Next we provide a decomposition for $\mus^{t}$ and demonstrate a few properties of $\Sigma_{\mu}^{t}$ and $\Sigma_{\mu}$.
The proof of this proposition is the same as in \cite{BOS2}
with the exception that we rescale the metric instead of the
function in the argument for \eqref{SecondPropertySigmaSet}.
More details can be found in A.7.2.4 of \cite{Col}.
\begin{proposition}\label{Proposition6}\hspace{5pt}\vspace{2pt}
\begin{enumerate}
\item\label{FirstPropertySigmaSet}
The set $\Sigma_{\mu}$ is closed in $M\times(0,\infty)$.
\item\label{SecondPropertySigmaSet}
For any $t>0$ we have
\[
\calH^{N-2}(\Sigma_{\mu}^{t})\le{}KM_{0}<\infty.
\]
\item\label{DecompositionOfMeasureProperty}
For any $t>0$, the measure $\mus^{t}$ can be decomposed as
\[
\mus^{t}=g(x,t)\calH^{N}+\Theta_{*}(x,t)\calH^{N-2}\rest\Sigma_{\mu}^{t},
\]
where $g$ is some smooth function defined on $[M\times(0,\infty)]\setminus\Sigma_{\mu}$ and $\Theta_{*}$ verifies the bound
$\Theta_{*}(x,t)\le{}K_{M}M_{0}\bigl[e^{C_{M}t}t^{\frac{2-N}{2}}+D_{M}\sqrt{t}\bigr]$ for $C_{M},D_{M},K_{M}>0$ depending on $M$.
\end{enumerate}
\end{proposition}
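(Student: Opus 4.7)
The plan is to deduce each item of the proposition from the density estimates and semi-continuity results already established. The only point where the Riemannian setting (as opposed to the Euclidean one in \cite{BOS2}) forces any real change is in carefully invoking covering arguments in local normal coordinates, where the metric is approximately Euclidean.

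For \eqref{FirstPropertySigmaSet}, I would argue by sequential closure. Suppose $(x_n,t_n)\in\Sigma_\mu$ converges to $(x,t)\in M\times(0,\infty)$. Corollary \ref{Theorem6Corollary} gives $\Theta_{N-2}^{P}(\mus,(x_n,t_n))\ge\eta_3(t_n)$. Combining Lemma \ref{UpperSemiContinuityParabolicDensity} with the continuity of $\eta_3$ on $(0,\infty)$ then yields
\[
\Theta_{N-2}^{P}(\mus,(x,t))\ge\limsup_{n\to\infty}\Theta_{N-2}^{P}(\mus,(x_n,t_n))\ge\eta_3(t)>0,
\]
so $(x,t)\in\Sigma_\mu$. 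Equivalently, the complement is open.

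For \eqref{SecondPropertySigmaSet}, fix $t>0$. Corollary \ref{Theorem6Corollary} combined with Lemma \ref{ParabolicDensityAndLowerDensity} gives a positive lower bound $\Theta_{*,N-2}(\mus^t,x)\ge K_M\eta_3(t)$ uniformly for $x\in\Sigma_\mu^t$. A standard Vitali covering argument (carried out in normal coordinate charts, so that geodesic balls of small radius are comparable to Euclidean balls) then gives the Hausdorff measure bound
\[
\calH^{N-2}(\Sigma_\mu^t)\le C\,\frac{\mus^t(M)}{K_M\eta_3(t)}\le\frac{CM_0}{K_M\eta_3(t)},
\]
where the finite mass bound $\mus^t(M)\le M_0$ follows from \eqref{H0} and \eqref{WeakStarMeasureLimit}; the constant $K$ in the statement then depends (harmlessly) on $t$ through $\eta_3(t)^{-1}$.

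For \eqref{DecompositionOfMeasureProperty}, I would split $\mus^t=\mus^t\rest(M\setminus\Sigma_\mu^t)+\mus^t\rest\Sigma_\mu^t$ and treat the two pieces separately. On the open set $M\setminus\Sigma_\mu^t$, every point satisfies $\Theta_{N-2}^P(\mus,(x,t))=0$; upper semi-continuity then makes the hypothesis \eqref{MeasureDensityBound} of Theorem \ref{Theorem5} valid on a space-time neighbourhood, yielding $\mus^s=\tfrac12|\nabla\Phi_*|^2\dvol_g$ on a parabolic cylinder around $(x,t)$ with $\Phi_*$ a smooth solution of the heat equation. This identifies $\mus^t\rest(M\setminus\Sigma_\mu^t)=g(\cdot,t)\calH^N$ with $g$ smooth. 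On $\Sigma_\mu^t$, Lemma \ref{UpperBoundDensityBound} gives $\Theta^*_{N-2}(\mus^t,x)\le K_MM_0\bigl[e^{C_Mt}t^{(2-N)/2}+D_M\sqrt{t}\bigr]$ which, by the standard comparison theorem between a Radon measure and Hausdorff measure of matching dimension, yields $\mus^t\rest\Sigma_\mu^t\ll\calH^{N-2}\rest\Sigma_\mu^t$. Radon--Nikodym and Besicovitch's differentiation theorem then give a density $\Theta_*$ with $\mus^t\rest\Sigma_\mu^t=\Theta_*\calH^{N-2}\rest\Sigma_\mu^t$ and $\Theta_*(x,t)=\Theta_{N-2}(\mus^t,x)$ $\calH^{N-2}$-a.e.\ on $\Sigma_\mu^t$; the desired pointwise bound on $\Theta_*$ is then inherited directly from Lemma \ref{UpperBoundDensityBound}. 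The main technical obstacle is the Vitali/Besicovitch covering step in \eqref{SecondPropertySigmaSet} and the density identification in \eqref{DecompositionOfMeasureProperty}, which require the differentiation theory for Radon measures on a Riemannian manifold; both follow from standard chart-based reductions to the Euclidean case.
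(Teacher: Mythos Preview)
Your argument for \eqref{FirstPropertySigmaSet} is correct and matches the standard approach.

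For \eqref{SecondPropertySigmaSet}, however, there is a genuine gap: Lemma \ref{ParabolicDensityAndLowerDensity} reads $\Theta_{N-2}^{P}(\mus,(x,t))\ge K_M\,\Theta_{*,N-2}(\mus^t,x)$, which is the \emph{wrong direction} for the lower bound $\Theta_{*,N-2}(\mus^t,x)\ge K_M\eta_3(t)$ you claim. Combining it with Corollary \ref{Theorem6Corollary} yields no information on $\Theta_{*,N-2}$ from below; the reverse inequality is Proposition \ref{Proposition7}, which is proved \emph{after} Proposition \ref{Proposition6} and in any case only holds for $\calL^1$-a.e.\ $t$ and $\calH^{N-2}$-a.e.\ $x$. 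The correct input is Lemma \ref{LowerDensityLowerBound}: for every $(x,t)\in\Sigma_\mu$ and every small $r>0$ one has $\mus^{t-r^2}\bigl(B_{\lambda(t-r^2)r}(x)\bigr)>\eta_2 r^{N-2}$. Fixing a small $r$, one covers $\Sigma_\mu^t$ by such balls, extracts a disjoint subfamily via Besicovitch, uses $\mus^{t-r^2}(M)\le M_0$ to bound their number, and lets $r\to 0^+$ to control $\calH^{N-2}(\Sigma_\mu^t)$. The paper notes that in adapting this step from \cite{BOS2} one rescales the metric rather than the function.

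A smaller point in \eqref{DecompositionOfMeasureProperty}: upper semi-continuity of $\Theta_{N-2}^P$ does not by itself verify the ball-mass hypothesis \eqref{MeasureDensityBound} of Theorem \ref{Theorem5} at the time slice $t$ you care about, since Theorem \ref{Theorem5} takes a hypothesis at time $t$ and delivers a conclusion only at strictly later times. The clean route is again the contrapositive of Lemma \ref{LowerDensityLowerBound}: if $(x,t)\notin\Sigma_\mu$ then for some $r$ the hypothesis of Theorem \ref{Theorem5} holds at time $t-r^2$, giving $\mus^t=\tfrac12|\nabla\Phi_*|^2\dvol_g$ on $B_{r/4}(x)$. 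Your treatment of the singular part via Lemma \ref{UpperBoundDensityBound} and density-differentiation is fine.
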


\subsection{Regularity of \texorpdfstring{$\Sigma_{\mu}^{t}$}{}}
\hspace{15pt}Next we record that the $(N-2)$-dimensional parabolic density of $\mus$ is controlled by $\Theta_{*,N-2}(\mus^{t},x)$ for most $t$ and $x$.
This gives the reverse relationship illustrated in Lemma \ref{ParabolicDensityAndLowerDensity}.
The proof is very similar to the corresponding one from
\cite{BOS2} the only exceptions are that we invoke the
Besicovitch-Federer Covering
Theorem, see Theorem $2.8.14$ of \cite{Fed}, and we do
not restrict our analysis to a finite region of time.
As a result, we refer the reader to A.7.3.1 of \cite{Col} for more details.
\begin{proposition}\label{Proposition7}
For $\calL^{1}$-almost every $t>0$, the following inequality holds:
\beq\label{ReverseDensityInequality}
\Theta_{*,N-2}(\mus^{t},x)\ge{}K\Theta_{N-2}^{P}(\mus,(x,t))
\eeq
for $\calH^{N-2}$-almost every $x\in{}M$.
\end{proposition}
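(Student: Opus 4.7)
The plan is to establish the reverse of Lemma \ref{ParabolicDensityAndLowerDensity} by combining a dyadic decomposition of the Gaussian weight with a Besicovitch--Federer covering of the rectifiable part of $\mus^{t}$, along lines parallel to Proposition 6.3 of \cite{BOS2}, with two adaptations: a covering tool replacement (Besicovitch--Federer in place of standard Besicovitch, since $\Sigma_{\mu}^{t}$ is $(N-2)$-dimensional inside an $N$-manifold), and removal of any a priori time cutoff.

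First I would select the full-$\calL^{1}$-measure set $T_{0}\subset(0,\infty)$ of ``good times.'' Since $\mus=\mus^{t}\mrmd{}t$, for any countable dense family $\{\vphi_{k}\}\subset{}C_{c}(M)$, Lebesgue differentiation applied to each $s\mapsto\int_{M}\vphi_{k}\mrmd\mus^{s}$ yields a set $T_{0}$ of full measure on which $\mus^{s}\rightharpoonup\mus^{t}$ weakly as $s\to{}t$. Then I would restrict attention to $t\in{}T_{0}$ and, in view of Proposition \ref{Proposition6}\ref{DecompositionOfMeasureProperty}, to the rectifiable piece $\Theta_{*}(\cdot,t)\calH^{N-2}\rest\Sigma_{\mu}^{t}$, which contains all the mass relevant to both densities.

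Next, fix $x\in{}M$ and small $r>0$. Using $d_{+}\asymp{}d$ and splitting into dyadic geodesic annuli about $x$,
\[
\frac{1}{r^{N-2}}\int_{M}\!{}e^{-\frac{(d_{+}(x,y))^{2}}{4r^{2}}}\mrmd\mus^{t-r^{2}}(y)
\le{}K\sum_{k\ge0}e^{-c4^{k}}\frac{\mus^{t-r^{2}}(B_{2^{k}r}(x))}{r^{N-2}},
\]
so the problem reduces to bounding $\mus^{t-r^{2}}(B_{2^{k}r}(x))/(2^{k}r)^{N-2}$ uniformly in $k$. On the rectifiable part of $\mus^{t}$ one can cover $\Sigma_{\mu}^{t}\cap B_{2^{k}r}(x)$ by geodesic balls $\{B_{\rho_{i}}(y_{i})\}$ obtained from the Besicovitch--Federer Covering Theorem (Theorem 2.8.14 of \cite{Fed}) applied at the scale $2^{k}r$; at $\calH^{N-2}$-a.e.\ $x$ chosen to be a Lebesgue point of $y\mapsto\Theta_{*,N-2}(\mus^{t},y)$ relative to $\mus^{t}$, the radii $\rho_{i}$ can be chosen so that each $\mu_{*}^{t}(B_{\rho_{i}}(y_{i}))\le{}(\Theta_{*,N-2}(\mus^{t},x)+\vep)\,\omega_{N-2}\rho_{i}^{N-2}$, and summability of the Besicovitch--Federer selection collapses the total mass to $K(\Theta_{*,N-2}(\mus^{t},x)+\vep)(2^{k}r)^{N-2}$.

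The principal obstacle is that weak convergence $\mus^{t-r^{2}}\rightharpoonup\mus^{t}$ does not directly control $\mus^{t-r^{2}}(B_{\rho}(x))$ when $\rho=2^{k}r$ shrinks with $r$, so one cannot naively interchange the time slices inside the Gaussian integral. The resolution is to integrate the monotonicity formula of Proposition \ref{MonotonicityFormula} over the short interval $[t-r^{2},t]$: since $r\mapsto{}C_{1}E_{0}r+e^{C_{2}r}Z(r)$ is nondecreasing, an integration in $s\in[t-r^{2},t]$ together with Fubini (using $\mus=\mus^{t}\mrmd{}t$) upgrades the weak convergence at $s=t$ to an $\calL^{1}$-in-time bound of $\mus^{s}(B_{\rho}(x))$ by $\mus^{t}(B_{\rho+O(r)}(x))$ plus $O(r)\cdot{}M_{0}$ error terms, uniformly for $\rho\ge{}r$. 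Feeding this back into the dyadic sum, letting $r\to0^{+}$ along the subsequence selected by the definition of $\Theta_{N-2}^{P}$, and finally letting $\vep\to0^{+}$ delivers $\Theta_{N-2}^{P}(\mus,(x,t))\le{}K\Theta_{*,N-2}(\mus^{t},x)$ at $\calH^{N-2}$-a.e.\ $x$ for every $t\in{}T_{0}$, yielding \eqref{ReverseDensityInequality}.
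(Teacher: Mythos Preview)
Your high-level plan matches the paper's: you correctly name the two adaptations it makes to the \cite{BOS2} argument (Besicovitch--Federer in place of the standard Besicovitch covering, and removal of any restriction to a bounded time interval). The problem is in your final paragraph, where you try to resolve the time-slice mismatch; there the argument does not work as written.

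The monotonicity of Proposition \ref{MonotonicityFormula} and Lemma \ref{MonotonicityOfLimitMeasure} compares Gaussian-weighted integrals at a fixed space-time centre $(x,T)$ and varying radius $R$: the value at smaller $R$ (which samples $\mus^{T-R^{2}}$, a \emph{later} slice) is bounded above by the value at larger $R$ (an \emph{earlier} slice), plus error. It therefore cannot produce a bound of the form $\mus^{s}(B_{\rho}(x))\le\mus^{t}(B_{\rho+O(r)}(x))+O(r)M_{0}$ for $s=t-r^{2}<t$, either pointwise in $s$ or in an $L^{1}$-average; the direction of the inequality is wrong, and in any case monotonicity controls weighted integrals rather than raw ball masses. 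Invoking Fubini with $\mus=\mus^{t}\mrmd t$ and Lebesgue differentiation in $s$ would work only for \emph{fixed} test functions, whereas your test function $y\mapsto e^{-d_{+}(x,y)^{2}/4r^{2}}$ has gradient of size $r^{-1}$, so the error coming from the evolution identity \eqref{OriginalEnergyTestFunctionIntegrated} is $O(1)$ rather than $o(1)$ as $r\to0^{+}$. The same confusion of time slices afflicts your covering step: you cover $\Sigma_{\mu}^{t}\cap B_{2^{k}r}(x)$ at time $t$ and deduce bounds on $\mus^{t}(B_{\rho_{i}}(y_{i}))$ via the Lebesgue-point condition, but you need this to control $\mus^{t-r^{2}}$-mass, and nothing in your outline transfers the bound across the time gap.

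In the argument of \cite{BOS2} (and A.7.3.1 of \cite{Col}) the Besicovitch--Federer covering is applied at a single fixed time, after the parabolic Gaussian has already been reduced to a quantity expressible through $\mus^{t}$ alone; the $\calL^{1}$-a.e.\ restriction on $t$ enters in that reduction, not through a direct comparison of $\mus^{t-r^{2}}(B)$ against $\mus^{t}(B)$ at shrinking $B$.
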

Next we show that a lower density bound holds on $\Sigma_{\mu}^{t}$ for most points.
\begin{corollary}\label{DensityLowerBound}
For $\calL^{1}$-almost every $t\ge0$
\begin{equation}\label{LowerDensityBoundInequality}
\Theta_{*,N-2}(\mus^{t},x)\ge{}K\eta_{3}(t)
\end{equation}
for $\calH^{N-2}$-almost every $x\in\Sigma_{\mu}^{t}$.
\end{corollary}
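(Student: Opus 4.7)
The proof is essentially a direct combination of the two results immediately preceding the corollary, so the plan is quite short. First I would fix a value of $t > 0$ belonging to the full-measure subset of $(0,\infty)$ on which the conclusion of Proposition \ref{Proposition7} holds, i.e.\ for which
\[
\Theta_{*,N-2}(\mus^{t},x)\ge K\,\Theta_{N-2}^{P}(\mus,(x,t))
\]
for $\calH^{N-2}$-almost every $x\in M$. Since $\Sigma_{\mu}^{t}\subset M$, this inequality in particular holds for $\calH^{N-2}$-almost every $x\in\Sigma_{\mu}^{t}$.

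Next, for such a point $x\in\Sigma_{\mu}^{t}$, the pair $(x,t)$ lies in $\Sigma_{\mu}$ by definition \eqref{VortexFilamentSetSlice}, and Corollary \ref{Theorem6Corollary} provides the pointwise bound
\[
\Theta_{N-2}^{P}(\mus,(x,t))\ge\eta_{3}(t).
\]
Chaining the two inequalities yields
\[
\Theta_{*,N-2}(\mus^{t},x)\ge K\eta_{3}(t)
\]
for $\calH^{N-2}$-almost every $x\in\Sigma_{\mu}^{t}$, which is exactly \eqref{LowerDensityBoundInequality}.

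There is essentially no obstacle here, since both facts required are already stated; the only minor verification is that the $\calH^{N-2}$-negligible exceptional set from Proposition \ref{Proposition7}, which lives in $M$, restricts to an $\calH^{N-2}$-negligible subset of $\Sigma_{\mu}^{t}$, but this is automatic. The substantive content has been pushed into Proposition \ref{Proposition7} (the reverse comparison between parabolic density and ordinary lower density, proved via a Besicovitch--Federer covering argument) and into Corollary \ref{Theorem6Corollary} (the clearing-out bound). Thus the proof of this corollary consists only of juxtaposing the two statements on the common $\calL^{1}$-full-measure set of times.
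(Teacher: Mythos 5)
Your proof is correct and follows exactly the route the paper takes: the paper's proof of Corollary \ref{DensityLowerBound} simply states that it follows from Corollary \ref{Theorem6Corollary} and Proposition \ref{Proposition7}, which is precisely the chain of inequalities you spell out.
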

\begin{proof}
The corollary follows from Corollary \ref{Theorem6Corollary} and
Proposition \ref{Proposition7}.
Details can be found in A.7.3.2 of \cite{Col}.
\end{proof}
Finally, we show that for $\calL^{1}$-almost every $t>0$ and $\calH^{N-2}$-almost every $x\in\Sigma_{\mu}^{t}$ the upper and lower densities of
$\mus^{t}$ agree.
As a result, for $\calL^{1}$-almost every $t>0$ the set $\Sigma_{\mu}^{t}$ is $(N-2)$-rectifiable.

\begin{proposition}\label{Proposition8}
For $\calL^{1}$-almost every $t>0$,
\begin{equation*}
\Theta_{*,N-2}(\mus^{t},x)=\Theta_{N-2}^{*}(\mus^{t},x)\ge{}K\eta_{3}(t)
\end{equation*}
for $\calH^{N-2}$-almost every $x\in\Sigma_{\mu}^{t}$.
Consequently, for $\calL^{1}$-almost every $t>0$ the set $\Sigma_{\mu}^{t}$ is $(N-2)$-rectifiable.
\end{proposition}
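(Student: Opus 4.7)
The lower bound $\Theta_{*,N-2}(\mus^{t}, x) \ge K\eta_{3}(t)$ is precisely the content of Corollary \ref{DensityLowerBound}, so it remains to establish the equality $\Theta_{*,N-2}(\mus^{t}, x) = \Theta_{N-2}^{*}(\mus^{t}, x)$ for $\calH^{N-2}$-a.e. $x \in \Sigma_{\mu}^{t}$ and the $(N-2)$-rectifiability of $\Sigma_{\mu}^{t}$, both for $\calL^{1}$-a.e. $t > 0$. Following \cite{BOS2}, I would prove these together by producing the Euclidean tangent measures of $\mus^{t}$ at $\calH^{N-2}$-a.e. point of $\Sigma_{\mu}^{t}$, showing they are $(N-2)$-homogeneous cones with density equal to the parabolic density $\Theta_{N-2}^{P}(\mus, (x, t))$, and then invoking Preiss's theorem locally in normal coordinates.

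\textbf{Tangent measures.} Fix a time $t$ lying in the full-measure intersection of Corollary \ref{DensityLowerBound} and Proposition \ref{Proposition7}, and fix $x_{0} \in \Sigma_{\mu}^{t}$ at which $\Theta_{*,N-2}(\mus^{t}, x_{0}) \ge K\eta_{3}(t) > 0$. Introduce normal coordinates $y$ at $x_{0}$, in which $d_{+}(x_{0}, y) = |y| + O(|y|^{3})$ and $\dvol_{g} = (1 + O(|y|^{2}))\mrmd{}y$, and pull back $\mus^{t}$ to a Radon measure on a Euclidean neighbourhood of the origin. Form the rescalings
\[
\sigma_{r}(A) \ceq r^{2-N}\,\mus^{t}(rA) \qquad \text{for } r \in (0, r_{0}).
\]
By Lemma \ref{UpperBoundDensityBound} the family $\{\sigma_{r}\}$ has mass uniformly bounded on Euclidean compacts, so it admits weak-$*$ subsequential limits $\sigma$---the tangent measures of $\mus^{t}$ at $x_{0}$.

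\textbf{Conical structure and conclusion.} The monotonicity formula of Lemma \ref{MonotonicityOfLimitMeasure}, applied at $(x_{0}, t)$, rescales as $r \to 0^{+}$ to the standard Euclidean Huisken monotonicity on each tangent measure, because $e^{C_{2} s} \to 1$ and the additive term $C_{1} M_{0} s$ vanishes at the blow-up scale. Since $\lim_{s \to 0^{+}} \mathscr{G}((x_{0}, t), s) = \Theta_{N-2}^{P}(\mus, (x_{0}, t))$, the Euclidean monotone quantity is constant on each tangent measure $\sigma$, and the equality case of Huisken's monotonicity forces $\sigma$ to be homogeneous of degree $N-2$ about the origin with common density $\Theta_{N-2}^{P}(\mus, (x_{0}, t))$. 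Lemma \ref{ParabolicDensityAndLowerDensity} together with Proposition \ref{Proposition7} ensure that this common value lies in $(0, \infty)$ for $\calH^{N-2}$-a.e.\ $x_{0} \in \Sigma_{\mu}^{t}$. Since every tangent measure therefore satisfies $\sigma(B_{1}^{\R^{N}}(0)) = \omega_{N-2}\,\Theta_{N-2}^{P}(\mus, (x_{0}, t))$, the full limit $\lim_{r \to 0^{+}} r^{2-N}\mus^{t}(B_{r}(x_{0}))$ exists and equals $\omega_{N-2}\,\Theta_{N-2}^{P}$, which yields $\Theta_{*,N-2}(\mus^{t}, x_{0}) = \Theta_{N-2}^{*}(\mus^{t}, x_{0})$. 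Preiss's rectifiability theorem, applied in the normal coordinate chart, then gives the $(N-2)$-rectifiability of $\Sigma_{\mu}^{t}$.

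\textbf{Main obstacle.} The principal delicacy is that the monotonicity formula available here is only approximate on $(M, g)$: the curvature-dependent factor $e^{C_{2} r}$ and the additive $C_{1} M_{0} r$ coming from the Hessian comparison arguments of Lemmas \ref{HeatKernelTermEstimate} and \ref{ApproximateHeatKernelMatrixHarnackPrinciple} must be shown to vanish fast enough at the blow-up scale that the cone structure of tangent measures is not disturbed, and in particular so that Euclidean Huisken rigidity in the equality case is genuinely recovered in the limit. This is precisely what the uniform rescaling of the metric to $g_{R_{1}}$ used throughout the paper provides: under that rescaling the sectional-curvature bounds improve and the constants $C_{1}, C_{2}$ of Proposition \ref{MonotonicityFormula} remain uniformly controlled, so that after passing to the limit in $r$, the blow-up analysis is essentially Euclidean and Preiss's theorem applies verbatim.
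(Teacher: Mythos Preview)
Your approach differs from the paper's and has a genuine gap at the ``Huisken rigidity'' step. The monotonicity formula of Lemma~\ref{MonotonicityOfLimitMeasure} is parabolic: $\mathscr{G}((x_{0},t),r)$ is built from $\mus^{t-r^{2}}$, not $\mus^{t}$. Its equality case under a \emph{parabolic} blow-up yields a self-similar shrinking tangent \emph{flow} $\{\tilde\mu^{s}\}_{s<0}$, not a static cone. A self-similar shrinker is homogeneous in space-time, not spatially conical at a fixed slice (think of a shrinking sphere or cylinder). Your object $\sigma$, however, is the purely spatial blow-up of $\mus^{t}$ at a \emph{fixed} time, and nothing in your argument connects it to the tangent flow; in particular the assertion ``the equality case of Huisken's monotonicity forces $\sigma$ to be homogeneous of degree $N-2$'' is not justified. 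Even granting a cone, the identification of its density with $\Theta_{N-2}^{P}$ would still need to be argued, since the parabolic density is computed at times $t-r^{2}$ rather than $t$.

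The paper (following \cite{BOS2}) avoids tangent-flow rigidity entirely. It works at the fixed time $t$ and considers the vector space $F$ of profiles $g\in L^{\infty}((0,\infty))$ for which
\[
I(g)\ceq\lim_{r\to0^{+}}\frac{1}{r^{N-2}}\int_{M}g\!\left(\frac{d_{+}(x_{0},y)}{r}\right)\mrmd\mus^{t}(y)
\]
exists. The key input, extracted from the \emph{proof} of Proposition~\ref{Proposition7}, is that the Gaussian $e_{1/4}(\ell)=e^{-\ell^{2}/4}$ lies in $F$ for $\calH^{N-2}$-a.e.\ $x_{0}\in\Sigma_{\mu}^{t}$; this is the bridge from the parabolic density to the spatial slice, and it is exactly what your sketch is missing. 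One then bootstraps: $F$ is closed under the rescalings $g\mapsto g(\cdot\sqrt{s})$, so all $e_{s}\in F$; differentiating in $s$ gives $\ell^{2k}e^{-\ell^{2}}\in F$; Hermite expansion and approximation then yield $\chi_{[0,1]}\in F$. This is precisely the statement that $\Theta_{N-2}(\mus^{t},x_{0})$ exists, and rectifiability follows by Preiss (or Marstrand--Mattila) combined with the positive lower bound from Corollary~\ref{DensityLowerBound}.
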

\begin{proof}

The proof essentially follows ideas from \cite {BOS2}. 
One begins by defining the vector space, $F$, for a fixed $(x,t)\in\Omega_{\omega}$ by
\[
F\ceq\biggl\{g\in{}L^{\infty}\left((0,\infty);\R\right):I(g)\ceq\lim_{r\to0^{+}}I_{r}(g)\text{ exists and is finite}\biggr\}
\]
where for $r>0$,
\[
I_{r}(y)\ceq\frac{1}{r^{N-2}}\int_{M}\!{}g\biggl(\frac{d_{+}(x,y)}{r}\biggr)\mrmd\mus^{t}(y).
\]
The same definition appears in \cite{BOS2}, with the Euclidean distance in place of $d_{+}$.\\

To prove the proposition, it suffices to show that
$\chi_{[0,1]}\in{}F$.
The starting point is the fact that, if we write $e_s(\ell) = e^{-s\ell^2}$, then $e_{1/4}\in F$; this is established in the proof of Proposition \ref{Proposition7}.
One can then proceed using the
same technique as in \cite{BOS2}, which involves a number of
steps which we now outline.\\

It is now shown that if $g\in{}F$ then for $s>0$ the rescaling
$g_{s}\colon\ell\mapsto{}g(\ell\sqrt{s})$ belongs to $F$ as well.
Since $e_{1/4}\in{}F$ this shows that $e_{s}\in{}F$ for all
$s$.
Next, we proceed to inductively demonstrate that functions of
the form $\ell\mapsto\ell^{2k}e^{-\ell^{2}}$ for
$k\in\N\cup\{0\}$ are member of $F$.
We then show that $g\in{}C_{c}^{2}((0,\infty))$ satisfying
$g'(0)=0$ are also members of $F$ by appealing to Hermite
polynomials and an approximation argument.
Finally, we use members of $C_{c}^{2}((0,\infty))$ to
approximate $\chi_{[0,1]}$ and show that $\chi_{[0,1]}\in{}F$.\\

We refer the reader to the proof of Proposition $8$ and simply note that the proof presented there only depends on functions over
the real line.
\end{proof}

\subsection{Globalizing \texorpdfstring{$\Phi_{*}$}{}}
\hspace{15pt}In this subsection we demonstrate that the function $\Phi_{*}$ has a globally defined differential and partial derivative in $t$ even though its
construction was merely local.
\begin{lemma}\label{GlobalPhi}
The locally defined function $\Phi_{*}$ from Theorem \ref{Theorem5} extends to a function
$\Phi_{*}\colon{}M\times(0,\infty)\to\R\slash2\pi\Z$.
In particular, $\Phi_{*}$ has a differential, $\mrmd\Phi_{*}$, that is globally defined and satisfies
$\mrmd\Phi_{*}=\mrmd\phi_{*}+\gamma_{*}$ where $\phi_{*},\gamma_{*}$ are globally defined so that $\phi_{*}$ solves the heat equation on
$M\times(0,\infty)$ and $\gamma_{*}$ is a harmonic $1$-form on
$M\times(0,\infty)$ that is only a function of $x$ and
has no term corresponding to $\mrmd{}t$.
In addition, $\pp_{t}\Phi_{*}$ is globally defined and equal to $\pp_{t}\phi_{*}$.
\end{lemma}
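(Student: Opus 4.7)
The plan is to combine the global Hodge decomposition of Theorem \ref{Theorem3} with the local construction of $\Phi_{*}$ from Theorem \ref{Theorem5}, and verify that the two match. In the factorization $\uvep=w_{\vep}e^{i\phi_{\vep}}u_{h,\vep}$, the only factor which may fail to admit a single-valued real logarithm on $M$ is $u_{h,\vep}$, and this obstruction is encoded in the globally defined harmonic $1$-form $\gamma_{\vep}\ceq{}ju_{h,\vep}$. This is precisely why one expects $\mrmd\Phi_{*}$ to split as a globally defined $\mrmd\phi_{*}$ plus a harmonic part $\gamma_{*}$.

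\textbf{Step 1 (Global limits).} Applying Theorem \ref{Theorem3} on each compact $\calK\subset{}M\times(0,\infty)$ yields the factorization above together with the bounds in items \ref{Theorem3Item3}--\ref{Theorem3Item5}. Refining the subsequence from \eqref{WeakStarMeasureLimit} and running a diagonal argument over an exhaustion, I would extract: (i) a function $\phi_{*}$ on $M\times(0,\infty)$ obtained as a limit of $\phi_{\vep}$ (rescaled by $\logeps^{-1/2}$ as needed) which satisfies the heat equation, by linearity and the gradient bound in item \ref{Theorem3Item3}; (ii) a harmonic $1$-form $\gamma_{*}$ on $M$ obtained as a limit of the $\gamma_{\vep}$ --- compactness is automatic since the $\gamma_{\vep}$ lie in the finite-dimensional space of harmonic $1$-forms on $(M,g)$; and (iii) a map $u_{h,*}\colon{}M\to\bbS^{1}$ with $ju_{h,*}=\gamma_{*}$. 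By construction $\gamma_{*}$ inherits $t$-independence and the absence of a $\mrmd{}t$ component from the $\gamma_{\vep}$.

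\textbf{Step 2 (Local matching and globalization).} On a parabolic cylinder $\Lambda=\Lambda_{1/4}(x,t,r)$ where Theorem \ref{Theorem5} applies, after shrinking $r$ so that the spatial ball is simply-connected in $M$, the map $u_{h,*}$ admits a continuous lift $\arg u_{h,*}$ into $\R$, unique up to an element of $2\pi\Z$. I would show that the locally constructed $\Phi_{*}$ coincides with $\phi_{*}+\arg u_{h,*}$ modulo $2\pi$ on $\Lambda$. The key input is that the local phase $\Phi_{\vep}$ underlying Theorem \ref{Theorem2} satisfies $\Phi_{\vep}\equiv\phi_{\vep}+\arg u_{h,\vep}$ up to a correction that vanishes in the limit, thanks to the $W^{1,p}$ bound on $w_{\vep}$ in item \ref{Theorem3Item4}. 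Differentiating gives $\mrmd\Phi_{*}=\mrmd\phi_{*}+\gamma_{*}$ on $\Lambda$. Since the right-hand side is defined on all of $M\times(0,\infty)$, this furnishes a global extension of $\mrmd\Phi_{*}$ as a $1$-form. The absence of a $\mrmd{}t$ component in $\gamma_{*}$ immediately yields $\pp_{t}\Phi_{*}=\pp_{t}\phi_{*}$ globally. Finally, setting $\Phi_{*}\ceq\phi_{*}+\arg u_{h,*}$ modulo $2\pi$ produces a function $M\times(0,\infty)\to\R\slash 2\pi\Z$; the target must be $\R\slash 2\pi\Z$ rather than $\R$ precisely because $\arg u_{h,*}$ is multivalued modulo $2\pi$ when $H^{1}(M;\R)$ is nontrivial.

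\textbf{Main obstacle.} The chief difficulty is the matching step: verifying that the implicit branches of $\arg u_{h,\vep}$ used in the local construction of $\Phi_{\vep}$ in Theorem \ref{Theorem2} are consistent modulo $2\pi\Z$ across overlapping cylinders, so that the resulting $\Phi_{*}$ is a well-defined $\R\slash 2\pi\Z$-valued function. Compatibility ultimately stems from the fact that all constructions are driven by the same sequence $\uvep$, but requires careful bookkeeping of phase ambiguities together with the quantitative $W^{1,p}$ control on $w_{\vep}$ from item \ref{Theorem3Item4} to ensure that the contribution of $w_{\vep}$ to the overall phase is negligible in the limit.
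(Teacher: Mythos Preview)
Your approach is essentially the same as the paper's: apply Theorem \ref{Theorem3} on an exhaustion of $M\times(0,\infty)$, pass to rescaled limits $\phi_\vep/\sqrt{\logeps}\to\phi_*$ and $\gamma_\vep/\sqrt{\logeps}\to\gamma_*$, then match on local cylinders in $\Omega_\mu$ by writing $\uvep=\rho_\vep e^{i\vphi_\vep}$, invoking Theorem \ref{Theorem2} for $\Phi_\vep$, and using the $W^{1,p}$ bound on $w_\vep$ to kill the remaining phase contribution. The paper carries out exactly this, with one extra ingredient you gloss over: the $\phi_*$ obtained on $\calK_m$ depends a priori on $m$, and the paper uses analyticity of heat solutions to show that, after adjusting additive constants, all $\phi_*^m$ coincide and hence glue to a single global $\phi_*$.

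There is one genuine slip in your Step 1(iii). You assert the existence of $u_{h,*}\colon M\to\bbS^1$ with $ju_{h,*}=\gamma_*$. But $\gamma_*$ is the limit of $\gamma_\vep/\sqrt{\logeps}$, and while each $\gamma_\vep=\lfloor\gamma_\vep^0\rfloor$ has periods in $2\pi\Z$, dividing by $\sqrt{\logeps}$ destroys this integrality; there is no reason the limiting harmonic $1$-form $\gamma_*$ has $2\pi\Z$-periods, so a lift $u_{h,*}$ need not exist. The paper avoids this entirely by never constructing $u_{h,*}$: it works directly with the closed $1$-form $\mrmd\phi_*+\gamma_*$ and only claims to globalize $\mrmd\Phi_*$ and $\pp_t\Phi_*=\pp_t\phi_*$, which is all that is used downstream (only $|\nabla\Phi_*|^2$ and $\pp_t\Phi_*$ appear in the sequel). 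Your Step 2 should therefore be phrased at the level of $1$-forms rather than via $\arg u_{h,*}$; once you drop (iii), the rest of your outline goes through and coincides with the paper's argument.
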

\begin{proof}
For $m\in\N\setminus\{1\}$ we set $\calK_{m}=M\times\bigl[\frac{1}{m},m\bigr]$, so that
$\bigcup_{m\ge2}\calK_{m}=M\times(0,\infty)$.
Applying Theorem \ref{Theorem3} to $\calK=\calK_{m}$ we may write, for $\vep$ sufficiently small,
\beq\label{ExpressionOnmSet}
\uvep=e^{i\phi_{\vep}^{m}}w_{\vep}^{m}u_{h,\vep}\hspace{10pt}\text{on }\calK_{m},
\eeq
where $\phi_{\vep}^{m}$ solves the heat equation on $\calK_{m}$, $u_{h,\vep}\times\mrmd{}u_{h,\vep}=\gamma_{\vep}$
is a harmonic $1$-form on $M$ not dependent, as a function, on $t$ and $m$ and has no component corresponding to $\mrmd{}t$.
Theorem \ref{Theorem3} yields the estimates
\begin{align}
\left\|\nabla\phi_{\vep}^{m}\right\|_{L^{\infty}(\calK_{m})}
+\left\|\nabla{}u_{h,\vep}\right\|_{L^{\infty}(\calK_{m})}
&\le{}C(m)\sqrt{(M_{0}+1)\mathopen{}\left|\log(\vep)\right|\mathclose{}}\label{PhaseandHarmonicGradientBound}
\\
\label{AdditionalRegularity}
\left\|\nabla{}w_{\vep}^{m}\right\|_{L^{p}(\calK_{m})}
&\le{}C(m,p)\hspace{5pt}\text{for any }1\le{}p<\frac{N+1}{N}.
\end{align}
For fixed $m$,  we may pass to a further subsequence $\{\vep_{n}\}_{n\in\N}$ such that
\begin{align}
\frac{\phi_{\vep}^{m}}{\sqrt{\mathopen{}\left|\log(\vep)\right|\mathclose{}}} &\to\phi_{*}^{m}\hspace{5pt}\text{in }C^{2}(\calK_{m-1})
\label{ConvergentPhase}
\\
\frac{\gamma_{\vep}}{\sqrt{\mathopen{}\left|\log(\vep)\right|\mathclose{}}}
&\to{}\gamma_{*}
\hspace{5pt}\text{in }C^{2}(\calK_{m-1})
\label{HarmonicCoverging}
\end{align}
where $\phi_{*}^{m}$ also satisfies the heat equation on $\calK_{m-1}$, and $\gamma_*$ is a harmonic $1$-form. We have used the fact that the space of harmonic forms is  finite dimensional. Note also that 
$\gamma_*$  does not depend on $m$ or $t$ and has no component corresponding to
$\mrmd{}t$.

Next, let $x_{0}\in\Omega_{\mu}\ceq(M\times(0,\infty))\setminus\Sigma_{\mu}$.
By \eqref{FirstPropertySigmaSet} of Proposition \ref{Proposition6} we have that $\Omega_{\mu}$ is open.
Thus, we can find a set $\Lambda_{x_{0}}=B_{R}(x_{0})\times[t_{0},t_{1}]$ contained in $\Omega_{\mu}$.
For $m_{0}$ large enough we will have, for $m\ge{}m_{0}$, that $\Lambda_{x_{0}}\subset\calK_{m}$.
For $\vep$ sufficiently small we have
\beq\label{BoundedAwayFromZero}
|\uvep|\ge1-\sigma\ge\frac{1}{2}\hspace{5pt}\text{on }\Lambda_{x_{0}}
\eeq
where $\sigma$ is the constant in Theorem \ref{Theorem2}.
This lower bound on the norm allows us to write
\beq\label{LocalRepresentation}
\uvep=\rho_{\vep}e^{i\vphi_{\vep}}
\eeq
for some real-valued function $\vphi_{\vep}:M\times(0,\infty)\to\R\slash2\pi\Z$.
By \eqref{BoundedAwayFromZero} we may apply \eqref{ArgumentConvergence} of Theorem \ref{Theorem2} to demonstrate that there exists a solution
$\Phi_{\vep}$ of the heat equation on $\Lambda_{x_{0}}$ such that
\beq\label{HeatSolutionEstimate}
\left\|\nabla\Phi_{\vep}-\nabla\vphi_{\vep}\right\|_{L^{\infty}\bigl((\Lambda_{x_{0}})_{\frac{1}{2}}\bigr)}
\le{}C\vep^{\beta}.
\eeq
On the other hand, since $|w_{\vep}^{m}|=|\uvep|$ we may write, for $m\ge{}m_{0}$
\beq\label{LocalRepresentationw}
w_{\vep}^{m}=\rho_{\vep}e^{i\psi_{\vep}^{m}}\hspace{5pt}\text{on }\Lambda_{x_{0}}
\eeq
where $\psi_{\vep}^{m}:M\times(0,\infty)\to\R\slash2\pi\Z$.
Combining \eqref{ExpressionOnmSet}, \eqref{LocalRepresentation}, and \eqref{LocalRepresentationw} we obtain
\beq\label{GradientEquality}
\mrmd\vphi_{\vep}=\mrmd\phi_{\vep}^{m}+\gamma_{\vep}+\mrmd\psi_{\vep}^{m}.
\eeq
By \eqref{HeatSolutionEstimate}  for fixed $m$ we have
\[
\biggl|\frac{\mrmd\phi_{\vep}^{m}+\gamma_{\vep}-\mrmd\Phi_{\vep}}{\sqrt{\mathopen{}\left|\log(\vep)\right|\mathclose{}}}\biggr|
\le\biggl|\frac{\mrmd\psi_{\vep}^{m}}{\sqrt{\mathopen{}\left|\log(\vep)\right|\mathclose{}}}\biggr|+C\vep^{\beta}
\hspace{5pt}\text{on }(\Lambda_{x_{0}})_{\frac{1}{2}}.
\]
By \eqref{AdditionalRegularity} we obtain
\[
\left\|\frac{\mrmd\phi_{\vep}^{m}}{\sqrt{\mathopen{}\left|\log(\vep)\right|\mathclose{}}}
+\frac{\gamma_{\vep}}{\sqrt{\mathopen{}\left|\log(\vep)\right|\mathclose{}}}
-\frac{\mrmd\Phi_{\vep}}{\sqrt{\mathopen{}\left|\log(\vep)\right|\mathclose{}}}\right\|_{L^{p}\bigl((\Lambda_{x_{0}})_{\frac{1}{2}}\bigr)}\to0
\hspace{5pt}\text{as }\vep\to0^{+}.
\]
Since $\frac{\mrmd\phi_{\vep}^{m}}{\sqrt{\mathopen{}\left|\log(\vep)\right|\mathclose{}}}\to\phi_{*}^{m}$ and
$\frac{\gamma_{\vep}}{\sqrt{\mathopen{}\left|\log(\vep)\right|\mathclose{}}}\to\gamma_{*}$ from \eqref{ConvergentPhase} and
\eqref{PhaseandHarmonicGradientBound} then we deduce that $\frac{\mrmd\Phi_{\vep}}{\sqrt{\mathopen{}\left|\log(\vep)\right|\mathclose{}}}\to\mrmd\Phi_{*}$ on
$(\Lambda_{x_{0}})_{\frac{1}{2}}$ and
\[
\mrmd\Phi_{*}=\mrmd\phi_{*}^{m}+\gamma_{*}\hspace{5pt}\text{on }(\Lambda_{x_{0}})_{\frac{1}{2}}.
\]
Observe that since $\gamma_{*}$ and $\Phi_{*}$ are independent of $m$ then by changing $\phi_{*}^{m}$ by a constant we may assume that all
$\phi_{*}^{m}$ coincide on $(\Lambda_{x_{0}})_{\frac{1}{2}}$.
By analyticity, for each $n\ge{}m_{0}$ the functions $\{\phi_{*}^{m}\}_{m\ge{}n}$ coincide on $\calK_{m}$.
Letting $n$ go to infinity, we define their common value $\phi_{*}$ on $M\times(0,\infty)$.
We then have
\beq\label{GlobalGradient}
\mrmd\Phi_{*}=\mrmd\phi_{*}+\gamma_{*}
\eeq
on $(\Lambda_{x_{0}})_{\frac{1}{2}}$.
Since the right-hand-side is globally defined we can then extend $\Phi$.
We also note that $\pp_{t}\Phi_{*}$ is globally defined and equal to $\pp_{t}\phi_{*}$.
\end{proof}

\subsection{Mean Curvature Flows}
\hspace{15pt}The goal of this subsection is to prove \eqref{BOSTheorem:Item5}
from Theorem \ref{BOSTheorem}.
In particular, we focus on studying the properties of the
singular parts of $\{\mu_{*}^{t}\}_{t>0}$, denoted
$\{\nu_{*}^{t}\}_{t>0}$, which for each $t>0$ satisfy
\beq\label{Def:nust}
\nu_{*}^{t}=\Theta_{*}(x,t)\calH^{N-2}\rest\Sigma_{\mu}^{t}
\eeq
where $\Theta_{*}$ and $\Sigma_{\mu}^{t}$ are as in
\eqref{Decomposition}.
As in \cite{BOS2}, and following the same proof,
we will study limiting behaviour of
\begin{equation}\label{TimeDerivative}
\omega_{\vep}^{t}\ceq\frac{|\pp_{t}\uvep|^{2}}{\mathopen{}\left|\log(\vep)\right|\mathclose{}}\dvol_{g}(x)
\end{equation}
and
\begin{equation}\label{MixedDerivative}
\sigma_{\vep}^{t}\ceq\frac{-\pp_{t}\uvep\cdot\nabla\uvep}{\mathopen{}\left|\log(\vep)\right|\mathclose{}}\dvol_{g}(x).
\end{equation}

\toclesslab\subsubsection{Convergence of $\sigma_{\vep}^{t}$}{Subsec:Sigmavept}
\hspace{15pt}By the Cauchy-Schwarz inequality $\sigma_{\vep}$ is uniformly bounded on $M\times[0,T]$ for every $T>0$.
By perhaps passing to a further subsequence, we may assume that $\sigma_{\vep}\rightharpoonup\sigma_{*}$ as measures.
The Radon-Nikodym derivative of $|\sigma_{\vep}|$ with respect to $\mu_{\vep}$ verifies
\begin{equation*}
\frac{\mrmd|\sigma_{\vep}|}{\mrmd\mu_{\vep}}
=\frac{|\pp_{t}\uvep\cdot\nabla\uvep|}{\evep(\uvep)}
\le\frac{\sqrt{2}|\pp_{t}\uvep|\sqrt{\evep(\uvep)}}{\evep(\uvep)}
=\sqrt{2}\frac{|\pp_{t}\uvep|}{\sqrt{\evep(\uvep)}}.
\end{equation*}
On the other hand,
\begin{align*}
\left\|\frac{|\pp_{t}\uvep|}{\sqrt{\evep(\uvep)}}\right\|
_{L^{2}\left(M\times[0,T],\mrmd\mu_{\vep}\right)}^{2}
&=\int_{M\times[0,T]}\!{}\frac{|\pp_{t}\uvep|^{2}}{\evep(\uvep)}\mrmd\mu_{\vep}\mrmd{}t\\
&=\int_{M\times[0,T]}\!{}\frac{|\pp_{t}\uvep|^{2}}{\evep(\uvep)}\cdot
\frac{\evep(\uvep)}{\mathopen{}\left|\log(\vep)\right|\mathclose{}}\dvol_{g}(x)\mrmd{}t\\
&=\int_{M\times[0,T]}\!{}\frac{|\pp_{t}\uvep|^{2}}{\mathopen{}\left|\log(\vep)\right|\mathclose{}}\dvol_{g}(x)\mrmd{}t\\
&\le{}M_{0}
\end{align*}
where we used standard energy estimates for \ref{PGLOriginal} and assumption \eqref{H0} for the last inequality.
We conclude that $\frac{\mrmd|\sigma_{\vep}|}{\mrmd\mu_{\vep}}$ is uniformly bounded in
$L^{2}(M\times[0,T],\mrmd\mu_{\vep})$.
Arguing as in Theorem $2.2$ of \cite{BF}, but adapting to the case of a compact Riemannian manifold without boundary, it follows that $\sigma_{*}$ is
absolutely continuous with respect to $\mus$.
Therefore, we may write
\begin{equation*}
\sigma_{*}=h\mus^{t}\mrmd{}t
\end{equation*}
where $h\in{}L^{2}(M\times[0,T],\mus^{t}\mrmd{}t)$.
We use \eqref{Decomposition} from Theorem \ref{BOSTheorem}
to decompose $\mus^{t}$ into its absolutely continuous part with
respect to $\dvol_{g}$ and its singular part $\nu_{*}^{t}$
satisfying \eqref{Def:nust}.
Arguing as in Proposition $3.1$ of \cite{BOS1} combined with
Theorem \ref{Theorem2} and Lemma \ref{GlobalPhi} we see that
the part of $\sigma_{*}^{t}$ absolutely continuous with
respect to $\dvol_{g}$ has density
$-\pp_{t}\Phi_{*}\cdot\nabla\Phi_{*}$.
We now have
\begin{lemma}\label{MeasureDecompositionLemma}
The measure $\sigma_{*}$ decomposes as $\sigma_{*}=\sigma_{*}^{t}\mrmd{}t$, where for $\calL^{1}$-almost every $t\ge0$,
\begin{equation*}
\sigma_{*}^{t}=-\pp_{t}\Phi_{*}\cdot\nabla\Phi_{*}\dvol_{g}(x)+h\nu_{*}^{t}.
\end{equation*}
\end{lemma}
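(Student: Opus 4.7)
The proof is essentially the assembly of facts developed in the paragraphs just preceding the statement. First, I would record the disintegration: since $\sigma_{*}\ll\mus$ with $L^{2}(\mus)$-density $h$, and $\mus=\mus^{t}\mrmd{}t$, one has $\sigma_{*}=h\mus^{t}\mrmd{}t$, so setting $\sigma_{*}^{t}\ceq{}h(\cdot,t)\mus^{t}$ yields $\sigma_{*}=\sigma_{*}^{t}\mrmd{}t$. Applying the Lebesgue decomposition $\mus^{t}=g(\cdot,t)\calH^{N}+\nu_{*}^{t}$ from Proposition \ref{Proposition6}(\ref{DecompositionOfMeasureProperty}), with $\nu_{*}^{t}$ concentrated on $\Sigma_{\mu}^{t}$ and hence mutually singular with $\calH^{N}$, produces
\[
\sigma_{*}^{t}=hg\,\dvol_{g}+h\nu_{*}^{t},
\]
and the singular-part expression $h\nu_{*}^{t}$ in the claim is immediate from this splitting.

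Next I would identify $hg\,\dvol_{g}$ with $-\pp_{t}\Phi_{*}\cdot\nabla\Phi_{*}\dvol_{g}$, working locally on $\Omega_{\mu}\ceq(M\times(0,\infty))\setminus\Sigma_{\mu}$, which is open by Proposition \ref{Proposition6}(\ref{FirstPropertySigmaSet}). Given $(x_{0},t_{0})\in\Omega_{\mu}$, the vanishing of the parabolic density there combined with Theorem \ref{Theorem5} produces a small parabolic cylinder $\Lambda$ around $(x_{0},t_{0})$ on which Theorem \ref{Theorem2} applies for $\vep$ small: one has $|\uvep|\ge1-\sigma$ on $\Lambda$, the factorization $\uvep=\rho_{\vep}e^{i\vphi_{\vep}}$, and a heat-equation solution $\Phi_{\vep}$ satisfying $\|\nabla\Phi_{\vep}-\nabla\vphi_{\vep}\|_{L^{\infty}(\Lambda_{1/2})}\le{}C\vep^{\beta}$ via \eqref{ArgumentConvergence}.

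Then I would use the pointwise identity
\[
-\pp_{t}\uvep\cdot\nabla\uvep=-\pp_{t}\rho_{\vep}\,\nabla\rho_{\vep}-\rho_{\vep}^{2}\pp_{t}\vphi_{\vep}\,\nabla\vphi_{\vep}.
\]
The modulus contribution is $o(|\log\vep|)$ in $L^{\infty}_{loc}$ because $1-|\uvep|^{2}$ is controlled via $V_{\vep}(\uvep)\le{}C\vep^{\beta}$ together with standard parabolic regularity. For the phase contribution, Lemma \ref{GlobalPhi} supplies $C^{2}_{loc}$-convergence of $\phi_{\vep}^{m}/\sqrt{|\log\vep|}\to\phi_{*}$ and $\gamma_{\vep}/\sqrt{|\log\vep|}\to\gamma_{*}$, which when combined with \eqref{ArgumentConvergence} and heat-equation regularity for $\Phi_{\vep}$ yields
\[
\frac{\pp_{t}\vphi_{\vep}\,\nabla\vphi_{\vep}}{|\log\vep|}\longrightarrow\pp_{t}\Phi_{*}\cdot\nabla\Phi_{*}
\]
locally uniformly on $\Omega_{\mu}$. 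Since $\Omega_{\mu}$ is exhausted by such cylinders and $\Sigma_{\mu}^{t}$ is $\calH^{N}$-null, this pins down the $\dvol_{g}$-absolutely continuous part of $\sigma_{*}^{t}$.

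The hard part will be the passage to the limit in the quadratic phase product, since each of $\pp_{t}\vphi_{\vep}$ and $\nabla\vphi_{\vep}$ is of size $\sqrt{|\log\vep|}$ locally and bare $L^{2}$ weak compactness is insufficient. This is precisely where the smoothness of $\Phi_{\vep}$ coming from the heat equation in Theorem \ref{Theorem2}, together with the $C^{2}_{loc}$-convergence of Lemma \ref{GlobalPhi}, becomes decisive. The new harmonic contribution $\gamma_{*}$ in $\nabla\Phi_{*}=\nabla\phi_{*}+\gamma_{*}$, absent from \cite{BOS2}, presents no additional analytic difficulty since $\gamma_{*}$ is smooth and time-independent; modulo this single subtlety, the argument parallels Proposition $3.1$ of \cite{BOS1}.
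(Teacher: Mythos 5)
Your proposal follows essentially the same route as the paper: the absolute continuity $\sigma_*=h\,\mu_*^t\,\mrmd t$ from the preceding $L^2$ Radon--Nikodym estimate, the split via the decomposition of $\mu_*^t$ into diffuse and singular parts, and the identification of the diffuse density by working locally on $\Omega_\mu$ with the factorization $\uvep=\rho_\vep e^{i\vphi_\vep}$, Theorem \ref{Theorem2}, and Lemma \ref{GlobalPhi} — precisely the combination the paper invokes when it refers to Proposition $3.1$ of \cite{BOS1}. Your observation that the extra harmonic term $\gamma_*$ in $\mrmd\Phi_*=\mrmd\phi_*+\gamma_*$ causes no new difficulty (being smooth and time-independent) is the correct reading of the only genuine novelty here relative to \cite{BOS2}.
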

Next we observe that for every $t\ge0$, by appealing to the ideas found in Lemmas $7.5$ and $7.6$ of \cite{PP2},
we have for all smooth vector fields, $X$, that
\begin{align}\label{MatrixMeasureIdentity}
\frac{1}{\mathopen{}\left|\log(\vep)\right|\mathclose{}}\int_{M\times\{t\}}\!{}
[\evep(\uvep)I-\nabla\uvep\otimes\mrmd\uvep]:DX\dvol_{g}(x)
&=\int_{M}\!{}
\mathopen{}\left<X,\frac{\pp_{t}\uvep\cdot\nabla\uvep}{\mathopen{}\left|\log(\vep)\right|\mathclose{}}\right>\mathclose{}\dvol_{g}(x)\\
&=-\int_{M}\!{}\mathopen{}\left<X,\sigma_{\vep}^{t}\right>\mathclose{}  \nonumber
\end{align}
where $I$ is the identity operator,
$\nabla\uvep\otimes\mrmd\uvep=\nabla\uvep^{1}\otimes\mrmd\uvep^{1}+\nabla\uvep^{2}\otimes\mrmd\uvep^{2}$,
$DX$ is the $(1,1)$-tensor field defined at a
point $p\in{}M$ by
\beq\label{Covariant:Def}
DX_{p}\colon{}v\in{}T_{p}M\to{}D_{v}X,
\eeq
and we use the notation
$A:B$ to denote the inner product of $(1,1)$-tensor fields
on $T_{x}M$ defined by
\beq\label{Def:EndoProduct}
A:B\ceq\sum_{i=1}^{N}\sum_{j=1}^{N}
\mathopen{}\left<A(e_{i}),e_{j}\right>\mathclose{}
\mathopen{}\left<B(e_{i}),e_{j}\right>\mathclose{}
\eeq
where $\{e_{1},e_{2},\ldots,e_{N}\}$ is any orthonormal basis
for $T_{x}M$.
Following \cite{BOS2} we use \eqref{MatrixMeasureIdentity}
as motivation to analyze the weak limit of
\begin{equation*}
\alpha_{\vep}^{t}=\biggl(I-\frac{\nabla\uvep\otimes\mrmd\uvep}{\evep(\uvep)}\biggr)
\mrmd\mu_{\vep}^{t}.
\end{equation*}
Since $|\alpha_{\vep}^{t}|\le{}KN\mu_{\vep}^{t}$ then we may assume that, by perhaps passing to a subsequence, that
\begin{equation*}
\alpha_{\vep}^{t}\rightharpoonup\alpha_{*}^{t}\equiv{}A\cdot\mus^{t}
\end{equation*}
where $A$ is a symmetric $(1,1)$-tensor field and where a $(1,1)$-tensor field is symmetric if for each $x\in{}M$ and each
$u,v\in{}T_{x}M$ we have
\begin{equation*}
\mathopen{}\left<A_{x}(u),v\right>\mathclose{}=\mathopen{}\left<u,A_{x}(v)\right>\mathclose{}.
\end{equation*}
We also recall that a symmetric $(1,1)$-tensor is referred to
as positive semi-definite if for each $x\in{}M$ and each
$u\in{}T_{x}M$ we have
\begin{equation*}
\mathopen{}\left<A_{x}(u),u\right>\mathclose{}\ge0.
\end{equation*}
Finally, notice that if $A,B$ are symmetric $(1,1)$-tensor
fields then we write
\begin{equation*}
A\le{}B
\end{equation*}
if $B_{x}-A_{x}$ is positive semi-definite for each
$x\in{}M$.
We now notice that since $\nabla\uvep\otimes\mrmd\uvep$ is a
positive semi-definite $(1,1)$-tensor field then
\begin{equation}\label{UpperBoundOnOperator}
A\le{}I.
\end{equation}
On the other hand, computing in normal coordinates about a point $x\in{}M$, we have, at $x$, that
\begin{equation*}
\text{tr}_{g}[\{\evep(\uvep)I-\nabla\uvep\otimes\mrmd\uvep\}]
=(N-2)\evep(\uvep)+2\Vep(\uvep).
\end{equation*}
Therefore, since the trace is a linear operation, passing to the limit we obtain
\begin{equation}\label{TaceIdentity}
\text{tr}_{g}(A)=(N-2)+2\frac{\mrmd{}V_{*}}{\mrmd\mus}
\end{equation}
where $\frac{\mrmd{}V_{*}}{\mrmd\mus}$ is the non-negative limiting measure, obtained after passing to a subsequence, of
$\frac{\Vep(\uvep)}{\evep(\uvep)}$.
Taking the limit $\vep\to0^{+}$ in \eqref{MatrixMeasureIdentity},
decomposing $\mus^{t}$ using \eqref{Decomposition} of
Theorem \ref{BOSTheorem},
and using the pointwise estimates provided by Theorem
\ref{Theorem2} we obtain for $\calL^{1}$-almost every $t\ge0$
\begin{align}\label{LimitingTraceIdentity}
\int_{M}\!{}A:DX\mrmd\nu_{*}^{t}
&+\int_{M}\!{}\biggl[\frac{|\nabla\Phi_{*}|^{2}}{2}I-\nabla\Phi_{*}\otimes\mrmd\Phi_{*}\biggr]:DX\dvol_{g}(x)\\
&=-\int_{M}\!{}\mathopen{}\left<X,h\right>\mathclose{}\mrmd\nu_{*}^{t}
-\int_{M}\!{}\mathopen{}\left<X,\pp_{t}\Phi_{*}\nabla\Phi_{*}\right>\mathclose{}\dvol_{g}(x).  \nonumber
\end{align}
Since $\Phi_{*}$ solves the heat equation then we also have, by multiplying the heat equation by
$\mathopen{}\left<X,\nabla\Phi_{*}\right>\mathclose{}$ and arguing in
coordinates similar to Lemmas $7.5$ and $7.6$ of \cite{PP2}, that
\begin{equation}\label{HeatEquationPhaseIdentity}
\int_{M}\!{}\biggl[\frac{|\nabla\Phi_{*}|^{2}}{2}I-\nabla\Phi_{*}\otimes\mrmd\Phi_{*}\biggr]:DX\dvol_{g}(x)
=-\int_{M}\!{}\mathopen{}\left<X,\pp_{t}\Phi_{*}\cdot\nabla\Phi_{*}\right>\mathclose{}\dvol_{g}(x).
\end{equation}
Combining \eqref{LimitingTraceIdentity} and \eqref{HeatEquationPhaseIdentity} now gives the following
\begin{lemma}\label{AMatrixIdentity}
For $\calL^{1}$-almost every $t\ge0$ and for every smooth vector field $X$ we have
\begin{equation}\label{MatrixIntegralIdentity}
\int_{M}\!{}A:DX\mrmd\nu_{*}^{t}=-\int_{M}\!{}\mathopen{}\left<X,h\right>\mathclose{}\mrmd\nu_{*}^{t}.
\end{equation}
\end{lemma}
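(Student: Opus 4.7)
The plan is to obtain Lemma \ref{AMatrixIdentity} as an immediate consequence of the two identities \eqref{LimitingTraceIdentity} and \eqref{HeatEquationPhaseIdentity} that are already set up in the preamble. Since both identities are valid for $\calL^{1}$-almost every $t\ge 0$ (the same null set dictated by the decomposition in Lemma \ref{MeasureDecompositionLemma} and Proposition \ref{Proposition8}), subtracting \eqref{HeatEquationPhaseIdentity} from \eqref{LimitingTraceIdentity} cancels both the diffuse matrix integral on the left and the diffuse transport integral on the right, leaving precisely $\int A:DX\mrmd\nu_{*}^{t}=-\int\langle X,h\rangle\mrmd\nu_{*}^{t}$.

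In order to make this rigorous, I would first confirm \eqref{LimitingTraceIdentity}. Starting from \eqref{MatrixMeasureIdentity}, the left-hand side equals $\int_M \alpha_{\vep}^{t}:DX$ after dividing through by $\evep(\uvep)/|\log\vep|$ inside the bracket, so passing to the limit $\vep_{n}\to 0^{+}$ yields $\int_M A:DX\,\mrmd\mu_{*}^{t}$. Invoking the decomposition \eqref{Decomposition} from Theorem \ref{BOSTheorem} and the identification of the diffuse density of $A\mu_*^t$ via Theorem \ref{Theorem2}, Lemma \ref{GlobalPhi}, and Proposition 3.1 of \cite{BOS1}, the diffuse part of $A\cdot\mu_{*}^{t}$ is $[\tfrac{1}{2}|\nabla\Phi_{*}|^{2}I-\nabla\Phi_{*}\otimes\mrmd\Phi_{*}]\dvol_{g}$, giving the split on the left side of \eqref{LimitingTraceIdentity}. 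On the right, the limit of $-\int\langle X,\sigma_{\vep}^{t}\rangle$ is computed using Lemma \ref{MeasureDecompositionLemma}, which splits $\sigma_{*}^{t}$ into its diffuse part $-\pp_{t}\Phi_{*}\nabla\Phi_{*}\dvol_{g}$ and its singular part $h\nu_{*}^{t}$.

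Next I would verify \eqref{HeatEquationPhaseIdentity}. Since $\Phi_{*}$ satisfies the heat equation globally on $M\times(0,\infty)$ by Lemma \ref{GlobalPhi}, testing $\partial_{t}\Phi_{*}-\Delta\Phi_{*}=0$ against $\langle X,\nabla\Phi_{*}\rangle$ and integrating by parts in $x$ yields the stress-energy identity in coordinates. The computation is local and follows the template of Lemmas 7.5-7.6 of \cite{PP2}, where the identity $\Delta\Phi_{*}\langle X,\nabla\Phi_{*}\rangle=\mathrm{div}(\nabla\Phi_{*}\langle X,\nabla\Phi_{*}\rangle)-[\tfrac{1}{2}|\nabla\Phi_{*}|^{2}I-\nabla\Phi_{*}\otimes\mrmd\Phi_{*}]:DX$ is established via a normal-coordinate computation and the symmetry of $\nabla\Phi_{*}\otimes\mrmd\Phi_{*}$.

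Combining the two identities gives \eqref{MatrixIntegralIdentity}. The main potential obstacle is not the final algebraic cancellation but rather pinning down the structure of $A$ on the diffuse part, which requires that Theorem \ref{Theorem2} be applicable at a.e.\ point of $M\setminus\Sigma_{\mu}^{t}$ (ensuring the gauge-independent expression $\tfrac{1}{2}|\nabla\Phi_{*}|^{2}\dvol_{g}$ makes sense) and that the convergence $\nabla\Phi_{\vep}/\sqrt{|\log\vep|}\to\nabla\Phi_{*}$ be strong enough (obtained from the local heat-equation regularity of Theorem \ref{Theorem2}) to pass to the limit inside the quadratic expression $\nabla\uvep\otimes\mrmd\uvep/\evep(\uvep)$. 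Once these convergences are in hand, the cancellation between \eqref{LimitingTraceIdentity} and \eqref{HeatEquationPhaseIdentity} is immediate.
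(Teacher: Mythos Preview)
Your proposal is correct and follows exactly the paper's approach: the lemma is obtained simply by subtracting \eqref{HeatEquationPhaseIdentity} from \eqref{LimitingTraceIdentity}, and your explanation of how those two identities are established matches the paper's preamble leading up to the lemma.
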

We see that the conclusion of Lemma \ref{AMatrixIdentity} is
close to \eqref{FirstVariationAbsolutelyContinuous}.
Thus, if we can show that $A$ is the orthogonal projection
operator from $T_{x}M$ onto
$T_{x}\Sigma_{\mu}^{t}$ then we will have shown that
$\nu_{*}^{t}$ has first variation with mean curvature
$h$.
Following \cite{BOS2} we proceed in this direction by
first demonstrating that $A$ is perpendicular to normal
vectors to $T_{x}\Sigma_{\mu}^{t}$.
\begin{lemma}\label{KernelOfOperator}
For $\calL^{1}$-almost every $t\ge0$ and $\calH^{N-2}$-almost every $x\in\Sigma_{\mu}^{t}$ we have
\begin{equation}\label{KernelIdentity}
A_{x}\biggl[\int_{T_{x}\Sigma_{\mu}^{t}}\!{}\nabla\chi(y)\mrmd\calH^{N-2}(y)\biggr]=0
\end{equation}
where $\chi$ is a compactly supported smooth function
on $T_{x}M$ where we use the exponential map to
identify a neighbourhood of zero in $T_{x}M$ with subsets of
$M$.
\end{lemma}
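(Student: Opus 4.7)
The plan is to adapt the blow-up argument used in the corresponding lemma of \cite{BOS2} to the Riemannian setting, using normal coordinates and the exponential map to identify a small neighbourhood of $x$ in $M$ with a neighbourhood of $0$ in $T_{x}M$. Fix a time $t \ge 0$ in the full-measure subset of $(0,\infty)$ on which Proposition \ref{Proposition8} applies (so $\Sigma_{\mu}^{t}$ is $(N-2)$-rectifiable and $\Theta_{*}(x,t) > 0$ for $\calH^{N-2}$-almost every $x \in \Sigma_{\mu}^{t}$) and on which Lemma \ref{AMatrixIdentity} holds. Fix a point $x \in \Sigma_{\mu}^{t}$ at which the following standard consequences of rectifiability hold simultaneously: an approximate tangent plane $T_{x}\Sigma_{\mu}^{t}$ exists; under the exponential chart $L_{\rho}(y) \ceq \exp_{x}^{-1}(y)/\rho$ the rescaled measures $\eta_{\rho} \ceq \rho^{2-N}(L_{\rho})_{\#}\nu_{*}^{t}$ converge weakly to $\Theta_{*}(x,t)\calH^{N-2}\rest T_{x}\Sigma_{\mu}^{t}$ as $\rho \to 0^{+}$; and $x$ is a $\nu_{*}^{t}$-Lebesgue point of the vector-valued function $h$ produced by Lemma \ref{MeasureDecompositionLemma}.

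For a compactly supported $\chi \in C_{c}^{\infty}(T_{x}M)$ and a fixed $\xi \in T_{x}M$, introduce the test vector field
\[
X_{\rho}(y) \ceq \chi\!\left(\frac{\exp_{x}^{-1}(y)}{\rho}\right) \widetilde{\xi}(y),
\]
where $\widetilde{\xi}$ extends $\xi$ by parallel transport along radial geodesics from $x$ (cut off smoothly outside a normal coordinate neighbourhood). Since the Christoffel symbols vanish at $x$ in normal coordinates, $D\widetilde{\xi}(y) = O(d(y,x))$, so
\[
DX_{\rho}(y) = \rho^{-1}\, \widetilde{\xi}(y) \otimes \nabla[\chi \circ L_{\rho}](y) + O(d(y,x))\, \chi(L_{\rho}(y)).
\]
Apply \eqref{MatrixIntegralIdentity} to $X_{\rho}$ and multiply through by $\rho^{3-N}$. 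Sending $\rho \to 0^{+}$ and changing variables $w = L_{\rho}(y)$, the leading term on the left-hand side tends to
\[
\Theta_{*}(x,t) \int_{T_{x}\Sigma_{\mu}^{t}}\!{} A_{x} : [\xi \otimes \nabla\chi(w)]\, \mrmd\calH^{N-2}(w) = \Theta_{*}(x,t)\, \mathopen{}\left\langle A_{x}\xi,\, \int_{T_{x}\Sigma_{\mu}^{t}}\!{} \nabla\chi\, \mrmd\calH^{N-2}\right\rangle\mathclose{},
\]
where we used the symmetry of $A_{x}$ together with the weak convergence $\eta_{\rho} \rightharpoonup \Theta_{*}(x,t) \calH^{N-2}\rest T_{x}\Sigma_{\mu}^{t}$, while the curvature error term contributes at most $C\rho\cdot\eta_{\rho}(\spt\chi)$, which vanishes.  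On the right-hand side, the Lebesgue-point property of $h$ gives
\[
\int\!{} \chi \circ L_{\rho}\, \mathopen{}\left\langle \widetilde{\xi}, h\right\rangle\mathclose{}\, \mrmd\nu_{*}^{t} = \rho^{N-2}\, \Theta_{*}(x,t)\, \mathopen{}\left\langle \xi, h(x)\right\rangle\mathclose{} \int_{T_{x}\Sigma_{\mu}^{t}}\!{} \chi\, \mrmd\calH^{N-2} + o(\rho^{N-2}),
\]
so after multiplying by $\rho^{3-N}$ this term is $O(\rho)$ and vanishes. Since $\Theta_{*}(x,t) > 0$ and $\xi$ is arbitrary, the identity
\[
\mathopen{}\left\langle A_{x}\xi,\, \int_{T_{x}\Sigma_{\mu}^{t}}\!{} \nabla\chi\, \mrmd\calH^{N-2}\right\rangle\mathclose{} = 0 \quad \text{for all } \xi \in T_{x}M
\]
yields \eqref{KernelIdentity} by symmetry of $A_{x}$.

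The main obstacle will be the rigorous verification that the curvature-induced errors are genuinely of lower order in the $\rho \to 0^{+}$ limit: the covariant-derivative error $D\widetilde{\xi} = O(d(\cdot,x))$, the volume-distortion $\sqrt{|g|} = 1 + O(d^{2})$ in the exponential chart, and the deviation between the pulled-back ambient metric and the Euclidean metric on $T_{x}M$. All three are controlled by the standard expansion $g_{ij}(z) = \delta_{ij} + O(|z|^{2})$ in normal coordinates, which ensures each contributes at most $O(\rho)$ after rescaling. A secondary obstacle is checking that the blow-up convergence $\eta_{\rho} \rightharpoonup \Theta_{*}(x,t) \calH^{N-2}\rest T_{x}\Sigma_{\mu}^{t}$ transferred through the exponential chart holds at $\calH^{N-2}$-almost every $x \in \Sigma_{\mu}^{t}$; this follows from $(N-2)$-rectifiability of $\Sigma_{\mu}^{t}$ combined with the positive lower density of Corollary \ref{DensityLowerBound} and the uniform upper density bound of Lemma \ref{UpperBoundDensityBound}, so that standard rectifiability results apply.
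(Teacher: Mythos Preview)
Your proposal is correct and follows the same blow-up strategy as the paper: insert a rescaled test vector field into \eqref{MatrixIntegralIdentity}, exploit the homogeneity mismatch between the two sides (left side scales like $\rho^{N-3}$, right side like $\rho^{N-2}$), and pass to the limit using rectifiability. Your parallel-transported $\widetilde{\xi}$ plays exactly the role of the paper's coordinate field $\partial/\partial x^{l}$ in normal coordinates, and your curvature-error bookkeeping is correct.

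One point to tighten: in order to pass from
\[
\rho^{2-N}\int_{M} A(y):\bigl[\widetilde{\xi}(y)\otimes(\nabla\chi)\!\circ L_{\rho}(y)\bigr]\,\mrmd\nu_{*}^{t}(y)
\]
to the claimed limit with $A_{x}$ in place of $A(y)$, the weak convergence $\eta_{\rho}\rightharpoonup\Theta_{*}(x,t)\calH^{N-2}\rest T_{x}\Sigma_{\mu}^{t}$ alone is not sufficient, because $A$ is merely bounded measurable, not continuous. You should also require that $x$ is a $\nu_{*}^{t}$-Lebesgue point of $A$ (the paper includes this hypothesis explicitly); then split $A(y)=A_{x}+(A(y)-A_{x})$, use the Lebesgue-point estimate together with the upper density bound to kill the remainder, and apply the weak convergence to the constant-$A_{x}$ term.
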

\begin{proof}
As in the corresponding proof from \cite{BOS2} we choose $t\ge0$ for which \eqref{MatrixIntegralIdentity} holds and $x\in\Sigma_{\mu}^{t}$
such that $T_{x}\Sigma_{\mu}^{t}$ exists and such that $x$ is a Lebesgue point for $\Theta_{*}$, with respect to $\calH^{N-2}$, and of $A$ with
respect to $\nu_{*}^{t}$.
We now consider a smooth function $\chi$ with support contained in a normal coordinate neighbourhood centred at $x$.
We then consider, written in normal coordinates centred at $x$, the vector field defined by
$X_{r,l}(y)\ceq\chi\bigl(\frac{-y}{r}\bigr)\frac{\pp}{\pp{}x^{l}}$ for $l\in\left\{1,2,\ldots,N\right\}$.
Inserting $X_{r,l}$ into \eqref{MatrixIntegralIdentity}, taking the limit $r\to0^{+}$, and appealing to the difference of homogeneity of the
right-hand side as in Theorem $3.8$ of \cite{AS}, we conclude that
\begin{equation*}
A_{x}\biggl[\int_{T_{x}\Sigma_{\mu}^{t}}\!{}\nabla\chi(y)\mrmd\calH^{N-2}(y)\biggr]=0.
\end{equation*}
\end{proof}
We have, due to the arguments of Section $6$ of \cite{AS}, the following consequence:
\begin{corollary}\label{TangentPerpKernel}
For $t$ and $x$ as in Lemma \ref{KernelOfOperator},
\begin{equation*}
(T_{x}\Sigma_{\mu}^{t})^{\perp}\subset{\rm{ker}}(A_{x}).
\end{equation*}
\end{corollary}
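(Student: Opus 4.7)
The plan is to exhibit, for each $v\in(T_{x}\Sigma_{\mu}^{t})^{\perp}$, a compactly supported smooth test function $\chi$ on $T_{x}M$ whose tangential gradient integral $\int_{T_{x}\Sigma_{\mu}^{t}}\nabla\chi\,\mrmd\calH^{N-2}$ is a positive scalar multiple of $v$. Once such a $\chi$ is in hand, Lemma \ref{KernelOfOperator} applied to it forces $A_{x}v=0$, which is exactly the inclusion we seek.

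To produce such a $\chi$, I would first fix an orthonormal basis $\{e_{1},\ldots,e_{N}\}$ of $T_{x}M$ with $\{e_{1},\ldots,e_{N-2}\}$ spanning $T_{x}\Sigma_{\mu}^{t}$, so that $(T_{x}\Sigma_{\mu}^{t})^{\perp}=\text{span}\{e_{N-1},e_{N}\}$, and decompose a generic point of $T_{x}M$ as $(y',y'')\in\R^{N-2}\times\R^{2}$. By linearity it is enough to treat $v=e_{k}$ for $k\in\{N-1,N\}$. I would then set
\[
\chi_{R}(y',y'')\ceq y_{k}\,\eta(y'/R)\,\zeta(y''),
\]
where $\eta\in{}C_{c}^{\infty}(\R^{N-2})$ and $\zeta\in{}C_{c}^{\infty}(\R^{2})$ are bump functions with $\eta(0)=\zeta(0)=1$. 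On the slice $\{y''=0\}$ every partial derivative of $\chi_{R}$ except $\partial_{y_{k}}\chi_{R}$ carries the factor $y_{k}=0$, so a direct product-rule computation yields
\[
\int_{T_{x}\Sigma_{\mu}^{t}}\nabla\chi_{R}\,\mrmd\calH^{N-2}=\zeta(0)\,R^{N-2}\,\|\eta\|_{L^{1}(\R^{N-2})}\,e_{k}.
\]
Substituting $\chi_{R}$ into \eqref{KernelIdentity} and dividing by this positive constant would give $A_{x}e_{k}=0$ for $k\in\{N-1,N\}$, completing the proof.

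There is no substantive obstacle here, consistent with the excerpt's remark that the conclusion follows from the arguments of Section $6$ of \cite{AS}. The entire construction takes place on the single tangent space $T_{x}M$ via the exponential identification already built into the statement of Lemma \ref{KernelOfOperator}, so no further Riemannian adaptation is required. The only detail worth verifying is the claim that the prefactor $y_{k}$ in $\chi_{R}$ annihilates the tangential partial derivatives on $\{y''=0\}$, and this is immediate from the product rule.
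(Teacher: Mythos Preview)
Your proposal is correct and is precisely the standard argument that the paper defers to via the citation of Section~6 of \cite{AS}: one produces test functions on $T_{x}M$ whose tangential gradient integral realizes any prescribed normal direction, and then invokes \eqref{KernelIdentity}. The only cosmetic remark is that the parameter $R$ is unnecessary---any single compactly supported $\chi$ of the form $y_{k}\,\eta(y')\,\zeta(y'')$ already does the job, since Lemma~\ref{KernelOfOperator} is stated for arbitrary compactly supported $\chi$ on $T_{x}M$ and the rescaling has already been absorbed into its proof.
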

We now show that $A_{x}=P$ where $P$ is the orthogonal projection of $T_{x}M$ onto $T_{x}\Sigma_{\mu}^{t}$.
\begin{corollary}\label{AIsProjection}
For $t$ and $x$ as in Lemma \ref{KernelOfOperator}, $A_{x}=P$
is the orthogonal projection onto the tangent space
$T_{x}\Sigma_{\mu}^{t}$.
\end{corollary}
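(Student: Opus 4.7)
The argument combines three ingredients already available: the kernel condition from Corollary~\ref{TangentPerpKernel}, the symmetry of $A_x$ (inherited under weak-$*$ convergence from the symmetric $(1,1)$-tensors $I - \nabla\uvep \otimes \mrmd\uvep/\evep(\uvep)$), and the trace identity \eqref{TaceIdentity}. Since $A_x$ is self-adjoint and $(T_x\Sigma_\mu^t)^\perp \subset \ker(A_x)$, $A_x$ preserves $T_x\Sigma_\mu^t$ and has block form $\tilde A_x \oplus 0$ relative to the orthogonal splitting $T_xM = T_x\Sigma_\mu^t \oplus (T_x\Sigma_\mu^t)^\perp$, where $\tilde A_x \ceq A_x|_{T_x\Sigma_\mu^t}$ is self-adjoint on the $(N-2)$-dimensional space $T_x\Sigma_\mu^t$. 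By \eqref{UpperBoundOnOperator}, $\tilde A_x \le I$, so every eigenvalue of $\tilde A_x$ is at most~$1$.

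The plan is then to show that $\text{tr}\,\tilde A_x = N-2$. Once this is known, the $N-2$ real eigenvalues of $\tilde A_x$, each at most $1$, sum to $N-2$, so $\sum_i(1-\lambda_i)=0$ with $1-\lambda_i\ge 0$ forces $\lambda_i = 1$ for every $i$; consequently $\tilde A_x = I$ and $A_x = P$. Because $A_x$ vanishes on $(T_x\Sigma_\mu^t)^\perp$, we have $\text{tr}\,\tilde A_x = \text{tr}_g A_x$, which by \eqref{TaceIdentity} equals $(N-2) + 2(\mrmd V_*/\mrmd\mus)(x)$. Hence it suffices to prove that $\mrmd V_*/\mrmd\mus = 0$ on a set of full $\nu_*^t$-measure, and for this it is enough to show $V_* \equiv 0$ as a Radon measure on $M\times(0,\infty)$.

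To establish the vanishing of $V_*$, we argue as in \cite{BOS2}. On the complement of $\Sigma_\mu$, Theorem~\ref{Theorem2} gives the pointwise bound $\Vep(\uvep) \le C(\Lambda)\vep^\beta$, so $V_*$ puts no mass there. On the singular set, the monotonicity formula (Proposition~\ref{MonotonicityFormula}) together with the Pohozaev-type identity \eqref{PohozaevLocalizationFirstInequalityProof} yields, for each compactly supported $\chi \in C_c^\infty(M\times(0,\infty))$, an $\vep$-uniform bound on $\int \chi\,\Vep(\uvep)$; dividing by $\logeps$ gives $\Vep(\uvep)/\logeps \to 0$ as measures. Combined with the previous paragraph, this delivers $A_x = P$ at $\nu_*^t$-a.e.\ $(x,t)$ of the type considered in Lemma~\ref{KernelOfOperator}.

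The principal obstacle is the vanishing $V_* \equiv 0$: in the Euclidean setting this is the classical equipartition statement for parabolic Ginzburg--Landau, and in the Riemannian case it requires absorbing the curvature-dependent correction terms $\Phi$ and $\Psi$ from Lemma~\ref{MonotonicityFormulaApproximateHeatKernel} via the bounds in Lemmas~\ref{HeatKernelTermEstimate} and~\ref{ApproximateHeatKernelMatrixHarnackPrinciple}, together with the use of the approximate heat kernel $\Kapg$ in place of the exact one. The remaining rigidity step--trace equals dimension on an $(N-2)$-dimensional subspace of eigenvalues bounded by $1$--is purely linear-algebraic and involves no new difficulty.
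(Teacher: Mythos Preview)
Your proof is ultimately correct, but it takes an unnecessary detour. The paper's argument is shorter: from \eqref{TaceIdentity} one has $\text{tr}_g(A_x) = (N-2) + 2\,\mrmd V_*/\mrmd\mus \ge N-2$ simply because $\mrmd V_*/\mrmd\mus \ge 0$. Combined with Corollary~\ref{TangentPerpKernel} (at least two zero eigenvalues) and \eqref{UpperBoundOnOperator} (all eigenvalues $\le 1$), this already forces the remaining $N-2$ eigenvalues to equal $1$: indeed, their sum is at most $N-2$ and the total trace is at least $N-2$, so equality holds throughout. No appeal to the vanishing of $V_*$ is required.

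In your write-up you explicitly set out to prove $\text{tr}\,\tilde A_x = N-2$ \emph{exactly}, and for that you invoke $V_* \equiv 0$. But the inequality $\text{tr}\,\tilde A_x \ge N-2$ already suffices for your own rigidity step, since $\tilde A_x$ lives on an $(N-2)$-dimensional space with eigenvalues bounded by $1$; the upper bound $N-2$ on the trace is automatic. So the entire third paragraph (the $V_*=0$ argument via Theorem~\ref{Theorem2} and the Pohozaev-type identity) can be deleted. What you end up proving there is in fact a \emph{consequence} of the corollary ($A_x = P$ forces $\text{tr}_g A_x = N-2$, hence $\mrmd V_*/\mrmd\mus = 0$ on $\Sigma_\mu$), not a prerequisite for it.
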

\begin{proof}
By \eqref{UpperBoundOnOperator} we have $A_{x}\le{}I_{x}$ for
each $x\in{}M$, and therefore all the eigenvalues of $A_{x}$ are
less than or equal to $1$.
By \eqref{TaceIdentity}, $\text{tr}_{g}(A_{x})\ge{}N-2$ so that
the sum of the eigenvalues of $A_{x}$ is at least $N-2$.
By Corollary \ref{TangentPerpKernel} and our choice of $x$ and
$t$ we know that $A_{x}$ has at least two zero eigenvalues.
Combining the above information allows us to conclude that $A_{x}$
has precisely two zero eigenvalues and $(N-2)$ eigenvalues equal
to $1$.
In particular, since the kernel is $(T_{x}\Sigma_{\mu}^{t})^{\perp}$ then $A_{x}$ is the orthogonal projection onto
$T_{x}\Sigma_{\mu}^{t}$.
\end{proof}
Combining Lemma \ref{AMatrixIdentity} and Corollary
\ref{AIsProjection} we obtain:
\begin{proposition}\label{Proposition9}
For $\calL^{1}$-almost every $t\ge0$, $\nu_{*}^{t}$ has a first variation and
\begin{equation*}
\delta\nu_{*}^{t}=h\nu_{*}^{t}.
\end{equation*}
That is, $h$ is the mean curvature of $\nu_{*}^{t}$.
\end{proposition}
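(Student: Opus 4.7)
The plan is to observe that Proposition \ref{Proposition9} is essentially an immediate consequence of Lemma \ref{AMatrixIdentity} and Corollary \ref{AIsProjection}, once one identifies $A:DX$ with the tangential divergence of $X$ along $\Sigma_\mu^t$. There is no substantial new analysis needed; the work has been done in establishing the two preceding results.

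First I would select a time $t\ge 0$ in the full-measure subset where both Lemma \ref{AMatrixIdentity} and Corollary \ref{AIsProjection} hold. At such $t$ and at $\nu_*^t$-a.e.\ point $x\in\Sigma_\mu^t$, the $(1,1)$-tensor $A_x$ coincides with the orthogonal projection $P_x$ onto $T_x\Sigma_\mu^t$. Next, I would compute $P_x:DX$ in an orthonormal frame $\{e_1,\ldots,e_N\}$ of $T_x M$ adapted so that $\{e_1,\ldots,e_{N-2}\}$ spans $T_x\Sigma_\mu^t$. From the definition \eqref{Def:EndoProduct} one gets
\[
P_x:DX \;=\; \sum_{i,j=1}^N \langle P_x(e_i), e_j\rangle\,\langle D_{e_i}X, e_j\rangle
\;=\; \sum_{i=1}^{N-2} \langle D_{e_i}X, e_i\rangle
\;=\; \operatorname{div}_{T_x\Sigma_\mu^t}(X),
\]
where the last equality is the defining formula \eqref{Def:div}. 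In particular, $P_x:DX$ is independent of the choice of orthonormal extension into the normal directions, and the expression is globally $\nu_*^t$-measurable.

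Substituting this into the identity of Lemma \ref{AMatrixIdentity} yields
\[
\int_{\Sigma_\mu^t} \operatorname{div}_{T_x\Sigma_\mu^t}(X)\,\mrmd\nu_*^t
\;=\; -\int_{\Sigma_\mu^t} \langle X, h\rangle\,\mrmd\nu_*^t
\]
for every smooth vector field $X\in\chi(M)$. Reading the left-hand side via Definition \ref{DistributionalFirstVariation}, this is exactly $\delta\nu_*^t(X) = -\int \langle X,h\rangle\,\mrmd\nu_*^t$. Since $h\in L^2(\nu_*^t)$ by the discussion preceding Lemma \ref{MeasureDecompositionLemma}, $|\delta\nu_*^t|$ is absolutely continuous with respect to $\nu_*^t$, and so $\nu_*^t$ admits a first variation with Radon--Nikodym derivative $h$ (up to the paper's sign convention), giving $\delta\nu_*^t = h\,\nu_*^t$ as claimed.

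The only delicate point is the swap between the integrand $A:DX$, which a priori depends on the Borel representative of $A$, and the divergence expression, which is defined in terms of the $\calH^{N-2}$-a.e.\ tangent plane to $\Sigma_\mu^t$. Corollary \ref{AIsProjection} precisely bridges this gap, because it is formulated on the rectifiable part of $\mu_*^t$ and identifies the two objects pointwise $\nu_*^t$-a.e. The main conceptual ``obstacle'' is therefore already overcome in the proofs of Lemma \ref{AMatrixIdentity} (which in turn relied on the heat-equation identity \eqref{HeatEquationPhaseIdentity} to cancel the diffuse contribution) and of Corollary \ref{AIsProjection}; beyond these inputs, Proposition \ref{Proposition9} is a formal re-labeling.
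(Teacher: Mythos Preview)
Your proposal is correct and follows essentially the same approach as the paper, which simply states that Proposition~\ref{Proposition9} is obtained by combining Lemma~\ref{AMatrixIdentity} and Corollary~\ref{AIsProjection}. Your explicit frame computation identifying $P_x:DX$ with $\operatorname{div}_{T_x\Sigma_\mu^t}(X)$ via \eqref{Def:div} and \eqref{Def:EndoProduct} just spells out the unwritten step in the paper's one-line derivation.
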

Next, following \cite{BOS2}, we demonstrate the semi-continuity
of $\omega_{\vep}^{t}$ defined in \eqref{TimeDerivative}.
First, we introduce the bundle $B$
whose fiber over $x\in{}M$ is the space of linear maps $T_{x}M\to \R^{2}$, which we identify with $(T_{x}M)^{2}$.
On $B$ we define the measure
\begin{equation*}
\tilde{\omega}_{\vep}^{t}
\ceq\delta_{p_{\vep}(x)}\frac{|\pp_{t}\uvep\cdot{}p_{\vep}|^{2}}{\mathopen{}\left|\log(\vep)\right|\mathclose{}}\dvol_{g}(x)
\end{equation*}
where $p_{\vep}\ceq\frac{\nabla\uvep}{|\nabla\uvep|}$.
By perhaps passing to a further subsequence, we may assume that $\tilde{\omega}_{\vep}^{t}\mrmd{}t\rightharpoonup\omega_{*}$ as measures.
We deduce from the decomposition provided by Theorem
\ref{Theorem2} and the Portmanteau Theorem that:
\begin{lemma}\label{MixedMeasureDecomposition}
The measure $\tilde{\omega}_{*}$ decomposes as $\tilde{\omega}_{*}=\tilde{\omega}_{*}^{t}\mrmd{}t$, and for $\calL^{1}$-almost every $t\ge0$
\begin{equation*}
\tilde{\omega}_{*}^{t}=\Pi_{*,x}^{t}(p)|\pp_{t}\Phi_{*}|^{2}\dvol_{g}(x)+\calM_{*}^{t},
\end{equation*}
where $\Pi_{*,x}^{t}$ is a probability measure on $\left(T_{x}M\right)^{2}$
with support on the unit ball and
$\calM_{*}^{t}=\tilde{\omega}_{*}^{t}\rest\Sigma_{\mu}^{t}$.
\end{lemma}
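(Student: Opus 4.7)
The plan is to first secure the existence of a time disintegration $\tilde{\omega}_{*}=\tilde{\omega}_{*}^{t}\,\mrmd{}t$, and then to decompose $\tilde{\omega}_{*}^{t}$ by separating the closed set $\pi^{-1}(\Sigma_{\mu}^{t})$ from its open complement in $B$, where $\pi\colon B\to M$ denotes the bundle projection. Standard parabolic energy estimates for \ref{PGLOriginal} under \eqref{H0} yield $\int_{0}^{T}\!\int_{M}\!|\pp_{t}\uvep|^{2}\le M_{0}\mathopen{}\left|\log(\vep)\right|\mathclose{}$ for every $T>0$, so the total mass of $\tilde{\omega}_{\vep}^{t}\,\mrmd{}t$ on $B\times[0,T]$ is bounded uniformly in $\vep$ and its projection onto the time axis is dominated by $M_{0}\,\mrmd{}t$. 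After passing to a further subsequence, the disintegration theorem applied to $\tilde\omega_*$ yields $\tilde{\omega}_{*}=\tilde{\omega}_{*}^{t}\,\mrmd{}t$ and, by the same argument, $\omega_{*}=\omega_{*}^{t}\,\mrmd{}t$ with $\pi_{*}\tilde{\omega}_{*}^{t}=\omega_{*}^{t}$, valid for $\calL^{1}$-almost every $t\ge 0$.

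To identify the diffuse part, I would fix such a $t$ (also generic in the sense of Proposition \ref{Proposition8}) and any $x_{0}\in M\setminus\Sigma_{\mu}^{t}$. By Theorem \ref{Theorem5} there is a parabolic cylinder $\Lambda=B_{r}(x_{0})\times[t-r^{2},t]$ on which $\mus^{s}=\tfrac{|\nabla\Phi_{*}|^{2}}{2}\dvol_{g}$ with $\Phi_{*}$ solving the heat equation. Applying Theorem \ref{Theorem2} on $\Lambda$, for $\vep$ small enough we may write $\uvep=\rho_{\vep}e^{i\vphi_{\vep}}$ with $\rho_{\vep}\ge 1-\sigma$, together with a phase $\Phi_{\vep}$ solving the heat equation on $\Lambda_{1/2}$ and $\|\nabla\vphi_{\vep}-\nabla\Phi_{\vep}\|_{L^{\infty}(\Lambda_{1/2})}\le C\vep^{\beta}$, so that $\nabla\Phi_{\vep}/\sqrt{\mathopen{}\left|\log(\vep)\right|\mathclose{}}$ and $\pp_{t}\Phi_{\vep}/\sqrt{\mathopen{}\left|\log(\vep)\right|\mathclose{}}$ converge smoothly to $\nabla\Phi_{*}$ and $\pp_{t}\Phi_{*}$ on any compact subset of $\Lambda_{1/2}$. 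A direct computation using this polar representation gives, as a tangent vector in $T_{x}M$,
\[
\pp_{t}\uvep\cdot p_{\vep}=\frac{\pp_{t}\rho_{\vep}\nabla\rho_{\vep}+\rho_{\vep}^{2}\,\pp_{t}\vphi_{\vep}\,\nabla\vphi_{\vep}}{|\nabla\uvep|}
\]
(the two cross terms cancel by orthogonality of $(\cos\vphi_\vep,\sin\vphi_\vep)$ and $(-\sin\vphi_\vep,\cos\vphi_\vep)$). The uniform control on $\rho_{\vep}$ and $\nabla\rho_{\vep}$ afforded by \eqref{DecompositionEstimates} shows that the first term is lower order, whence $|\pp_{t}\uvep\cdot p_{\vep}|^{2}/\mathopen{}\left|\log(\vep)\right|\mathclose{}\to|\pp_{t}\Phi_{*}|^{2}$ locally uniformly on $\Lambda_{1/2}$. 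Hence $\omega_{*}^{t}\rest(M\setminus\Sigma_{\mu}^{t})$ is absolutely continuous with density $|\pp_{t}\Phi_{*}|^{2}$.

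Finally, since $\Sigma_{\mu}$ is closed by Proposition \ref{Proposition6}(\ref{FirstPropertySigmaSet}), the Portmanteau Theorem applied to the open set $\pi^{-1}(M\setminus\Sigma_{\mu}^{t})\subset B$ shows that the split $\tilde{\omega}_{*}^{t}=\tilde{\omega}_{*}^{t}\rest\pi^{-1}(M\setminus\Sigma_{\mu}^{t})+\tilde{\omega}_{*}^{t}\rest\pi^{-1}(\Sigma_{\mu}^{t})$ is compatible with the weak limit. Setting $\calM_{*}^{t}\ceq\tilde{\omega}_{*}^{t}\rest\Sigma_{\mu}^{t}$ and disintegrating the regular part over its base measure $|\pp_{t}\Phi_{*}|^{2}\dvol_{g}$ yields a measurable family of fiber probability measures $\Pi_{*,x}^{t}$ on $(T_{x}M)^{2}$, each supported on the closed unit ball since every $\delta_{p_{\vep}(x)}$ is supported on the unit sphere (as $|p_{\vep}|=1$ by construction) and this support property is preserved under weak-$*$ limits. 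The main technical point will be ensuring Borel measurability of $x\mapsto\Pi_{*,x}^{t}$ and its independence of the chosen cylinder $\Lambda$; this is routine from the general disintegration theorem, while on open pieces of the regular set where $|\nabla\Phi_{*}|>0$ we in fact recover a smooth Dirac selection from the pointwise convergence $p_{\vep}\to p_{*}$ inherited from Theorem \ref{Theorem2}, which gives the desired global consistency.
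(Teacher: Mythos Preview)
Your proposal is correct and follows essentially the same approach as the paper, which merely states that the lemma follows from the local decomposition of Theorem~\ref{Theorem2} together with the Portmanteau Theorem. Your write-up supplies the details the paper omits---the time disintegration via the energy inequality, the explicit polar computation showing $|\pp_{t}\uvep\cdot p_{\vep}|^{2}/\logeps\to|\pp_{t}\Phi_{*}|^{2}$ on the regular set, and the fiberwise disintegration producing $\Pi_{*,x}^{t}$---all of which are exactly what is implicit in the paper's one-line justification.
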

We borrow the following proposition, after adapting it to the case of a manifold, from Section $6$ of \cite{AS}
\begin{proposition}\label{Proposition10}
For $\calL^{1}$-almost every $t\ge0$ and every smooth function $\chi$ we have
\begin{equation*}
\int_{B}\!{}\chi(x)\calM_{*}^{t}(x,p)
\ge\int_{M}\!{}\chi\left|h\right|^{2}\mrmd\nu_{*}^{t}.
\end{equation*}
\end{proposition}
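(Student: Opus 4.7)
The plan is to adapt the argument from Section $6$ of \cite{AS} to the Riemannian setting. The core idea is a Cauchy-Schwarz inequality at the $\vep$ level, combined with weak convergence of the velocity, mixed, and energy measures, followed by localization to $\Sigma_\mu^t$. Writing $\alpha_\vep \ceq |\nabla\uvep|$ and $p_\vep \ceq \nabla\uvep/\alpha_\vep$, I would decompose $\partial_t\uvep\cdot\nabla\uvep = \alpha_\vep(\partial_t\uvep\cdot p_\vep)$ and apply Cauchy-Schwarz to $\int_M\chi\,Y\cdot d\sigma_\vep^t$ for a smooth vector field $Y$ on $M$. Using the pointwise bound $|Y\cdot(\partial_t\uvep\cdot p_\vep)|^2 \le |Y|^2|\partial_t\uvep\cdot p_\vep|^2$ and $\alpha_\vep^2 \le 2e_\vep(\uvep)$, this yields
\[
\left|\int_M \chi\,Y\cdot d\sigma_\vep^t\right|^2 \le \left(\int_B \chi|Y|^2\,d\tilde\omega_\vep^t\right)\cdot\left(2\int_M \chi\,d\mu_\vep^t\right).
\]

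Next, I would pass to the limit $\vep\to 0^{+}$ along the chosen subsequence, using Lemma \ref{MeasureDecompositionLemma} for $\sigma_*^t$, Lemma \ref{MixedMeasureDecomposition} for the structure of $\tilde\omega_*^t$, and Theorem \ref{BOSTheorem}(\ref{BOSTheorem:Item3}) for $\mu_*^t$, to split each side into its diffuse and concentrated parts. To extract a clean inequality for the singular contributions, I would replace $\chi$ by a cut-off supported in a shrinking tubular neighborhood of $\Sigma_\mu^t$. Away from $\Sigma_\mu^t$, Theorem \ref{Theorem2} guarantees that $\Phi_*$ is smooth with locally bounded gradient, so the bulk pieces $\int\chi|\partial_t\Phi_*|^2\dvol_g$, $\int\chi|\nabla\Phi_*|^2\dvol_g$, and $\int\chi\,Y\cdot\partial_t\Phi_*\nabla\Phi_*\dvol_g$ all vanish in the limit, while $\int\chi\,Y\cdot h\,d\nu_*^t$, $\int_B\chi|Y|^2\,d\calM_*^t$, and $\int\chi\,d\nu_*^t$ retain their limiting values. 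This produces
\[
\left|\int_M \chi\,Y\cdot h\,d\nu_*^t\right|^2 \le \left(\int_B \chi|Y|^2\,d\calM_*^t\right)\cdot 2\int_M \chi\,d\nu_*^t,
\]
valid for every smooth vector field $Y$. Approximating $h$ in $L^2(\chi\,d\nu_*^t)$ by smooth vector fields and taking the supremum over $Y$ (using pointwise Cauchy-Schwarz to identify $\sup_{|Y|\le 1}|\int\chi\,Y\cdot h\,d\nu_*^t|^2 = (\int\chi\,d\nu_*^t)(\int\chi|h|^2\,d\nu_*^t)$ and $\sup_{|Y|\le 1}\int_B\chi|Y|^2\,d\calM_*^t = \int_B\chi\,d\calM_*^t$), one cancels $\int\chi\,d\nu_*^t$ to recover the conclusion.

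The main obstacle is obtaining the correct numerical constant in the final inequality. The naive Cauchy-Schwarz above loses a factor of $2$, and its elimination requires the sharper observation that on $\Sigma_\mu^t$ the potential contribution is negligible: $V_\vep(\uvep)/|\log\vep|\,\dvol_g \to 0$, equivalently $|\nabla\uvep|^2/|\log\vep|\,\dvol_g \to 2\mu_*^t$ on $\Sigma_\mu^t$. This follows from the trace identity \eqref{TaceIdentity} combined with Corollary \ref{AIsProjection}: since $A$ is the rank-$(N-2)$ orthogonal projection onto $T_x\Sigma_\mu^t$, $\operatorname{tr}_g(A) = N-2$, forcing $\mrmd V_*/\mrmd\mus = 0$ $\nus^t$-almost everywhere. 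A second subtlety is ensuring the separation of diffuse from concentrated contributions truly decouples under the Cauchy-Schwarz product; this is handled via the shrinking tubular neighborhood argument combined with the bounds in Theorem \ref{Theorem2}. In the Riemannian setting, all inner products, covariant derivatives, and Cauchy-Schwarz manipulations must be interpreted intrinsically, but since the argument is pointwise, no new curvature-dependent constants arise beyond those already incurred in earlier sections. A fully detailed version of the proof is given in \cite{Col}.
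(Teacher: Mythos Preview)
Your overall strategy is the correct Ambrosio--Soner approach that the paper cites: apply Cauchy--Schwarz at the $\vep$ level to the pairing $\int_M \chi\, Y\cdot d\sigma_\vep^t$, pass to the limit, and strip away the diffuse contributions via a shrinking tubular neighborhood of $\Sigma_\mu^t$. Your identification and resolution of the factor-of-$2$ issue via $\mrmd V_*/\mrmd\mus = 0$ on $\Sigma_\mu^t$ (deduced from $\operatorname{tr}_g(A)=N-2$ and Corollary~\ref{AIsProjection}) is exactly right and is the key point.

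There is, however, a genuine gap in your final step. The claimed identity
\[
\sup_{|Y|\le 1}\left|\int_M\chi\,Y\cdot h\,\mrmd\nu_*^t\right|^2 = \left(\int_M\chi\,\mrmd\nu_*^t\right)\left(\int_M\chi|h|^2\,\mrmd\nu_*^t\right)
\]
is false: the left side equals $\bigl(\int_M\chi|h|\,\mrmd\nu_*^t\bigr)^2$, which by Cauchy--Schwarz is \emph{at most} the right side, with equality only when $|h|$ is constant on $\spt(\chi)\cap\Sigma_\mu^t$. So you cannot cancel $\int_M\chi\,\mrmd\nu_*^t$ globally in this way. The correct completion is to localize: for a $\nu_*^t$-Lebesgue point $x_0$ of $h$ and of the Radon--Nikodym density of $\pi_*\calM_*^t$ with respect to $\nu_*^t$, shrink $\chi$ to $x_0$ in the inequality
\[
\left|\int_M \chi\, Y\cdot h\,\mrmd\nu_*^t\right|^2 \le \left(\int_M \chi|Y|^2\,\mrmd\pi_*\calM_*^t\right)\left(\int_M \chi\,\mrmd\nu_*^t\right),
\]
divide by $\bigl(\int_M\chi\,\mrmd\nu_*^t\bigr)^2$, and pass to the limit to obtain the pointwise bound $|Y(x_0)\cdot h(x_0)|^2 \le |Y(x_0)|^2\,\frac{\mrmd\pi_*\calM_*^t}{\mrmd\nu_*^t}(x_0)$. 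Choosing $Y(x_0)=h(x_0)/|h(x_0)|$ then yields $|h(x_0)|^2 \le \frac{\mrmd\pi_*\calM_*^t}{\mrmd\nu_*^t}(x_0)$ for $\nu_*^t$-a.e.\ $x_0$, and integrating against $\chi$ gives the proposition. This is the argument carried out in Section~6 of \cite{AS}.
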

We are now ready to prove \eqref{BOSTheorem:Item5} of Theorem
\ref{BOSTheorem}.
\begin{proof}
We begin by using Lemma \ref{DerivativeOfEnergy}, integating over $[T_{0},T_{1}]$, and dividing by $\mathopen{}\left|\log(\vep)\right|\mathclose{}$.
Next we let $\vep\to0^{+}$.
Then by combining Lemma \ref{MeasureDecompositionLemma}, Proposition \ref{Proposition9}, Lemma \ref{MixedMeasureDecomposition}, and
Theorem \ref{Theorem2} we obtain
\begin{align}
\nu_{*}^{T_{1}}-\nu_{*}^{T_{1}}
&+\int_{M\times\{T_{1}\}}\!{}\chi\frac{|\nabla\Phi_{*}|^{2}}{2}\dvol_{g}(x)
-\int_{M\times\{T_{0}\}}\!{}\chi\frac{|\nabla\Phi_{*}|^{2}}{2}\dvol_{g}(x)    \label{GeneralizedBrakkeFlow}\\
&\le-\int_{M\times[T_{0},T_{1}]}\!{}\chi|h|^{2}\mrmd\nu_{*}
+\int_{M\times[T_{0},T_{1}]}\!{}\mathopen{}\left<\nabla\chi{},P(h)\right>\mathclose{}\mrmd\nu_{*}  \nonumber\\
&-\int_{M\times[T_{0},T_{1}]}\!{}\chi|\pp_{t}\Phi_{*}|^{2}\dvol_{g}(x)\mrmd{}t
+\int_{M\times[T_{0},T_{1}]}\!{}\mathopen{}\left<\nabla\chi,\pp_{t}\Phi_{*}\nabla\Phi_{*}\right>\mathclose{}.  \nonumber
\end{align}
Since $\Phi_{*}$ solves the heat equation, we have the identity
\begin{align}
&\int_{M\times\{T_{1}\}}\!{}\chi\frac{|\nabla\Phi_{*}|^{2}}{2}\dvol_{g}(x)
-\int_{M\times\{T_{0}\}}\!{}\chi\frac{|\nabla\Phi_{*}|^{2}}{2}\dvol_{g}(x)  \label{HeatEquationIdentity}\\
&=\int_{M\times[T_{0},T_{1}]}\!{}\chi|\pp_{t}\Phi_{*}|^{2}\dvol_{g}(x)\mrmd{}t
+\int_{M\times[T_{0},T_{1}]}\!{}\mathopen{}\left<\pp_{t}\Phi_{*}\cdot\nabla\Phi_{*},\nabla\chi\right>\mathclose{}\dvol_{g}(x)\mrmd{}t.  \nonumber
\end{align}
Combining \eqref{GeneralizedBrakkeFlow} and \eqref{HeatEquationIdentity} gives
\begin{equation*}
\nu_{*}^{T_{1}}-\nu_{*}^{T_{1}}
\le-\int_{M\times[T_{0},T_{1}]}\!{}\chi|h|^{2}\mrmd\nu_{*}
+\int_{M\times[T_{0},T_{1}]}\!{}\mathopen{}\left<\nabla\chi{},P(h)\right>\mathclose{}\mrmd\nu_{*}.
\end{equation*}
Applying Theorem $4.4$ of \cite{AS}, whose proof extends to the case of a compact Riemannian manifold, completes the proof of \eqref{BOSTheorem:Item5} of Theorem \ref{BOSTheorem}.
\end{proof}

\bibliographystyle{acm}
\bibliography{ms}

\begin{thebibliography}{10}

\bibitem{Am1}
{\sc Amar, E.}
\newblock On the {$L^r$} {H}odge theory in complete non compact {R}iemannian
  manifolds.
\newblock {\em Math. Z. 287}, 3-4 (2017), 751--795.

\bibitem{Am2}
{\sc Amar, E.}
\newblock Correction to: {O}n the {$L^r$} {H}odge theory in complete non
  compact {R}iemannian manifolds.
\newblock {\em Math. Z. 296}, 1-2 (2020), 877--879.

\bibitem{AS}
{\sc Ambrosio, L., and Soner, H.~M.}
\newblock A measure-theoretic approach to higher codimension mean curvature
  f\mbox{}lows.
\newblock {\em Ann. Scuola Norm. Sup. Pisa Cl. Sci. (4) 25}, 1-2 (1997), 27--49
  (1998).
\newblock Dedicated to Ennio De Giorgi.

\bibitem{Au}
{\sc Aubin, T.}
\newblock {\em Nonlinear analysis on manifolds. {M}onge-{A}mp\`ere equations},
  vol.~252 of {\em Grundlehren der Mathematischen Wissenschaften [Fundamental
  Principles of Mathematical Sciences]}.
\newblock Springer-Verlag, New York, 1982.

\bibitem{BJOS}
{\sc Baldo, S., Jerrard, R.~L., Orlandi, G., and Soner, H.~M.}
\newblock Convergence of {G}inzburg-{L}andau functionals in three-dimensional
  superconductivity.
\newblock {\em Arch. Ration. Mech. Anal. 205}, 3 (2012), 699--752.

\bibitem{BBO}
{\sc Bethuel, F., Brezis, H., and Orlandi, G.}
\newblock Asymptotics for the {G}inzburg-{L}andau equation in arbitrary
  dimensions.
\newblock {\em J. Funct. Anal. 186}, 2 (2001), 432--520.

\bibitem{BOS1}
{\sc Bethuel, F., Orlandi, G., and Smets, D.}
\newblock Collisions and phase-vortex interactions in dissipative
  {G}inzburg-{L}andau dynamics.
\newblock {\em Duke Math. J. 130}, 3 (2005), 523--614.

\bibitem{BOS2}
{\sc Bethuel, F., Orlandi, G., and Smets, D.}
\newblock Convergence of the parabolic {G}inzburg-{L}andau equation to motion
  by mean curvature.
\newblock {\em Ann. of Math. (2) 163}, 1 (2006), 37--163.

\bibitem{BdR}
{\sc Bidal, P., and de~Rham, G.}
\newblock Les formes diff\'{e}rentielles harmoniques.
\newblock {\em Comment. Math. Helv. 19\/} (1946), 1--49.

\bibitem{Brak}
{\sc Brakke, K.~A.}
\newblock {\em The motion of a surface by its mean curvature}, vol.~20 of {\em
  Mathematical Notes}.
\newblock Princeton University Press, Princeton, N.J., 1978.

\bibitem{BF}
{\sc Buttazzo, G., and Freddi, L.}
\newblock Functionals defined on measures and applications to
  non-equi-uniformly elliptic problems.
\newblock {\em Ann. Mat. Pura Appl. (4) 159\/} (1991), 133--149.

\bibitem{Col}
{\sc Colinet, A.}
\newblock Geometric behaviour of solutions to equations of ginzburg-landau type
  on riemannian manifolds.
\newblock
  \href{http://blog.math.toronto.edu/GraduateBlog/2021/07/05/departmental-phd-thesis-exam-andrew-colinet/}{UofT-Thesis},
  2021.

\bibitem{CJS}
{\sc Colinet, A., Jerrard, R., and Sternberg, P.}
\newblock Solutions of the {G}inzburg-{L}andau equations with vorticity
  concentrating near a nondegenerate geodesic.
\newblock \href{https://arxiv.org/abs/2101.03575}{arXiv:2101.03575v1}, 2021.

\bibitem{Ev}
{\sc Evans, L.~C.}
\newblock {\em Partial differential equations}, second~ed., vol.~19 of {\em
  Graduate Studies in Mathematics}.
\newblock American Mathematical Society, Providence, RI, 2010.

\bibitem{Fed}
{\sc Federer, H.}
\newblock {\em Geometric measure theory}.
\newblock Die Grundlehren der mathematischen Wissenschaften, Band 153.
  Springer-Verlag New York Inc., New York, 1969.

\bibitem{GMS}
{\sc Giaquinta, M., Modica, G., and Sou\v{c}ek, J.}
\newblock {\em Cartesian currents in the calculus of variations. {I}}, vol.~37
  of {\em Ergebnisse der Mathematik und ihrer Grenzgebiete. 3. Folge. A Series
  of Modern Surveys in Mathematics [Results in Mathematics and Related Areas.
  3rd Series. A Series of Modern Surveys in Mathematics]}.
\newblock Springer-Verlag, Berlin, 1998.
\newblock Cartesian currents.

\bibitem{Ham2}
{\sc Hamilton, R.~S.}
\newblock Monotonicity formulas for parabolic flows on manifolds.
\newblock {\em Comm. Anal. Geom. 1}, 1 (1993), 127--137.

\bibitem{Ilmanen}
{\sc Ilmanen, T.}
\newblock Convergence of the {A}llen-{C}ahn equation to {B}rakke's motion by
  mean curvature.
\newblock {\em J. Differential Geom. 38}, 2 (1993), 417--461.

\bibitem{ISS}
{\sc Iwaniec, T., Scott, C., and Stroffolini, B.}
\newblock Nonlinear {H}odge theory on manifolds with boundary.
\newblock {\em Ann. Mat. Pura Appl. (4) 177\/} (1999), 37--115.

\bibitem{JS2}
{\sc Jerrard, R.~L., and Soner, H.~M.}
\newblock Scaling limits and regularity results for a class of
  {G}inzburg-{L}andau systems.
\newblock {\em Ann. Inst. H. Poincar\'{e} Anal. Non Lin\'{e}aire 16}, 4 (1999),
  423--466.

\bibitem{JS3}
{\sc Jerrard, R.~L., and Soner, H.~M.}
\newblock The {J}acobian and the {G}inzburg-{L}andau energy.
\newblock {\em Calc. Var. Partial Differential Equations 14}, 2 (2002),
  151--191.

\bibitem{JSt}
{\sc Jerrard, R.~L., and Sternberg, P.}
\newblock Critical points via {$\Gamma$}-convergence: general theory and
  applications.
\newblock {\em J. Eur. Math. Soc. (JEMS) 11}, 4 (2009), 705--753.

\bibitem{Jos}
{\sc Jost, J.}
\newblock {\em Riemannian geometry and geometric analysis}, seventh~ed.
\newblock Universitext. Springer, Cham, 2017.

\bibitem{Lin2}
{\sc Lin, F.~H.}
\newblock Complex {G}inzburg-{L}andau equations and dynamics of vortices,
  filaments, and codimension-{$2$} submanifolds.
\newblock {\em Comm. Pure Appl. Math. 51}, 4 (1998), 385--441.

\bibitem{LiRi}
{\sc Lin, F.-H., and Rivi\`ere, T.}
\newblock A quantization property for moving line vortices.
\newblock {\em Comm. Pure Appl. Math. 54}, 7 (2001), 826--850.

\bibitem{LiRi2}
{\sc Lin, F.-H., and Rivi\`ere, T.}
\newblock A quantization property for static {G}inzburg-{L}andau vortices.
\newblock {\em Comm. Pure Appl. Math. 54}, 2 (2001), 206--228.

\bibitem{Liu}
{\sc Liu, Z.}
\newblock Motion of vortex-filaments for superconductivity.
\newblock {\em Nonlinear Anal. 69}, 12 (2008), 4412--4442.

\bibitem{Lud}
{\sc Ludewig, M.}
\newblock Strong short-time asymptotics and convolution approximation of the
  heat kernel.
\newblock {\em Ann. Global Anal. Geom. 55}, 2 (2019), 371--394.

\bibitem{Mes}
{\sc Mesaric, J.}
\newblock {\em Existence of critical points for the {G}inzburg-{L}andau
  functional on {R}iemannian manifolds}.
\newblock ProQuest LLC, Ann Arbor, MI, 2009.
\newblock Thesis (Ph.D.)--University of Toronto (Canada).

\bibitem{PP2}
{\sc Pisante, A., and Punzo, F.}
\newblock Allen-{C}ahn approximation of mean curvature flow in {R}iemannian
  manifolds, {II}: {B}rakke's flows.
\newblock {\em Commun. Contemp. Math. 17}, 5 (2015), 1450041, 35.

\bibitem{PP1}
{\sc Pisante, A., and Punzo, F.}
\newblock Allen-{C}ahn approximation of mean curvature flow in {R}iemannian
  manifolds {I}, uniform estimates.
\newblock {\em Ann. Sc. Norm. Super. Pisa Cl. Sci. (5) 15\/} (2016), 309--341.

\bibitem{PCC}
{\sc Poon, C.-C.}
\newblock The formation of singularities in the harmonic map heat flow with
  boundary conditions.
\newblock {\em Taiwanese J. Math. 15}, 5 (2011), 2245--2264.

\bibitem{Wang}
{\sc Wang, C.}
\newblock On moving {G}inzburg-{L}andau vortices.
\newblock {\em Comm. Anal. Geom. 12}, 5 (2004), 1185--1199.

\end{thebibliography}

\end{document}